\theoremstyle{plain}
\newtheorem{theorem}{Theorem}\numberwithin{theorem}{section}
{}
\newtheorem{main}{Main~Theorem}{}
\newtheorem{lemma}{Lemma}\numberwithin{lemma}{section}
\newtheorem{proposition}{Proposition}\numberwithin{proposition}{section}
\newtheorem{corollary}{Corollary}\numberwithin{corollary}{section}
\theoremstyle{definition}
\newtheorem{definition}{Definition}\numberwithin{definition}{section}
\theoremstyle{remark} \theoremstyle{exam} \theoremstyle{ob}
\newtheorem{remark}{Remark}\numberwithin{remark}{section}
\newtheorem{question}{Question}\numberwithin{question}{section}
\newtheorem{exam}{Example}\numberwithin{exam}{section}
\newtheorem{ob}{Observation}\numberwithin{ob}{section}
\numberwithin{nt}{section}
\numberwithin{equation}{section}
\begin{document}
\title[Sublevel Set Estimates over Global Domains]
{Sublevel Set Estimates over Global Domains}

\author
{Joonil Kim}

\address{Department of Mathematics \\
       Yonsei University \\
       Seoul 120-729, Korea}
\email{jikim7030@yonsei.ac.kr}

\keywords{  Newton polyhedron,  oscillatory integral, sub-level set estimate}

\subjclass[2000]{Primary 42B20, 42B25}
 
 \begin{abstract} 
Since Varchenko's seminal paper,    the asymptotics of oscillatory integrals and related problems have been elucidated through the Newton polyhedra associated with the phase $P$.   The supports of those integrals  are concentrated on   sufficiently small neighborhoods. 
 The aim of this paper is to investigate the estimates of  sub-level-sets and oscillatory integrals whose supports are     global domains $D$.  A basic model of $D$ is $ \mathbb{R}^d$.   For this purpose, we define the Newton polyhedra associated with  $(P,D)$ and establish    analogues of Varchenko's theorem in   global domains $D$, under   non-degeneracy  conditions  of        $P$.   
 \end{abstract}
 \maketitle

\setcounter{tocdepth}{1}

\tableofcontents

\section{Introduction}
Asymptotic estimates of  the sub-level-sets   $\{x\in D:|\lambda P(x)|\le 1\} $ and the oscillatory integrals  $\int e^{i\lambda P(x)}\psi (x)dx$ with $\psi\in C^\infty(D)$  arise in many areas of mathematics.   
  For   systematic estimates for  global   $D$,  we consider the regions $D$ defined as:
\begin{align*} 
D_B:=\{x\in\mathbb{R}^d:   |x^{\mathfrak{b}}|\le 1\ \text{for all $\mathfrak{b}\in B$}\}, \ \text{given   finite subsets $B\subset \mathbb{Q}^d$}.
\end{align*}
 Here   $x^{\mathfrak{b}}:=x^{b_1}_1\cdots x^{b_d}_d$   for $\mathfrak{b}=(b_1,\cdots,b_d)$ and $x_\nu^0:=1$ for  every real $x_\nu $.  For instance,  let $\{{\bf e}_\nu\}_{\nu=1}^d$ be the set of the standard unit vectors in $\mathbb{R}^d$, then  $$D_{\{{\bf 0}\}}=\mathbb{R}^d,\ D_{\{{\bf e}_1,\cdots,{\bf e}_d\}}=[-1,1]^d\ \text{and}\ D_{\{-{\bf e}_1,\cdots,-{\bf e}_d\}}=\bigcap_{\nu=1}^d\{x\in \mathbb{R}^d:|x_\nu|\ge 1\}.$$  As a phase function, we shall take a    real valued polynomial     $P(x)$ in $\mathbb{R}^d$:
  \begin{align*} 
  P(x)=\sum_{\mathfrak{m}\in\Lambda(P)}c_{\mathfrak{m}}x^{\mathfrak{m}}\ \text{where    $\Lambda(P)=\{\mathfrak{m}\in\mathbb{Z}^d_+:c_{\mathfrak{m}}\ne 0\}$. }
  \end{align*}
\subsection{Main Questions}
Choose a  model  polynomial $P(x)=x_1x_2$ on the two regions  $D_{\{{\bf e}_1,{\bf e}_2\}}=[-1,1]^2$ and $D_{\{ {\bf 0}\}}=\mathbb{R}^2$. Then one can compute
 $|\{x\in D_{\{{\bf e}_1,{\bf e}_2\}} :|\lambda x_1x_2|\le1 \}|=4 \lambda^{-1} (1+|\log \lambda|)$ 
and $|\{x\in D_{\{ {\bf 0}\}}:|\lambda  x_1x_2|\le1 \}|=\infty$. This simple computation leads  us to study   the following  questions regarding the sublevel-set estimate in this paper:
\begin{question}\label{q11}
Find a condition of $(P,D_B)$  that determines whether  a sub-level-set measure $  \left|\{x\in D_B :|\lambda P(x)|\le 1\} \right|$ converges or diverges.
\end{question}
\begin{question}\label{q1}
When it converges, under a minimal non-degeneracy type condition of  $(P,D_B)$,  determine the   indices  $ \rho$ and   $a$ satisfying  
the sublevel set estimate $ \left|\{x\in D_B :|\lambda P(x)|\le 1\} \right|\approx\lambda^{-\rho} 
 (|\log  \lambda |+1)^a, $ according as $\lambda\in [1,\infty)$ or $\lambda\in (0,1)$.
\end{question}
For the corresponding oscillatory integrals, we are not asking their asymptotics with a fixed individual amplitude function $\psi_{D_B}$, but we are concerned with   convergences and  upper bounds, universal to  all suitable cutoff functions  $\Psi\in C^\infty(D_B)$.
\begin{question}\label{q3}
Under a minimal non-degeneracy type condition of $(P,D_B)$,
\begin{itemize}
\item  check if $
 \int e^{i\lambda P(x)} \Psi(x)dx  
$ converge for all appropriate    $\Psi\in C^\infty(D_B)$, or not.
\item find the best indices $\rho$ and $a$:  $
 \left|\int e^{i\lambda P(x)} \Psi(x)dx \right|\lesssim \lambda^{-\rho} (|\log  \lambda|+1)^a 
$  for all appropriate    $\Psi\in C^\infty(D_B)$, according as $\lambda\in [1,\infty)$ or $\lambda\in (0,1)$.
\end{itemize}
The constants involved in $\approx$ and $\lesssim$,    depend on $(P,D_B)$, but are independent of $\lambda$.  
\end{question}

\subsection{Local Estimates}

We first go over  these questions in a sufficiently small neighborhood $D$ of the origin. Write the usual Newton polyhedron as  ${\bf N}(P)$, which is defined by  ${\bf conv}(\Lambda(P)+\mathbb{R}_+^d)$  where ${\bf conv}(E)$ denotes the convex hull of a set $E$. Call  the non-negative number $ d({\bf N}(P) )$  displayed below,   the Newton distance:  $$  {\bf N}(P) \cap \rm{cone}({\bf 1})=\left[d({\bf N}(P) ),\infty\right){\bf 1}\ \text{ with ${\bf 1}=(1,\cdots,1)\in \mathbb{R}^d$}$$
where   $\rm{cone}({\bf 1}):=\{t {\bf 1} :t\ge 0\}$ is   the diagonal ray.
 In 1976, Varchenko  \cite{V}  studied the asymptotics of the oscillatory integrals  in Question \ref{q3} with $\Psi$ supported near the origin,   associated with real analytic phase functions $P$  with   $ \nabla P({\bf 0})={\bf 0}$.  He assumed that $d({\bf N}(P) )>1$ and  imposed the  face-nondegenerate-hypothesis:
\begin{align}\label{vc0}
\text{$\nabla P_{\mathbb{F}}|_{(\mathbb{R}\setminus\{0\})^d} $ are non-vanishing for all compact faces of $\mathbb{F}$ of ${\bf N}(P)$} \end{align}
where  $P_{\mathbb{F}}(x):=\sum_{\mathfrak{m}\in \Lambda(P)\cap\mathbb{F}}c_{\mathfrak{m}}x^{\mathfrak{m}}$. Then he calculated the     oscillation  index $\rho$  to be    $1/ d({\bf N}(P) )$ and
  the multiplicity $a$  to be   $ d-1-k$ for $k=\text{dim}(\mathbb{F}^{\rm{main}})$. Here $\mathbb{F}^{\rm{main}}$ is  the lowest dimensional face of ${\bf N}(P)$ containing   $d({\bf N}(P) )\bf{1}$.  In 1977,   Vassiliev \cite{V1} proved that   the sublevel-set-growth-index of    Question \ref{q1} for the local domain,  $\rho$  is $1/d({\bf N}(P))$  and the multiplicity $a$   is $d-1 -k  $, under the  normal-crossing   assumption:
\begin{align}\label{vc1}
\text{there is  $c>0$ so that $|P(x)|\ge c\sum_{\mathfrak{m}\in \Lambda(P)} |x^{\mathfrak{m}}|$ $\forall\ x\in D\cap (\mathbb{R}\setminus\{0\})^d$}.
\end{align}

\subsection{Model Result}
 To obtain the corresponding indices of Varchenko and Vassilev \cite{V,V1} in the whole domain $D_B=\mathbb{R}^d$,   define the  
the Newton  polyhedron ${\bf N}(P,\mathbb{R}^d)$   as the convex hull of $\Lambda(P)$.   Then $\rm{cone}(\Lambda(P)\cap\{ -{\bf 1}\})$ determines the convergence in Questions 1.1, and  the line segment
 $[\delta_{\rm{for}},\delta_{\rm{bac}}] {\bf 1}={\bf N}(P,\mathbb{R}^d)\cap\,\rm{cone}({\bf 1}) $ determines the growth rates  in Questions 1.2,  as well as those of Question 1.3:
\begin{itemize}
\item[{\bf A 1.1}.]  If  $\rm{cone}(\Lambda(P)\cap\{ -{\bf 1}\})\ne\mathbb{R}^d$, then  $|\{x\in \mathbb{R}^d:|\lambda P(x)|\le 1\}|$ diverges. 
\item[{\bf  A 1.2}.]  If  $\rm{cone}(\Lambda(P)\cap\{ -{\bf 1}\})=\mathbb{R}^d$, then the   growth rate $\rho$ and its multiplicity    $a$ in  the sub-level set estimate in Question \ref{q1}  under the condition  (\ref{vc1}) are 
$$(\rho,a)=\begin{cases} (1/\delta_{\rm{for}},d-1-\text{dim}(\mathbb{F}_{\rm{for}}^{\rm{main}})) \ \text{ if $\lambda\ge 1$}\\
 (1/\delta_{\rm{bac}},d-1-\text{dim}(\mathbb{F}_{\rm{bac}}^{\rm{main}})) \ \text{ if $0<\lambda<1$}
 \end{cases} $$
with $\mathbb{F}_{\rm{for}}^{\rm{main}}$ and $\mathbb{F}_{\rm{bac}}^{\rm{main}}$ smallest faces of ${\bf N}(P,\mathbb{R}^d)$ containing $\delta_{\rm{for}}{\bf 1}$ and $\delta_{\rm{bac}}{\bf 1}$.

\item[{\bf A 1.3}]  If  $\rm{cone}(\Lambda(P)\cap\{ -{\bf 1}\})\ne\mathbb{R}^d$, then there is an appropriate $\Psi$ supported in $D_B$ such that $ |\int e^{i\lambda P(x)} \Psi(x)dx  |=\infty$. If $\rm{cone}(\Lambda(P)\cap\{ -{\bf 1}\})=\mathbb{R}^d$, then  $  \left|\int e^{i\lambda P(x)} \Psi(x)dx \right|\lesssim \lambda^{-\rho} (|\log  \lambda|+1)^a $ with the same indices $\rho$ and $a$ above.
\end{itemize}

  Not like a local domain, in the global domain,  the indices $\rho$ and $a$  turn out to be different according  as  $0<\lambda\ll 1\ \text{or}\ \lambda\gg 1$. 
 \begin{remark}
From the computations of the model phase   $P(x_1,x_2)=x_1x_2$:
\begin{align*}
& \int_{D_{\{ {\bf 0}\}}} e^{2\pi i \lambda x_1 x_2}dx_1dx_2:=\lim_{R\rightarrow \infty}\int_{|x_1|<R}\int_{|x_2|\le R} e^{2\pi i \lambda x_1 x_2}dx_1dx_2 = \frac{2}{\lambda}\\
& \int  e^{2\pi i \lambda x_1 x_2}\psi_{D_{\{ {\bf 0}\}}} (x)dx_1dx_2:=\lim_{R\rightarrow \infty}\int e^{2\pi i \lambda x_1 x_2} \psi(\frac{x_1}{R})\psi(\frac{x_2}{R}) dx_1dx_2=O(\frac{1}{\lambda})
\end{align*}
we can observe that the criterion of the convergence of the oscillatory integral for a fixed amplitude $\psi_{D_B}$ would be different from that of the sublevel-set estimates above. However, we do not deal with these main issues in this paper.
\end{remark}

\subsection{Resolution of Singularities in the local region}
 When (\ref{vc1}) breaks down, one needs appropriate resolutions of the singularities.    For the study of the classical    resolution of singularities  of analytic functions, we refer  selectively \cite{Ab,H1,Jung} with its evolution  \cite{BM,EV,Ha,Su,W}     in the context of  algebraic geometry.
When the non-degeneracy hypothesis (\ref{vc0}) (or (\ref{vc1})) fails,   Varchenko \cite{V}  established, via toric geometry,   the resolution of singularity algorithm in $\mathbb{R}^2$ for finding an adaptable local coordinate system $\Phi$ satisfying $$d({\bf N}(P\circ\Phi))=\sup_{\phi\ \text{local coordinates}}d({\bf N}(P\circ\phi)) .$$
    Later, 
 Phong, Stein and Sturm \cite{PSS} utilized the Weierstrass preparation theorem and the   Piuseux series expansions of the roots $ r_i$ of $P(x_1,r_i(x_1))=0$ for constructing pullbacks   $\phi_i(x)=(x_1,x_2+r_i(x_1))$ of the horns $D_i$    for $D\subset\bigcup_{i=1}^M D_i$  making $P\circ \phi_i$ satisfying (\ref{vc0}) and (\ref{vc1}) on $\phi_i^{-1}(D_i)$.  Moreover,  Ikromov and Muller  \cite{IM} accomplished the Varchenko's algorithm for the adaptable local   coordinate systems  of the form $\phi(x_1,x_2)= (x_1,x_2+r(x_1))\ \text{or}\  (x_1+r(x_2),x_2)$  with $r$ analytic and $r({\bf 0})=0$  in $\mathbb{R}^2$ by using the   elementary analysis of the Newton polyhedron. They can  handle  a class of smooth functions. Moreover,  Greenblatt \cite{Gr} computed the leading terms of asymptotics of related integrals for the smooth phase after constructing Varchenko's adaptable local  coordinate system in $\mathbb{R}^2$ by performing only an elementary analysis such as an implicit function theorem. 
   With only analysis tools,  Greenblatt \cite{G2}   utilized the induction argument, as in a spirit of Hironaka's,  to establish   an elementary local resolution of singularities in $\mathbb{R}^d$ for all $d\ge 1$.  More recently, Collins, Greenleaf and Pramanik \cite{CGP}  developed  the classical  resolution of singularities   to obtain a higher dimensional resolution of singularity algorithm  applicable to  the above  integrals   in  a local domian of $\mathbb{R}^d$.   In  a   small neighborhood $D$ of the origin,    the    aforementioned oscillatory integral estimates yield the  oscillation index of $P$ at the origin  given by the infimum  $\rho_{0} $ below over all   $\psi$ of the asymptotics 
 \begin{align*} 
 \int e^{i\lambda P(x)} \psi(x)dx\sim\sum_{i=0}^{\infty}\sum_{n=0}^{d-1} c_{i,n}(\psi) \lambda^{-\rho_i}(\log\lambda)^{n} 
 \end{align*}
where $\rho_i<\rho_{i+1}$ and $ \sum_{n=0}^{d-1}|c_{0,n}(\psi)|\ne 0$ as $\lambda\rightarrow \infty$ (its existence  follows from Hironaka's resolution of singularities in \cite{H1}). In this paper, we do not establish the global resolution of singularity nor exact initial coefficient $c_{0,n(0)}$.  But,  we   fix  a coordinate system  under a  non-degeneracy hypothesis  as a global variant  of  (\ref{vc0}) or (\ref{vc1}), and  focus only on finding the   leading indices $\rho$ and $a$  of  Varchenko or Vassiliev in  Main Theorems \ref{mainth13} and \ref{mainth15}.    Next,  we partition  the domain   $D\subset\bigcup_{i=0}^M D_i$  so that    $P\circ \phi_i$ are normal-crossing on $\phi_i^{-1}(D_i)$ for all $i=0,\cdots,  M$,  in Main Theorem 3. 
\\
\\
   \noindent
 {\bf Notation}.
Denote the set of non-negative real numbers (integers, rationals)  by $\mathbb{R}_+$ ($\mathbb{Z}_+$, $\mathbb{Q}_+$).   For $j=(j_1,\cdots,j_d)\in \mathbb{R}^d$, we write ${\bf 2}^{-j}=(2^{-j_1},\cdots, 2^{-j_d})$. Moreover,  by $\pm {\bf 2}^{-j}$, we denote  the $2^d$ number of  all possible vectors of the forms $(\pm 2^{-j_1},\cdots,\pm 2^{-j_d})$. Set those vectors with their exponents in $K\subset \mathbb{R}^d$ as
\begin{align}\label{jmq4}
 {\bf 2}^{-K}:=\{ \pm {\bf 2}^{-j}:j\in  K \}. 
  \end{align}
Let $[d]:=\{1,\cdots,d\}$.    Given $j= (j_1,\cdots,j_d)\in \mathbb{Z}^d$ and  $x=(x_1,\cdots,x_d)\in\mathbb{R}^d$ , we write the dilation
${\bf 2}^{-j}x=(2^{-j_1}x_1,\cdots,2^{-j_{d}}x_{d})$ and denote $ x\sim {\bf 2}^{-j}$ if \begin{align}\label{jmq3}
 2^{-j_\nu-1}\le |x_\nu|\le 2^{-j_\nu+1}\ \text{for all $\nu\in [d]$}.
 \end{align}
Thus $x\sim {\bf 2}^{-{\bf 0}}$ iff $1/2\le |x_\nu|\le 2$ for $\nu\in [d]$. Sometimes, we shall use the notation $x\sim_h {\bf 2}^{{\bf 0}}$ representing 
  $1/h\le |x_\nu|\le  h$ for a fixed  number $1\le h<\infty$.  
 For $K\subset \mathbb{Z}^d$ and $B\subset \mathbb{Z}^d\cap [-r,r]^d$, we denote the set $K+ B$    by  $K+O(r)$. 
  We employ the following smooth non-negative cutoff functions
 \begin{itemize}
 \item[(1)] $\psi$ supported in $\{u\in\mathbb{R}^d:|u|\le 1\}$ for $\psi(u)\equiv 1$ in $|u|<\frac{1}{2}$ and $\psi^c=1-\psi$,
 \item[(2)]  $\chi$  supported in $\{ u\in \mathbb{R}:1/2\le |u|\le 2\}  \ \text{or}\  \bigcap_{\nu=1}^d\{ u\in\mathbb{R}^d:\frac{1}{2}\le |u_\nu |\le 2\}$, 
  \end{itemize}
  allowing slight line-by-line modifications of $\chi$ and $\psi$.  In this paper, we let $D$ be a  Borel set  and let $\psi_{D}$   indicate 
 a   cutoff function supported in   $D$.  Given two scalars $a,b$,  write $a\lesssim b$  if $a\le Cb$ for some  $C>0$ depending only on  $(P,D_B)$ in (I.1)-(I.2). The notation $a\approx b$ means that $a\lesssim b$ and $b\lesssim a$.  Notice the bounds involved in $\approx,\lesssim$ of   (I.1) and (I.2) are independent of $\lambda$ and $x$.  In additions,
  denote $0\le a\ll b$ if $a/b$ is a sufficiently small number compared with 1. Note that   our positive constants $\epsilon$ ($\epsilon\ll 1$) and    $c,C$ may be   different   line by line.  Finally,    $\text{rank}(A)$ is the number of linearly independent vectors in $A$.  \\
 \\
   {\bf Organization}.
   In Section \ref{Sec2}, we define the Newton polyhedron ${\bf N}(P,D_B)$ associated with a general domain $D_B$ and its balancing condition that determines a divergence of  our integral. In Section \ref{Sec4}, we  state the two main theorems regarding Questions 1.1 and 1.2 under a    normal-crossing  hypothesis derived from    (\ref{vc1})  on $(P,D_B)$.
In Section \ref{Sec3}, we   introduce   basic properties of  polyhedra and  its supporting planes.     In Sections \ref{Sec5}, we prove  some combinatorial lemmas regarding the distances and orientations of  ${\bf N}(P,D_B)$.  In Section \ref{Sec6}, we decompose our integrals according to the oriented and simplicial   dual faces of ${\bf N}(P,D_B)$. In Sections \ref{Sec7}-\ref{Sec9}, we give a bulk of proofs for the two main theorems stated in Section \ref{Sec4}.  In Section 10, by partitioning  domains $D$  into finite  pieces,  we restate the main results under a type of face-nondegeneracy (\ref{vc0}). 
    In the last two sections, we prove the dual face decomposition (Theorem 6.1), and the equivalence of   two  nondegeneracy conditions (Theorem 10.1).

 \section{Global Integrals and Newton Polyhedra}\label{Sec2}

  \subsection{Two Aspects of Global Integrals}\label{ej1}
The following  examples    illustrate  the two main features of  global integral estimates.
 \begin{itemize}
\item[(F1)]  Divergence due to unbalanced  ${\bf Ch}(\Lambda(P))$.   Compare:  
\begin{itemize}  
\item[(i)] $  \lim_{R\rightarrow \infty} \int \psi(\lambda (x_1^2+x_2^2))\psi\left(\frac{x_1}{R}\right)\psi\left(\frac{x_2}{R}\right)dx_1dx_2\approx   \lambda^{-1}$,
 \item[(ii)] $ 
    \lim_{R\rightarrow \infty} \int \psi(\lambda x_1^2)\psi\left(\frac{x_1}{R}\right)\psi\left(\frac{x_2}{R}\right)dx_1dx_2 \approx  \lim_{R\rightarrow \infty}\lambda^{ -1/2}  R
=\infty$.
\end{itemize}
Observe that   the divergence  of (ii) 
is owing to the deviation ${\bf Ch}(\Lambda(x_1^2))=\{(2,0)\} $   from  the   $\rm{cone}({\bf 1})$  causing  
 the   biased integration   $dx_2$.   
\item[(F2)] Different   decays  according to   $\lambda\gg 1$ or $\lambda\ll 1$.  Consider the 1-D estimates  
 \begin{align*}
  \int_{\mathbb{R}}\psi(\lambda(t^4+t^6))dt&\approx \begin{cases}
 \int_{|t|<1}\psi(\lambda t^4)dt\approx \lambda^{-1/4}\ \text{if $\lambda\in [1,\infty)$}\\
 \int_{|t|>1}\psi(\lambda t^6)dt\approx\lambda^{-1/6}\ \text{if $\lambda\in (0,1)$}.
\end{cases}
\end{align*} 
This $1D$ estimate  yields the  distinct decay rates of the  $2D$ sublevel-set measure for $P(x_1,x_2)=x_1^4
 +x_2^4+x_1^6+x_2^6$
 according as $\lambda\gg 1$  or $\lambda\ll 1$:
\begin{align*}
\qquad\ \ \int_{\mathbb{R}^2} \psi(\lambda  P(x)) dx_1dx_2 & \approx   \prod_{i=1}^2 \int_{\mathbb{R}} \psi(\lambda  (x_i^4+x_i^6)) dx_i \approx \begin{cases}
\lambda^{-1/2}\ \text{if  $\lambda\in [1,\infty)$}\\
\lambda^{-1/3}\ \text{if $\lambda\in (0,1)$.}
\end{cases}
\end{align*} 
Observe   the    exponents  $1/2$  and  $1/3$   are   in $[2,3]{\bf 1}= {\bf Ch}(\Lambda(P))\cap \rm{cone}({\bf 1})$.  
\end{itemize} 
The   geometric intuition   in (F1) suggests us to define the  balancing condition of Newton polyhedron along $\rm{cone}({\bf 1})$ in Section \ref{Sec2.4}.  The regions $|t|<1$ and $|t|\ge 1$ in (F2) suggests us to define forward and backward polyhedra   in Section \ref{sec6.2}.

    \subsection{Domain $D_B$ and its dual cone representation}
  \begin{definition}\label{deir1} 
Given a  finite $B\subset\mathbb{Q}^{d}$, we set     $ \rm{cone}(B)$  and  its dual   $ \rm{cone}^{\vee}(B) $: \begin{align*} 
\rm{cone}(B)&:= \left\{\sum_{\mathfrak{b}\in B}\alpha_{\mathfrak{b}}\mathfrak{b}: \alpha_{\mathfrak{b}}\ge 0 \right\} \ \text{and}\  
\rm{cone}^{\vee}(B) := \bigcap_{\mathfrak{b}\in\rm{cone}(B)} \{\mathfrak{q}:\langle\mathfrak{b},\mathfrak{q}\rangle\ge 0\}.\end{align*} 
\end{definition} 
\begin{exam}\label{exx21}
If   $B=\{{\bf e}_\nu\}_{\nu=1}^d$,  $\rm{cone}(B)=\rm{cone}^{\vee}(B)=\mathbb{R}_+^d$, and $\rm{cone}^{\vee}(\{{\bf 0}\})=\mathbb{R}^d.$ 
\end{exam}
\begin{lemma}
For $K\subset \mathbb{R}^d$, recall ${\bf 2}^{-K}=\{\pm{\bf 2}^{-j}:j\in K\}$ defined in (\ref{jmq4}).  Then,
\begin{align}
D_B= D_{\rm{cone}(B)}   =  {\bf 2}^{-\rm{cone}^{\vee}(B)}\ \text{except set of measure $0$}. \label{2323}
\end{align}
Hence, for evaluating  integrals  $\int_{D_B}$,  we can regard $D_{B}={\bf 2}^{-\rm{cone}^{\vee}(B)}$  from (\ref{2323}). \end{lemma}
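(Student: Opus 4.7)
The plan is to reparametrize the domain logarithmically and thereby convert the multiplicative monomial inequalities defining $D_B$ into linear inequalities on the exponent vectors, at which point both equalities reduce to basic statements about cones and their duals. Since $\{x\in\mathbb{R}^d : x_\nu=0 \text{ for some } \nu\}$ has Lebesgue measure zero, we may restrict throughout to $x\in(\mathbb{R}\setminus\{0\})^d$; on this set each $|x_\nu|$ admits a unique representation $|x_\nu|=2^{-j_\nu}$ with $j_\nu\in\mathbb{R}$, yielding the key identity
\[
|x^{\mathfrak{b}}|=\prod_{\nu=1}^{d}|x_\nu|^{b_\nu}=2^{-\langle j,\mathfrak{b}\rangle}\qquad\text{for every } \mathfrak{b}\in\mathbb{Q}^d.
\]
In particular $|x^{\mathfrak{b}}|\le 1$ if and only if $\langle j,\mathfrak{b}\rangle\ge 0$, so the defining constraints of $D_B$ become a linear system of half-space conditions on $j$.

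For the first equality $D_B=D_{\rm{cone}(B)}$, the inclusion $\supseteq$ is immediate from $B\subset\rm{cone}(B)$. For $\subseteq$, if $x\in D_B$ and $\mathfrak{b}=\sum_{i}\alpha_i\mathfrak{b}_i$ with $\alpha_i\ge 0$ and $\mathfrak{b}_i\in B$, then $\langle j,\mathfrak{b}\rangle=\sum_i\alpha_i\langle j,\mathfrak{b}_i\rangle\ge 0$, so $|x^{\mathfrak{b}}|\le 1$. For the second equality $D_{\rm{cone}(B)}=\mathbf{2}^{-\rm{cone}^{\vee}(B)}$, observe that $x\in D_{\rm{cone}(B)}$ (with all $x_\nu\ne 0$) is equivalent, via the identity above, to $\langle j,\mathfrak{b}\rangle\ge 0$ for every $\mathfrak{b}\in\rm{cone}(B)$, which is precisely the defining condition $j\in\rm{cone}^{\vee}(B)$ from Definition \ref{deir1}; writing $x=(\pm 2^{-j_1},\ldots,\pm 2^{-j_d})=\pm\mathbf{2}^{-j}$ places $x$ in $\mathbf{2}^{-\rm{cone}^{\vee}(B)}$, and this correspondence is a bijection onto $(\mathbb{R}\setminus\{0\})^d\cap D_{\rm{cone}(B)}$.

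No part of this is truly difficult; the only point requiring a little care is the null set on which some $x_\nu=0$, where $|x^{\mathfrak{b}}|$ may be ill-defined if $\mathfrak{b}$ has negative components. Since all three sets in (\ref{2323}) are Borel, any discrepancy between them is confined to the coordinate hyperplanes $\bigcup_\nu\{x_\nu=0\}$, which has measure zero and is therefore irrelevant for the integration statement used in the rest of the paper.
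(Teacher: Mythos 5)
Your argument is correct and follows essentially the same route as the paper: rewrite $|x^{\mathfrak{b}}| = 2^{-\langle j,\mathfrak{b}\rangle}$ on $(\mathbb{R}\setminus\{0\})^d$, deduce $D_B\subset D_{\mathrm{cone}(B)}$ by linearity of $\mathfrak{b}\mapsto\langle j,\mathfrak{b}\rangle$, identify $D_{\mathrm{cone}(B)}\cap(\mathbb{R}\setminus\{0\})^d$ with $\mathbf{2}^{-\mathrm{cone}^{\vee}(B)}$, and discard the measure-zero coordinate hyperplanes. The only cosmetic difference is that the paper illustrates the first inclusion with $B=\{\mathfrak{b}_1,\mathfrak{b}_2\}$ before invoking the general case, whereas you write it directly for an arbitrary conic combination.
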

\begin{proof}
To show $D_B= D_{\rm{cone}(B)}$, it suffices to claim $D_B\subset D_{\rm{cone}(B)} $ for $B=\{\mathfrak{b}_1,\mathfrak{b}_2\}$. If  $x\in D_B$, then  $|x^{\mathfrak{b}_1}|, |x^{\mathfrak{b}_2}|\le 1$. Thus $|x^{\alpha_1\mathfrak{b}_1 +\alpha_2\mathfrak{b}_2}| =|x^{\mathfrak{b}_1}|^{\alpha_1}|x^{\mathfrak{b}_2}|^{\alpha_2}\le 1$   for all $\alpha_1,\alpha_2\ge 0$. So $x\in D_{\rm{cone}(B)}$. Next, 
set $ D'_{\rm{cone}(B)}:=  \{x\in   (\mathbb{R}\setminus\{0\})^d:|x^{\mathfrak{b}}|\le 1\ \text{for all}\ \mathfrak{b}\in \rm{cone}\left(B\right) \} = \{\pm {\bf 2}^{-\mathfrak{q}}:2^{-\langle  \mathfrak{b},\mathfrak{q}\rangle  }\le 1\ \text{for all}\ \mathfrak{b}\in \rm{cone}\left(B\right)\}  $    which is
 $ 
   \{\pm {\bf 2}^{-\mathfrak{q}}:\langle \mathfrak{b},\mathfrak{q}\rangle\ge 0\ \text{for all}\ \mathfrak{b}\in \rm{cone}\left(B\right)\} = \{\pm {\bf 2}^{-\mathfrak{q}}:\mathfrak{q}\in \rm{cone}\left(B\right)^{\vee}\} =  {\bf 2}^{-\rm{cone}^{\vee}(B)}$.  Hence, the second part of (\ref{2323}) holds as  $ D_{\rm{cone}(B)} \setminus D_{\rm{cone}(B)}'\subset \bigcup_{\nu}\{x: x_\nu=0\}$.
\end{proof}

\subsection{Cutoff functions on $D_B$ and Strongly Convexity of $\rm{cone}(B)$}
 \begin{definition}[Amplitude  on $D_B$] \label{topdef}
   Let $\psi\in C^\infty([-1,1])$  such that $\psi \equiv 1$ on $[-1/2,1/2]$ and let $ D_{B,R}:=D_{B}\cap [-R,R]^d$ for  $R>0$. 
   Then  put
\begin{align*} 
\psi_{D_B}(x):= \prod_{\mathfrak{b}\in B}\psi\left(x^{\mathfrak{b}}\right)\ \  \text{and}\ \  \psi_{D_{B,R}}(x):= \psi_{D_B}(x) \psi\left( \frac{x_1}{R}  \right) \cdots \psi\left( \frac{x_d}{R}  \right) 
\end{align*} 
satisfying $\text{supp}(\psi_{D_B})\subset D_B$ and $\text{supp}(\psi_{D_{B,R}})\subset D_{B,R}$. More generally,  
denote by $\mathcal{A}(D_B)$    the set of   smooth cutoff functions  $\Psi$  supported  in $ D_B $, 
 satisfying  a   zero  symbol condition:
 \begin{eqnarray}\label{29a}
 \sup_{x\in   D_B}\left|x^{\alpha}\partial^{\alpha}_{x} \Psi(x) \right|\,\le C_\alpha\ \ \text{for}\  \alpha\in\mathbb{Z}_+^d.
  \end{eqnarray}
For example, $ \psi_{D_B},\psi_{D_{B,R}}$ defined above belong  to the class $ \mathcal{A}(D_B)$. 
  \end{definition}
By using $\psi_{D_B},\psi_{D_{B,R}}$,  we  express  the sub-level set measure $ |\{x\in D_{B}:|\lambda P(x)|\le 1\}|$ and  the oscillatory integral $\int e^{i\lambda  P(x)}\psi_{D_{B}}(x)dx$     as  the  limits:
\begin{align}\label{jmq31}
  \lim_{R\rightarrow\infty} \int \psi(\lambda P(x))\psi_{D_{B,R}}(x)dx, \ \text{and}\ 
  \lim_{R\rightarrow\infty}\int e^{i\lambda  P(x)}\psi_{D_{B,R}}(x)dx 
   \end{align}
respectively. 
   Into these integrals,  insert  the dyadic decomposition   $\sum_{j\in \mathbb{Z}^d} \chi\left(\frac{x}{{\bf 2}^{-j}}\right)\equiv 1$ with $\chi\left(\frac{x}{{\bf 2}^{-j}}\right): =\prod_{\nu=1}^d \chi \left(\frac{x_\nu}{2^{-j_\nu}}\right) $.  Then  the convergence  of the limit  in $R$   follows from the boundedness for the   sum over $j$ of  the absolute values below:
\begin{align*}  
 \sum_{j\in \mathbb{Z}^d}  \int \psi(\lambda P( x)) \psi_{D_{B}}( x)\chi\left(\frac{x}{{\bf 2}^{-j}}\right) dx\ \text{and}\ \sum_{j\in \mathbb{Z}^d}   \int e^{i \lambda P(x)}\psi_{D_B}(x)\chi\left(\frac{x}{{\bf 2}^{-j}}\right) dx.
\end{align*}
Write the two summands as $\mathcal{I}^{\rm{sub}}_{j}(\lambda)$ and $\mathcal{I}^{\rm{osc}}_{j}(\lambda)$ respectively, and observe    $ {\bf 2}^{-j}\sim x$  for  $x\in D_B={\bf 2}^{-\rm{cone}^{\vee}(B)}$   in  (\ref{2323}). Henceforth,  we  shall rewrite    (\ref{jmq31})  as
 \begin{align} \label{25}
 \mathcal{I}^{\rm{sub}}(P,D_B,\lambda) = \sum_{j\in    \rm{cone}^{\vee}(B) \cap \mathbb{Z}^d }  \mathcal{I}^{\rm{sub}}_{j}(\lambda),\\
\mathcal{I}^{\rm{osc}}(P,D_B,\lambda) = \sum_{j\in    \rm{cone}^{\vee}(B) \cap \mathbb{Z}^d }  \mathcal{I}^{\rm{osc}}_{j}(\lambda).\nonumber
\end{align}

 \begin{definition}\label{strong1}
Note  that $\rm{cone}(B)$ is said to be {\bf strongly convex} if   $\rm{cone}(B)\cap (-\rm{cone}(B))=\{{\bf 0}\}$.   
If $\rm{cone}(B)$ is not strongly convex, then there exist nonzero $\mathfrak{r}$ and $-\mathfrak{r}$ contained in $\rm{cone}(B)$. This implies that$$D_B =\{x:   |x^{\mathfrak{b}}|\le 1\ \text{for all $\mathfrak{b}\in B$}\} \subset \{x:  |x^{\mathfrak{r}}|\le 1\ \text{and}\  |x^{-\mathfrak{r}}|\le 1\}= \{x:  |x^{\mathfrak{r}}|=1\},$$ whose  measure is zero,  so that (\ref{jmq31}) and (\ref{25}) vanish.  Hence, we shall state our main main theorems in Section \ref{Sec42},  assuming   that   $\rm{cone}(B)$ is strongly convex. But, we shall define a   class of domains generalizing $D_B$ and  remove the strong convexity of $\rm{cone}(B)$ in  Section \ref{Sec43}, which enables us to treat a class of Laurent polynomials.
 \end{definition}

 \subsection{Newton Polyhedra} \label{Sec2.4}   Viewing $\mathbb{R}_+^2$ as  $ \rm{cone}({\bf e}_1,{\bf e}_2) $ in the original definition  ${\bf N}(P):={\bf conv}(\Lambda(P)+\mathbb{R}_+^2)$ in  the local region $ D_{\{{\bf e}_1,{\bf e}_2\}}$, we   extend the notion of the Newton polyhedron   to all pairs   of  polynomial $P$ and domain $D_B$ (with $B\subset \mathbb{Q}^d$).
\begin{definition} [Newton Polyhedron and Balancing Condition]\label{ded28}
Recall $\Lambda(P)$ the exponent set of $P(x)$ with $x\in\mathbb{R}^d$.
We define the Newton polyhedron  for $(P,D_B)$:
 \begin{align} \label{nph}
  {\bf N}(P,D_B)&: ={\bf conv}\left(   \Lambda(P) + \rm{cone}(B)  \right).
\end{align}
We say that ${\bf N}(P,D_B)\subset \mathbb{R}^d$ is {\bf balanced} if $\rm{cone}(  B\cup \Lambda(P)\cup\{-{\bf 1}\})= \mathbb{R}^d$ and  unbalanced  if $ \rm{cone}(  B\cup \Lambda(P)\cup\{-{\bf 1}\})\ne \mathbb{R}^d$.   See Figure \ref{osc10} for $B=\{{\bf 0}\}$.   
  \end{definition} 
 \begin{exam}\label{ded281}
Given a polynomial $P$, the regions $ \mathbb{R}^3=D_{\{{\bf 0}\}}$,   $[-1,1]^3=D_{\{{\bf e}_1,{\bf e}_2,{\bf e}_3\}}$ and $ (\mathbb{R}\setminus(-1,1))^3 =D_{\{-{\bf e}_1,-{\bf e}_2,-{\bf e}_3\}}$  in (\ref{ded281}), have the following Newton polyhedra:
\begin{itemize} 
\item[(1)]  ${\bf N}(P,D_{\{{\bf 0}\}}
   ) ={\bf conv}[\Lambda(P)]$: Convex hull of  $\Lambda(P)$,
 \item[(2)]
${\bf N}(P,D_{\{{\bf e}_1,{\bf e}_2,{\bf e}_3\}} ) ={\bf conv}\left[\Lambda(P)+
\rm{cone}({\bf e}_1,{\bf e}_2,{\bf e}_3)
\right]$:   Originally defined ${\bf N}(P)$.
\item[(3)]
${\bf N}(P,D_{\{-{\bf e}_1,-{\bf e}_2,-{\bf e}_3\}} ) ={\bf conv}\left[\Lambda(P)+
\rm{cone}(-{\bf e}_1,-{\bf e}_2,-{\bf e}_3)
\right] $.
  \end{itemize}  
  
\end{exam} 
 \begin{exam}\label{exx1}
Observe that    ${\bf N}(x_1^2+x_2^2,D_{\{{\bf 0}\}})= \overline{(2,0),(0,2)}$ are balanced since $\rm{cone}(B\cup \Lambda(P)\cup\{-{\bf 1}\})=\rm{cone}(\{(2,0),(0,2),-{\bf 1}\})=\mathbb{R}^2$. But   ${\bf N}(x_2^2,D_{\{{\bf 0}\}}) =\{(0,2)\}$ are unblanced as $\rm{cone}(  \{(0,0),(0,2),-{\bf 1}\})\ne \mathbb{R}^2$, and ${\bf N}(x_1^2x_2^2+x_1^4x_2^4,D_{\{{\bf 0}\}})=\overline{(2,2),(4,4)}$  are unbalanced since     $\rm{cone}(\{  (0,0),(2,2),(4,4), -{\bf 1}\}) \ne\mathbb{R}^2$.  Note that ${\bf N}(P,D_{\{{\bf e}_1,{\bf e}_2\}})\subset\mathbb{R}^2$ is balanced for any polynomial $P$ since $\rm{cone}({\bf e}_1,{\bf e}_2,-{\bf 1})=\mathbb{R}^2$.
  \end{exam}
 
\section{Statements of Main Theorems}\label{Sec4}
  \subsection{Normal Crossing Condition of $(P,D)$} 
    \begin{definition}\label{de41}
Given a polynomial $P(x)=\sum_{\mathfrak{m}\in \Lambda(P)}c_{\mathfrak{m}}x^{\mathfrak{m}}$  and a Borel set   $D\subset\mathbb{R}^d $,  call $(P,D) $   {\bf  normal-crossing of type}  $[\sigma,\tau]$  if $\tau\in \mathbb{Z}_+$ is the minimal number: 
\begin{align}\label{sv3}   
\sum_{\sigma\le |\alpha|\le \tau} |x^{\alpha}\partial_{x}^{\alpha}P(x)|\ge c\sum_{\mathfrak{m}\in \Lambda(P)}| x^{\mathfrak{m}}|\ \text{for all}\ x\in D\cap (\mathbb{R}\setminus\{0\})^d
\end{align} 
where $c>0$,  independent of $x$, can  depend  on  $(P,D)$.  Given $\tau\ge 1$,  type $[1,\tau]$  implies type $[0,\tau]$.
Denote the number $\tau$ above by $\tau_{\sigma}(P,D)$ or $\tau(P,D)$ for simplicity.
\end{definition}
  
\begin{exam}\label{ex41}
Let $P (x)=\sum_{\mu=1}^dc_\mu x^{\mathfrak{m}_\mu} $ with all $c_\mu\ne 0$. From $x^{\mathfrak{m}_\mu}=\prod_{\nu=1}^dx_\nu^{m_{\mu,\nu}}$, it follows that $x_\nu\partial_{x_\nu}P(x)=\sum_{\mu=1}^dc_\mu m_{\mu,\nu}x^{\mathfrak{m}_\mu}$.
Regarding $ c_{\mu}\mathfrak{m}_\mu$ as column vectors,   
  $$\big[(x_\nu\partial_{x_\nu}P(x))_{\nu=1}^d\big]^T=\sum_{\mu=1}^d  c_\mu  x^{\mathfrak{m}_\mu}\mathfrak{m}_\mu   =\big(c_1\mathfrak{m}_1,c_2\mathfrak{m}_2,\cdots,c_d\mathfrak{m}_d\big)(x^{\mathfrak{m}_1},\cdots,x^{\mathfrak{m}_d})^T$$ where $\big(c_1\mathfrak{m}_1,c_2\mathfrak{m}_2,\cdots,c_d\mathfrak{m}_d\big)$ is the $d\times d$  matrix. Hence $\tau_1(P,\mathbb{R}^d)=1$   if and only if   $\text{rank}\,(\mathfrak{m}_1,\cdots,\mathfrak{m}_d)=d$.   So $\tau_1(x_1^3-x_1x_2^2,\mathbb{R}^2)=\tau_1(x_1x_2+x_2x_3+x_3x_1,\mathbb{R}^3) =1$.  
   \end{exam}

\subsection{Main Results}  \label{Sec42}
 \begin{definition}\label{defi90}
 Let $\mathbb{P}\cap \rm{cone}({\bf 1})\ne \emptyset$ for  $\mathbb{P}={\bf N}(P,D_B)$, then there are   $\delta_{\rm{for}},\delta_{\rm{bac}}\ge 0$:
\begin{align}\label{jq514}
    \mathbb{P}\cap \rm{cone}({\bf 1})=[\delta_{\rm{for}},\delta_{\rm{bac}}] {\bf 1}.
\end{align}
Call   the face   $\mathbb{F} $ of $\mathbb{P}$,   of the minimal dimension, containing $\delta_{\rm{for}}{\bf 1}$ ($\delta_{\rm{bac}}{\bf 1}$),
{\bf  the main forward  (backward)  face}. Denote the face   by  $ \mathbb{F}^{\rm{main}}_{\rm{for}}$ ($\mathbb{F}^{\rm{main}}_{\rm{bac}}$), and  its dimension 
  by $ k_{\rm{for}}=  \text{dim}(\mathbb{F}_{\rm{for}}^{\rm{main}})\  (k_{\rm{bac}}=\text{dim}(\mathbb{F}_{\rm{bac}}^{\rm{main}}))$ respectively.  
  See Figures \ref{osc10} and \ref{forba89}. 
  \end{definition}
 
\begin{main}[Sublevel-Set]\label{mainth13}   \noindent
Let $P(x) $ be a   polynomial in $\mathbb{R}^d$. Suppose that $\rm{cone}(B)$  is  strongly convex  in Definition \ref{strong1} and  $0\in P(D_B)=\{P(x):x\in D_B\}$.
\begin{itemize}
\item[(A)] Suppose that ${\bf N}(P,D_B)$ is balanced.   If
  $  (P,D_B)$  is  normal-crossing of  type  $[0,\tau]$ for $ \tau<\delta_{\rm{for}}$, then it holds that 
\begin{align*} 
 |\{x\in D_{B}:|\lambda P(x)|\le 1\}|\approx \begin{cases} \lambda^{-1/\delta_{\rm{for}}  }  (|\log \lambda|+1)^{d-1-k_{\rm{for}}} \ \text{if}\  \lambda \in [1,\infty),\\
 \lambda^{-1/\delta_{\rm{bac}}  } (|\log\lambda|+1)^{d-1-k_{\rm{bac}}} \ \text{if}\   \lambda\in (0,1)  \end{cases}
\end{align*}
where the constants involving $\approx $ depend on $(P,D_B)$, independent of $\lambda$.
 \item[(B)] Suppose that ${\bf N}(P,D_B)$ is unbalanced. Then there exists $c>0$ such that
  $$ |\{x\in D_{B}:|\lambda P(x)|\le 1\}| =\infty\ \text{ for  all $ \lambda\in (0,c)$ }$$ 
where  the   range $(0,c)$ is $(0,\infty)$ if $D_B $ contains a neighborhood of the origin.  \end{itemize}
  \end{main} 
  
 \begin{remark}\label{rm42}
  We assume   $0\in P(D_B)$ for generalizing the condition $P({\bf 0})=0$.
\end{remark}

 \begin{corollary}[Powers,  Integrability]\label{mainth14}
\noindent  Let $(P,D_B) $ be   in Main Theorem \ref{mainth13}.
  \begin{itemize}
\item[(A)] Under the hypothesis of (A) of Main Theorem \ref{mainth13},
   $$  \left\{\rho\in (0,\infty):\int_{D_{B}} |P(x)|^{-\rho} dx<\infty\right\} =(1/\delta_{\rm{bac}},1/\delta_{\rm{for}}).$$ 
  \item[(B)]  
Under the hypothesis of (B) of Main Theorem \ref{mainth13},
$$\left\{\rho\in (0,\infty):\int_{D_{B}} |P(x)|^{-\rho} dx<\infty\right\} =\emptyset.$$
 \end{itemize}
  \end{corollary}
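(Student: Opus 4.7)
The plan is to reduce everything to Main Theorem \ref{mainth13} via the layer cake (distribution function) representation. Since $P$ is a non-trivial polynomial, its zero set has Lebesgue measure zero, so one may write
\begin{align*}
\int_{D_B} |P(x)|^{-\rho} dx = \rho \int_0^\infty \lambda^{\rho-1} \left|\{x \in D_B : |\lambda P(x)| \le 1\}\right| d\lambda
\end{align*}
by Fubini (after the substitution $\lambda = 1/s$ inside the standard formula $\int |P|^{-\rho} = \rho\int s^{\rho-1} |\{|P|^{-1} > s\}| ds$). This converts the $\rho$-integrability question into a question about the asymptotic behavior of the sublevel set measure at both $\lambda \to 0^+$ and $\lambda \to \infty$.

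For part (A), split the $\lambda$-integral into $\int_0^1$ and $\int_1^\infty$ and plug in the two-sided bounds from Main Theorem \ref{mainth13}(A). The tail near $\infty$ behaves like $\int_1^\infty \lambda^{\rho-1-1/\delta_{\rm for}}(|\log\lambda|+1)^{d-1-k_{\rm for}} d\lambda$, which converges precisely when $\rho < 1/\delta_{\rm for}$; and the piece near $0$ behaves like $\int_0^1 \lambda^{\rho-1-1/\delta_{\rm bac}}(|\log\lambda|+1)^{d-1-k_{\rm bac}} d\lambda$, which converges precisely when $\rho > 1/\delta_{\rm bac}$. The compact piece near $\lambda=1$ is harmless. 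Taking both conditions together gives exactly $(1/\delta_{\rm bac}, 1/\delta_{\rm for})$.

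For part (B), Main Theorem \ref{mainth13}(B) gives a constant $c > 0$ with $|\{x \in D_B : |\lambda P(x)| \le 1\}| = \infty$ for all $\lambda \in (0,c)$. Thus the $\lambda$-integrand is identically $+\infty$ on $(0,c)$ for every $\rho > 0$, forcing $\int_{D_B} |P(x)|^{-\rho} dx = \infty$. Equivalently, if one prefers to avoid an infinite integrand, one may observe directly that $|\{0 < |P(x)| \le 1/\lambda\}| = \infty$ implies $\int_{D_B} |P|^{-\rho} dx \ge (1/\lambda)^{-\rho} \cdot \infty = \infty$.

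The only subtle point is verifying that the endpoints $\rho = 1/\delta_{\rm for}$ and $\rho = 1/\delta_{\rm bac}$ are genuinely excluded. At these values the power of $\lambda$ becomes $-1$ and the remaining logarithmic factor $(|\log \lambda|+1)^{d-1-k}$ has nonnegative exponent since $\delta_{\rm for}{\bf 1}, \delta_{\rm bac}{\bf 1}$ lie on the boundary of $\mathbf{N}(P,D_B)$ (so $k_{\rm for}, k_{\rm bac} \le d-1$); the substitution $u = |\log\lambda|$ then yields a divergent integral $\int_0^\infty (u+1)^{d-1-k} du$. This is the one place where the sharp \emph{lower} bound in the $\approx$ of Main Theorem \ref{mainth13} is essential, rather than just the upper bound used for finiteness on the open interval. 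I expect no further obstacles; the argument is a direct consequence of the two-sided sublevel-set estimate.
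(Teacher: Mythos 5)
Your proposal is correct and takes essentially the same approach as the paper: your layer-cake representation $\rho\int_0^\infty \lambda^{\rho-1}|\{x\in D_B:|\lambda P(x)|\le 1\}|\,d\lambda$ is the continuous analogue of the paper's dyadic decomposition $(1-2^{-\rho})\sum_{k\in\mathbb{Z}}2^{-\rho k}|\{|P|\le 2^k\}|$, and both then invoke the two-sided estimate of Main Theorem~\ref{mainth13}(A) (resp.\ the divergence in (B)) to read off the range of $\rho$. Your remark that the sharp lower bound from $\approx$ is what rules out the endpoints is likewise implicit in the paper's use of the two-sided bound.
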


  \begin{main}[Oscillatory Integral]\label{mainth15}
 \noindent  Let $P(x) $ be a   polynomial in $\mathbb{R}^d$ and  let    $\rm{cone}(B)$ be   strongly convex  with    $0\in P(D_B)$.
 \begin{itemize}
 \item[(A)] Suppose that  ${\bf N}(P,D_B)$ is  balanced.  If $(P,D_B)$ is normal-crossing of  type  $[1,\tau]$ for $ \tau<\delta_{\rm{for}} $, then it holds that
\begin{align*}  
&\qquad\left|\int e^{i\lambda   P(x)}\psi_{D_{B}}(x)dx\right| 
   \le C\begin{cases}  \lambda^{-1/\delta_{\rm{for}}   } (|\log\lambda|+1)^{d-1-k_{\rm{for}}}\ \text{if}\  \lambda \in [1,\infty),\   \\
 \lambda^{-1/\delta_{\rm{bac}} }  (|\log\lambda|+1)^{d-1-k_{\rm{bac}}} \ \text{if}\  \lambda \in (0,1),
\end{cases}
\end{align*}  
and   there is $c>0$  with $p=d -k_{\rm{for}} $ and $q=d -k_{\rm{bac}}$ such that
\begin{align*} 
\ \ \  \  \limsup_{|\lambda|\rightarrow\infty} \left|\frac{  \int e^{i\lambda   P(x)}\psi_{D_{B}}(x)dx  }{\lambda^{-1/\delta_{\rm{for}}   }(|\log\lambda|+1)^{p-1}}\right|\ge c   \ \text{and}\ 
\limsup_{|\lambda|\rightarrow 0} \left|\frac{ \int e^{i\lambda   P(x)}\psi_{D_{B}}(x)dx }{\lambda^{-1/\delta_{\rm{bac}}   } (|\log\lambda|+1)^{q-1}}\right| \ge c. 
\end{align*} 
 \item[(B)] Suppose   ${\bf N}(P,D_B)$ is unbalanced. Then there is $\Psi_{D_B} \in \mathcal{A}(D_B)$  in (\ref{29a}):
  $$\left|\int_{\mathbb{R}^d} e^{i\lambda  P(x)}\Psi_{D_{B}}(x) dx\right|=\infty\ \text{ for almost every  $\lambda>0 $.} $$
 \end{itemize}
Once $\tau=1$ in the estimate $\lesssim$ of (A),  the restriction $ \tau<\delta_{\rm{for}}$ is not needed.
\end{main} 

 \begin{remark}\label{k66}
For the estimate of $\lesssim$ in (A) of Main Theorem  \ref{mainth15}, one can replace the amplitudes $\psi_{D_{B}}$  with all functions in  $ \mathcal{A}(D_B)$.
 \end{remark}
 \begin{remark}\label{ex43e}
The  unbalanced condition of ${\bf N}(P,D_B)$  in (B) of Main Theorem \ref{mainth15},   does not always imply a   divergence of    $\int e^{i\lambda   P(x)}\psi_{D_{B}}(x)dx$, but guarantees  an existence of an amplitude $\Psi_{D_B}\in \mathcal{A}(D_B)$  such that $|\int e^{i\lambda   P(x)}\Psi_{D_B}(x)dx|=\infty$. For instance,  
despite  the unblanced condition  of   ${\bf N}(x_1x_2,\mathbb{R}^2)=\{(1,1)\}$,    
\begin{align*}
   \lim_{R\rightarrow\infty} \int e^{i\lambda x_1x_2}  \psi(x_1/R)\psi(x_2/R)dx& = \lim_{R\rightarrow\infty} \int \widehat{\psi}(R\lambda x_2) R\psi(x_2/R)dx_2= O(\lambda^{-1})
\end{align*}
while   $ |\{(x_1,x_2)\in\mathbb{R}^2:|x_1x_2\lambda|<1\}| =\infty$  straightforwardly from Main Theorem \ref{mainth13}.
  \end{remark}  
 \begin{exam}
 Let $D=\mathbb{R}^2$ and $P(x_1,x_2)= x_1^4x_2^4(1-x_1^2-x_2^2)^2 $ whose nontrivial zeros are in  the unit circle. Then $\tau_1(P,D) =2<4=\delta_{\rm{for}}$ and $\delta_{\rm{bac}}=6$ with  $k_{\rm{for}}=0$ and $k_{\rm{bac}}=1$.  By applying (A) of Main Theorems \ref{mainth13} and \ref{mainth15},
 \begin{align*} 
  \int_{\mathbb{R}^2} \psi(\lambda P(x)) dx &\approx  \begin{cases} \lambda^{-1/4}  (|\log\lambda|+1)   \ \text{if}\  \lambda \in [1,\infty)\\
  \lambda^{-1/6}    \ \text{if}\  \lambda \in(0,1),
 \end{cases}\\
  \left|\int_{\mathbb{R}^2}e^{i\lambda P(x)} dx \right|&\lesssim  \begin{cases} \lambda^{-1/4}  (|\log\lambda|+1)   \ \text{if}\  \lambda \in [1,\infty)\\
  \lambda^{-1/6}    \ \text{if}\  \lambda \in(0,1).
 \end{cases}
\end{align*}
By Corollary \ref{mainth14}, we have  $ \int_{\mathbb{R}^d} |P(x)|^{-\rho} dx<\infty$ if and only if $\rho\in (1/6,1/4)$. 
 \end{exam}

 \begin{exam}
 Let  $D=\mathbb{R}^2$ and $P(x_1,x_2)= x_1^4x_2^4(x_2-x_1^2)^2 $ whose nontrivial zeros are in a parabola. Then $\tau_1(P,D) =2<16/3=\delta_{\rm{for}}=\delta_{\rm{bac}}$ and  $k_{\rm{for}}=k_{\rm{bac}}=1$. Apply (A) of Main Theorems \ref{mainth13} and \ref{mainth15} together with Corollary \ref{mainth14}  to have  \begin{align*} 
& \int_{\mathbb{R}^2} \psi(\lambda P(x)) dx  \approx  \lambda^{-3/16} \ \text{and}\    |\int_{\mathbb{R}^2} e^{i\lambda P(x)}dx| \lesssim \lambda^{-3/16}  \ \text{for all}\  \lambda \in(0,\infty)
\end{align*}
whereas $ \int_{\mathbb{R}^d} |P(x)|^{-\rho} dx=\infty$ for all $\rho\ge 0$.
 \end{exam} 
If the order of zeros of $P(x)$  
is less than $\delta_{\rm{for}}$, that is, $\tau_0(P,D)<\delta_{\rm{for}}$, then one can apply Main Theorems \ref{mainth13}    to obtain the exact growth indices of the sub-level set measure as well as its convergence.

\subsection{The Case  $ \tau <\delta_{\rm{for}}$  breaks}\label{rk43}
Suppose that $\delta_{\rm{for}}\le  \tau=\tau (P,D_B)<\delta_{\rm{bac}}$.  Then, once  $0<\lambda<1$, the  estimates of   Main Theorems  \ref{mainth13} and \ref{mainth15} still hold. If  $\lambda\ge 1$,  without resolution of singularity,  we   shall obtain at least a non-sharp estimate:
\begin{align} \label{es0}
|\mathcal{I}(P,D_B,\lambda)|\lesssim
 \frac{(|\log \lambda|+1)^{p } }{\lambda^{1/ \tau  }  }  \ \text{if}\  p  = \begin{cases}0 \ \ \ \ \ \ \   \ \text{if  $ \tau >\delta_{\rm{for}}$}\\
 d-k_{\rm{for}}  \ \text{if $ \tau =\delta_{\rm{for}}$.}\end{cases}
 \end{align}
where   $\mathcal{I}(P,D_B,\lambda) $  stands for both $\mathcal{I}^{\rm{sub}}(P,D_B,\lambda) $ and $\mathcal{I}^{\rm{osc}}(P,D_B,\lambda) $ in  (\ref{25}).  
This with (A)   of Main Theorems \ref{mainth13} and  \ref{mainth15},
implies that 
 \begin{align} \label{hd1}
\delta\in (\delta_{\rm{for}},\delta_{\rm{bac}})\cap ( \tau,\delta_{\rm{bac}})   \Rightarrow  |\mathcal{I}(P,D_B,\lambda)|   \le C_{\delta} \lambda^{-\frac{1}{\delta}  }\ \text{for all $\lambda\in (0,\infty)$}
\end{align}
since $  |\lambda|^{-1/\delta_{\rm{for}} }\le  |\lambda|^{-1/\delta_{\rm{bac}} }$ if $\lambda\ge 1$ and  $  |\lambda|^{-1/\delta_{\rm{bac}} }\le  |\lambda|^{-1/\delta_{\rm{for}} }$ if $\lambda\le 1$.   
Next, we consider the worse case $\tau\ge \delta_{\rm{bac}}$.  Then not only, it can break 
 (\ref{es0}), but also diverge. For example,  $|\{x\in \mathbb{R}^2:\lambda |(x_2 -x_1)^2|\le 1\}|=\infty$ for $\delta_{\rm{bac}}=1<2= \tau$ and $|\{x\in \mathbb{R}^2:\lambda |x_2^2-x_1^2|\le 1\}|=\infty$ for $ \delta_{\rm{bac}}=1=\tau$, because  ${\bf N}(x_2^2,D_{(-1,1)}), {\bf N}(x_2(x_2+2x_1),D_{(-1,1)})$ after coordinate changes, are unbalanced. 
 In Section 10, we   split $D_B=\bigcup D_{B_i}$ so as to treat the cases $ \tau (P,D_B)\ge \delta_{\rm{for}}$.

 \section{Polyhedra and  Balancing Conditions}\label{Sec3}  
\subsection{Two Representations of Polyhedra} \label{ex31}
   \begin{definition}[Polyhedron]\label{d12}
Let $V$ be  an inner product space of dimension $d$.  For $\mathfrak{q}\in  V\setminus\{{\bf 0}\}$ and $r\in\mathbb{R}$,  set  a hyperplane  and its upper half-space, 
\begin{align*} 
 \pi_{\mathfrak{q},r}=
\{ {\bf y}\in V:
 \langle\mathfrak{q}, {\bf y} \rangle = r\}\ \text{with}\     
\pi^+_{\mathfrak{q},r} = \{{\bf y}\in V:
  \langle\mathfrak{q}, {\bf y} \rangle \ge r \}.  
  \end{align*}
  Denote its interior  
 $ \{{\bf y}\in  V:
 \langle\mathfrak{q}, {\bf y} \rangle > r \}$ by  $(\pi^+_{\mathfrak{q},r} )^{\circ}. $  
Given    a  finite set  $\Pi(\mathbb{P})=\{\pi_{\mathfrak{q}_i,r_i}\}_{i=1}^M  $ of   hyperplanes,   define   a  {\bf convex polyhedron} $\mathbb{P}$  (convex polytope) as the intersection of the upper half-spaces of the elements in $\Pi(\mathbb{P})$:
\begin{align}\label{jq820}
\mathbb{P}=\bigcap_{\pi_{\mathfrak{q},r}\in\Pi(\mathbb{P})}  \pi_{\mathfrak{q},r}^+. 
\end{align}
  If all $r=0$,  then  $\mathbb{P}$ is called a {\bf convex polyhedral cone}.   If   $\mathbb{P}\cap (-\mathbb{P})=\{{\bf 0}\}$,  then $\mathbb{P}$ is said to be {\bf strongly convex}.    
\end{definition}
\begin{definition}[Supporting Plane]
Let $\mathbb{P}$ be a polyhedron in $V$. We say that a hyperplane $\pi_{\mathfrak{q},r}$ (which needs not belong to $\Pi(\mathbb{P})$)  is a {\bf supporting  plane of $\mathbb{P}$} if 
\begin{align*} 
\text{$\pi_{\mathfrak{q},r}\cap \mathbb{P}\ne \emptyset $ and  $\pi_{\mathfrak{q},r}^{+}\supset\mathbb{P} $.}
\end{align*}
Call   $\pi_{\mathfrak{q},r}^+$ a {\bf supporting-upper-half-space} of $\mathbb{P}$. Let  $\overline{\Pi}(\mathbb{P})$ stand  for the set of all  supporting  planes $\pi_{\mathfrak{q},r}$ of $\mathbb{P}$. Then   the inner normal vectors $\mathfrak{q}$ of the all elements in $\overline{\Pi}(\mathbb{P})$ form a convex polyhedral cone:
\begin{align}\label{jmq27}
\mathbb{P}^{\vee}:=\{\mathfrak{q}\in V:\pi_{\mathfrak{q},r}\in \overline{\Pi}(\mathbb{P})\}. 
\end{align}  
We call $\mathbb{P}^{\vee}$ the {\bf dual cone} of $\mathbb{P}$. In view of (\ref{jq820}), we can observe that   
\begin{align*} 
  \mathbb{P}^{\vee}=\left( \bigcap_{i=1} ^M \pi_{\mathfrak{q}_i,0}^+ \right)^{\vee}=\rm{cone}(\mathfrak{q}_1,\cdots,\mathfrak{q}_M),
  \end{align*}
where the second $\vee$  indicates a set of inner normal vectors of the supporting planes.   
 \end{definition}
We shall show that ${\bf N}(P,D_B)$ can be expressed as $\mathbb{P}$ of (\ref{jq820}) with $\mathbb{P}^{\vee}=\rm{cone}^{\vee}(B)$. The restriction of the bilinear form  $\langle,\rangle:\mathbb{P}\times \mathbb{P}^{\vee}\rightarrow\mathbb{R}$ to  $\Lambda(P)\times \rm{cone}^{\vee}(B)$  enables us to control all  $| x^{\mathfrak{m}}|\sim 2^{-\langle \mathfrak{m},\mathfrak{q}\rangle}$  for $x\sim {\bf 2}^{-\mathfrak{q}}\in {\bf 2}^{-\rm{cone}^{\vee}(B)}=D_B$ wiht $\mathfrak{q}=j$ in (\ref{25}).

 \begin{lemma}  $\mathbb{P}=\bigcap_{\pi_{\mathfrak{q},r}\in\Pi(\mathbb{P})}  \pi_{\mathfrak{q},r}^+$ in  (\ref{jq820}) if and only if
$\mathbb{P}={\bf N}(P,D_B)$ in (\ref{nph}) for some $P,B$. Here   $\mathfrak{q},\Lambda(P),B\in \mathbb{Q}^d$ and $r\in Q$.
\end{lemma}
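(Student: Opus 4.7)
The plan is to invoke the classical Minkowski--Weyl correspondence between H-representations (finite intersections of closed rational half-spaces) and V-representations (Minkowski sums of a rational polytope and a finitely generated rational cone), specialized to the Newton polyhedron setting.

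For the direction ($\Leftarrow$), given $\mathbb{P} = {\bf N}(P,D_B) = {\bf conv}(\Lambda(P) + \rm{cone}(B))$, I would first use the identity ${\bf conv}(A + C) = {\bf conv}(A) + C$ valid for any convex cone $C$ to rewrite $\mathbb{P} = {\bf conv}(\Lambda(P)) + \rm{cone}(B)$, displaying $\mathbb{P}$ as a Minkowski sum of a rational polytope and a finitely generated rational cone. Minkowski's direction of the theorem then produces a finite collection of rational pairs $(\mathfrak{q}_i, r_i)$, read off from the facet-defining inequalities of $\mathbb{P}$, such that $\mathbb{P} = \bigcap_{i=1}^{M} \pi_{\mathfrak{q}_i, r_i}^+$ with $\mathfrak{q}_i \in \mathbb{Q}^d$ and $r_i \in \mathbb{Q}$.

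For ($\Rightarrow$), given $\mathbb{P} = \bigcap_{i=1}^{M} \pi_{\mathfrak{q}_i, r_i}^+$ with rational data, I would apply Weyl's converse to obtain finite rational sets $V, B_0 \subset \mathbb{Q}^d$ with $\mathbb{P} = {\bf conv}(V) + \rm{cone}(B_0)$, where $V$ gathers the vertices of the minimal faces of $\mathbb{P}$ and $B_0$ generates its recession cone. Setting $\Lambda(P) := V$, $B := B_0$, and letting $P(x) := \sum_{\mathfrak{m} \in V} c_{\mathfrak{m}} x^{\mathfrak{m}}$ with arbitrary nonzero coefficients yields $\mathbb{P} = {\bf N}(P, D_B)$ directly from definition~(\ref{nph}).

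The central input is the Minkowski--Weyl theorem itself, which I would cite as a standard result from convex geometry, provable by Fourier--Motzkin elimination that preserves rationality. The main subtlety to navigate will be reconciling the strict convention $\Lambda(P) \subset \mathbb{Z}_+^d$ for ordinary polynomial exponents with the lemma's broader hypothesis $\Lambda(P) \subset \mathbb{Q}^d$: if genuine integer exponents are required, one absorbs the discrepancy into $B$ by, for each vertex $v \in V \setminus \mathbb{Z}_+^d$, choosing $\lambda_v \in \mathbb{Z}_+^d$ and adjoining the shift $v - \lambda_v$ to $B_0$, after verifying that the enlarged $\rm{cone}(B)$ still coincides with the recession cone of $\mathbb{P}$; this step is the anticipated main obstacle.
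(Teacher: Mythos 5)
Your Minkowski--Weyl route is correct and genuinely different from the paper's argument. The paper works more elementarily: for $\Leftarrow$ it first represents $\rm{cone}(B)$ as a finite intersection $\bigcap_{\mathfrak{q}\in U}\pi_{\mathfrak{q},0}^+$ with $U\subset\rm{cone}(B)^{\vee}$ finite, translates to get $\mathfrak{m}+\rm{cone}(B)$, and then interchanges convex hull, union, and intersection to land on an H-representation; for $\Rightarrow$ it proposes an explicit (but questionable, see below) choice of $\Lambda(P)$ and $B$. Your appeal to the classical Minkowski--Weyl correspondence is cleaner and — for $\Rightarrow$ in particular — more reliable: the assignment $\Lambda(P):=V$ (vertex set) and $B:=B_0$ (recession cone generators), combined with the identity ${\bf conv}(\Lambda(P)+\rm{cone}(B))={\bf conv}(\Lambda(P))+\rm{cone}(B)$ that you correctly invoke, gives ${\bf N}(P,D_B)=\mathbb{P}$ directly. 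By contrast, the paper's $\Rightarrow$ step takes $\Lambda(P)$ to be the feet of perpendiculars $r\mathfrak{q}/|\mathfrak{q}|$ from the origin to the defining hyperplanes and $\rm{cone}(B)^{\vee}$ the cone of normals; for a bounded rational polytope (e.g.\ $[1,2]^2$) this $\rm{cone}(B)$ collapses to $\{{\bf 0}\}$ and the convex hull of the feet of perpendiculars is in general a different polytope from $\mathbb{P}$, so the paper's own sketch of $\Rightarrow$ does not literally go through; your Minkowski--Weyl version is the argument one actually wants.

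One remark on your anticipated obstacle: you worry about reconciling $\Lambda(P)\subset\mathbb{Z}_+^d$ with the need for rational vertices and propose absorbing discrepancies into $B$. This is unnecessary here, since the lemma's hypothesis explicitly declares $\Lambda(P)\in\mathbb{Q}^d$, so an arbitrary finite rational $V$ is an admissible exponent set; moreover the proposed fix of adjoining $v-\lambda_v$ to $B_0$ would in general enlarge the recession cone and change the polyhedron, so it would not be a safe patch even if one did need integer exponents. You can simply delete that paragraph.
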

\begin{proof} In   Definition \ref{deir1},   $\mathfrak{b}\in \rm{cone}(B)$  if and only if $\langle \mathfrak{b},  \mathfrak{q}\rangle \ge 0$    for all $\mathfrak{q}\in \rm{cone}^{\vee}(B)$:  $$\rm{cone}(B)=\bigcap_{\mathfrak{q}\in \rm{cone}^{\vee}(B)}\pi^+_{\mathfrak{q},0}=\bigcap_{\mathfrak{q}\in U}  \pi_{\mathfrak{q},0}^+  \ \text{as in  (\ref{jq820}) for $U=\{\mathfrak{q}_i\}_{i=1}^N\subset\rm{cone}\left(B\right)^{\vee}$.} $$
Thus  $
 \mathfrak{m} + \rm{cone}\left( B\right) =\bigcap_{\mathfrak{q}\in U}\pi_{\mathfrak{q},\langle \mathfrak{m},\mathfrak{q}\rangle}^+$ 
with $ \langle\mathfrak{n}(\mathfrak{q}),\mathfrak{q}\rangle:=\min\{ \langle\mathfrak{m},\mathfrak{q}\rangle :\mathfrak{m}\in \Lambda(P)\}$ implies 
\begin{align*} 
  {\bf N}(P,D_B)& ={\bf conv}\left(   \Lambda(P) + \rm{cone}(B)  \right)={\bf conv}\left\{  \bigcap_{\mathfrak{q}\in U}  \pi_{\mathfrak{q},\mathfrak{m}\cdot\mathfrak{q}}^+  : \mathfrak{m}\in \Lambda(P)  \right\}  \nonumber\\
  &= \bigcup_{\mathfrak{m}\in {\bf conv}(\Lambda(P))}  \bigcap_{\mathfrak{q}\in U}  \pi_{\mathfrak{q},\langle\mathfrak{m},\mathfrak{q}\rangle }^+  =  \bigcap_{\mathfrak{q}\in U} \bigcup_{\mathfrak{m}\in \Lambda(P)}   \pi_{\mathfrak{q}, \langle\mathfrak{m},\mathfrak{q}\rangle}^+ =  \bigcap_{\mathfrak{q}\in U} \pi_{\mathfrak{q},\langle \mathfrak{n}(\mathfrak{q}),\mathfrak{q}\rangle}^+
 \end{align*}
showing $\Leftarrow$.
Next $\Rightarrow$ follows from  
$\bigcap_{\pi_{\mathfrak{q},r}\in\Pi(\mathbb{P})}  \pi_{\mathfrak{q},r}^+= {\bf N}(P,D_B)$ where $\Lambda(P)=\{ r\mathfrak{q}/|\mathfrak{q}|:\pi_{\mathfrak{q},r}\in\Pi(\mathbb{P})\}$ and $\rm{cone}(B)^{\vee}=\{\mathfrak{q}:\pi_{\mathfrak{q},r}\in\Pi(\mathbb{P})\}$. 
 \end{proof}


 \subsection{Balancing Condition  of  Supporting Planes}\label{seca4}
  \begin{figure}
 \centerline{\includegraphics[width=13cm,height=9cm]{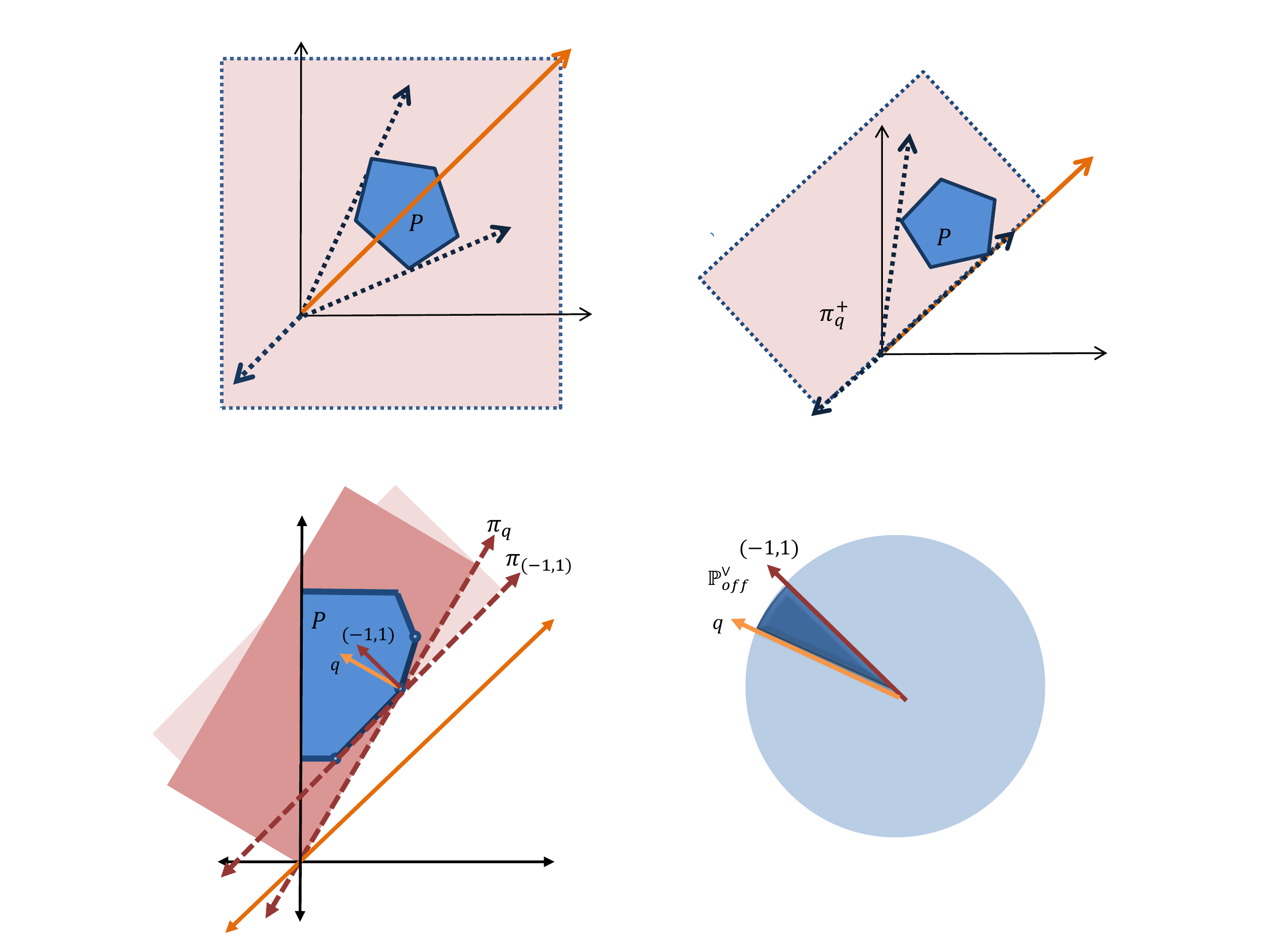}}
 \caption{ The first polyhedron $\mathbb{P}$ is balanced. The second $\mathbb{P}$ is unbalanced where  its supporting  upper space $\pi^+_{\mathfrak{q}}$  is   off-diagonal.  In the  lower  part,   $\mathbb{P}$ is off-diagonal. Set $\mathbb{P}^{\vee}_{\rm{off}}:=\rm{cone}^{\vee}(B\cup \Lambda(P)\cup \{-{\bf 1}\})$  as the   normal vectors of the off-diagonal supporting upper spaces.} \label{osc10}
 \end{figure}    
We state a balancing condition of a polyhedron ${\bf N}(P,D_B)=\bigcap \pi^+_{\mathfrak{q},r}$ in terms of $\pi^+_{\mathfrak{q},r}$. See    Figure \ref{osc10}. 
 \begin{definition}
An upper half space $\pi_{\mathfrak{q},r}^+$ is  {\bf across-diagonal} if   $(\pi_{\mathfrak{q},r}^+)^{\circ}\cap \rm{cone}({\bf 1})\ne\emptyset$, or
  {\bf off-diagonal} if    $(\pi_{\mathfrak{q},r}^+)^{\circ}\cap \rm{cone}({\bf 1})=\emptyset $  as in Figure \ref{osc10}. This implies
\begin{align}\label{5.6}
\text{ $\pi_{\mathfrak{q},r}^+ $ is off-diagonal\  if and only if    $  \langle\mathfrak{q},t{\bf 1}\rangle\le r$  $\forall t\ge 0$. }
\end{align}
 \end{definition}

\begin{lemma}\label{lems01}
From RHS of (\ref{5.6}),   it follows that
\begin{itemize}
\item[(1)] $
\text{  $\pi_{\mathfrak{q},r}^+ $ is off-diagonal if and only if  
  $  \rm{(i)}\  r\ge 0 \ \text{and}   \ \rm{(ii)}\   
  \langle\mathfrak{q},{\bf 1}\rangle \le 0$,  }
$
\item[(2)] $\pi_{\mathfrak{q},r}^+ $ is across-diagonal if and only if  
  $  \rm{(i)}\  r> 0 \ \text{or}   \ \rm{(ii)}\   
  \langle\mathfrak{q},{\bf 1}\rangle > 0$.
\item[(3)]
$\pi^+_{\mathfrak{q},r}$ is  across-diagonal if and only if  $\pi^+_{\mathfrak{q},r}$ as a Newton polyhedron  is balanced. 
\end{itemize}
 \end{lemma}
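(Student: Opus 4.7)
The plan is to derive (1) directly from the characterization in (\ref{5.6}), obtain (2) as its logical complement, and then reduce (3) to (1)--(2) by representing the single half-space as a Newton polyhedron via Lemma~3.1.

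For (1), I would read (\ref{5.6}) as the assertion that $t\langle \mathfrak{q},{\bf 1}\rangle\le r$ for every $t\ge 0$, and then probe the two extreme regimes. Plugging in $t=0$ forces $0\le r$, and sending $t\to\infty$ forces $\langle \mathfrak{q},{\bf 1}\rangle\le 0$, since otherwise the linear function $t\mapsto t\langle \mathfrak{q},{\bf 1}\rangle$ would eventually overshoot any fixed $r$. The converse is immediate: if $r\ge 0$ and $\langle \mathfrak{q},{\bf 1}\rangle\le 0$, then $t\langle \mathfrak{q},{\bf 1}\rangle\le 0\le r$ for all $t\ge 0$. Part (2) then follows at once, since by their definitions off-diagonal and across-diagonal are complementary properties of $\pi^+_{\mathfrak{q},r}$, so (2) is precisely the De~Morgan negation of (1).

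For (3), I would first invoke Lemma~3.1 to realize $\pi^+_{\mathfrak{q},r}$ as a Newton polyhedron ${\bf N}(P,D_B)$: take $B$ to be a set of generators of the recession cone, so that $\rm{cone}(B)=\pi^+_{\mathfrak{q},0}$, and let $\Lambda(P)=\{\mathfrak{m}_0\}$ for a single point $\mathfrak{m}_0\in \pi_{\mathfrak{q},r}$. In this representation, the balancing condition from Definition~2.8 becomes $\rm{cone}\bigl(\pi^+_{\mathfrak{q},0}\cup\{\mathfrak{m}_0,-{\bf 1}\}\bigr)=\mathbb{R}^d$. For the $(\Rightarrow)$ direction I would argue the contrapositive: if $\pi^+_{\mathfrak{q},r}$ is off-diagonal, then (1) gives $r\ge 0$ and $\langle \mathfrak{q},{\bf 1}\rangle\le 0$, which place both $\mathfrak{m}_0$ and $-{\bf 1}$ inside $\pi^+_{\mathfrak{q},0}$; the cone in question then collapses to the closed half-space $\pi^+_{\mathfrak{q},0}$, which is a proper subset of $\mathbb{R}^d$, so the polyhedron is unbalanced. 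For $(\Leftarrow)$ I would split according to which of the two across-diagonal subcases in (2) holds. When $\langle \mathfrak{q},{\bf 1}\rangle>0$, any $\mathbf{v}\in\mathbb{R}^d$ satisfies $\mathbf{v}+t{\bf 1}\in\pi^+_{\mathfrak{q},0}$ for all sufficiently large $t$, so $\mathbf{v}=(\mathbf{v}+t{\bf 1})+t(-{\bf 1})$ realizes $\mathbf{v}$ in $\rm{cone}(\pi^+_{\mathfrak{q},0}\cup\{-{\bf 1}\})$; in the remaining subcase, the point $\mathfrak{m}_0$ lies in the open opposite half-space, and a parallel dilation argument using $\mathfrak{m}_0$ instead of $-{\bf 1}$ fills $\mathbb{R}^d$.

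The combinatorial content of (1)--(2) is essentially tautological once (\ref{5.6}) is in hand, so the main obstacle is in (3): correctly identifying the canonical Newton-polyhedron representation of a single half-space, and then verifying that the two geometrically dual conditions---the diagonal ray entering the open half-space, and $-{\bf 1}$ together with the recession cone and a vertex generating all of $\mathbb{R}^d$---really match case by case. The underlying principle is the standard duality that a closed convex cone equals $\mathbb{R}^d$ iff every direction can be reached by non-negative combinations of its generators, which is what drives the dilation-in-$t$ step.
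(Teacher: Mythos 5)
Your proof is correct and follows essentially the same route as the paper: parts (1)--(2) come from evaluating (\ref{5.6}) at $t=0$ and letting $t\to\infty$, and part (3) is reduced to (1)--(2) by realizing $\pi^+_{\mathfrak{q},r}={\bf N}(P,D_B)$ with $\Lambda(P)$ a singleton on $\pi_{\mathfrak{q},r}$ and $\mathrm{cone}(B)=\pi^+_{\mathfrak{q},0}$, then unwinding Definition~\ref{ded28}, which is exactly what the paper does (it chooses the point $r\mathfrak{q}/|\mathfrak{q}|$ where you take a general $\mathfrak{m}_0\in\pi_{\mathfrak{q},r}$, an immaterial difference). One remark worth making explicit: the De~Morgan negation of (1) gives ``$r<0$ or $\langle\mathfrak{q},{\bf 1}\rangle>0$,'' not the ``$r>0$'' printed in part (2) of the lemma --- for $\mathfrak{q}=-{\bf 1}$ and $r=1$ the printed clause would wrongly declare $\pi^+_{\mathfrak{q},r}$ across-diagonal. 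Your case analysis in (3), placing $\mathfrak{m}_0$ in the open \emph{opposite} half-space in the second subcase, silently uses the corrected $r<0$, and this agrees with the paper's own proof of (3), which also writes ``$r<0$ or $\langle\mathfrak{q},{\bf 1}\rangle>0$.'' So there is no gap in your argument; you have merely repaired, without flagging it, a sign misprint in the statement of part (2).
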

\begin{proof} 
Put $t=0$  and $t\gg 1$ in    (\ref{5.6})  to get (1), that gives (2).  To show (3),  observe 
  $\pi^+_{\mathfrak{q},r}={\bf N}(P, B)$ for
$\Lambda(P)=\{r\mathfrak{q}/|\mathfrak{q}|\}$ and $B=\rm{cone}^{\vee}(\mathfrak{q})$. Then  $\pi^+_{\mathfrak{q},r}$   balanced if and only if
$\rm{cone}(r\mathfrak{q}/|\mathfrak{q}|\cup \rm{cone}^{\vee}(\mathfrak{q})\cup\{-\bf 1\})=\mathbb{R}^d$ if and only if 
$r<0$ or $\langle\mathfrak{q},{\bf 1}\rangle>0$.
 \end{proof}
Recall  the set  $ \overline{\Pi}(\mathbb{P})$ of all supporting planes of $\mathbb{P}$.
\begin{proposition}\label{lem53}
Let $\mathbb{P}={\bf N}(P,D_B)$. Then
all $\pi^+$ of  $\pi\in\overline{\Pi}(\mathbb{P})$   are  across-diagonal (balanced)
 if and only if  $\mathbb{P}$  is  balanced. 
\end{proposition}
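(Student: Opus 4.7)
\emph{Plan of proof.} The approach is to recast both sides of the equivalence in terms of the existence of a common dual vector. By Lemma~\ref{lems01}, asserting that \emph{every} supporting half-space of $\mathbb{P}={\bf N}(P,D_B)$ is across-diagonal is equivalent to asserting that no supporting $\pi_{\mathfrak{q},r}^+$ of $\mathbb{P}$ simultaneously satisfies $r\ge 0$ and $\langle \mathfrak{q},{\bf 1}\rangle\le 0$. On the other hand, by Definition~\ref{ded28}, $\mathbb{P}$ is balanced if and only if $\rm{cone}(B\cup\Lambda(P)\cup\{-{\bf 1}\})=\mathbb{R}^d$. Since this last cone is finitely generated, hence closed, standard cone duality translates its failure to fill $\mathbb{R}^d$ into the existence of a nonzero $\mathfrak{q}\in\mathbb{R}^d$ with $\langle\mathfrak{q},\mathfrak{m}\rangle\ge 0$ for every $\mathfrak{m}\in\Lambda(P)$, $\langle\mathfrak{q},\mathfrak{b}\rangle\ge 0$ for every $\mathfrak{b}\in B$, and $\langle\mathfrak{q},{\bf 1}\rangle\le 0$. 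This common vector is the bridge between the two reformulations.

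For the direction unbalanced $\Rightarrow$ existence of an off-diagonal supporting half-space, I would take such a $\mathfrak{q}$ from duality and set $r:=\min_{\mathfrak{m}\in\Lambda(P)}\langle\mathfrak{q},\mathfrak{m}\rangle\ge 0$, which is attained because $\Lambda(P)$ is finite. Since $\mathfrak{q}\in\rm{cone}^{\vee}(B)$, the functional $\langle\mathfrak{q},\cdot\rangle$ is nonnegative on $\rm{cone}(B)$, and therefore its infimum over $\mathbb{P}={\bf conv}(\Lambda(P)+\rm{cone}(B))$ still equals $r$ and is realized at any minimizing $\mathfrak{m}_0\in\Lambda(P)$. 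Hence $\pi_{\mathfrak{q},r}$ is a genuine supporting plane, and the pair $(\mathfrak{q},r)$ meets the hypothesis of Lemma~\ref{lems01}(1), so $\pi_{\mathfrak{q},r}^+$ is off-diagonal.

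Conversely, given an off-diagonal supporting $\pi_{\mathfrak{q},r}^+\supset\mathbb{P}$, the inclusion $\Lambda(P)\subset\mathbb{P}$ forces $\langle\mathfrak{q},\mathfrak{m}\rangle\ge r\ge 0$ for each $\mathfrak{m}\in\Lambda(P)$, while testing the inequality on rays $\mathfrak{m}+t\mathfrak{b}\in\mathbb{P}$ and letting $t\to\infty$ gives $\langle\mathfrak{q},\mathfrak{b}\rangle\ge 0$ for every $\mathfrak{b}\in\rm{cone}(B)$. Combined with the off-diagonal condition $\langle\mathfrak{q},-{\bf 1}\rangle\ge 0$, the nonzero vector $\mathfrak{q}$ then lies in $\rm{cone}^{\vee}(B\cup\Lambda(P)\cup\{-{\bf 1}\})$, and the duality step above certifies that $\mathbb{P}$ is unbalanced.

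The only mildly delicate point is confirming that the candidate supporting plane of the first direction really meets $\mathbb{P}$, i.e.\ that the linear functional $\langle\mathfrak{q},\cdot\rangle$ attains its infimum on the unbounded set $\mathbb{P}$. This is handled by the observation that $\mathfrak{q}$ is nonnegative on the recession cone $\rm{cone}(B)$ of $\mathbb{P}$, forcing the minimum onto the finite ``vertex data'' $\Lambda(P)$. Beyond this elementary check, no further machinery should be required.
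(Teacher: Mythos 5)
Your argument is correct and follows essentially the same route as the paper's proof: both translate ``off-diagonal supporting half-space exists'' via Lemma~\ref{lems01}(1) and translate ``unbalanced'' via cone duality into the existence of a common nonzero $\mathfrak{q}\in\rm{cone}^{\vee}(B\cup\Lambda(P)\cup\{-{\bf 1}\})$, and the two directions you spell out are exactly the chain of equivalences the paper runs through by contraposition. You are somewhat more careful than the paper in verifying that the minimizer $r=\min_{\mathfrak{m}\in\Lambda(P)}\langle\mathfrak{q},\mathfrak{m}\rangle$ actually yields a supporting plane (i.e.\ that the linear functional attains its infimum on the unbounded $\mathbb{P}$ because $\mathfrak{q}$ is nonnegative on the recession cone $\rm{cone}(B)$), which is a legitimate point the paper leaves implicit.
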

\begin{proof}  We claim its contraposition.  We can see that    there is $\pi_{\mathfrak{q},r}\in \overline{\Pi}({\bf N}(P,D_B)) $ such that  $\pi_{\mathfrak{q},r}$ is off-diagonal, i.e., $  \rm{(i)}\  r\ge 0 \ \text{and}   \ \rm{(ii)}\   
  \langle\mathfrak{q},{\bf 1}\rangle \le 0$ if and only if  there  is a nonzero $\mathfrak{q}\in \rm{cone}^{\vee}(B)$ such that $\langle q,\mathfrak{m}\rangle \ge 0$ for all $\mathfrak{m}\in\Lambda(P)$ and  $\langle\mathfrak{q},-{\bf 1}\rangle \ge 0$ if and only if there is a nonzero $\mathfrak{q}\in \rm{cone}^{\vee}(B\cup \Lambda(P)\cup\{-{\bf 1}\})$ if and only if $\rm{cone}(  B\cup \Lambda(P)\cup\{-{\bf 1}\})\ne \mathbb{R}^d$. 
\end{proof} 
\begin{remark}
 Geometrically,  $\mathbb{P}$ is balanced if and only if  $\rm{cone}(\mathbb{P})$ contains a conical neighborhood $\rm{cone}(\{{\bf 1}+\epsilon {\bf e}_\nu\}_{\nu\in [d]})$ of $\rm{cone}({\bf 1})$.
\end{remark}
 


    \section{Combinatorial Lemmas}\label{Sec5}
  
\subsection{Forward and Backward Orientation}\label{sec6.2} 
Let $\mathbb{P}={\bf N}(P,D_B)$ and consider  the integral    
 $\sum_{\mathfrak{q}\in \rm{cone}^{\vee}(B)\cap \mathbb{Z}^d}\int \psi(\lambda P(x)) \chi\left(\frac{x}{{\bf 2}^{-\mathfrak{q}}}\right)\psi_{D_B}(x)dx $  with $\mathfrak{q}=j$ in (\ref{25}).
    As $|\mathfrak{q}|\rightarrow \infty$,
the volume $|\{x:x\sim {\bf 2}^{-\mathfrak{q}}\} |\approx 2^{-  \langle\mathfrak{q},{\bf 1}\rangle}$ of its support  is to vanish or to blow up    as $ \langle\mathfrak{q},{\bf 1}\rangle>0$ or $ \langle\mathfrak{q},{\bf 1}\rangle<0$.  This observation leads us to  bisect  $  \rm{cone}^{\vee}(B)=\mathbb{P}^{\vee} $  according to  the  signs of  $   \langle\mathfrak{q},{\bf 1}\rangle  $ and split the domain $D_B=2^{-\rm{cone}^{\vee}(B)} $ of  the integral.

\begin{figure}
 \centerline{\includegraphics[width=13cm,height=9cm]{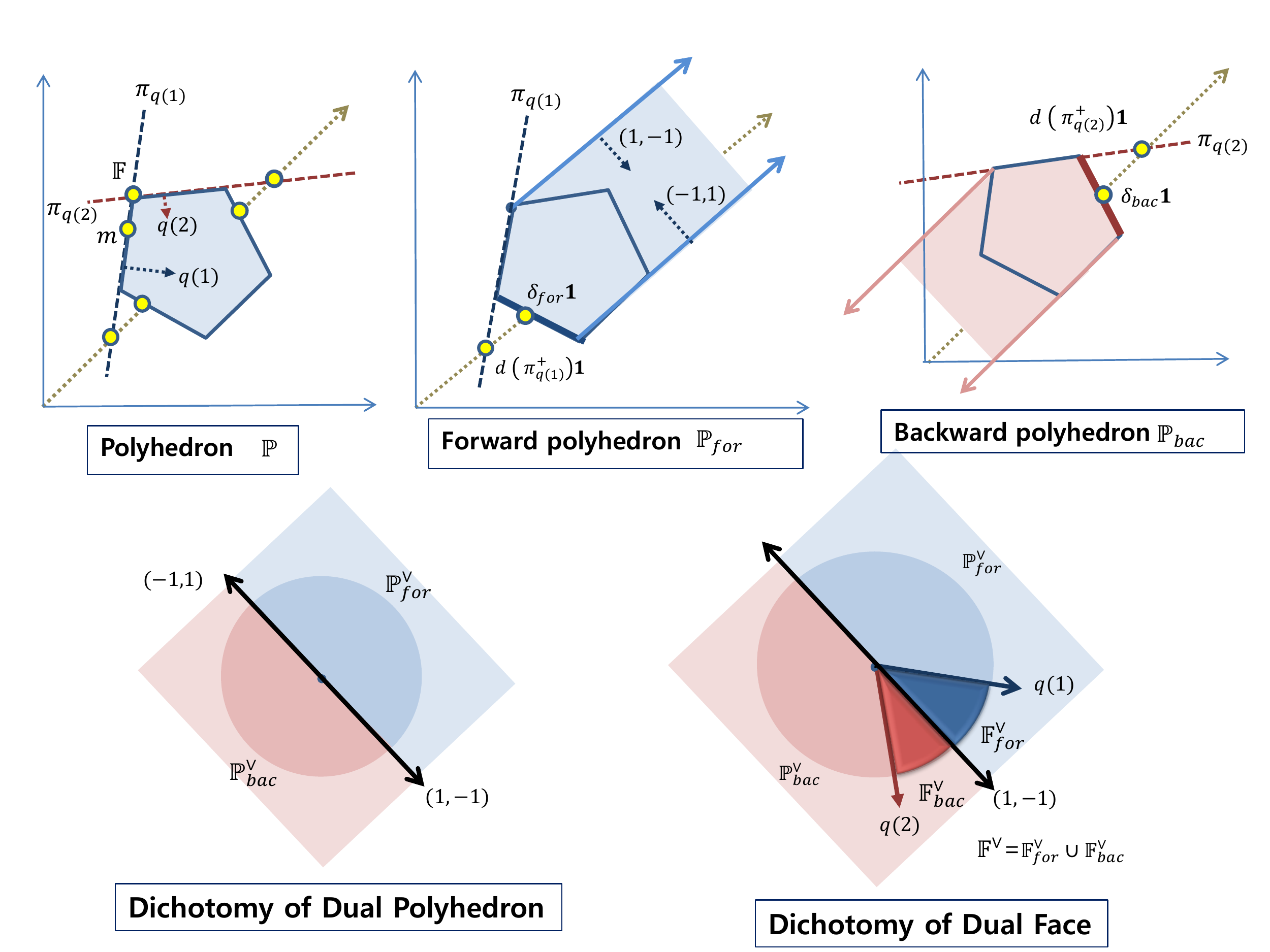}}
\caption{The pentagon $ \mathbb{P} $ has its forward and backward polyhedra   satisfying  $ \mathbb{P} =\mathbb{P}_{\rm{for}}\cap \mathbb{P}_{\rm{bac}}$  and  $\mathbb{P}^{\vee}_{\rm{for}}\cup \mathbb{P}^{\vee}_{\rm{back}}=\mathbb{P}^{\vee}=\mathbb{R}^2$. The vertex $\mathbb{F}$   has the dual face
 $\mathbb{F}^{\vee}$ splitting $ \mathbb{F}_{\rm{for}}^{\vee}\cup \mathbb{F}_{\rm{bac}}^{\vee}$ with $\mathbb{F}_{\rm{for}}^{\vee}=\rm{cone}(\mathfrak{q}(1),((1,-1))$ and $\mathbb{F}_{\rm{bac}}^{\vee}=\rm{cone}(\mathfrak{q}(2),((1,-1))$ in the last part.  } 
 \label{forba89}
 \end{figure}

\begin{definition} [Forward and Backward Orientation of Polyhedron]\label{ob52}
Let $\pi_{\mathfrak{q},r}\in \overline{\Pi}(\mathbb{P})$.  We call $\mathfrak{q},\pi_{\mathfrak{q},r}$ and $\pi_{\mathfrak{q},r}^+$  {\bf forward}
   if $ \langle\mathfrak{q},{\bf 1}\rangle\ge 0$, and {\bf backward}  if $ \langle\mathfrak{q},{\bf 1}\rangle\le 0$.
 We   split the dual cone $\mathbb{P}^{\vee}$ of $\mathbb{P}$ in (\ref{jmq27}) into the two sectors as $\mathbb{P}^{\vee}=\mathbb{P}^{\vee} _{\rm{for}} \cup \mathbb{P}^{\vee} _{\rm{bac}}  $ where
\begin{align*} 
\mathbb{P}^{\vee} _{\rm{for}} &:=  \mathbb{P}^{\vee} \cap \rm{cone}^{\vee}({\bf 1})\ \text{and}\ 
\mathbb{P}^{\vee} _{\rm{bac}}  := \mathbb{P}^{\vee} \cap \rm{cone}^{\vee}(-{\bf 1}).
\end{align*}
 Given $\mathbb{P}=\bigcap_{\mathfrak{q}\in \mathbb{P}^{\vee}} \pi_{\mathfrak{q},r}^+ $,  we can define a forward and a backward polyhedron of $\mathbb{P}$ as
\begin{align}\label{jmq6}
\mathbb{P}_{\rm{for}}:=\bigcap_{ \mathfrak{q}\in \mathbb{P}^{\vee}_{\rm{for}} } \pi_{\mathfrak{q},r}^+ \ \text{and}\ \mathbb{P}_{\rm{bac}}:= \bigcap_{ \mathfrak{q}\in \mathbb{P}^{\vee}_{\rm{bac}} } \pi_{\mathfrak{q},r}^+. 
\end{align}
 Theses are illustrated in   Figure \ref{forba89}.  When computing  $\mathcal{I}(P,D_B,\lambda)$ with $\mathbb{P}={\bf N}(P,D_B)$, it turns out that  $\mathbb{P}_{\rm{for}} $ determines the decay rate of  $\lambda\ge 1$, whereas
 $ \mathbb{P}_{\rm{bac}}$  determines not only the decay rate of  $\lambda< 1$,  but also the convergence of the integral.
  \end{definition}

 \subsection{Oriented Distances of Upper Half Spaces} 
\begin{definition}\label{df5.2}[Distance]
Let $\pi_{\mathfrak{q},r}\in \overline{\Pi}(\mathbb{P})$. To each across-diagonal upper half space $\pi_{\mathfrak{q},r}^+$,  from $(\pi_{\mathfrak{q},r}^+)^{\circ}\cap \rm{cone}({\bf 1})\ne \emptyset$,   one can assign the distance $ d (\pi_{\mathfrak{q},r}^+)$    satisfying
\begin{align}\label{jq143}
   (\pi_{\mathfrak{q},r}^+)^{\circ}\cap\mathbb{R} {\bf 1} =\begin{cases}
   (d(\pi_{\mathfrak{q},r}^+),\infty){\bf 1}\ \text{if $\mathfrak{q}$ is forward,}\\
   (-\infty,d(\pi_{\mathfrak{q},r}^+) ){\bf 1} \ \text{if $\mathfrak{q}$ is backward.}     \end{cases}
      \end{align}
In case of confusion, we  write  $d(\pi_{\mathfrak{q},r}^+)$ as $ d_{\rm{for}} (\pi_{\mathfrak{q},r}^+)$ or $  d_{\rm{bac}} (\pi_{\mathfrak{q},r}^+)$       according as  $\mathfrak{q}\cdot {\bf 1}\ge 0$ or $\mathfrak{q}\cdot {\bf 1}\le 0$ respectively.   See the second and third pictures of Figure \ref{forba89}. Once $\pi_{\mathfrak{q},r}^+$  is across-diagonal,  the  distance in (\ref{jq143}) exists from the observation: \begin{itemize}
 \item[(1)]
If $\langle\mathfrak{q},{\bf 1}\rangle\ne 0 $,  $ -\infty<d(\pi_{\mathfrak{q},r}^+)<\infty  $,  since $\pi_{\mathfrak{q},r} \cap\mathbb{R}{\bf 1}=  d(\pi_{\mathfrak{q},r}^+){\bf 1} $ is a singleton. \item[(2)] If $  \langle\mathfrak{q},{\bf 1}\rangle= 0 $,  then $d_{\rm{for}} (\pi^+_{\mathfrak{q},r})=-\infty$ and  $d_{\rm{bac}} (\pi^+_{\mathfrak{q},r})= \infty$ due to $(\pi^+_{\mathfrak{q},r})^{\circ}\supset\mathbb{R}{\bf 1}$.\end{itemize}
If $\mathbb{P}$  is balanced, we  define the distances $d(\mathbb{P}_{\rm{for}})$  and $d(\mathbb{P}_{\rm{bac}})$ by the numbers \footnote{If  $\mathbb{P}_{\rm{for}} \supset \mathbb{R}{\bf 1}$ or $ \mathbb{P}_{\rm{bac}} \supset  \mathbb{R}{\bf 1}$, then  $d(\mathbb{P}_{\rm{for}}):=-\infty$ or $d(\mathbb{P}_{\rm{bac}}):=\infty$ respectively.} \begin{align}\label{jmq9}
 [d(\mathbb{P}_{\rm{for}}),\infty){\bf 1} =\mathbb{P}_{\rm{for}} \cap \mathbb{R}{\bf 1}\ \text{and}\ (-\infty, d(\mathbb{P}_{\rm{bac}})]{\bf 1} =\mathbb{P}_{\rm{bac}} \cap \mathbb{R}{\bf 1}. 
\end{align}
 \end{definition}
 
\begin{lemma}\label{lem10}
Let $\mathbb{P}$  be balanced.  
Then   $  [d(\mathbb{P}_{\rm{for}}),  d(\mathbb{P}_{\rm{bac}})]{\bf 1}=\mathbb{P} \cap \mathbb{R}{\bf 1}\ne \emptyset$ with $ d(\mathbb{P}_{\rm{bac}})>0$.   
If $\pi_{\mathfrak{q}_1,r_1},\pi_{\mathfrak{q}_2,r_2}\in \overline{\Pi}(\mathbb{P})$ with $\mathfrak{q}_1\in \mathbb{P}^{\vee}_{\rm{for}}$ and $\mathfrak{q}_2\in \mathbb{P}^{\vee}_{\rm{bac}}$,  then 
\begin{align}\label{m18}
 d(\pi^+_{\mathfrak{q}_1,r_1})\le   d(\mathbb{P}_{\rm{for}})\le  \delta_{\rm{for}}=\max\{0,d(\mathbb{P}_{\rm{for}})\}\le  \delta_{\rm{bac}}=d(\mathbb{P}_{\rm{bac}}) \le d(\pi^+_{\mathfrak{q}_2,r_2}). 
 \end{align}
Recall that if $\mathbb{P}={\bf N}(P,D_B)$ for a polynomial $P$, then $  \mathbb{P}\, \cap \rm{cone}({\bf 1}) =[\delta_{\rm{for}},\delta_{\rm{bac}}]{\bf 1} $ as in (\ref{jq514}).   See the first three pictures in Figure  \ref{forba89}. 
  \end{lemma}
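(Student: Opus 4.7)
The plan is to bisect everything along the forward/backward decomposition of Definition~\ref{ob52}, $\mathbb{P}^{\vee}=\mathbb{P}^{\vee}_{\rm{for}}\cup\mathbb{P}^{\vee}_{\rm{bac}}$ with the resulting $\mathbb{P}=\mathbb{P}_{\rm{for}}\cap\mathbb{P}_{\rm{bac}}$, and then to slice by the diagonal line $\mathbb{R}{\bf 1}$ one half-space at a time. For any supporting $\pi_{\mathfrak{q},r}^+$ the slice $\pi_{\mathfrak{q},r}^+\cap\mathbb{R}{\bf 1}=\{t{\bf 1}:t\langle\mathfrak{q},{\bf 1}\rangle\ge r\}$ is, by Definition~\ref{df5.2}, the forward half-ray $[d(\pi_{\mathfrak{q},r}^+),\infty){\bf 1}$ when $\mathfrak{q}$ is forward and the backward half-ray $(-\infty,d(\pi_{\mathfrak{q},r}^+)]{\bf 1}$ when $\mathfrak{q}$ is backward, with the null directions $\langle\mathfrak{q},{\bf 1}\rangle=0$ contributing the entire line via the $\pm\infty$ convention there. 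Intersecting over all forward normals converts the supremum of lower endpoints into $d(\mathbb{P}_{\rm{for}})$ exactly as prescribed by (\ref{jmq9}), and intersecting over all backward normals converts the infimum of upper endpoints into $d(\mathbb{P}_{\rm{bac}})$; the two outer inequalities of (\ref{m18}) are then simply that these are a supremum and an infimum respectively, and one reads off $\mathbb{P}\cap\mathbb{R}{\bf 1}=[d(\mathbb{P}_{\rm{for}}),d(\mathbb{P}_{\rm{bac}})]{\bf 1}$.

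The principal obstacle, and the one place where the balanced hypothesis actually enters, is the strict positivity $d(\mathbb{P}_{\rm{bac}})>0$. By Proposition~\ref{lem53}, balanced is equivalent to every supporting half-space being across-diagonal, so Lemma~\ref{lems01} forces any backward supporting $\pi_{\mathfrak{q},r}^+$ with $\langle\mathfrak{q},{\bf 1}\rangle<0$ to satisfy $r<0$ strictly (the alternative $r\ge 0$ together with $\langle\mathfrak{q},{\bf 1}\rangle\le 0$ is precisely off-diagonal), whereas a null backward direction $\langle\mathfrak{q},{\bf 1}\rangle=0$ sits in the open half-space and contributes $d_{\rm{bac}}(\pi_{\mathfrak{q},r}^+)=+\infty$, and so plays no role in the infimum. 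The strict backward case then computes $d(\pi_{\mathfrak{q},r}^+)=r/\langle\mathfrak{q},{\bf 1}\rangle>0$. Because ${\bf N}(P,D_B)$ is cut out by finitely many inequalities, the dual cone $\mathbb{P}^{\vee}$ and hence $\mathbb{P}^{\vee}_{\rm{bac}}=\mathbb{P}^{\vee}\cap\{\langle\cdot,{\bf 1}\rangle\le 0\}$ are finitely generated; thus $\mathbb{P}_{\rm{bac}}$ is a finite intersection of essential backward half-spaces, $d(\mathbb{P}_{\rm{bac}})$ is realized as the minimum over a finite collection of strictly positive values, and $d(\mathbb{P}_{\rm{bac}})>0$ follows.

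To close the chain and pin down the middle equalities, I would cross-reference the identity $\mathbb{P}\cap\mathbb{R}{\bf 1}=[d(\mathbb{P}_{\rm{for}}),d(\mathbb{P}_{\rm{bac}})]{\bf 1}$ from the first step against the given relation $\mathbb{P}\cap\rm{cone}({\bf 1})=[\delta_{\rm{for}},\delta_{\rm{bac}}]{\bf 1}$ of (\ref{jq514}). Since $\rm{cone}({\bf 1})=[0,\infty){\bf 1}$ and $d(\mathbb{P}_{\rm{bac}})>0$ by the preceding paragraph, intersecting yields $[\max\{0,d(\mathbb{P}_{\rm{for}})\},d(\mathbb{P}_{\rm{bac}})]{\bf 1}$; comparing immediately forces $\delta_{\rm{for}}=\max\{0,d(\mathbb{P}_{\rm{for}})\}$ and $\delta_{\rm{bac}}=d(\mathbb{P}_{\rm{bac}})$ together with the nonemptiness $d(\mathbb{P}_{\rm{for}})\le d(\mathbb{P}_{\rm{bac}})$, completing (\ref{m18}).
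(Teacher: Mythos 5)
Your overall plan is the same as the paper's: bisect $\mathbb{P}^\vee$ into forward and backward sectors, slice each resulting half-space by $\mathbb{R}{\bf 1}$, read off the first and last inequalities of (\ref{m18}) from the supremum/infimum representation coming from (\ref{jmq6}), (\ref{jq143}), (\ref{jmq9}), and compare $\mathbb{P}\cap\mathbb{R}{\bf 1}=[d(\mathbb{P}_{\rm{for}}),d(\mathbb{P}_{\rm{bac}})]{\bf 1}$ with (\ref{jq514}) for the middle equalities. Where you genuinely diverge is the positivity step $d(\mathbb{P}_{\rm{bac}})>0$. The paper argues \emph{primally}: the balanced hypothesis gives a point $t^*{\bf 1}\in\mathbb{P}$ with $t^*>0$, and since $\mathbb{P}\subset\mathbb{P}_{\rm{bac}}$ this forces $d(\mathbb{P}_{\rm{bac}})\ge t^*>0$ in one line. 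You argue \emph{dually}: for every backward across-diagonal supporting half-space $\langle\mathfrak{q},{\bf 1}\rangle<0$ forces $r<0$ hence $d(\pi^+_{\mathfrak{q},r})>0$, and you then want to pass to a minimum over finitely many of them.

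That last passage is where your sketch has a genuine gap. Having $d(\pi^+_{\mathfrak{q},r})>0$ for every $\mathfrak{q}\in\mathbb{P}^\vee_{\rm{bac}}$ gives only that $d(\mathbb{P}_{\rm{bac}})=\inf_{\mathfrak{q}}d(\pi^+_{\mathfrak{q},r})$ is an infimum of positive numbers, which a priori could be $0$. To exclude that you assert that since $\mathbb{P}^\vee_{\rm{bac}}$ is finitely generated, $\mathbb{P}_{\rm{bac}}$ is a finite intersection of the corresponding half-spaces. This does \emph{not} follow directly: the supporting value $r_\mathfrak{q}=\min_{y\in\mathbb{P}}\langle\mathfrak{q},y\rangle$ is superadditive, not additive, in $\mathfrak{q}$, so the half-spaces indexed by an arbitrary generating set of $\mathbb{P}^\vee_{\rm{bac}}$ need not cut out $\mathbb{P}_{\rm{bac}}$. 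Concretely, take $\mathbb{P}=\mathbf{conv}\{(0,0),(1,\tfrac12),(\tfrac12,1)\}\subset\mathbb{R}^2$, so $\mathbb{P}^\vee=\mathbb{R}^2$ and $\mathbb{P}^\vee_{\rm{bac}}=\{(a,b):a+b\le 0\}$. The set $\{(-1,0),(1,-1),(-1,1)\}$ generates this half-plane, and the associated supporting half-spaces are $\{y_1\le 1\}$, $\{y_1-y_2\ge-\tfrac12\}$, $\{y_2-y_1\ge-\tfrac12\}$, whose intersection omits the supporting constraint $\{y_1+y_2\le\tfrac32\}$ coming from $\mathfrak{q}=(-1,-1)\in\mathbb{P}^\vee_{\rm{bac}}$; in particular $\min_i d(\pi^+_{\mathfrak{q}_i})=1$ while $d(\mathbb{P}_{\rm{bac}})=\tfrac34$. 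The repair requires a real polyhedral fact — e.g.\ $\mathbb{P}_{\rm{bac}}=\mathbb{P}+(\mathbb{P}^\vee_{\rm{bac}})^\vee$ is a Minkowski sum of polyhedra, hence a polyhedron, whose dual cone is exactly $\mathbb{P}^\vee_{\rm{bac}}$, after which your argument closes; or one must select the generating set to consist of the lineality generators together with the extreme rays. Your sketch glosses over this. The paper's primal route avoids this machinery entirely, which is why, even though your conclusion is correct, the paper's argument for positivity is the cleaner one to adopt.
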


\begin{proof}[Proof of  (\ref{m18})]
Observe  that
 $\mathbb{P}\cap \{t{\bf 1}:t>0\}\ne \emptyset$ as $\mathbb{P}$  is  balanced.  Take $t^*{\bf 1}\in \mathbb{P}\cap \{t{\bf 1}:t>0\}\subset \mathbb{P}\cap \mathbb{R}{\bf 1}$.    
This  with (\ref{jmq9}) and $\mathbb{P}=\mathbb{P}_{\rm{for}}\cap \mathbb{P}_{\rm{bac}}$ yield  $[d(\mathbb{P}_{\rm{for}}),  d(\mathbb{P}_{\rm{bac}})]{\bf 1}=\mathbb{P} \cap \mathbb{R}{\bf 1} \ne\emptyset$ and $ d(\mathbb{P}_{\rm{bac}})>0$.  By this, we have  the middle part $ d(\mathbb{P}_{\rm{for}})\le  \delta_{\rm{for}} \le  \delta_{\rm{bac}}=d(\mathbb{P}_{\rm{bac}})$ in (\ref{m18}).
From (\ref{jmq6}),(\ref{jq143}) and (\ref{jmq9}), it follows that  
\begin{align*}
\begin{cases}
[d(\mathbb{P}_{\rm{for}}),\infty){\bf 1} =\bigcap_{\mathfrak{q}\in \mathbb{P}^{\vee}_{\rm{for}}} \pi_{\mathfrak{q},r}^+\cap \mathbb{R}{\bf 1}=\bigcap_{\mathfrak{q}\in \mathbb{P}^{\vee}_{\rm{for}}} [d(\pi_{\mathfrak{q},r}^+),\infty){\bf 1}, 
\\
(-\infty, d(\mathbb{P}_{\rm{bac}})]{\bf 1} =\bigcap_{\mathfrak{q}\in \mathbb{P}^{\vee}_{\rm{bac}}} \pi_{\mathfrak{q},r}^+\cap \mathbb{R}{\bf 1}=\bigcap_{\mathfrak{q}\in \mathbb{P}^{\vee}_{\rm{bac}}} (-\infty,d(\pi_{\mathfrak{q},r}^+)]{\bf 1}
\end{cases} 
 \end{align*}
which  give the first and last inequalities of (\ref{m18}).   
  \end{proof}

 \begin{lemma}\label{lem34joo}
Let $\pi_{\mathfrak{q},r}$ be an hyperplane containing $\mathfrak{m}\in    \pi_{\mathfrak{q},r}$. If $\langle \mathfrak{q}, {\bf 1}\rangle\ne 0$,  then
 \begin{align}\label{pm12}
\langle \mathfrak{m}, \mathfrak{q}\rangle=  \langle d (\pi^+_{\mathfrak{q},r}){\bf 1}, \mathfrak{q}\rangle.
\end{align}
 \end{lemma}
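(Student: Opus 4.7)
The plan is to observe that both sides of (\ref{pm12}) are equal to $r$, the scalar defining the hyperplane $\pi_{\mathfrak{q},r}$.

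First I would use the definition of the hyperplane: since $\mathfrak{m} \in \pi_{\mathfrak{q},r}$, by Definition \ref{d12} we directly have $\langle \mathfrak{m}, \mathfrak{q}\rangle = r$. This takes care of the left-hand side of (\ref{pm12}).

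For the right-hand side, I would invoke item (1) of the observation following Definition \ref{df5.2}: since the hypothesis $\langle \mathfrak{q}, {\bf 1}\rangle \ne 0$ is in force, the line $\mathbb{R}{\bf 1}$ is transverse to $\pi_{\mathfrak{q},r}$, so the intersection $\pi_{\mathfrak{q},r}\cap \mathbb{R}{\bf 1}$ is a singleton, namely $d(\pi_{\mathfrak{q},r}^+){\bf 1}$. In particular $d(\pi_{\mathfrak{q},r}^+){\bf 1} \in \pi_{\mathfrak{q},r}$, and again by Definition \ref{d12} this yields $\langle d(\pi_{\mathfrak{q},r}^+){\bf 1}, \mathfrak{q}\rangle = r$. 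Equating the two expressions gives (\ref{pm12}).

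Since the argument reduces to reading off the defining equation of $\pi_{\mathfrak{q},r}$ at two of its points, there is essentially no obstacle; the only subtlety is that the distance $d(\pi_{\mathfrak{q},r}^+)$ might a priori be $\pm\infty$ (as in the second bullet of the observation when $\langle \mathfrak{q},{\bf 1}\rangle = 0$), but this degenerate case is explicitly excluded by the hypothesis $\langle \mathfrak{q}, {\bf 1}\rangle \ne 0$, so $d(\pi_{\mathfrak{q},r}^+) = r / \langle \mathfrak{q}, {\bf 1}\rangle$ is a well-defined finite number and the computation goes through unconditionally.
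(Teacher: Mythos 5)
Your proof is correct and is essentially the same as the paper's: the paper notes that both $\mathfrak{m}$ and $d(\pi^+_{\mathfrak{q},r})\mathbf{1}$ lie on the hyperplane $\pi_{\mathfrak{q},r}$ (the latter because the hypothesis $\langle\mathfrak{q},\mathbf{1}\rangle\ne 0$ makes $\pi_{\mathfrak{q},r}\cap\mathbb{R}\mathbf{1}$ a singleton), so their difference is orthogonal to $\mathfrak{q}$, which is exactly your observation that both inner products equal $r$.
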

 \begin{proof}
  As $ d(\pi^+_{\mathfrak{q},r}){\bf 1},\mathfrak{m} \in \pi_{\mathfrak{q},r}$, it holds $(d(\pi^+_{\mathfrak{q},r}){\bf 1}-\mathfrak{m} )\perp \mathfrak{q}$ in the first of Figure \ref{forba89}.   
\end{proof} 

\subsection{Model Estimates}  Take a supporting plane  $\pi_{\mathfrak{q},r}\in \overline{\Pi}(\mathbb{P})$ such that $\langle \mathfrak{q},{\bf 1}\rangle\ne 0$. We apply Lemmas \ref{lems01} and \ref{lem34joo}  for the model sum in    (\ref{25}), given by $$\mathcal{I}^{\rm{sub}}(\lambda, \mathfrak{q} ):=\sum_{\alpha\in \mathbb{Z}_+} \int_{ x \sim  {\bf 2}^{-\alpha\mathfrak{q}}}  \psi(\lambda P(x))dx\ \text{with}\ d(\pi_{\mathfrak{q},r}^+)=\delta.$$
\begin{itemize}
\item[(A)]  Let $\pi_{\mathfrak{q},r}^+$  be  across-diagonal and
  $\tau=\tau_0(P,D_B)$. If  $\tau<  \delta$,  we  show that
  \begin{itemize}
  \item[(A-1)]    $  \int_{ x \sim 1}  \psi(\lambda P(2^{-\alpha \mathfrak{q}}x))dx=O(  |\lambda 2^{-\alpha \mathfrak{q}\cdot \mathfrak{m}}|^{-1/\tau}  )$  for $\mathfrak{m}\in\pi_{\mathfrak{q},r}$,  
  \item[(A-2)]     $ \mathfrak{q}\cdot \mathfrak{m} =  \delta \langle\mathfrak{q},{\bf 1}\rangle $ as in  Lemma \ref{lem34joo}, to obtain that  
  \end{itemize}
\begin{align*}
&\ \ \ \quad  \mathcal{I}^{\rm{sub}}(\lambda, \mathfrak{q} ) \approx \sum_{\alpha\in \mathbb{Z}_+}  \frac{2^{- \alpha\langle\mathfrak{q},{\bf 1}\rangle} }{(1+|\lambda 2^{-\alpha \mathfrak{q}\cdot \mathfrak{m}}|)^{1/\tau}}  =    \sum_{\alpha\in \mathbb{Z}_+} \frac{2^{- \alpha\langle\mathfrak{q},{\bf 1}\rangle} }{(1+|\lambda 2^{-\alpha \delta \langle\mathfrak{q},{\bf 1}\rangle }|)^{1/\tau}}  \lesssim \lambda^{-1/\delta }.
  \end{align*}
  \item[(B)] Let  $\pi_{\mathfrak{q},r}^+$ be off-diagonal for $\mathfrak{q}$ in  Lemma \ref{lems01}. Then  $ \int_{ x \sim  {\bf 2}^{-\alpha\mathfrak{q}}}  \psi(\lambda P(x))dx$ has
\begin{itemize}
 \item[(B-1)]  a  big support   $|\{x:x \sim  {\bf 2}^{-\alpha\mathfrak{q}}\}|\approx 2^{- \alpha\langle\mathfrak{q},{\bf 1}\rangle}\ge 1$    and
  \item[(B-2)]   a small phase   $| P(x)|\lesssim |x^{\mathfrak{m}}|\sim 2^{-\alpha\langle \mathfrak{m},\mathfrak{q}\rangle}= 2^{-\alpha \delta \langle\mathfrak{q},{\bf 1}\rangle }\ll 1 $ for  $\mathfrak{m}\in\pi_{\mathfrak{q},r} $,
  \end{itemize}
\begin{align*}
\text{showing  that}\ \mathcal{I}^{\rm{sub}}(\lambda, \mathfrak{q} )\approx \sum_{j=\alpha\mathfrak{q}} 2^{- \alpha\langle\mathfrak{q},{\bf 1}\rangle}  
  \approx    \infty\ \text{  for all real $\lambda$. }
  \end{align*}
\end{itemize} 

  \section{Oriented-Simplicial-Cone Decomposition}\label{Sec6}  
 \subsection{Basic Dual Face Decomposition}
   \begin{definition}\label{d412}
   [Face and Dual Faces]  
  Let $\mathbb{P}$ be a polyhedron in  $V$.  A subset $\mathbb{F}\subset \mathbb{P}$ is called a {\bf face} of $\mathbb{P}$
(denoted by $\mathbb{F}\preceq \mathbb{P}$) if there is  a supporting plane $ \pi_{ \mathfrak{q},r }\in \overline{\Pi}(\mathbb{P}) $ such that
$
\mathbb{F}= \pi_{ \mathfrak{q},r } \cap \mathbb{P}.  
$     Denote the set of $k$-dimensional faces of $\mathbb{P}$ by $\mathcal{F}^k(\mathbb{P})$ and the set of all faces by  $\mathcal{F}(\mathbb{P})$. 
Define the {\bf dual face} $\mathbb{F}^{\vee}$ of $\mathbb{F}\in \mathcal{F}(\mathbb{P})$ as   the set of all normal vectors $\mathfrak{q}$ of supporting planes $\pi_{\mathfrak{q},r} $ containing $\mathbb{F}$:\begin{align*} 
  \mathbb{F}^{\vee}=\left\{\mathfrak{q}\in\mathbb{P}^{\vee}:   \pi_{\mathfrak{q},r}\cap \mathbb{P}\supset \mathbb{F} \right\}\ \text{with}\ (\mathbb{F}^{\vee})^{\circ}=\left\{\mathfrak{q}\in\mathbb{P}^{\vee}:   \pi_{\mathfrak{q},r}\cap \mathbb{P}= \mathbb{F} \right\}.
  \end{align*}  
   \end{definition}
See $\mathbb{F}$ and $\mathbb{F}^{\vee}$ in the first and last  of Figure   \ref{forba89}. Finally,  call $\mathbb{F},\mathbb{F}^{\vee}$ {\bf oriented}, that is, forward or backward  if $\mathbb{F}^{\vee}\subset \rm{cone}^{\vee}({\bf 1})$ or $\mathbb{F}^{\vee}\subset \rm{cone}^{\vee}(-{\bf 1})$ respectively.

 \begin{proposition}[Dual Face decomposition]
If $\text{rank}(\mathbb{P}^{\vee})=d-k_0$, then,   
   \begin{align}\label{07hh}
 \mathbb{P}^{\vee}=  \bigcup_{\mathbb{F}\in \mathcal{F}(\mathbb{P})}\mathbb{F}^{\vee}= \bigcup_{  \mathbb{F}\in\mathcal{F}^{k_0}(\mathbb{P})}\mathbb{F}^{\vee} \end{align} 
 where $k_0$ is the minimal possible dimension\footnote{Every minimal (under $\subset$)   face  of  $\mathbb{P}$ has dimension $k_0=d-\text{rank}(\mathbb{P}^{\vee})$. }
of  faces in $\mathbb{P}$.  The second part  of (\ref{07hh}) follows from the fact 
 $\mathbb{G}\preceq\mathbb{F} \Rightarrow\mathbb{F}^{\vee}\preceq \mathbb{G}^{\vee}$. See \cite{Ful,O} 
\end{proposition}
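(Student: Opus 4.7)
The plan is to establish the two equalities in (\ref{07hh}) in turn. The first equality $\mathbb{P}^{\vee}=\bigcup_{\mathbb{F}\in\mathcal{F}(\mathbb{P})}\mathbb{F}^{\vee}$ is essentially a tautology from Definitions \ref{d412} and \ref{jmq27}. For the inclusion $\subseteq$, I would take any $\mathfrak{q}\in\mathbb{P}^{\vee}$ and set $r:=\inf\{\langle\mathfrak{q},y\rangle:y\in\mathbb{P}\}$; this infimum is attained because $\mathfrak{q}$ is the normal to \emph{some} supporting plane of $\mathbb{P}$, hence $\pi_{\mathfrak{q},r}\in\overline{\Pi}(\mathbb{P})$ and $\mathbb{F}:=\pi_{\mathfrak{q},r}\cap \mathbb{P}$ is a face with $\mathfrak{q}\in \mathbb{F}^{\vee}$. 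The reverse inclusion follows since each $\mathbb{F}^{\vee}\subseteq\mathbb{P}^{\vee}$ by construction.

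For the second equality, I would first establish that every minimal face has dimension exactly $k_0=d-\text{rank}(\mathbb{P}^{\vee})$. Writing $\mathbb{P}=\bigcap_{i=1}^{N}\pi_{\mathfrak{q}_i,r_i}^{+}$ as in Definition \ref{d12}, a minimal face can be realized as $\mathbb{F}_{\min}=\mathbb{P}\cap\bigcap_{i\in I}\pi_{\mathfrak{q}_i,r_i}$ for a maximal $I\subseteq\{1,\dots,N\}$ with the intersection nonempty. By minimality, no further $\pi_{\mathfrak{q}_j,r_j}$ properly cuts $\mathbb{F}_{\min}$, so $\mathbb{F}_{\min}$ is affine of dimension $d-\text{rank}\{\mathfrak{q}_i:i\in I\}$. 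An inspection shows $\mathbb{F}_{\min}^{\vee}=\mathbb{P}^{\vee}$, because every supporting plane of $\mathbb{P}$ has its face containing $\mathbb{F}_{\min}$; hence $\text{rank}\{\mathfrak{q}_i:i\in I\}=\text{rank}(\mathbb{P}^{\vee})$, giving $\dim\mathbb{F}_{\min}=k_0$.

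With this in hand, the second equality reduces to showing that any $\mathfrak{q}\in\mathbb{F}^{\vee}$ for some face $\mathbb{F}$ actually lies in $\mathbb{G}^{\vee}$ for some $k_0$-face $\mathbb{G}\preceq\mathbb{F}$. Iteratively intersecting $\mathbb{F}$ with supporting hyperplanes of $\mathbb{P}$ that do not contain it produces a descending chain of faces terminating at a minimal face $\mathbb{G}\preceq\mathbb{F}$ with $\dim\mathbb{G}=k_0$. The cited implication $\mathbb{G}\preceq\mathbb{F}\Rightarrow\mathbb{F}^{\vee}\subseteq\mathbb{G}^{\vee}$ then delivers $\mathfrak{q}\in\mathbb{G}^{\vee}$. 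The reverse containment is trivial since $\mathcal{F}^{k_0}(\mathbb{P})\subseteq\mathcal{F}(\mathbb{P})$.

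The main obstacle is the dimension count for minimal faces, especially treating uniformly the pointed case ($k_0=0$, vertices exist) and the case where $\mathbb{P}$ carries a nontrivial lineality space (so $k_0>0$ and minimal faces are affine subspaces parallel to that lineality). Once that identification $k_0=d-\text{rank}(\mathbb{P}^{\vee})$ is pinned down, the remainder is clean poset manipulation of the order-reversing correspondence $\mathbb{F}\leftrightarrow\mathbb{F}^{\vee}$ between faces of $\mathbb{P}$ and cones of its normal fan, which is standard material from \cite{Ful,O}.
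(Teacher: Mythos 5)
Your overall architecture is sound and mirrors what the paper points to (the first identity is a tautology of Definitions \ref{d412} and of the supporting‐plane correspondence, and the second identity is obtained by descending to a minimal subface and invoking the order reversal $\mathbb{G}\preceq\mathbb{F}\Rightarrow\mathbb{F}^{\vee}\subseteq\mathbb{G}^{\vee}$, which is exactly the fact the paper cites from \cite{Ful,O}). However, there is a genuine error in the step that pins down $\dim\mathbb{F}_{\min}=k_0$: the assertion ``$\mathbb{F}_{\min}^{\vee}=\mathbb{P}^{\vee}$, because every supporting plane of $\mathbb{P}$ has its face containing $\mathbb{F}_{\min}$'' is false whenever $\mathbb{P}$ has more than one minimal face. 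Take $\mathbb{P}=[0,1]^{2}\subset\mathbb{R}^{2}$ and $\mathbb{F}_{\min}=\{(0,0)\}$: the supporting line $\{x_1=1\}$ has face $\{1\}\times[0,1]$, which does not contain $(0,0)$, and indeed $\mathbb{F}_{\min}^{\vee}=\mathbb{R}_{+}^{2}\subsetneq\mathbb{R}^{2}=\mathbb{P}^{\vee}$.

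What is true, and what you actually need, is only the equality of ranks, not of sets. A minimal face, having no proper subfaces, must be an affine subspace; an affine subspace contained in $\mathbb{P}=\bigcap_{i}\pi_{\mathfrak{q}_i,r_i}^{+}$ is a translate of the lineality space $L=\bigcap_{i}\ker\langle\mathfrak{q}_i,\cdot\rangle$. Since $\mathbb{P}^{\vee}$ spans $L^{\perp}$, one gets $\dim\mathbb{F}_{\min}=\dim L=d-\text{rank}(\mathbb{P}^{\vee})=k_0$, and for each minimal face $\text{span}(\mathbb{F}_{\min}^{\vee})=L^{\perp}=\text{span}(\mathbb{P}^{\vee})$ even though the cones themselves are distinct. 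Substituting this span‐equality argument for your set‐equality ``inspection'' repairs the dimension count; the remaining poset manipulation (pass from $\mathbb{F}$ to any minimal $\mathbb{G}\preceq\mathbb{F}$, then use $\mathbb{F}^{\vee}\subseteq\mathbb{G}^{\vee}$) is correct as you wrote it.
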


\subsection{Oriented Simplicial Cone Decomposition of Integral}
By  $\mathbb{P}={\bf N}(P,D_B)$ with $\mathbb{P}^{\vee}=\rm{cone}^{\vee}(B)$ in the second part of (\ref{07hh}), we  can write
      (\ref{25})   as
\begin{align}\label{255}
\mathcal{I}^{\rm{sub}}(P,D_B,\lambda)= \sum_{\mathbb{F}\in\mathcal{F}^{k_0}({\bf N}(P,D_B))} \sum_{j \in\mathbb{F}^{\vee}\cap \mathbb{Z}^d} \mathcal{I}^{\rm{sub}}_j(\lambda).
\end{align} 
\begin{remark}\label{rmk61}
Reset $\mathbb{F}^{\vee}_n$ as $\mathbb{F}^{\vee}_n\setminus(\bigcup_{i=1}^{n-1}\mathbb{F}_i^{\vee})$ in $\mathcal{F}^{k_0}=\{\mathbb{F}_n\}_{n=1}^M$  to make all $\mathbb{F}^{\vee}$ in (\ref{255}) mutually disjoint, though they  originally  may overlap on their boundaries.
\end{remark}
 \begin{definition}[Essential Disjointness]\label{de3434}
  We say polyhedra $\mathbb{G}_1,\cdots,\mathbb{G}_m$ in $V$ are essentially disjoint (simply ess-disjoint) if $  \mathbb{G}_i^{\circ}\cap \mathbb{G}_j^{\circ}= \emptyset$ for all pairs with $\mathbb{G}^{\circ}=\mathbb{G}\setminus \partial \mathbb{G}$. 
   \end{definition}
\begin{definition}
Let $\mathbb{K}$ be a polyhedral cone of dimension $n$ in $ V $. We say that $\mathbb{K}$  is simplicial    if     $\mathbb{K}=\rm{cone}(\{\mathfrak{q}_i\}_{i=1}^n)$ for some linearly independent vectors $\mathfrak{q}_i$'s in $V$.   
\end{definition}
For a convenient computation,   we  shall make a dual face $\mathbb{F}^{\vee}$ in (\ref{255}):
\begin{itemize}
\item[(i)]  simplicial   $\rm{cone}(\mathfrak{q}_1,\cdots,\mathfrak{q}_{d_0})$  with $d_0=d-k_0$,
\item[(ii)] contained in  an oriented dual cone $ \mathbb{P}^{\vee}_{\rm{for}}\ \text{or}\ \mathbb{P}^{\vee}_{\rm{bac}}$.
\end{itemize}
\begin{theorem}\label{lemir1}[Oriented-Simplicial-Cone Decomposition]  Let $\mathbb{P}={\bf N}(P,D_B)$ and let
     $d_0=\text{dim}(\rm{cone}^{\vee}(B))= d-k_0$. Then,  we can make   the  integral  of (\ref{255}) as
 \begin{align} 
& \mathcal{I}^{\rm{sub}}(P,D_B,\lambda)  = \sum_{\mathbb{F}\in  \mathcal{F}^{k_0}(\mathbb{P}_{\rm{for}})   \cup  \mathcal{F}^{k_0}(\mathbb{P}_{\rm{bac}}) }  \sum_{j\in \mathbb{F}^{\vee}\cap\mathbb{Z}^d} \mathcal{I}^{\rm{sub}}_j(\lambda)\label{s40} 
    \end{align} 
 where  $ \mathbb{F}^{\vee}=\rm{cone}(\mathfrak{q}_1,\cdots,\mathfrak{q}_{d_0})$   are oriented and  disjoint simplicial  cones.  Moreover, $ \mathbb{F}^{\vee}\cap \mathbb{Z}^d$ is equipped with the rational coordinates of the basis 
  $\{\mathfrak{q}_i\}_{i=1}^{d_0}\subset \mathbb{Q}^d $:
\begin{align}\label{asan1}
&\big\{\sum_{i=1}^{d_0}\alpha_i \mathfrak{q}_i: (\alpha_i)\in (M_0 \mathbb{Z}_+)^{d_0}\big\} \subset    \mathbb{F}^{\vee}\cap \mathbb{Z}^d \subset\big\{ \sum_{i=1}^{d_0}\alpha_i \mathfrak{q}_i:(\alpha_i)\in  \big(\frac{\mathbb{Z}_+}{M_1}\big)^{d_0}\big\} 
\end{align}
where $M_0, M_1\in \mathbb{N} $ and $1\le|\mathfrak{q}_i|\le 2$. One can replace $\mathcal{F}^{k_0}$ with $\bigcup_{k=k_0}^{d-1}\mathcal{F}^{k}$ in (\ref{s40}).
\end{theorem}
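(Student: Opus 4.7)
The plan is to refine the basic dual-face decomposition (6.3) in three successive moves: (i) bisect by forward/backward orientation using the hyperplane $\pi_{{\bf 1},0}$; (ii) simplicially triangulate each resulting rational polyhedral cone; (iii) rescale generators and read off the lattice-point coordinate description (6.4). Throughout, I rely on the identities $\mathbb{P}=\mathbb{P}_{\rm{for}}\cap \mathbb{P}_{\rm{bac}}$ and $\mathbb{P}^{\vee}=\mathbb{P}^{\vee}_{\rm{for}}\cup \mathbb{P}^{\vee}_{\rm{bac}}$ established in Definition 5.2.

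First I would apply (6.3) to $\mathbb{P}_{\rm{for}}$ and $\mathbb{P}_{\rm{bac}}$ separately rather than to $\mathbb{P}$. Since truncating by the half-space $\rm{cone}^{\vee}(\pm {\bf 1})$ does not change the lineality directions, the minimal face dimension of each oriented polyhedron remains $k_0=d-\text{dim}(\mathbb{P}^{\vee})$. Hence (6.3) applied to each gives
$$
\mathbb{P}^{\vee}_{\rm{for}}=\bigcup_{\mathbb{F}\in \mathcal{F}^{k_0}(\mathbb{P}_{\rm{for}})}\mathbb{F}^{\vee}, \qquad \mathbb{P}^{\vee}_{\rm{bac}}=\bigcup_{\mathbb{F}\in \mathcal{F}^{k_0}(\mathbb{P}_{\rm{bac}})}\mathbb{F}^{\vee},
$$
with each $\mathbb{F}^{\vee}$ entirely forward or entirely backward by construction. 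Substituting into the sum (6.2) and absorbing the measure-zero overlap on $\pi_{{\bf 1},0}\cap\mathbb{Z}^d$ into the disjointification trick of Remark 6.1 yields (6.5) with $\mathbb{F}^{\vee}$ oriented and pairwise essentially disjoint.

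Next, each oriented dual cone $\mathbb{F}^{\vee}$ is a rational polyhedral cone of dimension $d_0=d-k_0$ in $\mathbb{Q}^d$. By the standard simplicial triangulation of rational cones (Fulton \cite{Ful}, Oda \cite{O}), $\mathbb{F}^{\vee}$ admits a finite essentially-disjoint decomposition into simplicial subcones $\rm{cone}(\mathfrak{q}_1,\ldots,\mathfrak{q}_{d_0})$ with $\mathfrak{q}_i\in\mathbb{Q}^d$ linearly independent. Triangulating preserves the orientation (sub-cones of a forward cone are forward), and I rescale each $\mathfrak{q}_i$ by a positive rational so that $1\le |\mathfrak{q}_i|\le 2$. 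The lattice-point description (6.4) then follows from elementary linear algebra: letting $M_0\in\mathbb{N}$ be a common denominator so that $M_0\mathfrak{q}_i\in\mathbb{Z}^d$ for every $i$, any $\mathbb{Z}_+$-combination $\sum \alpha_i M_0\mathfrak{q}_i$ lies in $\mathbb{F}^{\vee}\cap\mathbb{Z}^d$, which gives the lower inclusion. For the upper one, any $j\in \mathbb{F}^{\vee}\cap\mathbb{Z}^d$ has a unique expansion $\sum \beta_i \mathfrak{q}_i$ with $\beta_i\ge 0$ rational (since the $\mathfrak{q}_i$ are rational and independent), and choosing $M_1$ to be a common denominator of all entries of the inverse matrix $(\mathfrak{q}_1,\ldots,\mathfrak{q}_{d_0})^{-1}$ forces $\beta_i\in\mathbb{Z}_+/M_1$.

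The main obstacle, I expect, is step (i): one must justify that the minimal-face lists $\mathcal{F}^{k_0}(\mathbb{P}_{\rm{for}})$ and $\mathcal{F}^{k_0}(\mathbb{P}_{\rm{bac}})$ together cover precisely $\mathbb{P}^{\vee}$ without gaps, even when a face $\mathbb{F}\in\mathcal{F}^{k_0}(\mathbb{P})$ has dual cone $\mathbb{F}^{\vee}$ that straddles $\pi_{{\bf 1},0}$. In that case $\mathbb{F}$ itself is not a face of either oriented polyhedron, but the two halves $\mathbb{F}^{\vee}\cap\rm{cone}^{\vee}(\pm{\bf 1})$ are dual cones to new faces that appear when the lineality along ${\bf 1}$ is broken by truncation; these new oriented faces must be identified via the supporting-plane description (3.1)--(3.2). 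The parenthetical remark that one may replace $\mathcal{F}^{k_0}$ by $\bigcup_{k\ge k_0}\mathcal{F}^{k}$ is then automatic, since adding higher-dimensional faces only subdivides each $\mathbb{F}^{\vee}$ along proper subfaces (lower-dimensional boundary pieces), which contribute a measure-zero set of lattice points that can again be absorbed as in Remark 6.1.
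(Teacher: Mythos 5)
Your three-step plan --- orient, triangulate, read off lattice coordinates --- is precisely the paper's route, and the obstacle you flag in step (i) is exactly what Proposition \ref{prop110} handles: for a dual face straddling $\pi_{{\bf 1},0}$, the two truncations $\mathbb{F}^{\vee}\cap\mathbb{P}^{\vee}_{\rm{for}}$ and $\mathbb{F}^{\vee}\cap\mathbb{P}^{\vee}_{\rm{bac}}$ are again full-dimensional dual cones of faces of $\mathbb{P}_{\rm{for}}$, $\mathbb{P}_{\rm{bac}}$ that coincide with $\mathbb{F}$ as sets, which is what later lets Lemma \ref{l910} be applied to $\mathbb{F}\cap\Lambda(P)$ in the downstream estimates. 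Two small fixes are worth noting. When $d_0<d$ the matrix $A=(\mathfrak{q}_1|\cdots|\mathfrak{q}_{d_0})$ is $d\times d_0$ and has no inverse, so for $M_1$ you should take a common denominator of the entries of $A_0^{-1}$ for a non-singular $d_0\times d_0$ sub-matrix $A_0$ of $A$ (such a sub-matrix exists by linear independence of the $\mathfrak{q}_i$), exactly as in the paper's proof of Theorem \ref{lemir1}. And your assertion that ``truncating by $\rm{cone}^{\vee}(\pm{\bf 1})$ does not change the lineality directions'' can fail when $\mathbb{P}^{\vee}$ sits entirely in one half-space; in that case the complementary oriented piece is a lower-dimensional boundary slice, the set $\mathcal{F}^{k_0}$ of the corresponding truncated polyhedron is empty, and its lattice points are absorbed by the disjointification of Remark \ref{rmk61} --- so the theorem's conclusion survives, but the justification you gave for the dimension count is not the right one.
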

We use (\ref{255}) with some geometric argument to prove Theorem \ref{lemir1} in Section \ref{Sec13}. 
\begin{remark}
To show (\ref{s40}),  we take
    $\mathcal{F}^{k_0} $ as a newly formed set (denoted by  $\mathcal{F}^{k_0}_{\rm{os}}$) by modifying its element $\mathbb{F} $  to have an oriented  and  simplicial  dual face.  In (\ref{s40}), we need to correct $\mathbb{F}^{\vee}$ to be disjoint 
as in Remark \ref{rmk61}.  
\end{remark}
\begin{remark}
There is a  similar   decomposition of $ \mathcal{I}^{\rm{osc}}(P,D_B,\lambda)$ as in (\ref{s40}).
\end{remark}
\begin{remark}\label{rmkk61}  
It suffices to take  
$\alpha_i\in \mathbb{Z}_+$   rather than $ \alpha_i\in (1/M_1)\mathbb{Z}_+ $ in (\ref{asan1}). 
\end{remark}

  \section{Estimate of One Dyadic Piece}\label{Sec7}

Under the normal-crossing condition    (\ref{sv3}), we  estimate  $\mathcal{I}^{\rm{sub}}_{j}(\lambda)$  and  $ \mathcal{I}^{\rm{osc}}_{j}(\lambda)$ of Theorem \ref{lemir1}. 
The estimate is based on the  control of  derivatives of $P$ 
in the following lemma.
  \begin{lemma}[Monomialization]\label{l910}
Suppose that  $\mathbb{F}\in\mathcal{F}(\mathbb{P})$ in Theorem \ref{lemir1}, $\mathfrak{m} \in\mathbb{F}\cap \Lambda(P)$  and  $j\in  \mathbb{F}^{\vee}\cap \mathbb{Z}^d $.  Then for $x\sim {\bf 2}^{-{\bf 0}}$, it holds that     
 \begin{align}
  &  |P({\bf 2}^{-j}x)|  \lesssim\sum_{\mathfrak{n}\in \Lambda(P)} 2^{-j\cdot\mathfrak{n}} \approx 2^{-j\cdot \mathfrak{m}}\ \text{with}\  j\cdot \mathfrak{m}
  = j\cdot \tilde{\mathfrak{m}}\ \ \forall\  \tilde{\mathfrak{m}} \in\mathbb{F}\cap \Lambda(P),\label{jq501}\\
& 2^{c|j|}  \sum_{\mathfrak{n}\in \Lambda(P)\setminus \pi_{\mathfrak{q},r} } 2^{-j\cdot \mathfrak{n}}   \lesssim  2^{-j\cdot \mathfrak{m}} \ \text{for some $c>0$ if  $\mathbb{F}=\pi_{\mathfrak{q},r}\cap \mathbb{P}$.}\label{jq524}
\end{align}
As a consequence,  if $(P,D_B)$ is  normal-crossing of type $[\sigma,\tau]$, then
\begin{align}
 &\sum_{\sigma\le |\alpha|\le \tau}|\partial^{\alpha} (P ({\bf 2}^{-j}x))|  \approx 2^{-j\cdot \mathfrak{m}} \label{aptment}
  \end{align}
  with constants   in $\lesssim$,   depend on the coefficients of $P$, but independent of  $j$ and $x$.  
One can perturbate $x\sim  {\bf 2}^{-{\bf 0}}$   as $x\sim_h {\bf 2}^{-{\bf 0}}$ i.e., $1/h\le |x_\nu|\le  h$ for a fixed $ h\ge 1$. 
 \end{lemma}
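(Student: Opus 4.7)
The plan is to verify the three displayed claims in order, each leveraging the face-geometry of $j$ with respect to $\mathbb{F}$ inside $\mathbb{P}={\bf N}(P,D_B)$. First I would expand
\[
P({\bf 2}^{-j}x)=\sum_{\mathfrak{n}\in\Lambda(P)}c_{\mathfrak{n}}\,2^{-j\cdot\mathfrak{n}}\,x^{\mathfrak{n}},
\]
observing that $|x^{\mathfrak{n}}|\approx 1$ on $x\sim{\bf 2}^{-{\bf 0}}$ and that, since $j\in\mathbb{F}^{\vee}$ and $\mathfrak{m}\in\mathbb{F}$, the plane $\pi_{j,\,j\cdot\mathfrak{m}}$ is a supporting plane of $\mathbb{P}$ containing $\mathbb{F}$. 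Hence $j\cdot\mathfrak{n}\ge j\cdot\mathfrak{m}$ for every $\mathfrak{n}\in\Lambda(P)\subset\mathbb{P}$, with equality for every $\tilde{\mathfrak{m}}\in\mathbb{F}\cap\Lambda(P)$. The triangle inequality then gives $|P({\bf 2}^{-j}x)|\lesssim\sum_{\mathfrak{n}}2^{-j\cdot\mathfrak{n}}\le|\Lambda(P)|\cdot 2^{-j\cdot\mathfrak{m}}$, while the $\mathfrak{n}=\mathfrak{m}$ term supplies the matching lower bound on the sum, proving (\ref{jq501}).

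For (\ref{jq524}) I would exploit the oriented simplicial structure $\mathbb{F}^{\vee}=\rm{cone}(\mathfrak{q}_1,\ldots,\mathfrak{q}_{d_0})$ furnished by Theorem \ref{lemir1}, together with the disjointness convention of Remark \ref{rmk61} that effectively places $j$ in $(\mathbb{F}^{\vee})^{\circ}$. The hypothesis $\mathbb{F}=\pi_{\mathfrak{q},r}\cap\mathbb{P}$ forces $\mathfrak{q}\in(\mathbb{F}^{\vee})^{\circ}$, so any $\mathfrak{n}\in\Lambda(P)\setminus\pi_{\mathfrak{q},r}$ satisfies $\mathfrak{q}\cdot(\mathfrak{n}-\mathfrak{m})>0$; coupled with $\mathfrak{q}_i\cdot(\mathfrak{n}-\mathfrak{m})\ge 0$ (as each $\mathfrak{q}_i\in\mathbb{P}^{\vee}$) and the finiteness of $\Lambda(P)$, this yields a uniform $\epsilon_0>0$ and, for each such $\mathfrak{n}$, an index $i^*(\mathfrak{n})$ with $\mathfrak{q}_{i^*(\mathfrak{n})}\cdot(\mathfrak{n}-\mathfrak{m})\ge\epsilon_0$. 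Writing $j=\sum_i\alpha_i\mathfrak{q}_i$ with $\alpha_i\in\mathbb{Z}_+$ (Remark \ref{rmkk61}) and using $|j|\approx\sum_i\alpha_i$ for the bounded linearly independent generators, one gets $j\cdot(\mathfrak{n}-\mathfrak{m})\ge\epsilon_0\,\alpha_{i^*(\mathfrak{n})}$; partitioning the finitely many $\mathfrak{n}\in\Lambda(P)\setminus\pi_{\mathfrak{q},r}$ by their transverse direction $i^*(\mathfrak{n})$ and summing the resulting geometric pieces produces $\sum_{\mathfrak{n}\notin\pi_{\mathfrak{q},r}}2^{-j\cdot\mathfrak{n}}\lesssim 2^{-j\cdot\mathfrak{m}-c|j|}$ for a $c>0$ depending only on the polyhedral data.

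Finally, for (\ref{aptment}), I would apply the normal-crossing hypothesis (\ref{sv3}) at $y={\bf 2}^{-j}x\in D_B\cap(\mathbb{R}\setminus\{0\})^d$, which is valid because $j\in\mathbb{P}^{\vee}=\rm{cone}^{\vee}(B)$ (the $h$-perturbation is absorbed into implicit constants). Using the scaling identity $x^{\alpha}\partial_x^{\alpha}(P({\bf 2}^{-j}x))=y^{\alpha}(\partial^{\alpha}_yP)(y)$ and $|x^{\alpha}|\approx 1$, (\ref{sv3}) transfers to
\[
\sum_{\sigma\le|\alpha|\le\tau}\bigl|\partial_x^{\alpha}(P({\bf 2}^{-j}x))\bigr|\gtrsim\sum_{\mathfrak{n}}|y^{\mathfrak{n}}|\approx 2^{-j\cdot\mathfrak{m}}
\]
by (\ref{jq501}), while the matching upper bound for each $|\partial_x^{\alpha}(P({\bf 2}^{-j}x))|$ follows term-by-term from Leibniz and (\ref{jq501}). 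The main obstacle is the uniform $|j|$-gap in (\ref{jq524}): the strict positivity $\mathfrak{q}\cdot(\mathfrak{n}-\mathfrak{m})>0$ on $(\mathbb{F}^{\vee})^{\circ}$ is immediate, but upgrading it to a bound proportional to $|j|$ is delicate because $j$ can approach $\partial\mathbb{F}^{\vee}$ along specific $\mathfrak{q}_i$-directions; control here rests precisely on the oriented simplicial cone partition of Theorem \ref{lemir1}, whose reassignment of near-boundary $j$'s to lower-dimensional dual cones prevents such degeneration within a given $\mathbb{F}^{\vee}$.
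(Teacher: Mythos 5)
Your treatments of (\ref{jq501}) and (\ref{aptment}) follow essentially the same lines as the paper: $j\in\mathbb{F}^{\vee}$ forces $j\cdot(\mathfrak{n}-\mathfrak{m})\ge 0$ for all $\mathfrak{n}\in\Lambda(P)$ with equality on $\mathbb{F}\cap\Lambda(P)$, which gives (\ref{jq501}); and the scaling identity $x^{\alpha}\partial_x^{\alpha}(P({\bf 2}^{-j}x))=({\bf 2}^{-j}x)^{\alpha}(\partial^{\alpha}P)({\bf 2}^{-j}x)$ combined with (\ref{sv3}) and (\ref{jq501}) gives (\ref{aptment}). These parts are correct.

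The issue is (\ref{jq524}). You set out to prove the $2^{c|j|}$-gain for a general $d_0$-dimensional dual cone $\mathbb{F}^{\vee}=\rm{cone}(\mathfrak{q}_1,\dots,\mathfrak{q}_{d_0})$, and you correctly flag the obstacle: $j\cdot(\mathfrak{n}-\mathfrak{m})\ge\epsilon_0\,\alpha_{i^*(\mathfrak{n})}$ does not yield a gain proportional to $|j|$ when $j$ drifts toward a facet of $\mathbb{F}^{\vee}$ on which $\alpha_{i^*(\mathfrak{n})}$ stays bounded while $|j|\to\infty$. But your proposed remedy does not close the gap: Remark \ref{rmk61} only removes set-theoretic overlap among dual faces; it does not confine $j$ away from $\partial\mathbb{F}^{\vee}$ inside $\mathbb{F}^{\vee}$, and when $\mathbb{F}^{\vee}$ has dimension $\ge 2$ the uniform estimate (\ref{jq524}) with gain $2^{c|j|}$ genuinely fails along near-boundary sequences. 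The paper avoids this because its proof (and its only later use, in the divergence argument for (B) of Main Theorem \ref{mainth15}) takes $\mathbb{F}^{\vee}=\rm{cone}(\mathfrak{q})$ one-dimensional, so $j=\alpha\mathfrak{q}$, $j/|j|=\mathfrak{q}/|\mathfrak{q}|$, and the uniform gap $\mathfrak{q}\cdot(\mathfrak{n}-\mathfrak{m})\ge c>0$ (from finiteness of $\Lambda(P)$) immediately gives $j\cdot(\mathfrak{n}-\mathfrak{m})\ge c|j|$. For genuinely multidimensional $\mathbb{F}^{\vee}$ the paper instead proves the strictly weaker (\ref{624}) in Proposition \ref{lemir11}, where the gain is only the fixed constant $2^{Cr^{k+1}}$ and $j$ is restricted to the neighborhood $\mathcal{N}_r^{k}(\mathbb{F}^{\vee}|\mathbb{G}^{\vee})$, constructed precisely to stay away from $\partial\mathbb{F}^{\vee}$. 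So the gap you identified is real, and closing it requires weakening the estimate (as the paper does in Section 11), not invoking the oriented simplicial decomposition.
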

   \begin{proof}[Proof of (\ref{jq501})]
  By the triangle inequality, we have 
  \begin{align}\label{9256}
  |P({\bf 2}^{-j}x)|\le  \sum_{\mathfrak{n}\in \Lambda(P)} |c_{\mathfrak{n}}|2^{-j\cdot\mathfrak{n}}|x^{\mathfrak{n}}|\le  \max\{|c_{\mathfrak{n}}|\}  \sum_{\mathfrak{n}\in \Lambda(P)}  2^{-j\cdot\mathfrak{n}}.
  \end{align}
Let $\mathbb{F}\in \mathcal{F}^k(\mathbb{P})$. Then,   there are $\ell $  supporting planes  $\pi_{\mathfrak{q}_\nu,r_\nu}$ such that
   $$\mathbb{F}=\bigcap_{\nu=1}^{\ell} \pi_{\mathfrak{q}_\nu,r_\nu}\cap\mathbb{P}\ \text{and}\ \mathbb{F}^{\vee}=\rm{cone}(\mathfrak{q}_1,\cdots,\mathfrak{q}_{\ell}).$$
Let $\mathfrak{m}\in \mathbb{F}\cap \Lambda(P)$ and $\mathfrak{n}\in \Lambda(P)\subset \mathbb{P}$. Then
 $\mathfrak{m}\in  \pi_{\mathfrak{q}_\nu,r_\nu}$ and $\mathfrak{n}\in \pi_{\mathfrak{q}_\nu,r_\nu}^+$, that is,
$
 \mathfrak{q}_\nu\cdot (\mathfrak{n}-\mathfrak{m})\ge 0\ \text{for all $\nu\in [\ell]$}.$  By this with    $j=\alpha_1\mathfrak{q}_1+\cdots+\alpha_{\ell}\mathfrak{q}_{\ell} \in\mathbb{F}^{\vee}\cap \mathbb{Z}^d$   for  $\alpha_1,\cdots,\alpha_{\ell}\ge0$, 
  one obtain  $ j\cdot (\mathfrak{n}-\mathfrak{m})\ge 0$. Thus $ 2^{- j\cdot \mathfrak{m} }\ge 2^{-  j\cdot \mathfrak{n}}$ in (\ref{9256}), which  implies  $\approx$ of (\ref{jq501}) 
   because   $\Lambda(P)$ is finite.   Since $\mathfrak{m},\tilde{\mathfrak{m}}\in \mathbb{F}\subset \pi_{\mathfrak{q}_\nu,r_\nu}$ and $\mathfrak{q}_\nu\cdot (\mathfrak{m}-\tilde{\mathfrak{m}})=0$ for all $\nu=1,\cdots,{\ell}$. So, $j\cdot (\mathfrak{m}-\tilde{\mathfrak{m}})=0$ in (\ref{9256}). This completes the proof of  (\ref{jq501}).
  \end{proof}

  \begin{proof}[Proof of (\ref{jq524})]
As $j\in \mathbb{F}^{\vee}=\rm{cone}(\mathfrak{q})$,    $j=\alpha\mathfrak{q}$ for $ \alpha>0$. If  $\mathfrak{m}\in  \pi_{\mathfrak{q},r} $ and $\mathfrak{n}\in   (\pi_{\mathfrak{q},r}^+)^{\circ}$,  there is $c(\mathfrak{m},\mathfrak{n})>0 $ such that
 $  \frac{ \mathfrak{q}}{|\mathfrak{q}|}\cdot (\mathfrak{n}-\mathfrak{m})>c(\mathfrak{m},\mathfrak{n})$.  Take $c$ as the minimal $c(\mathfrak{m},\mathfrak{n})$ over $\mathfrak{n}\in\Lambda(P)\setminus \pi_{\mathfrak{q},r}$. Then $\frac{ j}{|j|}\cdot (\mathfrak{n}-\mathfrak{m})>c(\mathfrak{m},\mathfrak{n})\ge c $, that is, $2^{c|j|}   2^{-j\cdot \mathfrak{n}} \le 2^{-j\cdot \mathfrak{m}}$.
  \end{proof}

   \begin{proof}[Proof of (\ref{aptment})]
By the chain rule in differentiation and $x\sim {\bf 1}$,  
 $$\text{LHS  of (\ref{aptment})}\approx  \sum_{\sigma\le |\alpha|\le \tau} |({\bf 2}^{-j}x)^{\alpha}(\partial_{x}^{\alpha}P) ({\bf 2}^{-j}x) |\approx \sum_{\mathfrak{n}\in \Lambda(P)}  2^{-j\cdot \mathfrak{n}} \approx 2^{-j\cdot \mathfrak{m}}$$ where the second $\approx$ is owing to    (\ref{sv3}) and  the last $\approx$ is   due to 
  (\ref{jq501}).  \end{proof}

   \begin{lemma}[Decay Estimates]\label{lembd}
Let $\mathbb{P}={\bf N}(P,D_B)$ and $\mathbb{F}\in \mathcal{F}(\mathbb{P}) $.  Take
  $\mathfrak{m}\in \mathbb{F}\cap \Lambda(P)$ and $j\in \mathbb{Z}^d\cap\mathbb{F}^{\vee}$ in  Theorem \ref{lemir1}. Then
 \begin{align}
   |  \mathcal{I}^{\rm{sub}}_j(\lambda)|&\lesssim  2^{-j\cdot {\bf 1}}\min\left\{ 1, \frac{1}{|\lambda 2^{-j\cdot \mathfrak{m} }|^{ 1/\tau}}\right\}\ \text{if  $\tau_0(P,D_B)=\tau$ as in (\ref{sv3}), } \label{ss66}\\
      |  \mathcal{I}^{\rm{osc}}_j(\lambda)|&\lesssim  2^{-j\cdot {\bf 1}}\min\left\{ 1, \frac{1}{|\lambda 2^{-j\cdot \mathfrak{m} }|^{ 1/\tau}}\right\}\ \text{if  $\tau_1(P,D_B)=\tau$.} \label{sv77}
       \end{align} 
  If $\tau_0(P,D_B) =0$ in (\ref{ss66}), or  $\tau_1(P,D_B) =1$ in (\ref{sv77}),   one can take      $0<\tau\ll 1$. 
  \end{lemma}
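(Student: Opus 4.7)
\textbf{Proof plan for Lemma \ref{lembd}.} The strategy is to rescale the dyadic piece to a unit-scale integral with uniformly controlled phase, and then invoke classical one-dimensional sublevel/oscillatory decay estimates. First I would change variables $x={\bf 2}^{-j}y$ in both $\mathcal{I}^{\rm{sub}}_{j}(\lambda)$ and $\mathcal{I}^{\rm{osc}}_{j}(\lambda)$, which produces the Jacobian $2^{-j\cdot {\bf 1}}$ and converts $\chi(x/{\bf 2}^{-j})$ into $\chi(y)$ supported on $y\sim {\bf 2}^{-{\bf 0}}$. Setting $Q_j(y):=2^{j\cdot \mathfrak{m}}P({\bf 2}^{-j}y)$ and $\tilde\lambda:=\lambda\,2^{-j\cdot\mathfrak{m}}$,
\[
\mathcal{I}^{\rm{sub}}_{j}(\lambda)=2^{-j\cdot{\bf 1}}\!\int \psi(\tilde\lambda Q_j(y))\,\Psi_j(y)\,dy,\qquad \mathcal{I}^{\rm{osc}}_{j}(\lambda)=2^{-j\cdot{\bf 1}}\!\int e^{i\tilde\lambda Q_j(y)}\,\Psi_j(y)\,dy,
\]
with $\Psi_j(y):=\psi_{D_B}({\bf 2}^{-j}y)\chi(y)$. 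By Lemma \ref{l910}, $|Q_j(y)|\lesssim 1$ and $\sum_{\sigma\le|\alpha|\le\tau}|\partial^{\alpha}_yQ_j(y)|\approx 1$ on $y\sim_h{\bf 2}^{-{\bf 0}}$, uniformly in $j$; the derivative estimate uses the chain-rule identity $y^{\alpha}\partial^{\alpha}_y[P({\bf 2}^{-j}y)]=({\bf 2}^{-j}y)^{\alpha}(\partial^{\alpha}P)({\bf 2}^{-j}y)|_{x={\bf 2}^{-j}y}$ combined with (\ref{aptment}). Since $j\in \mathbb{F}^{\vee}\subset\rm{cone}^{\vee}(B)$, one has $j\cdot\mathfrak{b}\ge 0$ for $\mathfrak{b}\in B$, so the factors $\psi(2^{-j\cdot\mathfrak{b}}y^{\mathfrak{b}})$ defining $\Psi_j$ have $C^N$ norms bounded independently of $j$ for every $N$.

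The first half of the $\min$ (the bound $\lesssim 2^{-j\cdot {\bf 1}}$) is immediate from $|\psi|,|e^{i\tilde\lambda Q_j}|\le 1$, $|\Psi_j|\lesssim 1$, and the finite measure of $\{y\sim{\bf 2}^{-{\bf 0}}\}$. For the decay half, I would use a finite smooth partition of unity on the compact set $\{y\sim_h{\bf 2}^{-{\bf 0}}\}$ subordinated to the open cover on which a single multiindex $\alpha_\ast$ with $\sigma\le|\alpha_\ast|\le\tau$ satisfies $|\partial^{\alpha_\ast}_yQ_j(y)|\gtrsim 1$; such a cover exists by continuity and the pointwise lower bound $\sum_{\sigma\le|\alpha|\le\tau}|\partial^{\alpha}_yQ_j|\gtrsim 1$. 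On each piece, in the sublevel case either $|\alpha_\ast|=0$ (in which case $\psi(\tilde\lambda Q_j)$ vanishes for $|\tilde\lambda|$ larger than a constant, so the trivial bound already wins), or $|\alpha_\ast|\ge 1$ and I freeze all but one variable $y_\nu$ with $(\alpha_\ast)_\nu\ge 1$ and apply the classical one-dimensional sublevel set estimate $|\{t:|\tilde\lambda g(t)|\le 1\}|\lesssim|\tilde\lambda|^{-1/k}$ for $|g^{(k)}|\gtrsim 1$, then use Fubini. In the oscillatory case $\sigma=1$, so $|\alpha_\ast|\ge 1$ always, and I apply van der Corput's lemma in the distinguished variable together with Fubini; the uniformly bounded total variation of $\Psi_j$ in each slice (from the $C^N$ bound above) controls the boundary terms. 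In either case the decay rate is $|\tilde\lambda|^{-1/|\alpha_\ast|}\le|\tilde\lambda|^{-1/\tau}$, and substituting $\tilde\lambda=\lambda\,2^{-j\cdot\mathfrak{m}}$ produces the stated factor $|\lambda\,2^{-j\cdot\mathfrak{m}}|^{-1/\tau}$.

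The main technical point is ensuring that all implicit constants in the partition-of-unity/one-dimensional estimates are independent of $j$. This is precisely what Lemma \ref{l910} provides: after rescaling, $Q_j$ and $\Psi_j$ belong to a bounded family in $C^{\tau+N}$ on $\{y\sim_h{\bf 2}^{-{\bf 0}}\}$, and the lower bound $\sum|\partial^{\alpha}_yQ_j|\gtrsim 1$ is uniform. Hence the number of partition pieces, the size of each $|\partial^{\alpha_\ast}_yQ_j|$, and the constants in van der Corput/sublevel bounds all depend only on the coefficients of $P$ and on $(B,\mathbb{F})$, not on $j$. Combining the trivial and decay bounds via $\min$ completes the proof. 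The remark that one can take $0<\tau\ll 1$ when $\tau_0(P,D_B)=0$ or $\tau_1(P,D_B)=1$ follows because in those cases the sublevel/oscillatory estimates degenerate to the trivial bound, so any $\tau>0$ works on the decay side.
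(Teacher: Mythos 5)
Your proof is correct and follows essentially the same strategy as the paper: rescale each dyadic piece, use Lemma \ref{l910} (in particular (\ref{aptment})) to obtain the $j$-uniform bounds $|Q_j|\lesssim 1$ and $\sum_{\sigma\le|\alpha|\le\tau}|\partial^\alpha Q_j|\approx 1$ on $y\sim {\bf 2}^{-{\bf 0}}$, and then apply the standard sublevel-set estimate (for $\mathcal{I}^{\rm{sub}}_j$) and van der Corput's lemma (for $\mathcal{I}^{\rm{osc}}_j$) to the rescaled integrals. The paper compresses that final step into a single sentence while you usefully spell out the partition-of-unity/slicing mechanism and the uniformity of constants, but the underlying argument is the same.
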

 
       \begin{proof}[Proof of (\ref{ss66}) and  (\ref{sv77})]
 By the change of variable, 
 \begin{align*}  
 \mathcal{I}^{\rm{sub}}_{j}(\lambda)&=2^{-j\cdot {\bf 1}}    
  \int \psi(\lambda P({\bf 2}^{-j}x)) \psi_{D_{B,R}}({\bf 2}^{-j}x)\chi(x) dx,\\
\mathcal{I}^{\rm{osc}}_{j}(\lambda)&= 2^{-j\cdot {\bf 1}}    
  \int e^{i \lambda P({\bf 2}^{-j}x)}\Psi_{D_{B,R}}({\bf 2}^{-j}x)\chi(x) dx. 
\end{align*}  
 From    (\ref{aptment}) for  $\mathfrak{m}\in \mathbb{F}\cap \Lambda(P )$ and $j\in \mathbb{Z}^d\cap \mathbb{F}^{\vee}$ with the normal-crossing assumption,
 \begin{align*} 
 \sum_{\sigma \le |\alpha|\le \tau}|\partial^{\alpha} (P ({\bf 2}^{-j}x))| \approx   2^{-j\cdot \mathfrak{m}} 
\ \text{  with  $x\sim {\bf 1}$. }
\end{align*}
This with the rapid decreasing property of $\psi$ yields the desired bound of $\mathcal{I}^{\rm{sub}}_{j}(\lambda)$.
From the hypothesis $\Psi_{D_{B,R}}\in \mathcal{A}(D_B)$ in (\ref{29a}), it follows that  
\begin{align*} 
|\partial_{x}^{\alpha}(\Psi_{D_{B,R}}({\bf 2}^{-j}x)\chi(x))|\lesssim 1 \ \text{in the support of the integral $\mathcal{I}^{\rm{osc}}_{j}(\lambda)$.}
\end{align*}
With this, we apply  the  van der Corput lemma to obtain the   bounds of $\mathcal{I}^{\rm{osc}}_{j}(\lambda)$.  \end{proof}

\section{Summation  Over a Dual Face}\label{Sec8}
We utilize Lemma \ref{lembd} for computing $  \mathcal{I}_{j}(\lambda) $  for $j\in \mathbb{F}^{\vee}$ in (\ref{s40}).
Next, we need to sum $  \mathcal{I}_{j}(\lambda) $ over   $\alpha_1,\cdots,\alpha_{d_0}\in \mathbb{Z}_+$  where $j=\alpha_1\mathfrak{q}_1+\cdots+\alpha_{d_0}\mathfrak{q}_{d_0}$ in (\ref{asan1}).
  \subsection{Summation Formula}    
We  use the following lemma for summing over $\alpha$.
\begin{lemma}\label{L76} 
 Let  $0<\tau<\delta$.  Then for all $\lambda>0$,  we have
\begin{align}
 &\sum_{(\alpha_1,\cdots,\alpha_p)\in \mathbb{Z}_+^p} 2^{-(\alpha_1 +\cdots+\alpha_p)} \min\left\{1, \frac{1}{(\lambda 2^{-(\alpha_1+\cdots+\alpha_p)\delta  })^{1/\tau}  }\right\}\lesssim\frac{ (|\log \lambda   |+1)^{p-1}}{  \lambda^{1/\delta} }, \label{wiki}
\\
 &\sum_{(\alpha_1,\cdots,\alpha_p)\in \mathbb{Z}_+^p} 2^{(\alpha_1 +\cdots+\alpha_p)} \min\left\{1,\frac{1}{(\lambda 2^{(\alpha_1+\cdots+\alpha_p)\delta  })^{1/\tau}  }\right\}\lesssim\frac{ (|\log \lambda  |+1)^{p-1}}{  \lambda^{1/\delta} }.  \label{wiki2}
\end{align}
Let  $ \tau\ge \delta$.  Then it holds   
\begin{align}\label{wik}
& \text{LHS of (\ref{wiki})}\lesssim \frac{ (|\log \lambda   |+1)^{p(\tau)}}{  \lambda^{1/\tau} }\ \text{for}\ p(\tau)=\begin{cases} p\  \text{if $\tau=\delta$}\\ 0\ \text{if $\tau>\delta$.}  
 \end{cases}\\
&\text{$\text{LHS of (\ref{wiki2})}=\infty$.\nonumber}
\end{align}
 
  \end{lemma}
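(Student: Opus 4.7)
\textbf{Proof plan for Lemma \ref{L76}.}
The strategy is to reduce the multi-index sum to a single-variable dyadic sum and then split it at the threshold where the minimum transitions. Write $s=\alpha_1+\cdots+\alpha_p$. Since the summand depends only on $s$, grouping lattice points along level sets of the sum yields
\[
\sum_{\alpha\in\mathbb{Z}_+^p} F(\alpha_1+\cdots+\alpha_p)
 = \sum_{s\ge 0}\binom{s+p-1}{p-1}F(s)\,\approx\, \sum_{s\ge 0}(s+1)^{p-1}F(s),
\]
so both (\ref{wiki}) and (\ref{wiki2}) reduce to estimating one-dimensional sums of the form $\sum_{s\ge 0}(s+1)^{p-1}\, 2^{\pm s}\,\min\{1,(\lambda 2^{\mp s\delta})^{-1/\tau}\}$. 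Define the threshold $s_0=s_0(\lambda)$ where $\lambda 2^{\mp s_0\delta}=1$, that is, $s_0=\pm\tfrac{\log_2\lambda}{\delta}$ depending on the sign chosen in the exponent; the minimum equals $1$ on one side of $s_0$ and equals $(\lambda 2^{\mp s\delta})^{-1/\tau}$ on the other.

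For (\ref{wiki}), I split the sum into $S_{\le}=\sum_{s\le s_0}$ and $S_{>}=\sum_{s>s_0}$. On $S_{>}$ (where $\min=1$) the sum is a geometric series with damping factor $2^{-s}$, so it is dominated by its first term, giving
$\;S_{>}\lesssim (s_0+1)^{p-1}\,2^{-s_0}\lesssim (|\log\lambda|+1)^{p-1}\lambda^{-1/\delta}$.
On $S_{\le}$ the summand becomes $(s+1)^{p-1}2^{-s}\lambda^{-1/\tau}2^{s\delta/\tau}=\lambda^{-1/\tau}(s+1)^{p-1}2^{s(\delta/\tau-1)}$. Here I use the assumption $\tau<\delta$: the exponent $\delta/\tau-1>0$, so this geometric-times-polynomial sum is dominated by its \emph{last} term at $s=s_0$, producing $\lambda^{-1/\tau}(s_0+1)^{p-1}2^{s_0(\delta/\tau-1)}=(s_0+1)^{p-1}\lambda^{-1/\delta}$, which again matches the target bound. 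The degenerate range $\lambda\le 1$ is handled trivially because then $s_0\le 0$ and only the $\min=1$ regime survives, giving $O(1)\le (|\log\lambda|+1)^{p-1}\lambda^{-1/\delta}$. The case $\tau\ge\delta$ in (\ref{wik}) requires the same split, but now the exponent $\delta/\tau-1\le 0$: when $\tau=\delta$, $S_{\le}$ becomes $\lambda^{-1/\tau}\sum_{s\le s_0}(s+1)^{p-1}\lesssim (|\log\lambda|+1)^{p}\lambda^{-1/\tau}$, giving the factor $(|\log\lambda|+1)^{p}$; when $\tau>\delta$, the sum converges absolutely, so $S_{\le}\lesssim \lambda^{-1/\tau}$, and $S_{>}\lesssim \lambda^{-1/\delta}\le\lambda^{-1/\tau}$ for $\lambda\ge 1$, matching $p(\tau)=0$.

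The proof of (\ref{wiki2}) is essentially symmetric under $s\mapsto -s$: set $s_0=-\log_2\lambda/\delta$, so $\min=1$ for $s\le s_0$ and $\min=(\lambda 2^{s\delta})^{-1/\tau}$ for $s>s_0$. The "$\min=1$" block $\sum_{s\le s_0}(s+1)^{p-1}2^s$ is a growing geometric series dominated by its top term $(s_0+1)^{p-1}2^{s_0}=(|\log\lambda|+1)^{p-1}\lambda^{-1/\delta}$. The "$\min$ $<$ $1$" block equals $\lambda^{-1/\tau}\sum_{s>s_0}(s+1)^{p-1}2^{s(1-\delta/\tau)}$; when $\tau<\delta$, $1-\delta/\tau<0$, so this is a geometric tail dominated by its first term at $s\approx s_0$, yielding the same $(|\log\lambda|+1)^{p-1}\lambda^{-1/\delta}$ bound. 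For $\tau\ge\delta$, however, $1-\delta/\tau\ge 0$, so the same block contains infinitely many terms of size $\gtrsim \lambda^{-1/\tau}(s+1)^{p-1}$ or larger, forcing divergence.

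The main obstacle is purely bookkeeping: carrying the polynomial factor $(s+1)^{p-1}$ cleanly through the geometric-series estimates (for which one repeatedly uses the elementary fact that $\sum_{s\le N}(s+1)^{p-1}r^s$ is comparable to $(N+1)^{p-1}r^N$ when $r>1$ and to $1$ when $r<1$, with an extra logarithmic power when $r=1$) and patching together the subcases $\lambda\lessgtr 1$ and $\tau\lessgtr\delta$ without losing or gaining log factors. Everything else is an application of the standard lattice-point count and the convexity of the geometric series.
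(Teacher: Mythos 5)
Your argument is correct, and it travels a genuinely different route from the paper's. You collapse the $p$-dimensional sum to the one-dimensional sum $\sum_{s\geq 0}(s+1)^{p-1}F(s)$ via the lattice-point count $\#\{\alpha\in\mathbb{Z}_+^p:\alpha_1+\cdots+\alpha_p=s\}=\binom{s+p-1}{p-1}\approx(s+1)^{p-1}$, and then split once at the transition point $s_0$ of the $\min$; this treats (\ref{wiki}) and (\ref{wiki2}) entirely symmetrically under $s\mapsto -s$. The paper instead establishes the $p=1$ case by hand, and for $p\geq 2$ splits the sum at $2^{\alpha_1+\cdots+\alpha_p}\gtrless(\lambda+\lambda^{-1})^{2/\delta}$, sacrifices a factor $2^{-\frac12(\alpha_1+\cdots+\alpha_p)}$ on the far range, and on the near range fixes $\alpha_1,\ldots,\alpha_{p-1}$ (at most $O\big((|\log\lambda|+1)^{p-1}\big)$ choices) and applies the $p=1$ estimate to the remaining variable. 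For (\ref{wiki2}) the paper then performs a nontrivial substitution with base $A=2^{\delta/\tau-1}$ and a surrogate exponent $\tilde\delta=\delta\tau/(\delta-\tau)$ to recycle (\ref{wiki}), whereas you simply redo the same two-block split with the flipped sign. Your approach is more uniform and arguably more elementary; the paper's has the advantage that it isolates and reuses the $p=1$ computation (\ref{siii}) verbatim. One minor point worth tightening: in the $\tau>\delta$ branch of (\ref{wik}) you drop the $(|\log\lambda|+1)^{p-1}$ arising from $S_{>}$ before noting it is absorbed into $\lambda^{-1/\tau}$; you should record that $(|\log\lambda|+1)^{p-1}\lambda^{-1/\delta}\lesssim\lambda^{-1/\tau}$ for $\lambda\geq 1$ precisely because $1/\delta>1/\tau$ strictly, so the loss is harmless, and verify separately that the range $\lambda\leq 1$ is trivial since the whole sum is then $O(1)$.
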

\begin{proof}[Proof of (\ref{wiki})]
If $p=1$, one can obtain (\ref{wiki})  as
\begin{align}
 \sum_{\alpha\in \mathbb{Z}_+} 2^{-\alpha} \min\left\{1, \frac{1}{(\lambda 2^{-\alpha \delta })^{1/\tau}  }\right\}&\approx\sum_{\lambda 2^{-\alpha\delta}\le 1}2^{-\alpha}+ \sum_{\lambda 2^{-\alpha\delta}\ge 1}\frac{2^{\alpha(\frac{\delta}{\tau}-1)} }{\lambda^{1/\tau} } \approx  \lambda^{-1/\delta}.\label{siii}
\end{align}
Next, to show the case $p\ge 2$ of (\ref{wiki}),   split the  sum in (\ref{wiki})   into the two parts
\begin{align} 
\begin{cases}
& \sum_{2^{\alpha_1+\cdots+\alpha_p}\ge (\lambda+\lambda^{-1})^{2/\delta}}2^{-(\alpha_1+\cdots+\alpha_p)}\\
 &   \sum_{2^{\alpha_1+\cdots+\alpha_p}\le  (\lambda+\lambda^{-1})^{2/\delta}}2^{-(\alpha_1+\cdots+\alpha_p)}\min\left\{1, \frac{1}{(\lambda 2^{-(\alpha_1+\cdots+\alpha_p) \delta})^{1/\tau}  }\right\}
 \end{cases}\label{844g}
\end{align}
over  the  indices $\alpha_i\in\mathbb{Z}_+$ for $i\in [p]$. The first sum in (\ref{844g}) is bounded by 
\begin{align*}
\sum_{\alpha_i\in \mathbb{Z}_+\ \text{for}\ i=1,\cdots,p} 2^{-\frac{1}{2}(\alpha_1+\cdots+\alpha_p)}  (\lambda+\lambda^{-1})^{- 1/\delta} 
\lesssim
 \lambda^{- 1/\delta}\le \frac{ (|\log \lambda  |+1)^{p-1}}{  \lambda^{1/\delta} }.
 \end{align*}
The second sum  (\ref{844g}), after coordinate change $\alpha_1+\cdots+\alpha_p=\alpha$,  becomes
\begin{align*}
 &\sum_{  0\le \alpha_1+\cdots+\alpha_{p-1}\le \frac{2}{\delta}\log_2 (\lambda+\lambda^{-1}) } \left(\sum_{\alpha\in\mathbb{Z}_+
}2^{-\alpha}\min\left\{1, \frac{1}{(\lambda 2^{-\alpha \delta})^{1/\tau}  }\right\}\right)\approx \frac{(|\log   \lambda|+1 )^{p-1}}{\lambda^{1/\delta}} \end{align*}
because of (\ref{siii}) and $\log(\lambda +\lambda^{-1})\approx |\log \lambda|+1$.   This with (\ref{siii})   yields   (\ref{wiki}).   
\end{proof}
 \begin{proof}[Proof of (\ref{wiki2})]
 Let $\alpha=\alpha_1+\cdots+\alpha_p$ and  $A:=2^{\frac{\delta}{\tau}-1}>1$ and write the sum 
\begin{align*}
\lambda^{-1/\tau}\min\left[2^{\alpha}\lambda^{1/\tau},2^{-\alpha \left(  \frac{\delta}{\tau}-1\right)}  \right]&=
\lambda^{-1/\tau} A^{-\alpha}\min\left\{ \left(\lambda^{-1}\left(A^{-1} A^{- \frac{1}{(\frac{\delta}{\tau}-1)}}\right)^{\tau\alpha}\right)^{-1/\tau},1 \right\}\\
&=\lambda^{-1/\tau} A^{-\alpha}\min\left\{ \left(\lambda^{-1} A^{-\alpha \tilde{\delta}} \right)^{-1/\tau},1 \right\} 
\end{align*}
with $\tilde{\delta}= \tau\left(1+\frac{1}{\frac{\delta}{\tau}-1}\right)  $. Replace $2$  in (\ref{wiki})  by $A$ and sum the last line over  $\alpha_i$  in $\mathbb{Z}_+$ to have the bound 
$\lambda^{-1/\tau}(\lambda^{-1})^{-1/\tilde{\delta}}(|\log\lambda^{-1}|+1)^{p-1}=\lambda^{-1/\delta}(|\log\lambda|+1)^{p-1}$. 
 \end{proof} 
\begin{proof}[Proof of  (\ref{wik})]
The  case $\tau> \delta$  follows from the geometric sum, and  the  case $\tau=\delta$  from $\sum_{2^{\alpha_1+\cdots+\alpha_p}<\lambda} 1\approx ( \log \lambda)^p$. Next,  use $\sum_{\alpha\in \mathbb{Z}_+} 2^{\alpha}\min\{1, (\lambda 2^{\alpha})^{-1}\}=\infty$.  \end{proof}

 \subsection{Forward Face Sum} 
We estimate $\sum_{j\in \mathbb{F}^{\vee}\cap\mathbb{Z}^d}   \mathcal{I}^{\rm{sub}}_{j}(\lambda)$ in (\ref{s40}) for $\mathbb{F}\in \mathcal{F}(\mathbb{P}_{\rm{for}})$.
      \begin{proposition}[Forward Face Estimate]\label{propfor}
Let $\mathbb{F}=\bigcap_{i=1}^{d_0} \pi_{\mathfrak{q}_i,r_i} \in\mathcal{F}^{k_0}(\mathbb{P}_{\rm{for}})$
 in  Theorem \ref{lemir1}.
Suppose that    all $\pi_{ \mathfrak{q}_i,r_i}^+$  are across-diagonal and satisfying
 $$
  \sharp\{\pi_{\mathfrak{q}_i,r_i}  :
  d(\pi^+_{\mathfrak{q}_i,r_i})=d(\mathbb{P}_{\rm{for}})\}_{i=1}^{{d_0}}=p \ge 0 .$$ 
  \begin{itemize}
   \item[(1)]
   Let $\lambda>1$.  If $\delta_{\rm{for}} >0$ and $P$ is of type $[0,\tau]$ with $\tau<\delta_{\rm{for}} $ in  (\ref{ss66}), then,
    \begin{align*}
 \qquad\sum_{j\in \mathbb{F}^{\vee}\cap\mathbb{Z}^d}   \mathcal{I}^{\rm{sub}}_{j}(\lambda)   & \le C  
    \lambda^{-1/\delta_{\rm{for}} }(|\log \lambda|+1)^{p-1}\ \text{for $C$ independent of $\lambda$.}
\end{align*}
One can show it has  the lower bound   
\begin{align*}
 \sum_{j\in \mathbb{F}^{\vee}\cap\mathbb{Z}^d}  \mathcal{I}^{\rm{sub}}_{j}(\lambda) &\ge C^{-1}
 \lambda^{-1/\delta_{\rm{for}}}(|\log \lambda|+1)^{p-1}\ \text{if $p\ge 1$}. 
 \end{align*}
     \item[(2)]
Let $0< \lambda\le  1$.  Then   there is $C_\epsilon>0$,
 \begin{align*} 
 \sum_{j\in \mathbb{F}^{\vee}\cap\mathbb{Z}^d}  \mathcal{I}^{\rm{sub}}_{j}(\lambda)   \le C_\epsilon   \lambda^{-\epsilon}\ \text{for an arbitrary small $\epsilon>0$.}
 \end{align*} 
 \end{itemize} 
\end{proposition}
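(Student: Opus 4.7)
The plan is to combine the pointwise decay estimate of Lemma \ref{lembd} (formula (\ref{ss66})) with the dyadic summation formulas of Lemma \ref{L76}, using the oriented simplicial parametrization from Theorem \ref{lemir1}. By Remark \ref{rmkk61} write each $j\in\mathbb{F}^{\vee}\cap\mathbb{Z}^d$ as $j=\sum_{i=1}^{d_0}\alpha_i\mathfrak{q}_i$ with $\alpha_i\in\mathbb{Z}_+$, and fix $\mathfrak{m}\in\mathbb{F}\cap\Lambda(P)$ (this intersection is nonempty because $\mathfrak{q}_i\in\rm{cone}^{\vee}(B)$ forces the minimum of $\langle\mathfrak{q}_i,\cdot\rangle$ on ${\bf N}(P,D_B)$ to be attained at a lattice point of $\Lambda(P)$). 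Setting $\ell_i=\langle\mathfrak{q}_i,{\bf 1}\rangle\ge 0$ and $\delta_i=d(\pi_{\mathfrak{q}_i,r_i}^+)$, Lemma \ref{lem34joo} gives $\mathfrak{q}_i\cdot\mathfrak{m}=\delta_i\ell_i$ whenever $\ell_i>0$, whence
\begin{align*}
j\cdot{\bf 1}=\sum_{i=1}^{d_0}\alpha_i\ell_i,\qquad j\cdot\mathfrak{m}=\sum_{i=1}^{d_0}\alpha_i\delta_i\ell_i,
\end{align*}
and (\ref{ss66}) reads $|\mathcal{I}^{\rm{sub}}_j(\lambda)|\lesssim 2^{-\sum_i\alpha_i\ell_i}\min\{1,\lambda^{-1/\tau}2^{\sum_i\alpha_i\delta_i\ell_i/\tau}\}$. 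The degenerate directions $\ell_i=0$ are eliminated in advance by a harmless perturbation of the simplicial decomposition of Theorem \ref{lemir1}, pushing each generator strictly into $(\rm{cone}^{\vee}({\bf 1}))^{\circ}$; the excluded boundary stratum has dimension less than $d_0$ and is absorbed by induction or treated as an edge face.

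For the upper bound in (1), partition $\{1,\ldots,d_0\}=I_1\sqcup I_2$ with $I_1=\{i:\delta_i=\delta_{\rm{for}}\}$ (so $|I_1|=p$) and $I_2=\{i:\delta_i<\delta_{\rm{for}}\}$, invoking Lemma \ref{lem10}. For $i\in I_2$ the gap $\delta_{\rm{for}}-\delta_i>0$ provides a spare geometric factor $2^{-c\alpha_i\ell_i}$ obtained by raising the min to a fractional power (the trick used in part (2) below); after this reduction the remaining sum is purely over $(\alpha_i)_{i\in I_1}$ and matches exactly the shape of (\ref{wiki}) with $\delta=\delta_{\rm{for}}$ and $p=|I_1|$, producing $\lambda^{-1/\delta_{\rm{for}}}(|\log\lambda|+1)^{p-1}$. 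The matching lower bound when $p\ge 1$ uses the positivity of $\psi$: restrict to the lattice slab
\begin{align*}
\big\{(\alpha_i)\in\mathbb{Z}_+^{d_0}:\,\log_2\lambda-C_0\le\sum_{i\in I_1}\alpha_i\delta_{\rm{for}}\ell_i\le\log_2\lambda+C_0,\ \alpha_i=0\ \text{for}\ i\in I_2\big\},
\end{align*}
on which the model computation (A-1) of Section \ref{Sec5} gives $\mathcal{I}^{\rm{sub}}_j(\lambda)\approx 2^{-j\cdot{\bf 1}}\approx\lambda^{-1/\delta_{\rm{for}}}$, and counting the $\approx(\log\lambda)^{p-1}$ lattice points in this $(p-1)$-dimensional slab yields the claimed lower bound.

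For part (2), apply the elementary inequality $\min\{1,X^{-1/\tau}\}\le X^{-\epsilon/\tau}$ valid for any $X>0$ and $0<\epsilon\le 1$, with $X=\lambda\,2^{-j\cdot\mathfrak{m}}$:
\begin{align*}
\sum_{j\in\mathbb{F}^{\vee}\cap\mathbb{Z}^d}|\mathcal{I}^{\rm{sub}}_j(\lambda)|\lesssim\lambda^{-\epsilon/\tau}\sum_{\alpha\in\mathbb{Z}_+^{d_0}}2^{-\sum_i\alpha_i\ell_i(1-\epsilon\delta_i/\tau)}.
\end{align*}
Choosing $\epsilon>0$ small enough that $1-\epsilon\delta_i/\tau\ge 1/2$ for every $i$, the remaining series is a convergent geometric sum bounded by a constant, and relabelling $\epsilon/\tau$ as $\epsilon$ completes (2).

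The main obstacle is obtaining the \emph{sharp} multiplicity $p-1$ in both directions of (1). For the upper bound, the separation of the $I_2$-directions from the $I_1$-directions must be done so that the geometric convergence along $I_2$ is not destroyed by the $\min$-factor; this requires the fractional-power trick to be calibrated carefully against the dyadic threshold $\log_2\lambda$, after which Lemma \ref{L76} applies and the $I_2$-contributions telescope. For the lower bound one must exhibit a bona fide lattice slab of $(p-1)$-dimensional logarithmic volume on which every main supporting plane is simultaneously saturated and no spurious contribution from $I_2$ intrudes; the rational bounds (\ref{asan1}) on the basis $\{\mathfrak{q}_i\}$ are what guarantee that the slab indeed carries $\approx(\log\lambda)^{p-1}$ many lattice points rather than a smaller power.
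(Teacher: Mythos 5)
Your parametrization $j=\sum_i\alpha_i\mathfrak{q}_i$, the split into $I_1=\{i:\delta_i=\delta_{\rm{for}}\}$ and $I_2=\{i:\delta_i<\delta_{\rm{for}}\}$, the application of the dyadic summation formula (\ref{wiki}) to the $I_1$-block, the positivity argument for the lower bound, and the fractional--power reduction for part~(2) all line up with the paper's actual proof. The observation that $\mathbb{F}\cap\Lambda(P)\ne\emptyset$ is also correctly justified.

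However, there is a genuine gap in your handling of the generators with $\ell_i=\langle\mathfrak{q}_i,{\bf 1}\rangle=0$. You propose to ``eliminate'' them by perturbing the simplicial decomposition so that every generator lies strictly in $(\rm{cone}^{\vee}({\bf 1}))^{\circ}$, claiming the excluded boundary stratum has dimension $<d_0$. This is not correct: $\mathbb{F}^{\vee}$ is a fixed $d_0$--dimensional simplicial cone in $\mathbb{P}^{\vee}_{\rm{for}}=\mathbb{P}^{\vee}\cap\rm{cone}^{\vee}({\bf 1})$, and whenever $\mathbb{P}^{\vee}$ straddles the hyperplane $\{\langle\cdot,{\bf 1}\rangle=0\}$ (which is generic for genuinely global $D_B$), at least one generator of each simplicial piece of $\mathbb{P}^{\vee}_{\rm{for}}$ must lie on that hyperplane, so these directions are unavoidable. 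Perturbing such a generator inward does not remove a lower--dimensional stratum; it removes an entire $d_0$--dimensional wedge of lattice points, and no other cone in the decomposition absorbs them. Moreover, with your formula $j\cdot\mathfrak{m}=\sum_i\alpha_i\delta_i\ell_i$ (valid only for $\ell_i\ne0$) the $\ell_i=0$ axes contribute $\sum_{\alpha_i\ge0}2^{-\alpha_i\cdot 0}=\infty$ to the naive upper bound, so something substantive must be said.

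The missing ingredient is exactly the paper's use of Lemma~\ref{lems01}(2): since $\pi^+_{\mathfrak{q}_i,r_i}$ is across-diagonal and $\langle\mathfrak{q}_i,{\bf 1}\rangle=0$, necessarily $r_i<0$, hence $\mathfrak{q}_i\cdot\mathfrak{m}=r_i<0$ for $\mathfrak{m}\in\mathbb{F}\cap\Lambda(P)$. This is what makes the $\min$--factor in (\ref{ss66}) decay geometrically along those axes; the paper packages this into the rescaled parameter $\lambda(\alpha)=\lambda\,2^{\sum_{i>n}|r_i|\alpha_i}$ (display~(\ref{w99})) before invoking (\ref{wiki}), and needs the crude bound $|\log\lambda(\alpha)|\lesssim|\log\lambda|+\sum_{i>n}\alpha_i$ to close the argument without degrading the multiplicity $p-1$. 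You would need to add this case explicitly (and also account for it in the fractional--power computation of part~(2), where $\mathfrak{q}_i\cdot\mathfrak{m}=r_i<0$ again saves the geometric convergence along those axes).
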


   \begin{proof}[Proof of Proposition \ref{propfor}]
Since $\mathbb{F}^{\vee}=\rm{cone}(\{\mathfrak{q}_i\}_{i=1}^{d_0})$ is simplicial  with
$\text{rank}(\{\mathfrak{q}_{i=1}^{d_0})\}=d_0$,  one can express $j\in\mathbb{F}^{\vee}\cap\mathbb{Z}^d$ in  $ \sum_{ j\in\mathbb{F}^{\vee}\cap\mathbb{Z}^d}\mathcal{I}^{\rm{sub}}_j(\lambda)$ in    Theorem \ref{lemir1}  as  $$j=\alpha_1 \mathfrak{q}_1+\cdots+
\alpha_{d_0}\mathfrak{q}_{d_0}\ \text{  with $\alpha_i\in  \mathbb{Z}_+ $.}$$ 
If $P$ is a polynomial, then   $-\infty\le d(\pi_{\mathfrak{q}_i,r_i}^+)\le d(\mathbb{P}_{\rm{for}}) =\delta_{\rm{for}} $
   in (\ref{jmq9}) and (\ref{m18}).  
  With     $0\le p\le d_0$,  we rearrange $\{\pi_{\mathfrak{q}_i,r_i}\}_{i=1}^{d_0}$ of the forward     supporting planes:
\begin{align}\label{jq106}
& \mathfrak{q}_i\cdot {\bf 1}>0\ \text{and}\ d(\pi^+_{\mathfrak{q}_i,r_i})=\delta_{\rm{for}} \ \text{for}\ i=1,\cdots,p,\nonumber\\
&\mathfrak{q}_i\cdot {\bf 1}>0\ \text{and}\  d(\pi^+_{\mathfrak{q}_i,r_i})<\delta_{\rm{for}}\ \text{for}\ i=p+1,\cdots,n,\\
&\mathfrak{q}_i\cdot {\bf 1}=0\ \text{for}\ i=n+1,\cdots,{d_0}\ \text{where}\ 0\le p\le n\le{d_0}.\nonumber
    \end{align}
Use  (\ref{ss66}) to have the upper bound of    $ \sum_{j\in  \mathbb{F}^{\vee}\cap\mathbb{Z}^d}  \mathcal{I}^{\rm{sub}}_{j}(\lambda)$   in terms of the above $\alpha$:\begin{align}\label{ssv77}
 \sum_{(\alpha_1,\cdots,\alpha_{d_0})\in  \mathbb{Z}_+^{d_0} }  2^{-\left(\alpha_1\mathfrak{q}_1+\cdots+\alpha_{n} \mathfrak{q}_{n}\right)\cdot {\bf 1}  }\min\left\{ 1, \frac{1}{\lambda^{\frac{1}{\tau}}2^{-(\alpha_1 \mathfrak{q}_1+\cdots+\alpha_{d_0}\mathfrak{q}_{d_0})\cdot \frac{\mathfrak{m} }{\tau}  }}\right\}
    \end{align}
where $\mathfrak{m}\in \mathbb{F}\cap \Lambda(P)$, that is,
 $\mathfrak{m}\in   \bigcap_{i=1}^{d_0}\pi_{\mathfrak{q}_i,r_i}$ in  (\ref{jq106}). Then it follows that  
 \begin{align}
  \mathfrak{q}_i\cdot \mathfrak{m}&=\begin{cases} d(\pi^+_{\mathfrak{q}_i,r_i}) \mathfrak{q}_i\cdot {\bf 1} \  \text{from  (\ref{pm12})  if}\     i=1,\cdots,n\ \text{and},   \label{sv101}\\
 r_i<0    \ \text{from  (2) of Lemma \ref{lems01}  if}\  i=n+1,\cdots,{d_0}. 
\end{cases}
   \end{align}
By using the second case $\mathfrak{q}_i\cdot \mathfrak{m}=r_i<0$ for $i=n+1,\cdots,d_0$ in (\ref{sv101}),   we put
\begin{align}
  \lambda(\alpha)&:=\lambda 2^{-\sum_{i=n+1}^{d_0} \mathfrak{q}_i\cdot \mathfrak{m} \alpha_i  }=\lambda 2^{-\sum_{i=n+1}^{d_0}  r_i\alpha_i  }=\lambda 2^{ \sum_{i=n+1}^{d_0} | r_i|\alpha_i  }\  \text{for $|r_i|>0$. } \label{w99}
  \end{align}
 Then by inserting (\ref{sv101}),(\ref{w99}) into   (\ref{ssv77}), one can rewrite it as 
   \begin{align}\label{s88}
& \sum_{\alpha_1,\cdots,\alpha_{d_0}}  2^{-\left(\alpha_1\mathfrak{q}_1+\cdots+\alpha_n \mathfrak{q}_n\right)\cdot {\bf 1}  }\min\left\{ 1, \frac{2^{\sum_{i=p+1}^n \alpha_i \mathfrak{q}_i\cdot {\bf 1}\frac{d(\pi_{\mathfrak{q}_i,r_i}^+)  }{\tau}  }     }{\left[  \lambda(\alpha)2^{ -\sum_{i=1}^p \alpha_i \mathfrak{q}_i\cdot {\bf 1}  \delta_{\rm{for}} }\right]^{\frac{1}{\tau}}  }\right\}.   \end{align}
For   $i=p+1,\cdots,n$, put  $\delta_i=\max\{0,d(\pi_{\mathfrak{q}_i,r_i}^+)\}$. By (\ref{jq106}) and  $\tau<\delta_{\rm{for}}$,  assume that   
\begin{align}\label{jmq17}
  \max\{\delta_i \}_{i=p+1}^n  <   \tau 
\end{align}  keeping $\tau<\delta_{\rm{for}}$  in (\ref{ss66}). Then one can see that (\ref{s88})  is bounded by
\begin{align*} 
\sum_{\alpha_1,\cdots,\alpha_{p} } 2^{- (\alpha_1 \mathfrak{q}_1\cdot {\bf 1}+\cdots+\alpha_p\mathfrak{q}_p\cdot {\bf 1}) } &  \sum_{\alpha_{p+1},\cdots,\alpha_{n} } 2^{- \left(\alpha_{p+1}\mathfrak{q}_{p+1}\cdot {\textbf 1}\big(1-\frac{\delta_{p+1}  }{\tau} \big)+\cdots+\alpha_n \mathfrak{q}_n\cdot {\textbf 1} \big(1-\frac{\delta_n }{\tau}  \big)\right)}  
 \\
&\times
 \sum_{\alpha_{n+1},\cdots,\alpha_{d_0}}  \min\left\{ 1, \frac{1 }{\left[  \lambda(\alpha) 2^{ -\sum_{i=1}^p \alpha_i \mathfrak{q}_i\cdot {\textbf 1}  \delta_{\rm{for}}    }\right]^{\frac{1}{\tau}}  }\right\}. 
 \end{align*}
Summing $\sum_{\alpha_{p+1},\cdots,\alpha_{n} } $ due to $\frac{\delta_i}{\tau}<1 $ in (\ref{jmq17}), majorize the above  by  $S(\lambda)$ where
\begin{align} \label{953}
 S(\lambda) 
 := \sum_{\alpha_1,\cdots,\alpha_{p},\alpha_{n+1},\cdots,\alpha_{d_0} } 2^{-\sum_{i=1}^p \alpha_i \mathfrak{q}_i\cdot {\bf 1} } \min\left[ 1, \frac{1}{\left( \lambda(\alpha) 2^{- \sum_{i=1}^p \alpha_i \mathfrak{q}_i\cdot {\bf 1} \delta_{\rm{for}} }\right)^{ \frac{1}{\tau}}  }\right].
    \end{align}   
 \begin{proof}[Proof    of (1)  for the case $\lambda\ge 1$]     Regard  $\alpha_i\mathfrak{q}_i\cdot {\bf 1}$   and   $  \lambda(\alpha)$  in $(\ref{953})$ as $\alpha_i$ and $\lambda$ in  (\ref{wiki}).
 Then,  apply  (\ref{wiki}) for the sum $  \sum_{\alpha_1,\cdots,\alpha_{p} } $ of (\ref{953}) with (\ref{w99}) to obtain that
  \begin{align*} 
S(\lambda) &\lesssim \sum_{\alpha_{n+1},\cdots,\alpha_{d_0}}  \frac{\left(\left|\log     \lambda(\alpha)\right|+1\right)^{p-1} }{  \lambda(\alpha)^{1/\delta_{\rm{for}}}}
 \lesssim  \frac{(|\log \lambda|+1)^{p-1}}{ \lambda^{1/\delta_{\rm{for}}}  }
\end{align*}
where the second   follows from  $\left|\log     \lambda(\alpha)\right|\le |\log\lambda |+\sum_{i=n+1}^{d_0} \alpha_i $ and $|r_i|>0$ in (\ref{w99}). If $p=0$, then in (\ref{953}), it holds that
 $S(\lambda) \le \sum_{\alpha_{n+1},\cdots,\alpha_{d_0}}  
  | \lambda(\alpha)|^{ -\frac{1}{\tau}}  
   \lesssim  \lambda^{-\frac{1}{\tau}}\lesssim \lambda^{-1/\delta_{\rm{for}}-\epsilon}$ for $\lambda>1$ giving a better bound than $ \lambda^{-1/\delta_{\rm{for}}} (|\log \lambda|+1)^{-1} $.\end{proof}
Next,  we show (2) and the reverse inequality of  (1).

 \begin{proof}[Proof of (2)  for the case $0<\lambda<1$]
One can assume that  $1/\tau$ in (\ref{953}) satisfies $0< 1/\tau \ll 1$. This combined with $\mathfrak{q}_i\cdot {\bf 1}>0$ for $1\le i\le p$, enables us to   estimate
 \begin{align*} 
 S(\lambda) \lesssim\sum_{\alpha_1,\cdots,\alpha_{p},\alpha_{n+1},\cdots,\alpha_{d_0}}    \frac{2^{-(\alpha_1 \mathfrak{q}_1+\cdots+\alpha_p\mathfrak{q}_p)\cdot {\bf 1}}2^{\frac{\delta_{\rm{for}}}{\tau}(\alpha_1 \mathfrak{q}_1+\cdots+\alpha_p\mathfrak{q}_p)\cdot {\bf 1}}}{  \lambda(\alpha)^{1/\tau} } \lesssim \frac{1}{\lambda^{1/\tau}} 
    \end{align*}
    because  
    $1\ll \tau$ makes  $\delta_{\rm{for}}<\tau$.  This gives the desired bound of (2).
    \end{proof}
 
 \begin{proof}[Proof  of the lower bound   in (1)]
We shall find a   lower bound  of 
 \begin{align*}   \sum_{ j\in \mathbb{F}\cap\mathbb{Z}^d}\mathcal{I}^{\rm{sub}}_j(\lambda)= \sum_{ j\in  \mathbb{F}^{\vee}\cap\mathbb{Z}^d  } 2^{-j\cdot {\bf 1}}\int \psi(\lambda P({\bf 2}^{-j}x))\psi_{D_B}({\bf 2}^{-j}x)\chi(x)dx
 \end{align*}
under the condition $   \delta_{\rm{for}}>0$ and  $p\ge 1$  in (\ref{jq106}).
By (\ref{jq501}),  it holds that
\begin{align*}
  |P({\bf 2}^{-j}x)|  \le C 2^{-j\cdot \mathfrak{m}}\ \text{for}\  j\in \mathbb{F}^{\vee}\cap \mathbb{Z}^d\ \text{and}\ \mathfrak{m}\in \mathbb{F}\cap\Lambda(P)  
  \end{align*}
which implies  $ \psi\left( \lambda  P({\bf 2}^{-j}x) \right)=1$ whenever $|C\lambda 2^{-j\cdot \mathfrak{m}}|<1/10$.   Hence, one has
\begin{align} \label{jq242}
  \sum_{ j\in \mathbb{F}^{\vee} \cap\mathbb{Z}^d }\mathcal{I}^{\rm{sub}}_j(\lambda) \gtrsim  \sum_{(\alpha_i)\in  (M_0 \mathbb{Z}_+)^{p}\cap A } 2^{-(\alpha_1\mathfrak{q}_1\cdot{\bf 1} +\cdots+\alpha_p \mathfrak{q}_p\cdot {\bf 1}) }.
  \end{align}
where  $A: =\{(\alpha_i):  C\lambda 2^{-(\alpha_1\mathfrak{q}_1\cdot{\bf 1} +\cdots+\alpha_p \mathfrak{q}_p\cdot {\bf 1}) \delta_{\rm{for}} }\le 1/10\}$ because
one can  restrict  $j=\alpha_1\mathfrak{q}_1+\cdots+\alpha_p\mathfrak{q}_p$  by taking $\alpha_{p+1}=\cdots=\alpha_{d_0}=0$ so that
$$2^{-j\cdot \mathfrak{m}} = 2^{-(\alpha_1\mathfrak{q}_1\cdot{\bf 1} +\cdots+\alpha_p \mathfrak{q}_p\cdot {\bf 1})\delta_{\rm{for}}  }\ \text{with $\mathfrak{q}_i\cdot {\bf 1}>0$ in  (\ref{jq106}) and (\ref{sv101})}. $$ 
Reset
$
   \gamma_i=  \alpha_i\mathfrak{q}_i\cdot {\bf 1}>0\ \text{ for $i\in [p]$,} 
$  and rewrite  RHS of (\ref{jq242})  in terms of $\gamma_i$'s:
\begin{align*} 
 \sum_{(\gamma_1,\cdots,\gamma_p)\in (N\mathbb{Z}_+)^p\cap A'} 2^{-(\gamma_1 +\cdots+\gamma_p)}\ \text{ with $A'=\{  (\gamma_i):   2^{-(\gamma_1+\cdots+\gamma_p)} \le  (10 C\lambda )^{-1/\delta_{\rm{for}}}   \}$}
 \end{align*}
for some $N\in\mathbb{N}$,
giving the lower bound $\lambda^{ \frac{-1}{\delta_{\rm{for}}}}  (|\log  \lambda|+1)^{p-1}$.  
 \end{proof}
Therefore, we finish the proof of Proposition \ref{propfor}.  
    \end{proof}
 We next  show  (\ref{es0}) saying  the case $\tau\ge \delta_{\rm{for}}$ (which includes   $\tau=\delta_{\rm{for}}=0$).
\begin{proof}[Proof of (\ref{es0})]
Suppose that $\tau:=\tau_0(P,D)\ge \delta_{\rm{for}}\ge 0$ with $\lambda\ge 1$. Then  
\begin{align}\label{rmk81}
     \sum_{j\in \mathbb{F}^{\vee}\cap\mathbb{Z}^d}   \mathcal{I}^{\rm{sub}}_{j}(\lambda)  &  \le C 
     \lambda^{-1/\tau}   (|\log \lambda|+1)^{p(\tau)}   \ \text{if}\  p(\tau)  = \begin{cases}0 \quad \text{if  $ \tau >\delta_{\rm{for}}$}\\
 p   \quad  \text{if   $ \tau =\delta_{\rm{for}}$.}\end{cases}
   \end{align}
 This with (2) of Proposition \ref{th103} yields (\ref{es0}). \end{proof}

\begin{proof}[Proof of (\ref{rmk81})]
{\bf Case 1}  $\delta_{\rm{for}} >0$. We  have  (\ref{jmq17}),(\ref{953}) even if $ \tau\ge \delta_{\rm{for}}=d(\mathbb{P}_{\rm{for}})>0$. 
 Regard  $\alpha_i\mathfrak{q}_i\cdot {\bf 1}$   and   $  \lambda(\alpha) $  in $(\ref{953})$ as $\alpha_i$ and $\lambda$ in  (\ref{wik}). Then, we can apply  (\ref{wik}) for $  \sum_{\alpha_1,\cdots,\alpha_{p} } $ of  (\ref{953}) to obtain 
 \begin{align*} 
S(\lambda) &\lesssim \sum_{\alpha_{n+1},\cdots,\alpha_{d_0}}  \frac{\left(\left|\log     \lambda(\alpha)\right|+1\right)^{p(\tau)} }{  \lambda(\alpha)^{1/\tau}} \lesssim  \frac{(|\log \lambda|+1)^{p(\tau)}}{ \lambda^{1/\tau}  }.
\end{align*}
where the second inequality follows from  (\ref{w99}).   \\
{\bf Case 2} $\delta_{\rm{for}}=0$. Note $d(\mathbb{P}_{\rm{for}})\le \delta_{\rm{for}}= 0$.  We still have  (\ref{jmq17})  for $\tau\ge \delta_{\rm{for}}=0$.  Then if $\tau>\delta_{\rm{for}}=0$, we have the same estimate as above. But if $\tau=\delta_{\rm{for}}=0$, then we can take $\tau\ll 1$ in  (\ref{ss66})  and  (\ref{953}), showing
 $S(\lambda) \lesssim  \frac{1}{\lambda^{1/\tau}} $.
\end{proof}

\subsection{Backward Face Sum}
We treat  the backward  faces  similarly.

\begin{proposition}\label{th103}[Backward Face]
Let  
 $\mathbb{F}=\bigcap_{i=1}^{d_0} \pi_{\mathfrak{q}_i,r_i} \in \mathcal{F}^{k_0}(\mathbb{P}_{\rm{bac}})$  in (\ref{s40}). Suppose  that
  all $\pi_{\mathfrak{q}_i,r_i}^+$ are across-diagonal   
 and $
  \sharp \{\pi_{\mathfrak{q}_i,r_i}:
  d(\pi^+_{\mathfrak{q}_i,r_i})=\delta_{\rm{bac}} \}_{i=1}^{d_0}=p 
$.  
   \begin{itemize}
  \item[(1)]
 Let $0< \lambda\le 1$. If  $\tau_0(P,D)=\tau\in   [0,\delta_{\rm{bac}})$ in  (\ref{ss66}), then there is $C>0$:
    \begin{align*} 
 \sum_{j\in \mathbb{F}^{\vee}\cap\mathbb{Z}^d}  \mathcal{I}^{\rm{sub}}_{j}(\lambda)    &\le C    \lambda^{-1/\delta_{\rm{bac}}}(|\log \lambda|+1)^{p-1}.
 \end{align*}
If $p\ge 1$, then there is $0<b\le 1$ such that for all $ \lambda\in (0,b]$,
\begin{align*}
\sum_{j\in \mathbb{F}^{\vee}\cap\mathbb{Z}^d}  \mathcal{I}^{\rm{sub}}_{j}(\lambda)    \ge
C^{-1}\lambda^{-1/\delta_{\rm{bac}}}(|\log \lambda|+1)^{p-1}.
 \end{align*}
  \item[(2)]
 Let $ \lambda> 1$.  If $\tau_0(P,D)=\tau\in   [0,\delta_{\rm{bac}})$ in  (\ref{ss66}), then there is $C>0$:
 \begin{align*} 
 \sum_{j\in \mathbb{F}^{\vee}\cap\mathbb{Z}^d}  \mathcal{I}^{\rm{sub}}_{j}(\lambda)\le C    \lambda^{-\frac{1}{\tau}}  \  \text{which is $O(\lambda^{-(1/\delta_{\rm{for}}+\epsilon)})$ if $\tau\in [0,\delta_{\rm{for}})$}.
 \end{align*}
 \end{itemize}
\end{proposition}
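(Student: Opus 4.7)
The proof parallels Proposition \ref{propfor} with the orientation flipped: every $\mathfrak{q}_i\cdot{\bf 1}\le 0$, so $2^{-j\cdot{\bf 1}}$ grows (rather than decays) in the dyadic parameter and the role of (\ref{wiki}) is played by the growing-exponent formula (\ref{wiki2}). The plan is to parametrize $j=\alpha_1\mathfrak{q}_1+\cdots+\alpha_{d_0}\mathfrak{q}_{d_0}$ via Theorem \ref{lemir1}, substitute the one-scale bound of Lemma \ref{lembd}, apply Lemma \ref{lem34joo} and Lemma \ref{lems01}(2) to evaluate $\mathfrak{q}_i\cdot\mathfrak{m}$, and then sum.

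I would partition the defining planes of $\mathbb{F}$ into three blocks: $i\in[1,p]$ with $\mathfrak{q}_i\cdot{\bf 1}<0$ and $d(\pi^+_{\mathfrak{q}_i,r_i})=\delta_{\rm{bac}}$; $i\in[p+1,n]$ with $\mathfrak{q}_i\cdot{\bf 1}<0$ and $d(\pi^+_{\mathfrak{q}_i,r_i})=:\delta_i>\delta_{\rm{bac}}$ (Lemma \ref{lem10} gives $d(\pi^+)\ge\delta_{\rm{bac}}$ on backward supporting planes); $i\in[n+1,d_0]$ with $\mathfrak{q}_i\cdot{\bf 1}=0$ and $r_i<0$ (forced by across-diagonality). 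Writing $\gamma_i=-\mathfrak{q}_i\cdot{\bf 1}\ge 0$ and $\lambda(\alpha)=\lambda\cdot 2^{\sum_{i=n+1}^{d_0}|r_i|\alpha_i}$, Lemma \ref{lembd} gives
\[
\sum_{j\in\mathbb{F}^\vee\cap\mathbb{Z}^d}\mathcal{I}^{\rm{sub}}_j(\lambda)\lesssim\sum_{\alpha\in\mathbb{Z}_+^{d_0}}2^{\sum_{i=1}^n\gamma_i\alpha_i}\min\!\Big\{1,\,\lambda(\alpha)^{-1/\tau}2^{-\tfrac{\delta_{\rm{bac}}}{\tau}\sum_{i=1}^p\gamma_i\alpha_i-\tfrac{1}{\tau}\sum_{i=p+1}^n\delta_i\gamma_i\alpha_i}\Big\}.
\]
The decisive point, and where the backward case is actually easier than the forward one, is that $\tau<\delta_{\rm{bac}}<\delta_i$ holds automatically on block two, so the exponent $\gamma_i\alpha_i(1-\delta_i/\tau)$ is strictly negative once the min is bounded by its second argument; hence $\sum_{\alpha_{p+1},\ldots,\alpha_n}$ is a convergent geometric series with no need to enlarge $\tau$. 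The residual sum in $\alpha_1,\ldots,\alpha_p$ matches the template of (\ref{wiki2}) with growth rate $\delta_{\rm{bac}}$ after rescaling by $\gamma_i$, producing $\lambda(\alpha)^{-1/\delta_{\rm{bac}}}(|\log\lambda(\alpha)|+1)^{p-1}$; summation over $\alpha_{n+1},\ldots,\alpha_{d_0}$ is absorbed because $\lambda(\alpha)\ge\lambda$ grows exponentially with $|r_i|>0$, reproducing $\lambda^{-1/\delta_{\rm{bac}}}(|\log\lambda|+1)^{p-1}$ exactly as after (\ref{953}). This settles the upper bound in (1).

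Case (2) is immediate: for $\lambda>1$, since $\mathfrak{q}_i\cdot\mathfrak{m}<0$ on all three blocks one has $\lambda\cdot 2^{-j\cdot\mathfrak{m}}\ge 1$ throughout $\mathbb{F}^\vee\cap\mathbb{Z}^d$, so the min always selects its second term and the full sum collapses to a product of three geometric series with strictly negative exponents $1-\delta_{\rm{bac}}/\tau$, $1-\delta_i/\tau$, $-|r_i|/\tau$, yielding $\lesssim\lambda^{-1/\tau}$; this is $O(\lambda^{-1/\delta_{\rm{for}}-\epsilon})$ whenever $\tau<\delta_{\rm{for}}$. The matching lower bound in (1) when $p\ge 1$ mimics the forward lower-bound argument: restrict to $\alpha_{p+1}=\cdots=\alpha_{d_0}=0$ and use (\ref{jq501}) to obtain $|P({\bf 2}^{-j}x)|\le C\cdot 2^{\delta_{\rm{bac}}\sum_{i=1}^p\gamma_i\alpha_i}$; for $\lambda\in(0,b]$ with $b$ small, the condition $C\lambda\cdot 2^{\delta_{\rm{bac}}\sum\gamma_i\alpha_i}\le 1/10$ cuts out a simplex in $\mathbb{Z}_+^p$ of diameter $\approx|\log\lambda|/\delta_{\rm{bac}}$ on which $\psi(\lambda P({\bf 2}^{-j}x))\equiv 1$, and summing $2^{\sum\gamma_i\alpha_i}$ over its lattice points produces the matching factor $\lambda^{-1/\delta_{\rm{bac}}}(|\log\lambda|+1)^{p-1}$. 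Because all three geometric series converge automatically, no delicate enlargement of $\tau$ is required here; the only real obstacle is the bookkeeping of substituting the growing-exponent summation formula (\ref{wiki2}) in place of (\ref{wiki}) when mirroring Proposition \ref{propfor}.
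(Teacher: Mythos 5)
Your three-block split, the reparametrization by $\gamma_i=-\mathfrak{q}_i\cdot{\bf 1}$, the identification of $\mathfrak{q}_i\cdot\mathfrak{m}$ through Lemma \ref{lem34joo} and Lemma \ref{lems01}(2), the dispatch of case (2) by observing that $j\cdot\mathfrak{m}<0$ forces the $\min$ into its second branch so that everything collapses to geometric series, and the lower-bound argument restricting to $\alpha_{p+1}=\cdots=\alpha_{d_0}=0$ all follow the paper's route faithfully.

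The upper bound of part (1) has a genuine gap in the handling of block two. You claim that the sum over $\alpha_{p+1},\ldots,\alpha_n$ is ``a convergent geometric series'' with exponent $\gamma_i\alpha_i(1-\delta_i/\tau)$ ``once the min is bounded by its second argument.'' But the block-two factor inside the $\min$ is $T_i:=2^{-\delta_i\gamma_i\alpha_i/\tau}\le 1$, and the inequality $\min\{1,T\cdot B\}\le T\min\{1,B\}$ fails for $T\le 1$ (take $B=T^{-2}$: the left side is $1$, the right side is $T<1$). Concretely, for $\lambda\le 1$ and small $\alpha_1,\ldots,\alpha_p,\alpha_{n+1},\ldots,\alpha_{d_0}$, the bracket equals $1$ on a large initial range of $\alpha_{p+1},\ldots,\alpha_n$, where the block-two summand is exactly the \emph{growing} factor $2^{\sum_{i=p+1}^n\gamma_i\alpha_i}$; the block-two sum taken in isolation is therefore not geometric, and in fact depends on $\lambda$ and on the other $\alpha$'s. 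The residual block-one sum after your imagined block-two contraction does not match the template of (\ref{wiki2}). The repair is to absorb block two into $\lambda(\alpha)$ together with block three — set $\lambda(\alpha)=\lambda\cdot 2^{\sum_{i=p+1}^n\delta_i\gamma_i\alpha_i+\sum_{i=n+1}^{d_0}|r_i|\alpha_i}$ exactly as in (\ref{w100}) — apply (\ref{wiki2}) to the \emph{block-one} sum first, obtaining $\lambda(\alpha)^{-1/\delta_{\rm{bac}}}(|\log\lambda(\alpha)|+1)^{p-1}$, and only then sum over $\alpha_{p+1},\ldots,\alpha_{d_0}$: the combined block-two exponent is $\gamma_i\alpha_i\bigl(1-\delta_i/\delta_{\rm{bac}}\bigr)$, strictly negative because $\delta_i>\delta_{\rm{bac}}$. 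Your intuition that no backward analogue of the forward condition (\ref{jmq17}) is needed is right, but the ratio that governs convergence is $\delta_i/\delta_{\rm{bac}}$ — which exceeds $1$ automatically by Lemma \ref{lem10} — not $\delta_i/\tau$, and the order of summation (block one before block two) is essential to make that ratio appear.

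Two smaller omissions: you should deal with $p=0$ separately (the paper chooses $\tau$ with $\delta_{\rm{bac}}<\tau<\min_i\delta_i$ there, since (\ref{wiki2}) is vacuous when $p=0$), and the $\tau=0$ case of (2) requires replacing $\tau$ by a small positive number as in the discussion following (\ref{ss66}).
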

\begin{proof}[Proof of Proposition \ref{th103}]
Observe that $0< \delta_{\rm{bac}}= d(\mathbb{P}_{\rm{bac}})\le d(\pi_{\mathfrak{q}_i,r_i}^+)$ as in (\ref{m18})  
  \footnote{If $\delta_{\rm{bac}}=\infty$, then $\delta_{\rm{bac}}\le d(\pi_{\mathfrak{q}_i,r_i}^+)=\infty$ for all $i$ of  $\mathbb{F}=\bigcap_{i=1}^{d_0}\pi_{\mathfrak{q}_i,r_i} $.  This with Definition \ref{df5.2} implies $\mathfrak{q}_i\cdot {\bf 1}=0$ for all $i \le d_0$. So, $n=0$ in   (\ref{jq107})-(\ref{j99}) so that $(\ref{ssv78}) =\sum_{(\alpha_{n+1},\cdots,\alpha_{d_0})  }   \lesssim \frac{1}{\lambda^{1/\tau}}$.}  for $i\in [d_0]$. Then,  rearrange   $\pi_{\mathfrak{q}_i,r_i}$  forming a backward  face $ \mathbb{F}=\bigcap_{i=1}^{d_0} \pi_{\mathfrak{q}_i,r_i}$ as
\begin{align}\label{jq107}
& \mathfrak{q}_i\cdot {\bf 1}<0\ \text{and}\   d (\pi_{\mathfrak{q}_i,r_i}^+)=\delta_{\rm{bac}} \ \text{for}\ i=1,\cdots,p,\nonumber\\
&\mathfrak{q}_i\cdot {\bf 1}<0\ \text{and}\  d(\pi_{\mathfrak{q}_i,r_i}^+)>\delta_{\rm{bac}}\ \text{for}\ i=p+1,\cdots,n,\\
&\mathfrak{q}_i\cdot {\bf 1}=0\ \text{for}\ i=n+1,\cdots,{d_0}.\nonumber
    \end{align}
With  $\tau$ in (\ref{ss66}) and $\mathfrak{m}\in \mathbb{F}\cap \Lambda(P)$,
estimate   $\sum_{ j=\alpha_1 \mathfrak{q}_1+\cdots+\alpha_{d_0}\mathfrak{q}_{d_0}\in\mathbb{F}^{\vee}\cap\mathbb{Z}^d}\mathcal{I}^{\rm{sub}}_j(\lambda)$   by
 \begin{align}\label{ssv78}
  \sum_{ \alpha_1,\cdots,\alpha_{d_0}\in  \mathbb{Z}_+  }  2^{-(\alpha_1 \mathfrak{q}_1+\cdots+\alpha_n\mathfrak{q}_n)\cdot {\bf 1}}\min\left\{\frac{1}{\lambda^{1/\tau}2^{-(\alpha_1 \mathfrak{q}_1+\cdots+\alpha_{d_0}\mathfrak{q}_{d_0})\cdot \mathfrak{m}/\tau}},1\right\}.
    \end{align}
From  (\ref{pm12}) and (2) of Lemma \ref{lems01}, we have for each $\mathfrak{q}_i$ in $ (\ref{jq107})$,,
 \begin{align}\label{j99}
 \begin{cases} 
 \mathfrak{q}_i\cdot \mathfrak{m} =\mathfrak{q}_i\cdot {\bf 1} d (\pi_{\mathfrak{q}_i,r_i}^+) \ \text{if}\     i=1,\cdots,n, \ \text{for the case $\mathfrak{q}_i\cdot {\bf 1}< 0$} \\
   \mathfrak{q}_i\cdot \mathfrak{m}=r_i <0  \ \text{if}\  i=n+1,\cdots,{d_0},\ \text{for the case $\mathfrak{q}_i\cdot {\bf 1}= 0$}. 
   \end{cases}
 \end{align} 
In  (\ref{ssv78}),(\ref{j99}), we rewrite   $ -\alpha_i\mathfrak{q}_i\cdot {\bf 1}\ge 0\ \text{as}\ \alpha_i \in \frac{1}{N}\mathbb{Z}_+\ \text{for}\ i=1,\cdots,n $  and  set
\begin{align}\label{w100}
 \lambda(\alpha)&:=\lambda 2^{-(\alpha_{p+1}\mathfrak{q}_{p+1}+\cdots+\alpha_{d_0}\mathfrak{q}_{d_0})\cdot \mathfrak{m}  }  \\
 &= \lambda 2^{\alpha_{p+1}d (\pi_{\mathfrak{q}_{p+1},r_{p+1}}^+) +\cdots+\alpha_{n}d (\pi_{\mathfrak{q}_n,r_n}^+) }  2^{ \alpha_{n+1}|r_{n+1}|+\cdots+\alpha_{d_0}|r_{d_0}|}.\nonumber
 \end{align}
Then  we rewrite   (\ref{ssv78})  as
 \begin{align}  \label{s19}
S(\lambda):=\sum_{\alpha_{p+1},\cdots,\alpha_{d_0}} 2^{(\alpha_{p+1}+\cdots+\alpha_{n})}      \sum_{\alpha_1,\cdots,\alpha_{p}} 2^{(\alpha_1+\cdots+\alpha_p)} 
\min\left\{ \frac{1} {\left( \lambda(\alpha) 2^{(\alpha_1+\cdots+\alpha_p)\delta_{\rm{bac}} } \right)^{\frac{1}{\tau}}  }, 1
 \right\}.
\end{align}
\begin{proof}[Proof of (2) in Proposition \ref{th103}]
Let $\lambda\ge 1$. Then   the  condition   $0< \tau < \delta_{\rm{bac}} <d (\pi_{\mathfrak{q}_i,r_i}^+) $ for $i=  p+1,\cdots,n$ in (\ref{w100})  implies
$
S(\lambda) \lesssim   \lambda^{-1/\tau}
$ in (\ref{s19})
which yields (2). If $\tau=0$, then take $0< \tau\ll 1$  in $S(\lambda)$.
\end{proof}
\begin{proof}[Proof for $\lesssim$  (1) in Proposition \ref{th103}]
Let $0<\lambda<1$.  If $p\ge 1$,   by
  regarding   $ \lambda(\alpha) $ in $S(\lambda)$ as $\lambda$,   apply  (\ref{wiki2})  with $\tau<\delta_{\rm{bac}}$ for the inner sum over $(\alpha_1,\cdots,\alpha_p)$ in (\ref{s19}),   \begin{align*} 
S(\lambda)&\lesssim \sum_{\alpha_{p+1},\cdots,\alpha_{d_0}} 2^{(\alpha_{p+1}+\cdots+\alpha_{n})}   \lambda(\alpha)^{-1/\delta_{\rm{bac}}}\left(\left|\log  \lambda(\alpha)   \right|+1\right)^{p-1}.
\end{align*}
For $\lambda(\alpha)$ in  (\ref{w100}), put  $\delta_i=d (\pi_{\mathfrak{q}_{i},r_{i}}^+) >\delta_{\rm{bac}}$ for $i=p+1,\cdots,n$. Then in the above,
\begin{align*}
RHS&\lesssim\sum_{\alpha_{p+1},\cdots,\alpha_{d_0}} \frac{\lambda^{-1/\delta_{\rm{bac}}}  \left( |\log \lambda|^{p-1}+|\sum_{i=p+1}^{d_0} \alpha_i|^{p-1}  \right) } { 2^{\alpha_{p+1}\left(\frac{\delta_{p+1}}{\delta_{\rm{bac}}}-1\right) +\cdots+\alpha_{n}\left( \frac{\delta_{n} }{\delta_{\rm{bac}}}-1 \right) }  2^{ \alpha_{n+1}\frac{|r_{n+1}|}{\delta_{\rm{bac}}}+\cdots+ \alpha_{d_0}\frac{ |r_{d_0}|}{\delta_{\rm{bac}}}  \ }   }\\
&\lesssim \frac{(|\log \lambda|+1)^{p-1} }{\lambda^{1/\delta_{\rm{bac}}} }.
 \end{align*}
 If $p=0$,    take $\tau$ as $\delta_{\rm{bac}}<\tau<\delta_i$ for $i=p+1,\cdots,n$ of  $S(\lambda)$ in (\ref{s19}) to have
$$  S(\lambda) \lesssim \sum_{ \alpha_{p+1},\cdots,\alpha_{d_0}\in  \mathbb{Z}_+}  \frac{ 2^{ (\alpha_{p+1}+\cdots+\alpha_n)}  } {\lambda^{\frac{1}{\tau}}  2^{(\alpha_{n+1}|r_{n+1}|+\cdots+\alpha_{d_0}|r_{d_0}|)\frac{1}{\tau}}     2^{ (\alpha_{p+1}\frac{d_{p+1}}{s}+\cdots+ \alpha_{n}\frac{d_n}{\tau}) }     } \lesssim  \lambda^{-1/\tau}
 $$
with $1/\tau=  1/\delta_{\rm{bac}}-\epsilon$. We proved  $\lesssim$ of
(1) in  Proposition \ref{th103}.
\end{proof}

  \begin{proof}[Proof for the reverse inequality in  (1) of Proposition \ref{th103}] 
 Let $\lambda<b:= \frac{1}{100C}$. As   (\ref{jq242}), with  $\mathfrak{q}_i\cdot {\bf 1}<0$ for $i=1,\cdots,p$ in (\ref{jq107}), 
 \begin{align} \label{jq243}
  \sum_{ j\in \mathbb{F}\cap\mathbb{Z}^d}\mathcal{I}^{\rm{sub}}_j(\lambda)&\gtrsim \sum_{(\alpha_i)\in (M_0 \mathbb{Z}_+)^p\cap A } 2^{-(\alpha_1\mathfrak{q}_1+\cdots+\alpha_p\mathfrak{q}_p)\cdot {\bf 1}}  \approx  \lambda^{ \frac{-1}{  \delta_{\rm{bac}}  }}  (|\log  \lambda|+1)^{p-1} 
    \end{align}
due to   $A =\{(\alpha_i) : \lambda   2^{-(\alpha_1\mathfrak{q}_1\cdot{\bf 1} +\cdots+\alpha_p \mathfrak{q}_p\cdot {\bf 1})\delta_{\rm{bac}} } <1/(10C)\}$.
\end{proof}
Therefore, we have finished the proof of Proposition \ref{th103}.
\end{proof}
  \begin{proposition}\label{propforo}$[$Oscillatory integrals$]$
In  Propositions \ref{propfor} and \ref{th103}, if the  hypothesis (\ref{ss66})  is replaced with   (\ref{sv77}), then
$\sum_{j\in \mathbb{F}^{\vee}\cap\mathbb{Z}^d} | \mathcal{I}^{\rm{osc}}_{j}(\lambda) | $ has the same upper bounds  where $ (\tau,\delta_{\rm{for}})=(0,0)$   in    (\ref{rmk81})  switched with $(\tau,\delta_{\rm{for}})=(1,0)\ \text{and}\ (1,1)$.
\end{proposition}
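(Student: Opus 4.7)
The plan is to carry out the same bookkeeping as in Propositions~\ref{propfor} and \ref{th103}, simply replacing the dyadic-piece bound (\ref{ss66}) by its oscillatory counterpart (\ref{sv77}). Both bounds from Lemma~\ref{lembd} have the identical shape
\[
  2^{-j\cdot {\bf 1}}\min\Bigl\{1,\ \bigl|\lambda\, 2^{-j\cdot \mathfrak{m}}\bigr|^{-1/\tau}\Bigr\},
\]
and every subsequent manipulation in Section~\ref{Sec8}---the simplicial expansion $j=\alpha_1\mathfrak{q}_1+\cdots+\alpha_{d_0}\mathfrak{q}_{d_0}$ from Theorem~\ref{lemir1}, the forward/backward rearrangements (\ref{jq106}) and (\ref{jq107}), the inner-product identities (\ref{sv101}) and (\ref{j99}) coming from Lemma~\ref{lem34joo} and Lemma~\ref{lems01}, the reduction to the model sum $S(\lambda)$ of (\ref{953}) and (\ref{s19}), and the invocation of the summation formulas in Lemma~\ref{L76}---depends only on that shape. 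Substituting (\ref{sv77}) for (\ref{ss66}) therefore reproduces each upper estimate verbatim, so the forward bound $\lesssim\lambda^{-1/\delta_{\rm{for}}}(|\log\lambda|+1)^{p-1}$ for $\lambda\ge 1$, the backward bound $\lesssim\lambda^{-1/\delta_{\rm{bac}}}(|\log\lambda|+1)^{p-1}$ for $0<\lambda\le 1$, and their off-regime companions in (1)--(2) of Propositions~\ref{propfor} and \ref{th103} all transfer to $\sum_{j\in\mathbb{F}^{\vee}\cap\mathbb{Z}^d}|\mathcal{I}^{\rm{osc}}_{j}(\lambda)|$ without change.

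The only minor adjustment concerns the admissible range of $\tau$ at the degenerate endpoints. For the sublevel set integral, Lemma~\ref{lembd} permits replacing $\tau_0=0$ by an arbitrarily small $\tau>0$, whereas for the oscillatory integral the analogous flexibility is available exactly when $\tau_1=1$. Consequently, the degenerate configuration $(\tau,\delta_{\rm{for}})=(0,0)$ that was handled in (\ref{rmk81}) by choosing $0<\tau\ll 1$ is replaced, in the oscillatory setting, by the two endpoint configurations $(\tau,\delta_{\rm{for}})=(1,0)$ and $(1,1)$; both are resolved by the same shrinking trick applied to (\ref{sv77}), which allows $\tau$ to be taken as small as desired while still controlling the first-derivative norm of the phase via (\ref{aptment}). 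The parallel analysis goes through for the backward case, giving the sub-bound $\lambda^{-1/\tau}$ of (2) of Proposition~\ref{th103}.

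The only input beyond this direct parallel is the proof of (\ref{sv77}) itself, which rests on the van der Corput lemma applied after the rescaling $x\mapsto {\bf 2}^{-j}x$. That step requires the uniform amplitude bound $|\partial_x^{\alpha}(\Psi_{D_{B,R}}({\bf 2}^{-j}x)\chi(x))|\lesssim 1$ on the support of $\chi$, with constant independent of $j$; this is exactly what the zero symbol condition (\ref{29a}) in the definition of $\mathcal{A}(D_B)$ is designed to supply, and it is precisely the step already carried out inside the proof of Lemma~\ref{lembd}. Apart from recording that uniformity, there is no genuine obstacle: the main content of Proposition~\ref{propforo} is the observation that the oscillatory dyadic bound has the correct form to feed into the combinatorial summation machinery of Section~\ref{Sec8} with no further modification.
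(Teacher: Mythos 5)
Your proposal is correct and follows essentially the same route the paper (implicitly) relies on: the paper gives no separate proof of Proposition~\ref{propforo}, treating it as an immediate consequence of the fact that the dyadic-piece bound (\ref{sv77}) has the same shape as (\ref{ss66}), so the combinatorial summation of Section~\ref{Sec8} carries over unchanged. You also correctly identify the only adjustment — that the shrinking trick enabled by the $\tau_1=1$ clause of Lemma~\ref{lembd} handles both endpoint configurations $(\tau,\delta_{\rm{for}})=(1,0)$ and $(1,1)$ in place of the sublevel-set endpoint $(0,0)$ in (\ref{rmk81}).
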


\section{Proof of Main Theorems \ref{mainth13}-\ref{mainth15}}\label{Sec9}
\subsection{Part (A) of Main Theorems \ref{mainth13} and \ref{mainth15}}\label{sec71}   
  \begin{proof}[Proof of (A) of Main Theorems \ref{mainth13}]
The hypothesis of balanced${\bf N}(P,D_B)$ 
and the
  normal-crossing $  (P,D_B)$  of  type  $[0,\tau]$ for $ \tau<\delta_{\rm{for}}$ yield Propositions \ref{propfor} and \ref{th103}. 
Under these propositions,   it suffices to
 claim that 
\begin{align}  \label{81}
& \sum_{ \mathbb{F}\in\mathcal{F}^{k_0}}  \sum_{j\in\mathbb{F}^{\vee}\cap \mathbb{Z}^d}\mathcal{I}^{\rm{sub}}_j(\lambda) 
  \approx \begin{cases} \lambda^{-1/\delta_{\rm{for}}  }  (|\log \lambda|+1)^{d-1-k_{\rm{for}}} \ \text{if}\  \lambda\ge 1,\\
 \lambda^{-1/\delta_{\rm{bac}}  } (|\log\lambda|+1)^{d-1-k_{\rm{bac}}} \ \text{if}\  0< \lambda<1  \end{cases} 
\end{align}
where  $\mathcal{F}^{k_0}:=\mathcal{F}^{k_0}(\mathbb{P}_{\rm{for}}) \cup\mathcal{F}^{k_0} (\mathbb{P}_{\rm{bac}}) $ in Theorem \ref{lemir1}.\\
 {\bf Case 1}.  Let $\lambda\ge  1$ and $\text{dim}(\mathbb{F}^{\rm{for}}_{\rm{main}})=k_{\rm{for}}$.  Suppose that $\delta_{\rm{for}}>0$ and $\tau\in [0,\delta_{\rm{for}})$. Then   by using    (1)  of Proposition \ref{propfor}  (the forward case)    and   (2) in Proposition \ref{th103} (the backward case), we obtain that  \begin{align} \label{881}
  \sum_{j\in \mathbb{F}^{\vee}\cap\mathbb{Z}^d}  \mathcal{I}^{\rm{sub}}_{j}(\lambda) \begin{cases}
 \approx
 \lambda^{-1/\delta_{\rm{for}}}(|\log \lambda|+1)^{p-1}\ \ \text{if $\mathbb{F}\in \mathcal{F}^{k_0}(\mathbb{P}_{\rm{for}})$}
 \\
  \lesssim  
 \lambda^{-(1/\delta_{\rm{for}}+\epsilon)}\  \  \text{if  $\mathbb{F}\in \mathcal{F}^{k_0}(\mathbb{P}_{\rm{bac}})$}.
 \end{cases}
    \end{align} 
Here the case $p=0$ has a better major term. 
Sum the  RHS of (\ref{881})    over  all (finitely many)   faces $\mathbb{F}$ in (\ref{81}). Then the largest bound is
    $ \lambda^{-1/\delta_{\rm{for}}}(|\log \lambda|+1)^{d-k_{\rm{for}}-1} $, since $d-\text{dim}(\mathbb{F}^{\rm{for}}_{\rm{main}})=d-k_{\rm{for}}$ is the largest   $p$ in Proposition \ref{propfor}. Therefore, one has the desired bound   $ \lambda^{-1/\delta_{\rm{for}}}(|\log \lambda|+1)^{d-k_{\rm{for}}-1} $ for   $\lambda\ge 1$ in (\ref{81}). \\
{\bf Case 2}. 
 Let $0<\lambda< 1$ and $\text{dim}(\mathbb{F}^{\rm{bac}}_{\rm{main}})=k_{\rm{bac}}$.  
By taking    (1) of Proposition \ref{th103}  (the backward  case) and  (2) of Proposition \ref{propfor}  (the forward case) as
\begin{align} \label{882}
  \sum_{j\in \mathbb{F}^{\vee}\cap\mathbb{Z}^d}  \mathcal{I}^{\rm{sub}}_{j}(\lambda) \begin{cases}
 \approx
 \lambda^{-1/\delta_{\rm{bac}}}(|\log \lambda|+1)^{p-1}\ \ \text{if $\mathbb{F}\in \mathcal{F}^{k_0}(\mathbb{P}_{\rm{bac}})$}
 \\
  \lesssim
 \lambda^{-\epsilon}\  \text{for $0<\epsilon\ll 1$}   \  \text{if  $\mathbb{F}\in \mathcal{F}^{k_0}(\mathbb{P}_{\rm{for}})$}.
 \end{cases}
    \end{align}
The largest bound on RHS of (\ref{882}),  among all   faces $\mathbb{F}$  in (\ref{81}), is
   $ \lambda^{-1/\delta_{\rm{bac}}}(|\log \lambda|+1)^{d-k_{\rm{bac}}-1} $  because $d-\text{dim}(\mathbb{F}^{\rm{bac}}_{\rm{main}})=d-k_{\rm{bac}}$ is the largest possible $p$ in Proposition \ref{th103}.  The reverse inequality $\gtrsim$ holds for only $0<\lambda\le b$ as in (1) of Proposition \ref{th103}.  If $b< \lambda \le 1$, then  the lower bound $ \lambda^{-1/\delta_{\rm{bac}}}(|\log \lambda|+1)^{d-k_{\rm{bac}}-1} \approx 1$  in  (\ref{81}) follows from   $0\in P(D_B)$  in  the hypothesis of Main Theorem \ref{mainth13}. 
      \end{proof}

 \begin{proof}[Proof of $\lesssim$ in (A) of Main Theorem  \ref{mainth15}]
 Under the   normal-crossing hypothesis   of type  $[1,\tau]$, we can claim $\lesssim$ of (\ref{81})  for $|\mathcal{I}^{\rm{osc}}_{j}(\lambda)|$ by
applying   Proposition \ref{propforo}  
for  the estimates of $\sum_{j\in \mathbb{F}^{\vee}\cap\mathbb{Z}^d}  |\mathcal{I}^{\rm{osc}}_{j}(\lambda) |$ analogous to (\ref{881})  and  (\ref{882}).  
\end{proof}

\begin{proof}[Proof of  $\gtrsim$ in (A) of  Main Theorem \ref{mainth15}]
Let $p=d -k_{\rm{for}} $.We show the case $\lambda\ge1$:
\begin{align*} 
  \limsup_{|\lambda|\rightarrow\infty} \left|\frac{  \mathcal{I}^{\rm{osc}}(P,D_{B,R},\lambda) }{\lambda^{-1/\delta_{\rm{for}}   }(|\log\lambda|+1)^{p-1}}\right|\ge c\ \text{for}\ \mathcal{I}^{\rm{osc}}(P,D_{B,R},\lambda)=\int e^{i\lambda   P(x)}\psi_{D_{B,R}}(x)dx.
\end{align*} 
It suffices to find $c(P)>0$ such that  for any large $M$   there is $|\lambda|\ge M$  satisfying 
\begin{eqnarray} \label{0055}
 \lim_{R\rightarrow \infty} \left|\mathcal{I}^{\rm{osc}}(P,D_{B,R},\lambda)\right| >c(P)\lambda^{-1/\delta_{\rm{for}}}|\log \lambda|^{p-1}.
 \end{eqnarray}
Assume the contrary. Then  for an arbitrary small $\epsilon>0$, there is $M_\epsilon> 1$  such that
\begin{align}\label{n86}
 \left|\mathcal{I}^{\rm{osc}}(P,D_{B,R},\lambda)\right|<2\epsilon\lambda^{-1/\delta_{\rm{for}}}|\log \lambda|^{p-1}\ \text{ for all $|\lambda|>M_\epsilon$ and all $R>R_1$} 
 \end{align}
 for some   $R_1>0$.
 Let $m\in \mathbb{Z}_+$. Apply the Fourier inversion  and  the Fubini theorem,
\begin{align}\label{n87}
\int \psi\left(\frac{P(x)}{2^{-m}}\right)\psi_{D_{B,R}}(x)dx &=  \int  \left[\int 2^{-m}\widehat{\psi}(2^{-m}\lambda)e^{i\lambda P(x)} d\lambda\right] \psi_{D_{B,R}}(x)dx\nonumber\\
&=  \int \left[\int e^{i\lambda   P(x)}\psi_{D_{B,R}}(x)dx\right]2^{-m}\widehat{\psi}(2^{-m}\lambda)d\lambda \\
&\le  \int  \left| \mathcal{I}^{\rm{osc}}(P,D_{B,R},\lambda)\right|  2^{-m}|\widehat{\psi}(2^{-m}\lambda)|d\lambda.\nonumber
\end{align}
Insert the  lower bound   in  (A) of Main Theorem \ref{mainth13} into the LHS of (\ref{n87}). Then there is $c,R_1>0$ such that for $R>R_1,$
\begin{align}\label{nk88}
c(2^{-m})^{1/\delta_{\rm{for}}}(1+ |m|)^{p-1}&\le \int  \left|\mathcal{I}^{\rm{osc}}(P,D_{B,R},\lambda)\right|  2^{-m}|\widehat{\psi}(2^{-m}\lambda)|d\lambda.
\end{align}
By using $M_\epsilon$ in (\ref{n86}), split the RHS of (\ref{nk88})  into the three intervals
\begin{align} \label{app}
  \int_{|\lambda|>M_\epsilon}+\int_{ |\lambda|\le 1 }  + \int_{1\le |\lambda|\le M_\epsilon }   \left| \mathcal{I}^{\rm{osc}}(P,D_{B,R},\lambda)\right|  2^{-m}|\widehat{\psi}(2^{-m}\lambda)|d\lambda.
\end{align}
Utilize (\ref{n86}) to majorize the  first integral over $|\lambda|>M_\epsilon$ in (\ref{app}) by
\begin{align}\label{080}
  \epsilon (2^{-m})^{1/\delta_{\rm{for}}}\int_{|\lambda|>M_\epsilon}  \frac{|\log \lambda|^{p-1}}{( 2^{-m}\lambda)^{1/\delta_{\rm{for}}} } 2^{-m}|\widehat{\psi}(2^{-m}\lambda)|d\lambda\le C\epsilon  (2^{-m})^{1/\delta_{\rm{for}}}|m|^{p-1}
\end{align}
as $|\log \lambda| ^{p-1}\le ( \log (2^{-m}\lambda) | +\log 2^m)^{p-1}\lesssim  |\log (2^{-m}\lambda) |^{p-1}+     |m|^{p-1}$.
Majorize  the remaining   integrals in (\ref{app})   by the upper bounds in (A) of Main Theorem \ref{mainth15}: 
\begin{align*}
&  \int_{ |\lambda|\le 1 }\frac{(1+|\log \lambda|)^{d-\delta_{\rm{bac}}-1} }{  \lambda^{1/\delta_{\rm{bac}}}   }  2^{-m}d\lambda+ \int_{1\le |\lambda|\le M_\epsilon }   \frac{(1+|\log \lambda|)^{p-1} }{  \lambda^{1/\delta_{\rm{for}}} }  2^{-m} d\lambda 
\end{align*}
which is smaller than $C (M_\epsilon+1)2^{-m}$ where $1<\delta_{\rm{for}}\le \delta_{\rm{bac}}$.
With this and (\ref{080}), 
$$c(2^{-m})^{1/\delta_{\rm{for}}}(1+ |m|)^{p-1}\le   \text{RHS of  (\ref{nk88})}  \le C[\epsilon (2^{-m})^{1/\delta_{\rm{for}}}|m|^{p-1}+  (M_\epsilon+1)2^{-m}] $$
which  is smaller than $2C\epsilon (2^{-m})^{1/\delta_{\rm{for}}}|m|^{p-1}$  if we take $2^{-m}$ satisfying $2^{-m(1-\frac{1}{\delta_{\rm{for}}  })}M_\epsilon \ll  \epsilon$ due to $\delta_{\rm{for}}>1$.  This with  $\epsilon \ll 1$ makes a contradiction to $c\le 2C\epsilon$.
Hence  (\ref{0055}) is  true. One can similarly obtain $\gtrsim$ in (A) of the second case $\lambda\le 1$.
\end{proof}

\subsection{The Divergence  Part in  Main Theorems \ref{mainth13} and \ref{mainth15}}

\begin{proof}[Proof of (B) of Main Theorem   \ref{mainth13}]
Assume that $ {\bf N}(P,D_B)$ is unbalanced. Then  we show   $\int \psi(\lambda P(x)) \psi_{D_{B}}(x) dx=\infty$.   Take $\mathfrak{q}\in \rm{cone}^{\vee}(B\cup \Lambda(P)\cup \{-{\bf 1}\})$ where 
   $  \pi^+_{\mathfrak{q},r}$ with $r\ge 0$ is an    off-diagonal  supporting upper half space of  $ {\bf N}(P,D_B)$:  
 \begin{align} \label{jq239}
 \mathfrak{q} \in  \rm{cone}^{\vee}(B) \ \text{such that } \mathfrak{q}\cdot \mathfrak{n}\ge 0\ \forall \mathfrak{n}\in \Lambda(P)\   \text{and}\  \mathfrak{q}\cdot {\bf 1} \le 0\ \text{where $|\mathfrak{q}|\approx 1$}. \end{align}
 If $D_B$ does not contain  a neighborhood of ${\bf 0}$, then take $\mathfrak{p}={\bf 0}$.
If $D_B$ contains a neighborhood of ${\bf 0}$, then take $\mathfrak{p}\in (\rm{cone}^{\vee}(B))^{\circ}$ satisfying that   $ {\bf 2}^{-\mathfrak{p}}\sim (y_1,\cdots,y_d)\in D_B$ with   $p_\nu\gg 1$ for all $\nu$ in  $  (p_\nu)=\mathfrak{p}$  such that  for all $\mathfrak{n}\in \Lambda(P)$,
\begin{align}\label{911i}
 2^{-\mathfrak{p}\cdot \mathfrak{n}}\le \frac{1}{2^{10}C(P)\lambda} \ \text{with $C(P):=2  |\Lambda(P)|\max\{|c_{\mathfrak{n}}|\}$} 
\end{align} 
 for $P(x)=\sum_{\mathfrak{n}\in \Lambda(P)} c_{\mathfrak{n}}x^{\mathfrak{n}}$.   For $\mathfrak{q} $ in (\ref{jq239}) and $\mathfrak{p}$ in (\ref{911i}), define
\begin{align} \label{912p}
\mathbb{Z}^d(\mathfrak{q},\mathfrak{p},R)&: =\left\{j= \alpha \mathfrak{q}+ \mathfrak{p}\in \rm{cone}^{\vee}(B)\cap \mathbb{Z}^d :  \log_2R\le \alpha\le 2 \log_2R\right\}
 \end{align}
 where $\alpha\in M_0\mathbb{Z}_+ $ with $M_0\in \mathbb{N}$ in (\ref{asan1}) and   $R\gg 1$.  From $j=\alpha\mathfrak{q} +\mathfrak{p} \in \rm{cone}^{\vee}(B)$,
 \begin{align}\label{jq93}
   \mathcal{I}^{\rm{sub}}(P,D_B,\lambda) &\ge \sum_{j\in \mathbb{Z}^d(\mathfrak{q},\mathfrak{p}, R)} 2^{-j\cdot {\bf 1}} \int \psi(\lambda P  ({\bf 2}^{-j}x))\psi_{D_{B,R }}  ({\bf 2}^{-j}x)\chi(x)dx.
\end{align}
By $\Lambda(P)\subset \pi_{\mathfrak{q},r}^+$,  we have
 $\mathfrak{q}\cdot  \mathfrak{n} \ge r$ for all $\mathfrak{n}\in\Lambda(P)$. This with $2^{\alpha}\sim R$ for $R\gg 1$ in (\ref{912p}) implies
 in (\ref{jq93}),  
 \begin{align}\label{01kg}
 | \lambda P ({\bf 2}^{-j}x)|   &\le\lambda  C(P)  \sup_{\mathfrak{n}\in \Lambda(P) }2^{-\mathfrak{p}\cdot \mathfrak{n}} 2^{-\alpha\mathfrak{q}\cdot \mathfrak{n} } \le \lambda  C(P)  \sup_{\mathfrak{n}\in \Lambda(P) }2^{-\mathfrak{p}\cdot \mathfrak{n}} 2^{-\alpha r}\nonumber\\
&\le 2^{-10} \ \ \begin{cases}
\text{if $r>0$, }\\
\text{if $r=0$ and  $\lambda\in (0,c]$ with $c:= 1/(2^{11}C(P))$}, \\
\text{if $r=0$ and ${\bf 0}\in D_B$, due to (\ref{911i}).}
\end{cases} . 
\end{align}
As   $\mathfrak{p}$ is fixed and $  \mathfrak{q}\cdot {\bf 1} \le 0$ in (\ref{jq239}), we obtain that for the above three cases of $r$,
\begin{align*}
RHS\ \text{of (\ref{jq93})} &  \ge      \sum_{ \alpha\in M_0\mathbb{Z}_+; \ \log_2R\le \alpha\le 2 \log_2R}   2^{-\mathfrak{p}\cdot {\bf 1}} 2^{-\alpha   \mathfrak{q}\cdot {\bf 1}}
  \gtrsim      \log R 
\end{align*}
    as $R\rightarrow \infty$.  This yields $\mathcal{I}^{\rm{sub}}(P,D_B,\lambda)=\infty$ in (B) of Main Theorem \ref{mainth13}.
\end{proof}

\begin{proof}[Proof of (B) of  Main Theorem \ref{mainth15}]
Let ${\bf N}(P,D_B)$ be unbalanced.  
Then, choose   $\mathfrak{q}=(q_1,\cdots,q_d) $ in (\ref{jq239}).   Set $$\Psi (x)=\sum_{ \alpha\in \mathbb{Z}_+ }\chi
\left( \frac{x_1}{2^{-\alpha q_1}} \right) \cdots\chi\left( \frac{x_d}{2^{-\alpha q_d}} \right).$$
 Then $x\in \text{supp}(\Psi)$ if and only if $x\sim  {\bf 2}^{-\alpha \mathfrak{q}}$ for some $\alpha$.  We see  that  $\Psi\in \mathcal{A}(D_B)$. To show (B) of the main theorem \ref{mainth15}, we   claim that
\begin{align*} 
 \lim_{R\rightarrow\infty} \left|\int e^{i\lambda P(x) } \sum_{ \alpha\in \mathbb{Z}_+ }\chi
\left( \frac{x_1}{2^{-\alpha q_1}} \right) \cdots\chi\left( \frac{x_d}{2^{-\alpha q_d}} \right)\psi\left( \frac{x_1}{R}  \right) \cdots \psi\left( \frac{x_d}{R}  \right)dx\right|
 =\infty.
\end{align*}
By the change of variables $x_\nu\rightarrow 2^{-\alpha q_\nu}x_\nu$ and $\chi(x)=\chi\left( x_1\right) \cdots\chi\left( x_d\right)$, 
  split the above integral   into the two terms
\begin{align*} 
  \sum_{ 0\le \alpha\le \epsilon\log_2 R} + \sum_{  \epsilon\log_2 R \le \alpha\le  C\log_2 R}\  2^{-\alpha  \langle\mathfrak{q},{\bf 1}\rangle}\int e^{i\lambda P({\bf 2}^{-\alpha \mathfrak{q}}x) } \chi
\left( x\right)  dx=A(R)+B(R).
\end{align*}
 {\bf Case 1}. Let $r>0$ in (\ref{jq239}). By (\ref{01kg}) with $\mathfrak{p}={\bf 0}$,  $| \lambda P ({\bf 2}^{-\alpha \mathfrak{q}}x)|  \ll 1$.
   This with $ \langle\mathfrak{q},{\bf 1}\rangle\le 0$ in (\ref{jq239}) yields that   
\begin{align*} 
 |B(R)|\ge2^{  (\log_2R) |\langle \mathfrak{q},{\bf 1}\rangle|} (C/2)\log_2R ,  
\end{align*}
which is much bigger than   $|A(R)|$  because $|A(R)|\le 2^{\epsilon (\log_2R) |\langle \mathfrak{q},{\bf 1}\rangle|}  \epsilon \log_2 R$. Therefore,  it holds that $\lim_{R\rightarrow\infty} |A(R)+B(R)|=\infty$. 
  \\
{\bf Case 2}. Let $ r=0$ in (\ref{jq239}).  If $\mathfrak{m}\in \mathbb{F}=\pi_{\mathfrak{q},r}\cap {\bf N}(P,D_B)$,  then $\mathfrak{q}\cdot \mathfrak{m}=0$. In (\ref{jq524}), 
\begin{align*} 
1= 2^{-\alpha \mathfrak{q}\cdot \mathfrak{m}}\gtrsim 2^{c \alpha} \sum_{\mathfrak{n}\in \Lambda(P)\setminus \mathbb{F}} |c_{\mathfrak{n}}2^{-\alpha \mathfrak{q}\cdot \mathfrak{n}}| \ \  \text{for $c>0$ and $\alpha \ge 1$}.  \end{align*}
This implies that  for $\alpha\gg 1$,
\begin{align*}
P({\bf 2}^{-\alpha \mathfrak{q}}x)
&=2^{-\alpha\mathfrak{q}\cdot \mathfrak{m}}P_{\mathbb{F}}(x)+P_{\Lambda(P)\setminus \mathbb{F}}({\bf 2}^{-\alpha\mathfrak{q}}x) = P_{\mathbb{F}}(x)+O(2^{-c\alpha}).
\end{align*}
 By this with the mean value property and   $\epsilon\log_2R\le \alpha \le C\log_2 R$  in $B(R)$,
$$ e^{i\lambda  P  ({\bf 2}^{-\alpha \mathfrak{q}}x)}=e^{i\lambda  P_{\mathbb{F}}  (x)}+O(2^{-c\alpha}\lambda)  \ \text{with $ O(2^{-c \alpha}\lambda)=O(R^{-\epsilon/2})$} $$
 for  sufficiently large $R\gg 1$. Thus for this $R$,  it holds that  \begin{align}\label{cacu}
 &B(R)     =   \sum_{  \epsilon\log_2 R \le \alpha\le  C\log_2 R} 2^{-\alpha  \langle\mathfrak{q},{\bf 1}\rangle}\left[ \int  e^{i\lambda P_{\mathbb{F}}(x)}\chi(x)dx +O(R^{-\epsilon/2})\right].  
 \end{align}
A function $Q$ defined by $\lambda\rightarrow Q(\lambda):= \int  e^{i\lambda P_{\mathbb{F}}(x)}\chi(x)dx  $ is an analytic function in $\mathbb{R}$, not identically zero.  The identity theorem implies that $\{\lambda\in \mathbb{R}:Q(\lambda)=0\}$ is a measure-zero set. For $\lambda$  with $Q(\lambda)> 0$ and  $R\gg 1$,  we have that $|\text{RHS of (\ref{cacu})}|$ is
\begin{align*}
\left|\sum_{  \epsilon\log_2 R \le \alpha\le  C\log_2 R}   2^{-\alpha  \langle\mathfrak{q},{\bf 1}\rangle} [Q(\lambda) +O(R^{-\epsilon/2})]\right|&\gtrsim \left| \sum_{  \epsilon\log_2 R \le \alpha\le  C\log_2 R}   2^{-\alpha  \langle\mathfrak{q},{\bf 1}\rangle} Q(\lambda)  \right|\\
&\ge |Q(\lambda)2^{  (\log_2R) |\langle \mathfrak{q},{\bf 1}\rangle|}(C/2)\log_2R |,
 \end{align*}
which is  $\ge \epsilon  2^{\epsilon (\log_2R) |\langle \mathfrak{q},{\bf 1}\rangle|}\log_2 R\ge |A(R)|$. So,   $\lim_{R\rightarrow\infty} |A(R)+B(R)|=\infty$.
\end{proof}

\subsection{Proof of Corollary \ref{mainth14}}

  \begin{proof}[Proof of (A) in Corollary \ref{mainth14}]
  Suppose that ${\bf N}(P,D_B)$ is balanced. Then
\begin{align*} 
\int_{D_B} |P(x)|^{-\rho}  dx&\approx\sum_{k\in \mathbb{Z}} 2^{-\rho k} \left( |\{x\in D_B:|P(x)|\le 2^k\} |-  |\{x\in D_B:|P(x)|\le 2^{k-1}\} |\right) \\
   =&(1-2^{-\rho})  \sum_{k\in \mathbb{Z}} 2^{-\rho k}  |\{x\in D_B:|P(x)|\le 2^k\} | \\
  \approx   \sum_{2^k< 1}&   2^{-\rho k}  |\{x\in D_B:  |P(x)|\le 2^k\} |+\sum_{2^k\ge 1} 2^{-\rho k}  |\{x\in D_B:|P(x)|\le 2^k\} |. 
 \end{align*}
 From    (A) of the main theorem \ref{mainth13}, there are  $C_1,C_2>0$ independent of   $k$ such that 
\begin{align*}
&C_1 2^{k/\delta_{\rm{for}}}(|k|+1)^a\le |\{x\in D_B: |P(x)|\le 2^k\}|\le C_2 2^{k/\delta_{\rm{for}}}(|k|+1)^a \ \text{if $2^k< 1$},\\
&C_1 2^{k/\delta_{\rm{bac}}}(|k|+1)^b\le |\{x\in D_B: |P(x)|\le 2^k\}|\le C_2 2^{k/\delta_{\rm{bac}}}(|k|+1)^b\ \text{if $2^k\ge 1$}
\end{align*}
for  $a=d-k_{\rm{for}}-1$ and $b=d-k_{\rm{bac}}-1$.  This  yields that
   \begin{align*}
\int_{D_B} |P(x)|^{-\rho}  dx  &\approx\sum_{2^k<1} 2^{(1/\delta_{\rm{for}}-\rho) k}(|k|+1)^a + \sum_{2^k\ge 1} 2^{(1/\delta_{\rm{bac}}-\rho ) k} (|k|+1)^b
\end{align*}
which converges if and only  if $1/\delta_{\rm{bac}}<\rho< 1/\delta_{\rm{for}}$. This proves (A).
\end{proof}
\begin{proof}[Proof of (B) in Corollary \ref{mainth14}]
Suppose that ${\bf N}(P,D_B)$ is unbalanced.
Then, the part (B) of  the main theorem \ref{mainth13} implies $ \left|\{x\in D_B:|P(x)|\le 2^k\}\right|=\infty$  for some fixed $k$. This shows    $\int_{D_B} |P(x)|^{-\rho}dx\ge 2^{-k\rho}\left|\{x\in D_B:|P(x)|\le 2^k\}\right| =\infty$.
\end{proof}

\subsection{General Class of Phase Functions and Domains}\label{Sec43}
 We shall extend Main Theorems \ref{mainth13} and \ref{mainth15} to a larger class of  smooth functions $Q$  and  regions $ D$. 

  \begin{definition}\label{rmk21}
  Let $P$ be a  polynomial and let   $\mathbb{P}={\bf N}(P,D_B)$.
Set $$\mathbb{F}_{\rm{special}}^{\vee}:=\begin{cases} [\mathbb{F}^{\rm{main}}_{\rm{for}}]^{\vee}\cup [\mathbb{F}^{\rm{main}}_{\rm{bac}}]^{\vee}\ \text{ if  $\mathbb{P}$ is balanced}\\
\mathbb{P}^{\vee}_{\rm{off}}:=\rm{cone}^{\vee}(B \cup \Lambda(P)\cup \{-{\bf 1}\})\  \text{    if $\mathbb{P}$ is unbalanced}
\end{cases}
$$  as in Figure \ref{osc10}.  Consider a region $D\subset\mathbb{R}^d$ and  a smooth function $Q$   on $D\cap (\mathbb{R}\setminus\{0\})^d$.   
Then   $(Q,D)$ is  equivalent to $(P,D_B)$, provided   (1) and (2)  below hold.
\begin{itemize}
\item[(1)] $D\cong D_B$ if    $2^{-\mathbb{F}^{\vee}_{\rm{special}}\cap B(0,r)^c}\subset D\cap (\mathbb{R}\setminus\{0\})^d\subset 2^{-(\rm{cone}^{\vee}(B)+O(1))}$ for some $r$.
\item[(2)] $Q\cong_{[\sigma,\tau]} P $  if  $    \sum_{ \sigma\le |\alpha|\le \tau}|x^{\alpha}\partial^{\alpha}Q(x)|\bigg|_{D\cap (\mathbb{R}\setminus\{0\})^d}\approx  \sum_{\mathfrak{m}\in \Lambda(P)} |x^{\mathfrak{m}}|. $
\end{itemize}  
Denote  (1) and (2) at once by  $(Q,D) \cong_{[\sigma,\tau]} (P,D_B)$. For this case, define
\begin{align*} 
{\bf N}(Q,D)  :={\bf N}(P,D_B). 
\end{align*}
\end{definition}
  \begin{exam}
Thanks to $O(1)$ in (1) of Definition \ref{rmk21}, one  can treat  the perturbed domain   $D= \{x\in\mathbb{R}^d:   |x^{\mathfrak{b}}|\le 5\ \text{for all $\mathfrak{b}\in B$}\}$ of $D_B$ satisfying $D \cong D_B$.  For  instance,  take  $ D=\{x:|x^{{\bf e}_1}|,|x^{-{\bf e}_1}|\le 5\}.$ Then $D  \cong D_{\{{\bf e}_1,-{\bf e}_1\}}$     though  $\rm{cone}({\bf e}_1,-{\bf e}_1)$  is  not strongly convex. 
\end{exam}

\begin{exam}[Fractional Laurent  Polynomial]\label{defi42}
Let $ \Lambda(P)\subset (\frac{1}{K_1}\mathbb{Z}) \times \cdots\times (\frac{1}{K_d}\mathbb{Z})$ be a finite set with $K_\nu\in\mathbb{N}$ for $\nu\in [d]$. Then we call  $P(x)=\sum_{\mathfrak{m}=(m_\nu)\in \Lambda(P)} c_{\mathfrak{m}}x^{\mathfrak{m}}$ a Laurent  polynomial.  For $m_\nu=p/q$ with $q\in \mathbb{N}$ and $p\in\mathbb{Z}$, let
  $$x_\nu^{m_\nu}=x_\nu^{p/q}:=\begin{cases} (\sqrt[q]{|x_\nu|})^p\ \text{if $x_\nu\ge 0$} \\
 \text{one of}\  \pm  (\sqrt[q]{|x_\nu|})^p\ \text{if $x_\nu< 0$.}
   \end{cases} $$ 
   Given   $P,D_B$,  one can set  ${\bf N}(P,D_B): ={\bf Ch}\left(  \Lambda(P) + \rm{cone}(B)  \right)$ as in Definition \ref{ded28}
  \end{exam}

\begin{exam}
One can exclude  the middle region $M_h=\{x\in\mathbb{R}^d:1/h\le |x_\nu|\le h\ \forall \nu\in [d]\}$      from $D_B$,  keeping
 $ D_B\setminus M(h) \cong D_B$ due to $B(0,r)^c$ in (1).  
\end{exam}

 \begin{corollary}\label{coro42} 
Let  $(Q,D) \cong_{[0,\tau]} (P,D_B)$ for $\tau<\delta_{\rm{for}}$    and  $0\in Q(D)$.
\begin{itemize}
\item[(A)]  If ${\bf N}(Q,D)$ is   balanced, then it holds that 
\begin{align*} 
|  \{x\in D:|\lambda Q(x)|\le 1\}| \approx \begin{cases} \lambda^{-1/\delta_{\rm{for}}}  (|\log \lambda|+1)^{d-1-k_{\rm{for}}} \ \text{if}\  \lambda \in [1,\infty),\\
 \lambda^{-1/\delta_{\rm{bac}}} (|\log\lambda|+1)^{d-1-k_{\rm{bac}}} \ \text{if}\   \lambda\in (0,1).  \end{cases}
\end{align*} 
 \item[(B)] If ${\bf N}(Q,D)$ is  unbalanced, then, there exists $c>0$ such that
  $$|  \{x\in D:|\lambda Q(x)|\le 1\}|=\infty\ \text{ for  all $ \lambda\in (0,c)$.}$$ 
  \end{itemize}
\end{corollary}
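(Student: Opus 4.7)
The plan is to reduce Corollary \ref{coro42} directly to Main Theorem \ref{mainth13} applied to the companion polynomial pair $(P,D_B)$, using the two transfer conditions in Definition \ref{rmk21} to migrate every ingredient of the proof in Section \ref{sec71} from $(P,D_B)$ to $(Q,D)$. First I would observe that the polynomial $P$ enters the proof of Main Theorem \ref{mainth13}(A) essentially only through the monomialization identity
\begin{align*}
\sum_{0\le |\alpha|\le \tau}|\partial^{\alpha}(P({\bf 2}^{-j}x))|\approx 2^{-j\cdot \mathfrak{m}}\qquad(x\sim {\bf 2}^{-{\bf 0}},\ j\in \mathbb{F}^{\vee}\cap \mathbb{Z}^d)
\end{align*}
of Lemma \ref{l910}. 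By the chain rule, the left-hand side with $Q$ in place of $P$ equals $\sum_{0\le |\alpha|\le \tau}|({\bf 2}^{-j}x)^{\alpha}(\partial^{\alpha}Q)({\bf 2}^{-j}x)|$, which by condition (2) of Definition \ref{rmk21} is comparable to $\sum_{\mathfrak{m}\in \Lambda(P)}|({\bf 2}^{-j}x)^{\mathfrak{m}}|$, and the purely combinatorial half of Lemma \ref{l910} (i.e. \eqref{jq501}, which uses only the geometry of ${\bf N}(P,D_B)$) upgrades this to $\approx 2^{-j\cdot \mathfrak{m}}$. Hence Lemma \ref{lembd}, the oriented-simplicial-cone decomposition of Theorem \ref{lemir1}, and the face summations of Propositions \ref{propfor} and \ref{th103} all carry over to $(Q,D)$ with the same bookkeeping.

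For the upper bound in part (A), I would invoke the right inclusion of condition (1) in Definition \ref{rmk21}, namely $D\cap (\mathbb{R}\setminus\{0\})^d\subset 2^{-(\rm{cone}^{\vee}(B)+O(1))}$, which restricts the dyadic sum $\mathcal{I}^{\rm{sub}}(Q,D,\lambda)=\sum_{j} \mathcal{I}^{\rm{sub}}_j(\lambda)$ to indices $j$ in an $O(1)$-fattening of $\rm{cone}^{\vee}(B)\cap \mathbb{Z}^d$. The $O(1)$-shift is absorbed into the constants of Theorem \ref{lemir1} and the summation formulas of Section \ref{Sec8}, so the upper bound of Main Theorem \ref{mainth13}(A) transfers intact.

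For the lower bound in part (A) and for part (B), I would exploit the left inclusion $2^{-\mathbb{F}^{\vee}_{\rm{special}}\cap B(0,r)^c}\subset D$. In the balanced case, the reverse inequalities inside the proofs of Propositions \ref{propfor}(1) and \ref{th103}(1) extract the leading terms by summing $\mathcal{I}^{\rm{sub}}_{j}(\lambda)$ purely over $j$ lying in the dual cones $[\mathbb{F}^{\rm{main}}_{\rm{for}}]^{\vee}$ or $[\mathbb{F}^{\rm{main}}_{\rm{bac}}]^{\vee}$, which together form $\mathbb{F}^{\vee}_{\rm{special}}$; the restriction $|j|\ge r$ drops only $O(1)$ many indices and costs only a multiplicative constant. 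The residual range $\lambda\in[b,1]$ is handled by the hypothesis $0\in Q(D)$, mimicking the last paragraph of the proof of Main Theorem \ref{mainth13}(A). For part (B), the divergence construction in the proof of Main Theorem \ref{mainth13}(B) picked $\mathfrak{q}\in \rm{cone}^{\vee}(B\cup \Lambda(P)\cup\{-{\bf 1}\})=\mathbb{P}^{\vee}_{\rm{off}}=\mathbb{F}^{\vee}_{\rm{special}}$ and summed over dyadic pieces $j=\alpha\mathfrak{q}+\mathfrak{p}$ with $\alpha\to\infty$; for all sufficiently large $\alpha$ one has $|j|\ge r$ and hence ${\bf 2}^{-j}\in D$ by the left inclusion, so the divergent sum is inherited by $\mathcal{I}^{\rm{sub}}(Q,D,\lambda)$.

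The only non-mechanical step will be recovering, in part (B), the pointwise smallness $|\lambda Q({\bf 2}^{-j}x)|\ll 1$ that forces $\psi(\lambda Q({\bf 2}^{-j}x))=1$; for the polynomial $P$ this came from the explicit expansion used in \eqref{01kg}, whereas for $Q$ one must deduce a pointwise size bound from the derivative comparison $Q\cong_{[0,\tau]}P$. Because $\sigma=0$ is allowed in condition (2) of Definition \ref{rmk21}, the term with $|\alpha|=0$ delivers the required pointwise comparison $|Q(x)|\lesssim \sum_{\mathfrak{m}\in \Lambda(P)}|x^{\mathfrak{m}}|$ on $D\cap (\mathbb{R}\setminus\{0\})^d$, and substituting $x={\bf 2}^{-j}y$ with $y\sim {\bf 1}$ together with the choice of $\mathfrak{q},\mathfrak{p}$ from \eqref{jq239}--\eqref{911i} produces the bound $|\lambda Q({\bf 2}^{-j}x)|\le 2^{-10}$. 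Once this pointwise transfer is secured, the remainder of Corollary \ref{coro42} is a routine repackaging of Main Theorem \ref{mainth13}.
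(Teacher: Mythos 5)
Your proposal is correct and follows the same line of attack as the paper's own (extremely terse) proof of Corollary~\ref{coro42}: the paper simply says to replace $D_B$ and $P$ by $D$ and $Q$ in Theorem~\ref{lemir1}, run Propositions~\ref{propfor} and \ref{th103} unchanged, obtain the lower bound from the left inclusion ${\bf 2}^{-\mathbb{F}^{\vee}_{\rm special}\cap B(0,r)^c}\subset D$, and obtain the divergence from~(\ref{jq93}) together with that same inclusion. Where you genuinely add value is in identifying precisely \emph{which} ingredient of the proof needs to be re-established for a non-polynomial $Q$ — the monomialization estimate~(\ref{aptment}) — and in pointing out that the derivative comparison $Q\cong_{[0,\tau]}P$ of Definition~\ref{rmk21}(2), together with the purely combinatorial half of Lemma~\ref{l910}, delivers it; and likewise that the pointwise bound $|Q(x)|\lesssim\sum_{\mathfrak m}|x^{\mathfrak m}|$ needed in the divergence argument~(\ref{01kg}) is exactly the $|\alpha|=0$ term of condition~(2), which is why $\sigma=0$ matters there. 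One small caveat on your opening sentence: when $\rm{cone}(B)$ fails to be strongly convex (which Corollary~\ref{coro42} permits, per the remark following it), Main Theorem~\ref{mainth13} does \emph{not} literally apply to $(P,D_B)$ since $D_B$ then has measure zero; so this is not a black-box reduction to Main Theorem~\ref{mainth13} but, as the rest of your proposal makes clear, a transfer of the internal machinery (Theorem~\ref{lemir1}, Lemma~\ref{l910}, Propositions~\ref{propfor}--\ref{th103}) — which is exactly what the paper does.
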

\begin{remark}
In Corrollary \ref{coro42},  we do not assume  that  $\rm{cone}(B)$ is  strongly convex. One can include  $\tau=  \delta_{\rm{for}}=0$   in (A).
\end{remark}

\begin{proof}[Proof of Corollary  \ref{coro42}] 
We restrict the region $D$ to $D\cap (\mathbb{R}\setminus\{0\})^d$.
Replace  $D_B$ and  $P(x)$ with
 $  D \subset 2^{-(\rm{cone}^{\vee}(B)+B(0,r)) } $  and  a smooth function $Q(x)$  in     Theorem \ref{lemir1}. Then we obtain the same decay rate for each piece  of integral   in Propositions \ref{propfor} and \ref{th103}. This enables us to have
  the upper bound of (\ref{81}), which   gives the desired upper bound for $\mathcal{I}^{\rm{sub}}(Q,D,\lambda)$. The lower bounds (\ref{jq242}) and (\ref{jq243}) are  obtained from   ${\bf 2}^{- [\mathbb{F}^{\rm{main}}_{\rm{for}}]^{\vee}\cup [\mathbb{F}^{\rm{main}}_{\rm{bac}}]^{\vee} \cap B(0,r)^c} \subset D$ in (1) of Definition \ref{rmk21}. The divergence   follows from (\ref{jq93}) and  ${\bf 2}^{-\mathbb{P}^{\vee}_{\rm{off}}\cap B(0,r)^c}\subset D_B$. 
\end{proof}

 \begin{exam}\label{ex49}
A  decay  of  a sub-level-set-measure can be slower than that of  a  corresponding   oscillatory integral in a local region. But this can occur in an extreme manner  in the global region $\mathbb{R}^d$.
For example, let  $F(x)=x_3^2-(x_1^2+x_2^2)$.
Then the local estimates are  $\mathcal{I}^{\rm{sub}}(F(x),[-1,1]^3,\lambda)=O(\lambda^{-1})$  and $\mathcal{I}^{\rm{osc}}(F(x),[-1,1]^3,\lambda)=O(\lambda^{-3/2})$.
But in the global domain, one can compute $$
\begin{cases}
\mathcal{I}^{\rm{sub}}(F(x),\mathbb{R}^3,\lambda)=\infty\\
\mathcal{I}^{\rm{osc}}(F(x),\mathbb{R}^3,\lambda)=O(\lambda^{-3/2}).
\end{cases}
$$
The above estimate of the oscillatory integral follows from the iterated integration, or from  Main Theorem  \ref{mainth15} with
 $\tau_1(P,\mathbb{R}^3)=1$, $\delta_{\rm{for}}=2/3$. For the above sublevel-set estimate, use
$ \Phi(x)=(x_1,x_2,x_3+\sqrt{x_1^2+x_2^2})$ to have
 $F\circ\Phi(x)= x_3(x_3+2\sqrt{x_1^2+x_2^2})$ on  $D=\{ x :|x_\nu^{-1}x_3|\le  10^{-1}\ \text{for}\ \nu=1,2\}$. Then
$(F\circ\Phi,D)\cong_{[0,0]} ( x_3x_1+x_3x_2,D_{\{(-1,0,1),(0,-1,1)\}})$ in Definition \ref{rmk21} and ${\bf N}(x_3x_1+x_3x_2,D_{\{(-1,0,1),(0,-1,1)\}})$  unbalanced. Thus, Corollary \ref{coro42} yields 
\begin{align*} 
\mathcal{I}^{\rm{sub}}(F(x),\mathbb{R}^3,\lambda)&\ge  |\{x\in D: |\lambda F\circ\Phi(x)|\le 1\}|  =\infty. 
\end{align*}
  \end{exam}
\begin{exam}
Let $\mathbb{P}={\bf N}(P,D_B)$. If $P$  is a polynomial,   then $\delta_{\rm{for}}=d(\mathbb{P}_{\rm{for}})\ge 0$. If  $P$ is  a Laurent polynomial,    then $d(\mathbb{P}_{\rm{for}})$  in Figure \ref{forba89}  can be negative and
 $ \delta_{\rm{for}}=\max\{0,d(\mathbb{P}_{\rm{for}})\}=0$.  For example, if $P(x)= \frac{1}{x_1^{1}x_2^{3}}+ \frac{1}{x_1^{3}x_2^{1}} $, then $d(\mathbb{P}_{\rm{for}})=-2$ and $\delta_{\rm{for}}=0$.  Since ${\bf N}(P,(\mathbb{R}\setminus\{0\})^2)={\bf conv}(\{(-1,-3),(-3,-1)\})$ is unbalanced, $\mathcal{I}^{\rm{sub}}(P,(\mathbb{R}\setminus\{0\})^2,\lambda)=\infty$.  
\end{exam}

 \section{Partition of Domain}
\subsection{Statement of Global Theorems after Partition of Domain}
Let $P(x)=((x_1^2+x_2^2)-1)^2$ and $D=\mathbb{R}^2$. Then as $\tau_0(P,D)=2>0=\delta_{\rm{for}}(P,D)$, one cannot apply Main Theorems \ref{mainth13} for   
 $|\{x\in D:\lambda P(x)|\le 1\}|$. However, we can find a partition $D=\bigcup_{i=0}^M D_i$  so as to compute $|\{x\in D_i:\lambda P(x)|\le 1\}|$ for each $i$. In this section, we  restate Main Theorems \ref{mainth13} and \ref{mainth15} by   partitioning the domains.
 \begin{main} \label{mainth4}
Let  $P$ be a polynomial $P({\bf 0})=0$  in a domain $D\subset \mathbb{R}^d$. Suppose that
  a partition $\{D_i\}_{i=0}^M$ of  $D $ with  coordinate maps  $ \phi_i:\phi^{-1}_i(D_i) \rightarrow D_i$ decomposes 
 \begin{align}\label{4.17}
 \int_{D} \psi(\lambda P(x))dx& \approx\sum_{i=0}^M\int \psi(\lambda P (x)) \psi_{D_i}(x)dx\ \text{for $\psi_{D_i}\in C^{\infty}(D_i)$}\\
\text{satisfying}\ (P\circ\phi_i,\phi^{-1}_i(D_i) )&\cong_{[0,\tau_i]} (P_i,D_{B_i})\ \text{are  normal-crossing   of type $[0,\tau_i]$. }\nonumber
\end{align}
Then with the distance and multiplicity derived from $\mathbb{P}_i:= {\bf N}(P_i,D_{B_i})$ in  (\ref{jq514}): $$[\delta_{\rm{for}}', \delta_{\rm{bac}}']{\bf 1}=\bigcap_{i=0}^M\mathbb{P}_i\cap \rm{cone}({\bf 1})\ \text{and}\ \begin{cases} k_{\rm{for}}'=\min\{k_{\rm{for}}(\mathbb{P}_i): \delta_{\rm{for}}(\mathbb{P}_i)=\delta_{\rm{for}}'\}_{i=0}^M\\
k_{\rm{bac}}'=\min\{k_{\rm{bac}}(\mathbb{P}_i): \delta_{\rm{bac}}(\mathbb{P}_i)=\delta_{\rm{bac}}'\}_{i=0}^M,\end{cases}
$$
 where $[\delta_{\rm{for}}(\mathbb{P}_i),\delta_{\rm{bac}}(\mathbb{P}_i)]=\mathbb{P}_i\cap \rm{cone}({\bf 1})$, one can have the   estimates:
\begin{itemize}
\item[(A)] If  $\mathbb{P}_i$ are  balanced  and  $\tau_i<\delta_{\rm{for}}'$ for all $ i,$ then it holds that
 \begin{align}  \label{k67}
 \int \psi(\lambda P(x)) \psi_{D}(x) dx \approx \begin{cases}
 \lambda^{- 1/\delta_{\rm{for}}' }    (|\log\lambda|+1)^{d-1-k_{\rm{for}}'   }  \ \text{if $\lambda\ge 1$}\\
   \lambda^{- 1/\delta_{\rm{bac}}'  }    (|\log\lambda|+1)^{d-1-k_{\rm{bac}}'   } \ \text{if $\lambda< 1$}.
   \end{cases}
 \end{align}
If  the   type $[0,\tau_i]$  is replaced with $[1,\tau_i]$, then   the  oscillatory integral estimates    hold.
  Here
 $\delta_{\rm{for}}', \delta_{\rm{bac}}'$ and $k_{\rm{for}}', k_{\rm{bac}}'$ are independent of  choices $\{D_i\}_{i=0}^M$.   
\item[(B)]
If at least one of  $\mathbb{P}_i$ is unbalanced, then $LHS$ of (\ref{k67}) diverges.
\end{itemize}
\end{main}

 \begin{proof}[Proof  Main Theorem \ref{mainth4}]
 Applying (A) of Main Theorem \ref{mainth13}   and Corollary  \ref{coro42},
\begin{align*}
\int \psi(\lambda P(x)) \psi_{D_i}(x)dx&=\int \psi(\lambda P \circ\phi_i(x)) \psi_{D_i}(\phi_i(x))dx\approx \int \psi(\lambda P_i(x)) \psi_{D_{B_i}}(x)dx\\
&\approx \begin{cases}
 \lambda^{- 1/d_{\rm{for}}(\mathbb{P}_i)  }    (|\log\lambda|+1)^{d-1-k_{\rm{for}}(\mathbb{P}_i)   }  \ \text{if $\lambda\ge 1$}\\
   \lambda^{- 1/d_{\rm{bac}}(\mathbb{P}_i)   }    (|\log\lambda|+1)^{d-1-k_{\rm{bac}}(\mathbb{P}_i)    } \ \text{if $\lambda< 1$.}
   \end{cases}
\end{align*}
So, the decay rates of LHS of (\ref{4.17}), according to $\lambda\ge 1$   or $\lambda<1$ are reciprocals of $$ \max\{\delta_{\rm{for}}(\mathbb{P}_i)\}_{i=0}^M\ \text{or}\  \min\{\delta_{\rm{bac}}(\mathbb{P}_i)\}_{i=0}^M$$ which coincide with the above $1/\delta_{\rm{for}}'$ or $1/\delta_{\rm{bac}}'$ respectively.  Thus, we obtain (\ref{k67}).  Assume that there is another partition  having $\delta_{\rm{for}}'', \delta_{\rm{bac}}''$ and  $k_{\rm{for}}'', k_{\rm{bac}}''$. By applying  (\ref{k67}),  it holds that with   the constants involved in $\approx$ below  independent of $\lambda$, 
   \begin{align*}  
 \lambda^{- 1/\delta'_{\rm{for}}}    (|\log\lambda|+1)^{d-1-k_{for}'}  &\approx \int\psi(\lambda P(x)) \psi_{D_B}(x)dx  
  \approx 
 \lambda^{- 1/\delta_{\rm{for}}''}    (|\log\lambda|+1)^{d-1-k_{for}''},
 \end{align*} 
showing $\delta_{\rm{for}}''= \delta_{\rm{for}}', k_{\rm{for}}''=k_{\rm{for}}'$. Similarly, $\delta_{\rm{bac}}''= \delta_{\rm{bac}}', k_{\rm{bac}}''=k_{\rm{bac}}'$.
  Finally,  the oscillatory integral estimates follow from
  Main Theorem \ref{mainth15}.
  \end{proof}
  \begin{remark}
 The above theorem is  the first  step toward a global resolution of singularity.  But, we do not establish the   resolution of singularity in this paper. 
  \end{remark}
 \subsection{Three Types of Singular Set}
 Set  $V(P):=\{x\in \mathbb{R}^d: P(x)=0\ \text{or}\ \nabla P(x)={\bf 0}\}$ of singular points.  To treat non-local $V(P)$ with a partition $\{D_i\}$, we  apply Main Theorem 3 for the following model  cases:
\begin{itemize}
\item[(i)] $V(P)$ is a compact irreducible curve (circle) in Example \ref{ex46},
\item[(ii)] $V(P)$ is a non-compact irreducible curve (parabola) in Example \ref{expara},
\item[(iii)] $V(P)$ is the union of the cases (i) and (ii) in Example \ref{exboth}.
\end{itemize}

\begin{exam} [Circle]\label{ex46}
 Let $P_{\rm{circ}}(x)=(x_1^2+x_2^2-1)^2$ and $D_B=\mathbb{R}^2$.  Then 
\begin{align} 
 \int  \psi(\lambda P_{\rm{circ}}(x)) \psi_{D_B}(x) dx &\approx   \lambda^{-1/2} \ \text{if}\ \lambda\in (0,\infty),\label{4.199}\\
 \int   e^{i\lambda P_{\rm{circ}}(x)}  \psi_{D_B}(x)dx &=O(  \lambda^{-1/2}) \ \text{if}\ \lambda\in (0,\infty).\label{4.200}
\end{align} 
  Due to the compactness of    $V(P_{\rm{circ}})=S^1$,
  we can  find  $\{\mathfrak{c}_i\}_{i=1}^M\subset V(P_{\rm{circ}})$ such that
\begin{align}\label{p021}
 V(P_{\rm{circ}})+ [-h/2,h/2]^d  \subset\bigcup _{i=1}^M  \mathfrak{c}_i+[-h,h]^d\ \text{with $|h|\ll 1 $}.
\end{align}
 Set $D_i=\mathfrak{c}_i+ [-h,h]^2$ for $i\in [M]$ and $D_0=D_B\setminus \bigcup_{i\in [M]} D_i$.  Then
 $$\int \psi(\lambda P_{\rm{circ}}(x))\psi_{D_B}(x)dx=\sum_{i=0}^M \int \psi(\lambda P_{\rm{circ}}(x))\psi_{D_i}(x)dx.$$
\begin{itemize}
\item[(1)]   Take  $\phi_0=Id$ on $D_0$ to  define $P_0=P_{\rm{circ}}\circ\phi_0 $ and $\phi_0^{-1}(D_0)=D_0$. Then   $(P_0,\phi_0^{-1}(D_0))\cong (P_{\rm{circ}},D_B)$    is normal-crossing   of type $[0,\tau_0]$ for $\tau_0=0$.
\item[(2)] Fix $i\in [M]$ and let $\mathfrak{c}_i=(c_1,c_2)\in V(P)=S^1$ in (\ref{p021}). Then
\begin{align*}
  \int \psi(\lambda P_{\rm{circ}}(x))\psi_{D_i}(x)dx&=\int \psi(\lambda P_{\rm{circ}}(x))\psi\left(\frac{x-\mathfrak{c}_i}{h}\right)dx\\
& =  \int \psi (\lambda P_{\rm{circ}} (x+\mathfrak{c}_i))\psi\left(\frac{x}{h}\right)dx.
\end{align*}
Let $|c_1|\le|c_2|$ for $(c_1,c_2)\in S^1$. As $|x|\le h\ll 1 $, express $P (x+\mathfrak{c}_i) =(x_1^2+x_2^2  +2c_1 x_1+2c_2 x_2 )^2  =[(x_2-a(x_1))(x_2-b(x_1))]^2$ with
\begin{align}
  & \begin{cases}   a(x_1)  =-c_2+\sqrt{c_2^2-(2c_1x_1+x_1^2)} =-( c_1/c_2)x_1 +O(x_1^2) \\
   b(x_1)=-c_2-\sqrt{c_2^2-(2c_1x_1+x_1^2)}=-2c_2  +O(x_1 )\approx -2c_2 .
   \end{cases}\label{s33}
  \end{align}
Using a  coordinate map $\phi_i(x_1,x_2)=x+\mathfrak{c}_i+(0,a(x_1))  $,   define $ P_i(x)$:
  \begin{align}\label{gn1}
  P_{\rm{circ}}\circ\phi_i(x_1,x_2) =  [x_2(x_2+a(x_1)-b(x_1))]^2=(4c_2^2+O(x_1^2))x_2^2\approx x_2^2
   \end{align}
 on   $\phi_i^{-1}(D_i)=\{|x_1|\le h, |x_2+a(x_1)|\le h\}  \cong [-1,1]^2$.  This leads
  \begin{align*}
& \int \psi (\lambda P_{\rm{circ}}(x+\mathfrak{c}_i))\psi\left(\frac{x}{h}\right)dx
 =  \int \psi (\lambda P_i(x))\psi_{\phi_i^{-1}(D_i)}(x)dx 
 \end{align*}
for  $(P_i,\phi_i^{-1}(D_i) ) \cong (  x_2^2,[-1,1]^2)$     of type $[0,\tau_i]$ with $\tau_i=0$.
\item[(3)] Set   
 $\mathbb{P}_i:= \begin{cases}  {\bf N} ((1-(x_1^2+x_2^2))^2,\mathbb{R}^2)\ \text{if $i=0$}\\
  {\bf N} (x_2^2,[-1,1]^d ) \ \text{if $i\in [M]$.}\end{cases}$ Then
  $\bigcap_{i=0}^M\mathbb{P}_i\cap \rm{cone}({\bf 1})= [\delta_{\rm{for}}', \delta_{\rm{bac}}']{\bf 1}=2{\bf 1}$ due to $\begin{cases}
\text{ $[\delta_{\rm{for}}(\mathbb{P}_0), \delta_{\rm{bac}}(\mathbb{P}_0)]=[0,2]$}\\
\text{   $[\delta_{\rm{for}}(\mathbb{P}_i), \delta_{\rm{bac}}(\mathbb{P}_i)]=[2,\infty]$}
   \end{cases}$
   where $k_{\rm{for}}'=1$ and $k_{\rm{bac}}'=1$ because $k_{\rm{for}}(\mathbb{P}_i)=1$ for  $i\in  [M]$ and $k_{\rm{bac}}(\mathbb{P}_0)=1$. 
\end{itemize}
Hence   Main Theorem  \ref{mainth4}   gives (\ref{4.199}). 
The type condition    $[1,1]$     yields (\ref{4.200}). 
\end{exam}

\begin{exam} [Parabola]\label{expara}
 Let $P_{\rm{para}}(x)=(x_2-x_1^2)^2$ in  $D_B=\mathbb{R}^2$.  If $ \lambda>0$,
\begin{align*} 
&\int \psi(\lambda P_{\rm{para}}(x))\psi_{D_B}(x) dx  =\infty \ \text{and}  \\
&   \left|\int   e^{i\lambda P_{\rm{para}}(x)}\Psi_{D_{B}}(x) dx\right| =\infty \   \text{for some $ \Psi_{D_B}\in \mathcal{A}(D_B)$}. 
\end{align*} 
\begin{proof}
Split $\mathbb{R}^2=D_1\cup D_2$ where $$D_1=\{x: |x_2-x_1^2|\ge \epsilon |x_1^2|\}\ \text{and}\ D_2=\{x: |x_2-x_1^2|< \epsilon |x_1^2|\}.$$
Set  $\phi_i(x)=(x_1,x_2+x_1^2)$ and  $P_i(x)=P_{\rm{para}}\circ\phi_i(x)=x_2^2$  for $i=1,2$.
 Then
\begin{align*}
 \phi_1^{-1}(D_1)&:=\{x:  |x_1^{2}x_2^{-1}|\le \epsilon^{-1}\}\cong D_{\{(2,-1)\}}\ \text{and}\ \mathbb{P}_1={\bf N}(P_1,D_{\{(2,-1)\}})\\
 \phi_2^{-1}(D_2)&:=\{x:  |x_1^{-2}x_2^{1}|\le \epsilon \}\cong D_{\{(-2,1)\}}\ \text{and}\ \mathbb{P}_2={\bf N}(P_2,D_{\{(-2,1)\}})\ \text{is unbalanced}.
 \end{align*}
 Thus apply Main Theorem \ref{mainth4} to obtain  the above estimates.
\end{proof}
\end{exam}

\begin{exam}[Circle $\cup$ Parabola]\label{exboth}
 Let $P(x)=(x_1^2+x_2^2-1)^2(x_2-x_1^2)^2$ and $D_B=\mathbb{R}^2$.  Then Main Theorem \ref{mainth4} shows
\begin{align} 
 \int  \psi(\lambda P(x)) \psi_{D_B}(x) dx &\approx\begin{cases}
 \lambda^{- 1/2 }    (|\log\lambda|+1)   \ \text{if $\lambda\ge 1$}\\
   \lambda^{- 1/4}   (|\log\lambda|+1)     \ \text{if $\lambda< 1$}
   \end{cases} \label{709}\\
 \left|\int   e^{i\lambda P(x)} \psi_{D_B}(x)dx \right|&\lesssim\begin{cases}
 \lambda^{- 1/2 }    (|\log\lambda|+1)   \ \text{if $\lambda\ge 1$}\\
   \lambda^{- 1/4}     (|\log\lambda|+1)   \ \text{if $\lambda< 1$.}
   \end{cases}\label{710}
\end{align}
\end{exam} 
\begin{proof}[Proof of (\ref{709}) and (\ref{710})]
Let $\psi+\psi^c\equiv 1$ on  $\mathbb{R}^2$. Define the singular regions of the circle and the parabola as
$$D_{\rm{circ}}=\left\{ |x_1^2+x_2^2-1|\le \epsilon \right \}\ \text{and}\ D_{\rm{para}}=\left\{  \frac{|x_2-x_1^2|}{x_1^2}\le \epsilon \right \}$$
and the  non-singular regions as
$$D^{\rm{away}}_{\rm{circ}}=\left\{ |x_1^2+x_2^2-1|\ge \epsilon/2 \right \}\ \text{and}\ D^{\rm{away}}_{\rm{para}}=\left\{  \frac{|x_2-x_1^2|}{x_1^2}\ge \epsilon/2 \right \}.$$
Decompose $Id_{\mathbb{R}^2}=\sum_{i=1}^4 \psi_i$ with $\psi_i\in \mathcal{A}(D_i)$  defined by
\begin{align*}
 \psi_1(x)&:=\psi^c\left(\frac{ x_1^2+x_2^2-1}{\epsilon} \right)\psi^c\left(\frac{ x_2-x_1^2}{ \epsilon     x_1^2} \right)\ \text{supported}\ D_1:=D^{\rm{away}}_{\rm{circ}} \cap D^{\rm{away}}_{\rm{para}} \\
 \psi_2(x)&:=\psi \left(\frac{ x_1^2+x_2^2-1}{\epsilon} \right)\psi^c\left(\frac{ x_2-x_1^2}{ \epsilon     x_1^2} \right)\ \text{supported}\ D_2:=D_{\rm{circ}}  \cap D^{\rm{away}}_{\rm{para}} \\
 \psi_3(x)&:=\psi^c\left(\frac{ x_1^2+x_2^2-1}{\epsilon} \right)\psi\left(\frac{ x_2-x_1^2}{ \epsilon     x_1^2} \right)\ \text{supported}\ D_3:= D^{\rm{away}}_{\rm{circ}} \cap D_{\rm{para}} \\
 \psi_4(x)&:=\psi\left(\frac{ x_1^2+x_2^2-1}{\epsilon} \right)\psi\left(\frac{ x_2-x_1^2}{ \epsilon     x_1^2} \right)\ \text{supported}\ D_4:=D_{\rm{circ}}\cap D_{\rm{para}}
\end{align*} 
{\bf Case $(P,D_1)$}. This is the non-singular region. Set $P_1=P$ and $\phi_1(x)=Id.$
Then $(P_1\circ\phi_1,\phi_1^{-1}(D_1))\cong ((1+x_1^4+x_2^4)(x_1^4+x_2^2),\mathbb{R}^2)$ is normal-crossing of type $[0,\tau_1]$ with $\tau_1=0$. Define $\mathbb{P}_1={\bf N}( (1+x_1^4+x_2^4)(x_1^4+x_2^2),\mathbb{R}^2)$ where 
\begin{align*}
\text{$\delta_{\rm{for}}(\mathbb{P}_1)=4/3$ and $\delta_{\rm{bac}}(\mathbb{P}_1)=4$ with $k_{\rm{for}}(\mathbb{P}_1)=1$ and $k_{\rm{bac}}(\mathbb{P}_1)=0$. }
\end{align*} 
{\bf Case  $(P,D_2)$}.    Cover  $D_2=D_{\rm{circ}}  \cap D^{\rm{away}}_{\rm{para}}=\bigcup_{\ell=1}^M D_{2,\ell}\ \text{where} \ D_{2,\ell}:= D_2\cap (\mathfrak{c}_\ell+[-h,h]^2)$ with $h\ll \epsilon\ll 1$  and 
\begin{align}
 \mathfrak{c}_\ell=(c_1(\ell),c_2(\ell))\in S^1\setminus D_{\rm{para}}\ \text{where}\  h^2\ll |c_2(\ell)-c_1(\ell)^2|\approx 1.
\end{align}
Then,
decompose $$\int_{\mathbb{R}^2}  \psi(\lambda P (x))\psi_2(x ) dx=\sum_{\ell=1}^M \int     \psi (\lambda P (x) ) \psi^c\left(\frac{x_2- x_1^2}{\epsilon x_1 }  \right)   \psi\left( \frac{x-\mathfrak{c}_\ell}{h}\right)dx. $$
  As  (\ref{s33}),  the pullback of $D_{2,\ell}$ is  the    coordinate map $
 \phi_\ell:  \phi_\ell^{-1}(D_{2,\ell}) \rightarrow D_{2,\ell}$:
\begin{align}\label{0596}
\phi_\ell(x)=(x_1+c_1(\ell), x_2+c_2(\ell)+a(x_1))
\end{align}
changing the above integrals as
\begin{align*}
  \int     \psi (\lambda P\circ\phi_\ell(x) )  \psi^c\left(\frac{x_2+a(x_1)+c_2(\ell)- (x_1+c_1(\ell))^2}{\epsilon(x_1+c_1(\ell))}  \right)  \psi\left(  \frac{ x_1,x_2+a(x_1) }{h}\right)dx. 
\end{align*}
 Define
 $P_{2,\ell}(x):= P\circ\phi_\ell(x) =P_{\rm{cir}}\circ\phi_\ell(x) P_{\rm{para}}\circ\phi_\ell(x)\approx x_2^2$     due to   $P_{\rm{cir}}\circ\phi_\ell(x) \approx x_2^2$ in  (\ref{gn1})  and
$
P_{\rm{para}}\circ\phi_\ell(x) =
 \left[x_2+a(x_1)+c_2(\ell)- (x_1+c_1(\ell))^2\right]^2 \approx |c_2(\ell)-c_1(\ell)^2|^2\approx 1. $
Thus the above integrals become 
\begin{align*}
  \int     \psi (\lambda  P_{2,\ell} (x))    \psi\left(  \frac{ x_1,x_2+a(x_1) }{h}\right)dx=\int     \psi (\lambda P_{2,\ell} (x) )\psi_{\phi_\ell^{-1}(D_{2,\ell})}(x)dx. 
\end{align*}
The support of the  integrals is $\phi_\ell^{-1}(D_{2,\ell}) \cong \{x: |x|\le h \}$. Thus
  $(P_{2,\ell} (x), \phi_\ell^{-1}(D_{2,\ell} )  )\cong ( x_2^2,[-1,1]^2)$  is normal-crossing   of type $[0,\tau]$ with $\tau=0$. 
From $\mathbb{P}_{2,\ell} ={\bf N}(x_2^2 ,[-1,1]^2)$,
\begin{align*}
\text{$\delta_{\rm{for}}(\mathbb{P}_{2,\ell})=2$ and $\delta_{\rm{bac}}(\mathbb{P}_{2,\ell})=\infty$ with $k_{\rm{for}}(\mathbb{P}_{2,\ell})=1$ and $k_{\rm{bac}}(\mathbb{P}_{2,\ell})=1$. }
\end{align*} 
{\bf Case $(P,D_3)$}. On $D_3=D^{\rm{away}}_{\rm{circ}} \cap D_{\rm{para}} $, as  $P_{\rm{para}}(x)=(x_2-x_1^2)^2$  is  degenerate and   $P_{\rm{circ}}(x)=(x_1^2+x_2^2-1)^2$ is normal-crossing,  treat the parabola with $\phi_3(x_1,x_2)=(x_1, x_2+x_1^2)$ and take $P_3(x)=P\circ\phi_3(x) =x_2^2[x_1^2+(x_2+x_1^2)^2-1]^2$. Then
\begin{align*}
 \int  \psi(\lambda P (x))\psi_3(x ) dx &=\int  \psi(\lambda P_3 (x))\psi \circ\phi_3(x) dx \\
&=\int  \psi(\lambda P_3 (x))\psi \left(\frac{ x_2 }{ \epsilon  x_1^2} \right)\psi^c\left(\frac{ x_1^2+(x_2+x_1^2)^2-1}{\epsilon} \right)  dx \\
&= \int  \psi(\lambda P_3 (x))\psi_{\phi_3^{-1}(D_3)}(x)dx.
\end{align*}
From $\phi_3^{-1}(D_3)=\{ |x_2|\le  \epsilon x_1^2\ \text{and}\  |x_1^2+(x_2+x_1^2)^2-1|\ge \epsilon/2\}$, it follows that
 $ (P_3(x), \phi_3^{-1}(D_3)) \cong (x_2^2(x_1^4+x_1^8+1), D_{\{(-2,1)\}})$ with $\mathbb{P}_3={\bf N}(x_2^2(x_1^4+x_1^8+1),D_{\{(-2,1)\}}) $:
 \begin{align*}
\text{$\delta_{\rm{for}}(\mathbb{P}_3)=2$ and $\delta_{\rm{bac}}(\mathbb{P}_3)=4$ with $k_{\rm{for}}(\mathbb{P}_3)=k_{\rm{bac}}(\mathbb{P}_3)=1$. }
\end{align*} 
{\bf Case $(P,D_4)$}. There exists  $\mathfrak{c}=(c_1,c_2)\in S^1\cap \{c_2=c_1^2\}$ such that $D_4=D_{\rm{circ}} \cap D_{\rm{para}}\subset  \mathfrak{c}+[-h,h]^2 $. Thus  one can replace $\psi_4 $ supported on $D_4$ with  $ \psi \left(\frac{x_2- x_1^2}{\epsilon x_1 }  \right)\psi\left(\frac{x-\mathfrak{c}}{h}\right) $.  In view of $ a(x_1)  = - \frac{ c_1x_1 }{ c_2 }+O(|x_1|^2)  $ in (\ref{s33}),(\ref{0596}),  change coordinates via  $\phi_4^1(x)=(x_1+c_1,x_2+c_2+a(x_1))$ as
\begin{align*}
 \int_{\mathbb{R}^2}  \psi(\lambda P (x))\psi_4(x ) dx &\approx \int     \psi (\lambda P\circ\phi^1_4  (x))   \psi\left( \frac{x_2-k x_1 -x_1^2}{\epsilon(x_1+c_1)} \right)   \psi\left(  \frac{(x_1,x_2+a(x_1))}{h}\right)dx\\
 &  \approx \int     \psi (\lambda P\circ\phi^1_4  (x))     \psi\left(  \frac{(x_1,x_2+a(x_1))}{h}\right)dx
 \end{align*}
for       $k =2c_1+1/c_2$  and
$x$ supported in   $|x|\lesssim h \ll 1$. From $ P_{\rm{cir}}\circ\phi^1_4 (x)  \approx x_2^2$ and $P_{\rm{para}}\circ\phi^1_4(x)=(x_2+c_2+a(x_1)-(x_1+c_1)^2)^2\approx (x_2-(kx_1 +x_1^2))^2$,  
split the integral:
\begin{align*}
 \int     \psi (\lambda P^1_4 (x)) \left(\psi^c\left(\frac{x_2-kx_1}{\epsilon x_1}  \right) + \psi\left(\frac{x_2-kx_1}{\epsilon x_1}  \right) \right)      \psi\left(  \frac{(x_1,x_2+a(x_1))}{h}\right)dx.
\end{align*}
where $P^1_4=P\circ\phi^1_4$. The first part supported on 
  $D_4^1:=\{ |x_2-kx_1|\gtrsim |x_1|\ \text{and}\  |x|\ll1 \}$  corresponds to $(P^1_4, D_4^1)\cong ( |x_2|^2(|x_1|+|x_2|)^2, [-1,1]^2)$.  We next apply  another coordinate change via $\phi_4^2(x)=(x_1,x_2+kx_1)$ and $P^2_4=P^1_4\circ\phi^2_4$ for the second part:
\begin{align*} 
\int     \psi (\lambda P^2_4 (x))   \psi\left(\frac{x_2}{\epsilon x_1}  \right)     \psi\left(  \frac{(x_1,x_2+kx_1+a(x_1))}{h}\right)dx 
\end{align*}
supported on
  $D_4^2:=\{ |x_2| \ll |x_1|\ \text{and}\  |x|\ll1 \}$ so  that
 $(P^2_4, D_4^2)\cong ( |x_1|^2|x_2|^2, [-1,1]^2)$. 
Moreover, $P^\nu_4$ is of type $[0,\tau]$ with $\tau=0$ on $D^\nu_4$.   Take $\mathbb{P}^\nu_4:={\bf N}(P^\nu_4,[-1,1]^2)$. Then 
\begin{align*}
\text{$\delta_{\rm{for}}(\mathbb{P}_4^\nu)=2$ and $\delta_{\rm{bac}}(\mathbb{P}^\nu_4)=\infty$ with $k_{\rm{for}}(\mathbb{P}^\nu_4)=0$ and $k_{\rm{bac}}(\mathbb{P}^\nu_4)=1$ for $\nu=1,2$.}
\end{align*} 
{\bf Conclusion}. By applying the Main Theorem 3 with   $\cap_{i=1}^4(\mathbb{P}_i\cap \rm{cone}({\bf 1}))$ given by
$$[2,4] {\bf 1}=[4/3,4] \cap [2,\infty] \cap [2,4]\cap [2,\infty] {\bf 1}\ \text{and}\
0=\begin{cases} k_{\rm{for}}'=\min\{k_{\rm{for}}(\mathbb{P}_i): \delta_{\rm{for}}(\mathbb{P}_i)=2\}\\
k_{\rm{bac}}'=\min\{k_{\rm{bac}}(\mathbb{P}_i): \delta_{\rm{bac}}(\mathbb{P}_i)=4\}\end{cases}
$$
  to obtain (\ref{709}). Similarly, we have (\ref{710}).
   \end{proof}

     \subsection{Face-Nondegeneracy in Global Domains} \label{Sec99}
\begin{definition}
 Call
   $(P,D_B)$  {\bf face-nondegenerate} of type $[\sigma,\tau]$ if $\tau$ is minimal: \begin{align} \label{0246}
 \sum_{\sigma\le |\alpha|\le \tau}\big|\partial_{x}^{\alpha}P_{\mathbb{F}}\big|_{  (\mathbb{R}\setminus\{0\})^d}\ \text{are non-vanishing for all   faces  $\mathbb{F}$ of  $\mathbb{\bf N}(P,D_B)$.} 
 \end{align}
\end{definition}
In \cite{Gr1},  Greenblatt weakened the assumption (\ref{vc0}) of Varchenko  \cite{V} by restricting  the orders $\tau$  of  zeros of $P_{\mathbb{F}}$   less than   $\delta:= d({\bf N}(P, [-1,1]^d))$, which is equivalent to      the face-nondegeneracy in (\ref{0246})  of type    $\tau<\delta$ and $\sigma=1$.
 In a small neighborhood   $D  \cong D_{B}$ with $B=\{{\bf e}_\nu\}_{\nu=1}^d$, one can see that $(P,D)$  is normal-crossing of type $[\sigma,\tau]$  in  (\ref{sv3})  
  if and only if $(P,D_{B})$   is face-nondegenerate of type $[\sigma,\tau]$ in (\ref{0246}), which had already appeared  in  Theorem 1.5 of \cite{V1}.   
 This equivalence does not  always hold in a global domain $D\cong D_B$.  But, it does hold,  once  $D$ is away from a middle region $M_h:= \left\{x\in\mathbb{R}^d: h^{-1}\le |x_\nu|\le h\ \text{for all}\ \nu\in [d]\right\} $ for some $h\ge 1$.  
\begin{theorem}\label{prop44}
Let $D\cong D_B$. Then 
  $(P,D_B)$ is {\bf face-nondegenerate} of type $[\sigma,\tau]$ if and only if    $ (P,D\setminus M_h)$ for some  $h\ge 1$ is {\bf normal crossing} of  type $[\sigma,\tau]$.
  \end{theorem}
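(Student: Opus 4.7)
The plan is to bridge the two conditions through a monomial approximation on each dyadic annulus. Let $\mathbb{P} = {\bf N}(P,D_B)$ and, for $j \in \rm{cone}^{\vee}(B) \cap \mathbb{Z}^d$, let $\mathbb{F}$ be the face of $\mathbb{P}$ with $j \in (\mathbb{F}^{\vee})^{\circ}$ and pick $\mathfrak{m} \in \mathbb{F} \cap \Lambda(P)$. Writing $\tilde{x} := {\bf 2}^j x$, so that $x \sim {\bf 2}^{-j}$ iff $\tilde{x} \sim {\bf 2}^{-{\bf 0}}$, the identities (\ref{jq501})--(\ref{aptment}) of Lemma \ref{l910} combined with the strict dominance (\ref{jq524}) yield the bridge
\begin{equation*}
\sum_{\sigma \le |\alpha| \le \tau} x^{\alpha}\partial^{\alpha}P(x) \;=\; 2^{-j\cdot\mathfrak{m}}\Bigl[\,\sum_{\sigma \le |\alpha| \le \tau} \tilde{x}^{\alpha}\partial^{\alpha}P_{\mathbb{F}}(\tilde{x})\;+\;O(2^{-c|j|})\,\Bigr], \qquad \sum_{\mathfrak{n}\in\Lambda(P)}|x^{\mathfrak{n}}| \;\approx\; 2^{-j\cdot\mathfrak{m}},
\end{equation*}
with a uniform $c > 0$ obtained by restricting to the strictly simplicial subcones supplied by Theorem \ref{lemir1} and Remark \ref{rmk61}. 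This identity ties $P$ on the dyadic block $x \sim {\bf 2}^{-j}$ to its face projection $P_{\mathbb{F}}$ on the compact set $\{\tilde{x} \sim {\bf 2}^{-{\bf 0}}\}$, and it is the common tool for both directions.

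For the direction ($\Rightarrow$), face-nondegeneracy (\ref{0246}) together with the finiteness of the face set and the compactness of $\{\tilde{x} \sim {\bf 2}^{-{\bf 0}}\}$ upgrades the pointwise nonvanishing to a uniform bound $\sum_{\sigma \le |\alpha| \le \tau}|\tilde{x}^{\alpha}\partial^{\alpha}P_{\mathbb{F}}(\tilde{x})| \ge c_0 > 0$. Choose $J_0$ so that the error $O(2^{-cJ_0}) < c_0/2$ and set $h = 2^{J_0}$: every $x \in D \setminus M_h$ has $|j_{\nu}| > J_0$ for some $\nu$, hence $|j| \ge J_0$, and the bridge produces normal-crossing of type $[\sigma,\tau]$. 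For ($\Leftarrow$), fix $\mathbb{F}$, pick $\mathfrak{q} \in (\mathbb{F}^{\vee})^{\circ} \subset \rm{cone}^{\vee}(B)$, and for $x_0 \in D_B \cap (\mathbb{R}\setminus\{0\})^d$ follow the orbit $x_t := {\bf 2}^{-t\mathfrak{q}}x_0$: it stays in $D_B$ because $\mathfrak{q} \cdot \mathfrak{b} \ge 0$ for all $\mathfrak{b} \in B$, and exits $M_h$ for large $t$. Applying the normal-crossing inequality at $x_t$, invoking the $\mathfrak{q}$-quasi-homogeneity $P_{\mathbb{F}}(x_t) = 2^{-t\mathfrak{q}\cdot\mathfrak{m}}P_{\mathbb{F}}(x_0)$ (and its derivative analogue), and dividing by $2^{-t\mathfrak{q}\cdot\mathfrak{m}}$ before sending $t \to \infty$ yields
\begin{equation*}
\sum_{\sigma \le |\alpha| \le \tau} |x_0^{\alpha}\partial^{\alpha}P_{\mathbb{F}}(x_0)| \;\ge\; c\,|x_0^{\mathfrak{m}}| \;>\; 0.
\end{equation*}
Passage from $x_0 \in D_B$ to arbitrary $x_0 \in (\mathbb{R}\setminus\{0\})^d$ is obtained from invariance of the zero locus of $\sum|\partial^{\alpha}P_{\mathbb{F}}|$ under the $\mathbb{F}^{\vee}$-action: under strong convexity of $\rm{cone}(B)$ every $\mathfrak{q}$-orbit meets $D_B$, and the general $D \cong D_B$ case uses the asymmetric containments in Definition \ref{rmk21}. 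Because every step is reversible at fixed $\tau$, the minimal $\tau$'s in the two conditions coincide.

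The main obstacle is the uniformity of the exponential decay $O(2^{-c|j|})$ in the bridge. The constant $c(\mathfrak{m},\mathfrak{n})$ appearing in the proof of (\ref{jq524}) can degenerate as $j$ drifts toward the boundary of $\mathbb{F}^{\vee}$, since some $\mathfrak{q}_i \cdot (\mathfrak{n} - \mathfrak{m})$ then becomes small. The fix is the disjoint strictly simplicial refinement from Theorem \ref{lemir1} together with Remark \ref{rmk61}: on each piece one reassigns $\mathbb{F}$ upward to the higher-dimensional face whose dual cone contains $j$ in its relative interior, so that the expansion $j = \sum \alpha_i \mathfrak{q}_i$ has its active $\alpha_i$ all comparable to $|j|$ and a uniform $c$ is recovered. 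A secondary point is the $(\Leftarrow)$ extension from $D_B$ to all of $(\mathbb{R}\setminus\{0\})^d$, which is automatic under strong convexity of $\rm{cone}(B)$ and otherwise requires the compatibility check provided by Definition \ref{rmk21}.
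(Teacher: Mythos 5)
Your overall route is the paper's route: reduce to dyadic blocks ${x\sim{\bf 2}^{-j}}$, change variables to $\tilde{x}={\bf 2}^{j}x\sim{\bf 2}^{-{\bf 0}}$, approximate $P$ by the face polynomial $P_{\mathbb{F}}$, and compare the normal-crossing inequality on $D\setminus M_h$ with nonvanishing of $P_{\mathbb{F}}$ on $(\mathbb{R}\setminus\{0\})^d$. Your ($\Leftarrow$) argument (normal-crossing $\Rightarrow$ face-nondegenerate via the orbit $x_t={\bf 2}^{-t\mathfrak{q}}x_0$, quasi-homogeneity of $P_{\mathbb{F}}$, divide by $2^{-t\mathfrak{q}\cdot\mathfrak{m}}$ and send $t\to\infty$) is essentially the paper's proof of that implication.

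The gap is in the ($\Rightarrow$) direction, and it is precisely the one you flag but do not actually close. Your bridge asserts an error $O(2^{-c|j|})$ with $c>0$ uniform over $j\in(\mathbb{F}^{\vee})^{\circ}$; this is false. If $\mathbb{F}^{\vee}=\rm{cone}(\mathfrak{q}_1,\mathfrak{q}_2)$ and $j=\alpha_1\mathfrak{q}_1+\alpha_2\mathfrak{q}_2$ with $\alpha_2$ bounded and $\alpha_1\to\infty$, then for an exponent $\mathfrak{n}\notin\mathbb{F}$ with $\mathfrak{q}_1\cdot(\mathfrak{n}-\mathfrak{m})=0$ and $\mathfrak{q}_2\cdot(\mathfrak{n}-\mathfrak{m})>0$ the gap $j\cdot(\mathfrak{n}-\mathfrak{m})=\alpha_2\mathfrak{q}_2\cdot(\mathfrak{n}-\mathfrak{m})$ stays bounded while $|j|\to\infty$, so the error does not decay. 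Moreover, (\ref{jq524}) as stated is for codimension-one faces ($\mathbb{F}^{\vee}=\rm{cone}(\mathfrak{q})$), and the sharp rate $c$ is direction-dependent. Your proposed fix --- reassign $j$ to the face $\bar{\mathbb{F}}$ whose dual picks out the ``active'' coordinates of $j$ --- is directionally the right move (it is Lemma \ref{lemkim1} and the $\mathcal{N}_r(\mathbb{F}^{\vee}|\mathbb{G}^{\vee})$ partition of Definition \ref{de49}), but it changes what compact set you land on. After splitting $j=j_{\rm act}+\mathfrak{u}$ with the active part quasi-homogeneous for $P_{\bar{\mathbb{F}}}$, you are left to bound $|P_{\bar{\mathbb{F}}}({\bf 2}^{-\mathfrak{u}}x)|$ from below where $|\mathfrak{u}|$ grows like $r^k d$, i.e.\ on $\{y:2^{-O(r^k)}\le|y_\nu|\le 2^{O(r^k)}\}$, not on the fixed annulus $\{y\sim{\bf 2}^{-{\bf 0}}\}$. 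Compactness plus pointwise nonvanishing no longer yields a uniform lower bound there; one needs the quantitative bound (\ref{12.13}), which the paper derives from the \L ojasiewicz inequality (Lemma \ref{loja} via Lemma \ref{lo1}). That \L ojasiewicz input, together with the explicit constants of Definition \ref{def217} feeding (\ref{624}) and (\ref{a635}), is the missing ingredient in your proposal; without it the ``uniform $c_0>0$'' in your ($\Rightarrow$) argument is unjustified.
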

  We shall prove Theorem \ref{prop44} in Section \ref{Sec14}. 
Consequently,  one can   replace the {\bf normal-crossing} hypothesis of Main Theorem \ref{mainth4} by  {\bf face-nondegeneracy}:
$$\text{$(P\circ\phi_i,\phi^{-1}_i(D_i) )\cong (P_i,D_{B_i})$ are  {\bf face-nondegenerate}   of type $[0,\tau_i]$}$$
  after choosing the decomposition $D=\left(\bigcup_{i=1}^{M+1} D_i \right)\cup D_{\rm{nonsing}}$:
\begin{itemize}
\item $D_{i}=B_{\epsilon}({\bf c}_i)$ for ${\bf c}_i\in  P^{-1}(0)\cap [-h,h]^d$ so that
 $\bigcup_{i=1}^{M} D_{i}\supset  P^{-1}(0)\cap [-h,h]^d $
where
  $\phi^{-1}_i(D_i)=B_\epsilon({\bf 0})$   with  $\phi_i(x)=x+{\bf c}_i$ (or a further  coordinate change).
\item $D_{M+1}=\bigcup_{\nu=1}^d\{x\in D: |x_\nu|\ge h\}$ and $D_{\rm{nonsing}}=D\cap [-h,h]^d\cap (P^{-1}(0))^c.$ 
\end{itemize}
Here we need to choose $\epsilon, 1/h\ll 1$. See
  Examples \ref{ex46} through \ref{exboth}.

\section{Oriented Simplicial Dual Faces}\label{Sec13}
We shall prove Theorem \ref{lemir1}. We start with    Observations \ref{o6.1} and \ref{o6.2}.
\begin{ob}[Representation of Face and Dual face]\label{o6.1}
 Given    $\mathbb{P}$, there is $\Pi(\mathbb{P})$ such that $\mathbb{P}=\bigcap_{\pi\in \Pi(\mathbb{P})} \pi^+$  as in (\ref{jq820}).
For  $\mathbb{F}\in\mathcal{F}^k(\mathbb{P)}$,   set $\Pi(\mathbb{F}^+):=\{\pi_{\mathfrak{q},r} \in  \Pi(\mathbb{P}): \mathbb{F}\subset\pi_{\mathfrak{q},r}\}$. Then it holds that
  \begin{align}
\mathbb{F}=\bigcap_{\pi_{\mathfrak{q},r}\in \Pi(\mathbb{F}^+)}\pi_{\mathfrak{q},r}\cap \mathbb{P} \ \text{and}\  \mathbb{F}^{\vee}=\rm{cone}\big(\{\mathfrak{q}: \pi_{\mathfrak{q},r}\in \Pi(\mathbb{F}^+) \}\big)\label{39c}
  \end{align} 
  where  the dual face $\mathbb{F}^{\vee}$     is   a $(d-k)$ dimensional cone. 
   See   Propositions 4.1 and 4.2 of  \cite{Kim}. For further studies, we refer  \cite{CLS,Ful,O} for readers.
   \end{ob}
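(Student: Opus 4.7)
\emph{Plan.} The existence of a finite set $\Pi(\mathbb{P})$ with $\mathbb{P}=\bigcap_{\pi\in\Pi(\mathbb{P})}\pi^{+}$ is already built into Definition \ref{d12}, so the real content is the two identities in (\ref{39c}) together with the dimension claim. Fix $\mathbb{F}\in\mathcal{F}^{k}(\mathbb{P})$, so by definition there exists a supporting plane $\pi_{\mathfrak{q}_{0},r_{0}}\in\overline{\Pi}(\mathbb{P})$ with $\mathbb{F}=\pi_{\mathfrak{q}_{0},r_{0}}\cap\mathbb{P}$, and write $\Pi(\mathbb{F}^{+})=\{\pi_{\mathfrak{q}_{i},r_{i}}\}_{i\in I}$.

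\emph{First identity.} The inclusion $\mathbb{F}\subseteq\bigcap_{i\in I}\pi_{\mathfrak{q}_{i},r_{i}}\cap\mathbb{P}$ is immediate from the definition of $\Pi(\mathbb{F}^{+})$. For the reverse inclusion, I would invoke the standard Farkas-type lemma for polyhedra: since every $x\in\mathbb{F}$ attains equality on each $\mathfrak{q}_{i}$ (over $\mathbb{P}$, where the inequalities $\langle\mathfrak{q}_{i},\cdot\rangle\geq r_{i}$ hold), the linear function $\langle\mathfrak{q}_{0},\cdot\rangle-r_{0}$, which vanishes on $\mathbb{F}$ and is $\geq 0$ on $\mathbb{P}$, must be a nonnegative combination $\sum_{i\in I}\lambda_{i}(\langle\mathfrak{q}_{i},\cdot\rangle-r_{i})$ with $\lambda_{i}\geq 0$. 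Hence any $x\in\mathbb{P}$ with $\langle\mathfrak{q}_{i},x\rangle=r_{i}$ for all $i\in I$ satisfies $\langle\mathfrak{q}_{0},x\rangle=r_{0}$, i.e. $x\in\mathbb{F}$.

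\emph{Second identity.} Clearly $\rm{cone}(\{\mathfrak{q}_{i}\}_{i\in I})\subseteq\mathbb{F}^{\vee}$ since each $\pi_{\mathfrak{q}_{i},r_{i}}$ contains $\mathbb{F}$ and nonnegative combinations preserve this property. For the converse, take any $\mathfrak{q}\in\mathbb{F}^{\vee}$ with corresponding supporting plane $\pi_{\mathfrak{q},r}\supseteq\mathbb{F}$; then $\langle\mathfrak{q},\cdot\rangle-r$ is nonnegative on $\mathbb{P}$ and vanishes on $\mathbb{F}$, so the same Farkas argument expresses it as a nonnegative combination of the $(\langle\mathfrak{q}_{i},\cdot\rangle-r_{i})$, yielding $\mathfrak{q}=\sum\lambda_{i}\mathfrak{q}_{i}$ with $\lambda_{i}\geq 0$.

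\emph{Dimension.} Let $W=\mathrm{aff}(\mathbb{F})-\mathrm{aff}(\mathbb{F})$ be the $k$-dimensional tangent space of $\mathbb{F}$. Every $\mathfrak{q}_{i}\in I$ is constant on $\mathbb{F}$, so $\{\mathfrak{q}_{i}\}_{i\in I}\subset W^{\perp}$, hence $\mathbb{F}^{\vee}\subset W^{\perp}$, giving $\dim\mathbb{F}^{\vee}\leq d-k$. For the reverse inequality, pick a relative interior point $x_{0}$ of $\mathbb{F}$; then (\ref{39c}) together with the fact that $\mathbb{F}$ is the unique minimal face containing $x_{0}$ shows that $\mathbb{F}^{\vee}$ has nonempty relative interior inside $W^{\perp}$, forcing $\dim\mathbb{F}^{\vee}=d-k$. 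The main delicate point is the Farkas step underlying both identities; everything else is bookkeeping. Since the author cites \cite{Kim,Ful,O} for this observation, I would simply outline the argument in this form and refer to those sources for the Farkas-type duality.
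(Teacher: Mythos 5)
Your Farkas/LP-duality sketch is sound, and it is worth noting up front that the paper itself supplies no proof of this Observation: it simply cites Propositions~4.1--4.2 of \cite{Kim} and points the reader to \cite{CLS,Ful,O}. So there is no ``paper's proof'' to compare against, only the well-known polyhedral-geometry argument that you have largely reproduced. Both identities in (\ref{39c}) do indeed reduce to the affine Farkas lemma: if a linear functional $\langle\mathfrak{q},\cdot\rangle-r$ is nonnegative on $\mathbb{P}=\bigcap_{j}\pi_{\mathfrak{q}_j,r_j}^{+}$ and attains its minimum $0$ there, strong LP duality gives $\mathfrak{q}=\sum_j\lambda_j\mathfrak{q}_j$, $r=\sum_j\lambda_jr_j$, $\lambda_j\ge 0$, and the complementary-slackness computation $\sum_j\lambda_j(\langle\mathfrak{q}_j,x^*\rangle-r_j)=0$ for every $x^*\in\mathbb{F}$ forces $\operatorname{supp}(\lambda)\subset I$. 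Applied to $\mathfrak{q}_0$ this yields the reverse inclusion in the first identity, and applied to an arbitrary $\mathfrak{q}\in\mathbb{F}^{\vee}$ it yields the second.

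The one spot where your write-up is noticeably loose is the claim $\dim\mathbb{F}^{\vee}\ge d-k$: ``$\mathbb{F}^{\vee}$ has nonempty relative interior inside $W^{\perp}$'' is essentially restating what needs to be shown. The clean version: for $x_0\in\operatorname{relint}\mathbb{F}$, the cone $\mathbb{F}^{\vee}$ is exactly (the negative of) the normal cone $N(x_0,\mathbb{P})$, which is polar to the tangent cone $T(x_0,\mathbb{P})$; the lineality space of $T(x_0,\mathbb{P})$ is precisely $W=\operatorname{aff}(\mathbb{F})-\operatorname{aff}(\mathbb{F})$ (if $\pm v\in T$ then $\langle\mathfrak{q}_0,v\rangle=0$, so $x_0\pm tv\in\mathbb{F}$ for small $t$), and the standard identity $\dim C^{\circ}=d-\dim\operatorname{lin}(C)$ for polyhedral cones gives $\dim\mathbb{F}^{\vee}=d-k$. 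With that clarification your argument is complete and consistent with the sources the paper cites.
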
 
   \begin{ob}[Dual face and Dual cone]\label{o6.2}
The dual face $\mathbb{F}^{\vee}$ in (\ref{39c}) is  the dual cone $( \mathbb{F}^+)^{\vee}$   of the polyhedron $\mathbb{F}^+=\bigcap_{\pi_{\mathfrak{q},r}\in \Pi(\mathbb{F}^+)}\pi_{\mathfrak{q},r}^+  (\text{conical extension of $\mathbb{F}$}).$  See the first picture of Figure \ref{octa}.  
  \end{ob}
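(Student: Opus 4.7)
The plan is to unwind both sides of the asserted equality to a common expression in the normals $\{\mathfrak{q}:\pi_{\mathfrak{q},r}\in\Pi(\mathbb{F}^{+})\}$. By the second identity in (\ref{39c}), the dual face is already presented in generator form
\[
\mathbb{F}^{\vee}=\rm{cone}\bigl(\{\mathfrak{q}:\pi_{\mathfrak{q},r}\in\Pi(\mathbb{F}^{+})\}\bigr),
\]
so what remains is to establish the matching description
\[
(\mathbb{F}^{+})^{\vee}=\rm{cone}\bigl(\{\mathfrak{q}:\pi_{\mathfrak{q},r}\in\Pi(\mathbb{F}^{+})\}\bigr).
\]

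First I would recall that Section \ref{Sec3} of the paper already records this identity in the purely conical case when all $r_{i}=0$. Thus the only substantive step is to promote the formula from intersections of half-spaces through the origin to intersections of general affine half-spaces, which is exactly the situation of $\mathbb{F}^{+}$: the planes in $\Pi(\mathbb{F}^{+})$ are inherited from $\Pi(\mathbb{P})$ and need not pass through $\mathbf{0}$.

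For the promotion I would argue by two inclusions. For $\supseteq$, any nonnegative combination $\mathfrak{p}=\sum_{i}\alpha_{i}\mathfrak{q}_{i}$ of the defining normals yields a hyperplane $\pi_{\mathfrak{p},s}$ with $s=\sum_{i}\alpha_{i}r_{i}$ that is attained on the nonempty set $\mathbb{F}\subset\bigcap_{i}\pi_{\mathfrak{q}_{i},r_{i}}$, hence supports $\mathbb{F}^{+}$. For $\subseteq$, given a supporting plane $\pi_{\mathfrak{p},s}$ of $\mathbb{F}^{+}$, I would pick $\mathbf{y}\in\pi_{\mathfrak{p},s}\cap\mathbb{F}^{+}$ and invoke Farkas' lemma applied to the active constraints at $\mathbf{y}$ to express $\mathfrak{p}$ as a nonnegative combination of the corresponding normals $\mathfrak{q}_{i}$. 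The main obstacle is precisely this affine Farkas step: one needs to verify that the inner-normal condition $\langle\mathfrak{p},\mathbf{x}\rangle\ge\langle\mathfrak{p},\mathbf{y}\rangle$ on $\mathbb{F}^{+}$ forces the coefficients to be nonnegative, which is standard once the linear program is set up with optimum at $\mathbf{y}$. Combining the two inclusions then gives $(\mathbb{F}^{+})^{\vee}=\rm{cone}\bigl(\{\mathfrak{q}:\pi_{\mathfrak{q},r}\in\Pi(\mathbb{F}^{+})\}\bigr)$, and comparing with the generator form of $\mathbb{F}^{\vee}$ yields the observation.
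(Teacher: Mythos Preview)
Your proposal is correct. The paper itself gives no argument for this observation beyond pointing to Figure~\ref{octa}, so there is no ``paper's proof'' to compare against in detail; your sketch supplies a legitimate justification via the two inclusions and Farkas' lemma.

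One remark on efficiency: the displayed identity in Section~\ref{Sec3},
\[
\mathbb{P}^{\vee}=\Bigl(\bigcap_{i=1}^{M}\pi_{\mathfrak{q}_i,0}^{+}\Bigr)^{\vee}=\rm{cone}(\mathfrak{q}_1,\ldots,\mathfrak{q}_M),
\]
is already stated for a general polyhedron $\mathbb{P}=\bigcap_i\pi_{\mathfrak{q}_i,r_i}^{+}$ with arbitrary $r_i$ (the first equality is precisely the ``promotion'' from affine to homogeneous that you carry out by hand). So one can simply apply that identity with $\mathbb{P}$ replaced by $\mathbb{F}^{+}$ to obtain $(\mathbb{F}^{+})^{\vee}=\rm{cone}\bigl(\{\mathfrak{q}:\pi_{\mathfrak{q},r}\in\Pi(\mathbb{F}^{+})\}\bigr)$ and then compare with~(\ref{39c}). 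Your Farkas argument is not wrong---it just reproves a formula the paper already records. If you keep the Farkas step, be explicit that the point $\mathbf{y}$ should be chosen in the affine set $\bigcap_{\pi\in\Pi(\mathbb{F}^{+})}\pi$ (the minimal face of $\mathbb{F}^{+}$, which is nonempty since it contains $\mathbb{F}$), so that \emph{all} constraints are active there; this is what makes the LP-duality/Farkas conclusion clean.
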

   \begin{figure}
 \centerline{\includegraphics[width=13cm,height=8cm]{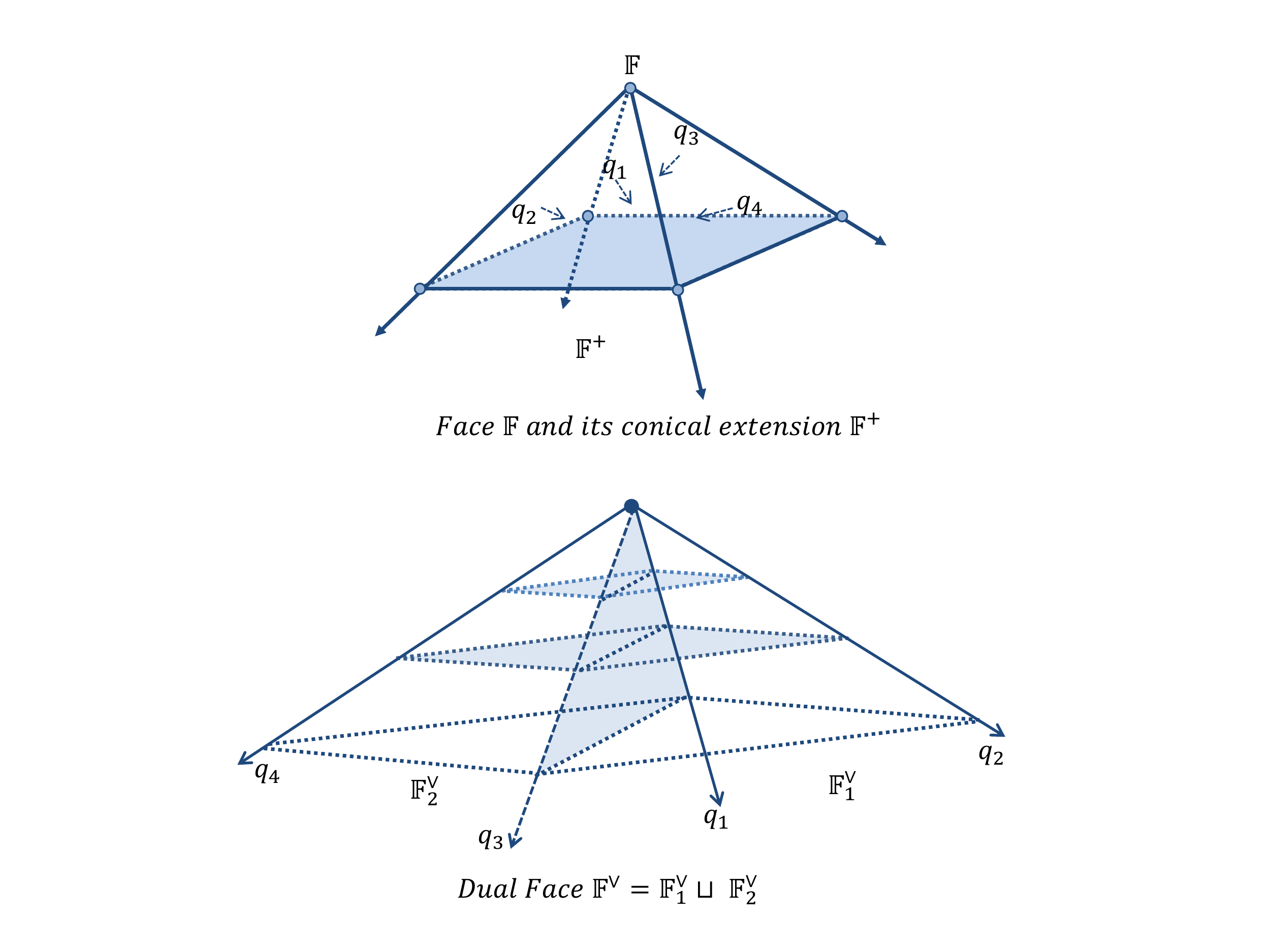}}
 \caption{
 The  polyhedron $\mathbb{P}$ has the  vertex $\mathbb{F}$ represented as 
 $\pi_{\mathfrak{q}_1}\cap\pi_{\mathfrak{q}_2}\cap \pi_{\mathfrak{q}_3}\cap \pi_{\mathfrak{q}_4}$. 
 Its dual face $\mathbb{F}^{\vee}=\rm{cone}(\mathfrak{q}_1,\mathfrak{q}_2,\mathfrak{q}_3,
 \mathfrak{q}_4) $ splits into two simplicial cones $\mathbb{F}_1^{\vee}= \rm{cone}\left(\mathfrak{q}_1,\mathfrak{q}_2,\mathfrak{q}_3\right)$ and $\mathbb{F}_2^{\vee} =\rm{cone}\left(\mathfrak{q}_1,\mathfrak{q}_3,\mathfrak{q}_4\right) $.  We can regard $\mathbb{F}_1=\pi_{\mathfrak{q}_1}\cap\pi_{\mathfrak{q}_2}\cap \pi_{\mathfrak{q}_3} $ and $\mathbb{F}_2=\pi_{\mathfrak{q}_1}\cap\pi_{\mathfrak{q}_3}\cap  \pi_{\mathfrak{q}_4}$ as   two faces, different from $\mathbb{F}$, because $\mathbb{F}_1,\mathbb{F}_2$ have the dual faces $\mathbb{F}_1^{\vee},\mathbb{F}_2^{\vee} $ different from   $\mathbb{F}^{\vee}$   while $\mathbb{F}_1$ and $\mathbb{F}_2$ themselves are same to $\mathbb{F}$ as  sets.  }\label{octa}
 \end{figure}   
  We  prove the two propositions making all dual faces $\mathbb{F}^{\vee}$ simplicial and oriented. \\

\noindent
 {\bf Simplicial duals}. See an idea of proof   in Figure \ref{octa} and the triangulization lemma.
 \begin{lemma}[Simplicial Cones]\label{lem5.6}
Every  $n$-dimensional   cone  $\mathbb{K}=\rm{cone}(\{\mathfrak{q}_i\}_{i=1}^m)$ in $V$ can be expressed as the union $\bigcup_{\ell=1}^L\mathbb{K}_\ell $ of  ess disjoint $n$-dimensional simplicial cones  $\mathbb{K}_\ell=\rm{cone}(\mathfrak{q}^{\ell}_1,\cdots,\mathfrak{q}^\ell_{n})$ where $\{\mathfrak{q}^{\ell}_1,\cdots,\mathfrak{q}^\ell_{n}\}\subset \{\mathfrak{q}_i\}_{i=1}^m$. Moreover, every  face of simplicial cone  is simplicial.
 \end{lemma}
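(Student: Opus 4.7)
The plan is to induct on the pair $(n,m)$ with $n=\dim\mathbb{K}$ and $m$ the number of generators, ordered lexicographically. The base cases are immediate: if $n=1$, then $\mathbb{K}$ is a single ray and hence simplicial; if $m=n$, the generators $\{\mathfrak{q}_i\}_{i=1}^n$ must be linearly independent (otherwise $\mathbb{K}$ would lie in a subspace of dimension $<n$), so $\mathbb{K}$ is already simplicial with $L=1$.

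For the inductive step, with $n\geq 2$ and $m>n$, I would first discard any $\mathfrak{q}_i$ that is redundant (i.e.\ does not lie on an extremal ray of $\mathbb{K}$), which does not change $\mathbb{K}$ and can only decrease $m$. Then I would fix one surviving generator $\mathfrak{q}_1$ and let $\mathcal{F}$ denote the collection of facets of $\mathbb{K}$ that do not contain $\mathfrak{q}_1$. Each $F\in\mathcal{F}$ is itself an $(n-1)$-dimensional cone with generators $S_F\subseteq\{\mathfrak{q}_2,\ldots,\mathfrak{q}_m\}$, namely the $\mathfrak{q}_i$ lying on the supporting hyperplane of $F$. The geometric heart of the argument, a \emph{star subdivision with apex} $\mathfrak{q}_1$, is the identity
\begin{align*}
\mathbb{K}=\bigcup_{F\in\mathcal{F}}\rm{cone}\bigl(\{\mathfrak{q}_1\}\cup S_F\bigr),
\end{align*}
with the summands pairwise essentially disjoint in the sense of Definition \ref{de3434}. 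I would verify this by slicing $\mathbb{K}$ with a hyperplane transverse to $\mathfrak{q}_1$, which reduces the statement to the analogous star-subdivision of a compact polytope from a chosen vertex: every point other than the apex lies on a unique segment from the apex terminating in some facet not containing the apex.

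Invoking the induction hypothesis on each $(n-1)$-dimensional facet $F$, I obtain an essentially disjoint decomposition $F=\bigcup_{\ell}\rm{cone}(\mathfrak{q}^\ell_{F,1},\ldots,\mathfrak{q}^\ell_{F,n-1})$ into simplicial subcones with generators drawn from $S_F$. Coning each piece with $\mathfrak{q}_1$ produces $\rm{cone}(\mathfrak{q}_1,\mathfrak{q}^\ell_{F,1},\ldots,\mathfrak{q}^\ell_{F,n-1})$, which is simplicial because $\mathfrak{q}_1$ lies off the hyperplane spanned by $F$, so linear independence is preserved. Substituting into the star decomposition yields the desired triangulation of $\mathbb{K}$, with all generators drawn from $\{\mathfrak{q}_i\}_{i=1}^m$.

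The second assertion is then immediate: if $\mathbb{K}=\rm{cone}(\mathfrak{q}_1,\ldots,\mathfrak{q}_n)$ with $\{\mathfrak{q}_i\}$ linearly independent and $\mathbb{F}=\mathbb{K}\cap\pi_{\mathfrak{q},0}$ is a face, then $\mathbb{F}=\rm{cone}(\{\mathfrak{q}_i:\langle\mathfrak{q},\mathfrak{q}_i\rangle=0\})$ is a cone over a subset of a linearly independent set, hence simplicial. The principal obstacle in the argument will be a rigorous verification of the star subdivision --- both that it covers all of $\mathbb{K}$ and that the pieces have disjoint interiors --- which rests on the combinatorics of facets visible from $\mathfrak{q}_1$ and on confirming that the cross-sectional polytope inherits a clean radial partition from its chosen vertex.
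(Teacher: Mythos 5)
The paper does not prove this lemma; it states it and writes ``We omit its proof,'' so there is no in-paper argument to compare against. Your placing/star subdivision is the standard route (it is e.g.\ the pulling or placing triangulation of cone/polytope theory), and your treatment of the base cases and of the ``moreover'' claim about faces of a simplicial cone is correct.

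There is, however, a genuine gap in the inductive step: your argument silently assumes $\mathbb{K}$ is pointed, and the cones to which the paper actually applies this lemma need not be. Dual faces $\mathbb{F}^{\vee}$ live inside $\mathbb{P}^{\vee}=\rm{cone}^{\vee}(B)$, whose lineality space is $(\rm{span}\,B)^{\perp}$; in the paper's central example $B=\{{\bf 0}\}$ one has $\mathbb{P}^{\vee}=\mathbb{R}^{d}$, and the minimal dual faces $\mathbb{G}^{\vee}$ of full rank $d-k_{0}$ contain that lineality space, so they are not strongly convex. For such a cone your first step --- discarding generators not on an extremal ray --- fails outright (a cone with lineality has no extremal rays), the transverse hyperplane slice is an unbounded polyhedron rather than a compact polytope, and in the worst case $\mathbb{K}$ is a linear subspace with no facets at all, so the star subdivision produces nothing. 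You would need either to handle the lineality space as a separate base of the induction (triangulate a positively spanned subspace directly, e.g.\ by triangulating its unit sphere using the directions of the available generators), or to replace the star subdivision by the placing/lexicographic triangulation, which adds generators one at a time and cones over visible facets and is valid without a pointedness hypothesis. Apart from this, your key step --- that the ray $\{x-t\mathfrak{q}_{1}:t\ge 0\}$ exits $\mathbb{K}$ through a facet not containing $\mathfrak{q}_{1}$ --- does hold whenever $\mathfrak{q}_{1}$ is not chosen in the lineality space, so the fix is local rather than a change of strategy.
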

  We omit its proof.
By   Lemma \ref{lem5.6}, we have a  simplical decomposition of $\mathbb{F}^{\vee}$.
 \begin{proposition}[Dual face Splits to Simplicial Cones]\label{facerep}  
  Recall $\mathbb{F}^{\vee}$    in (\ref{39c}) where
  $\mathbb{F}=\bigcap_{\pi_{\mathfrak{q},r}\in \Pi(\mathbb{F}^+)}\pi_{\mathfrak{q},r}   \in\mathcal{F}^{k}(\mathbb{P})$.  Then,  the dual face $\mathbb{F}^{\vee}$ splits
  \begin{align}
& 
 \mathbb{F}^{\vee} =\bigcup_{\ell=1}^L \mathbb{F}_\ell^{\vee}\ \text{with all  $\mathbb{F}_\ell^{\vee}$  ess  disjoint simplicial dual cones with}  \label{jq513} \\
&  \mathbb{F}_\ell:=\bigcap_{\pi_{\mathfrak{q},r}\in \Pi_\ell}\pi_{\mathfrak{q},r}   \cap\mathbb{P}\ \text{are same to $\mathbb{F}$ as a set where  $  \bigcup_{\ell=1}^L\Pi_\ell= \Pi(\mathbb{F}^+)$.}  \label{6.14}
 \end{align} 
 \end{proposition}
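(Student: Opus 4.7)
The plan is to apply the triangulation lemma (Lemma \ref{lem5.6}) directly to the $(d-k)$-dimensional cone $\mathbb{F}^{\vee}=\rm{cone}(\{\mathfrak{q}:\pi_{\mathfrak{q},r}\in \Pi(\mathbb{F}^+)\})$ supplied by (\ref{39c}) and Observation \ref{o6.1}. That lemma yields $\mathbb{F}^{\vee}=\bigcup_{\ell=1}^L \mathbb{K}_\ell$ as essentially disjoint $(d-k)$-dimensional simplicial subcones $\mathbb{K}_\ell=\rm{cone}(\mathfrak{q}_1^\ell,\ldots,\mathfrak{q}_{d-k}^\ell)$ whose generators are drawn from the original normals; by refining the triangulation if necessary (e.g. stellar subdivision at any leftover generator) one can arrange that every $\mathfrak{q}$ with $\pi_{\mathfrak{q},r}\in \Pi(\mathbb{F}^+)$ appears as a ray generator of some $\mathbb{K}_\ell$. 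For each $\ell$, set $\Pi_\ell:=\{\pi_{\mathfrak{q}_i^\ell,r_i^\ell}\}_{i=1}^{d-k}\subset \Pi(\mathbb{F}^+)$, define $\mathbb{F}_\ell:=\bigcap_{\pi\in\Pi_\ell}\pi\cap\mathbb{P}$, and, in the abuse of notation indicated in the figure caption (and Observation \ref{o6.2}), declare $\mathbb{F}_\ell^{\vee}:=\mathbb{K}_\ell$, viewed as the dual cone of the conical extension $\mathbb{F}_\ell^+=\bigcap_{\pi\in\Pi_\ell}\pi^+$.

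The heart of the argument is the set-theoretic identity $\mathbb{F}_\ell=\mathbb{F}$. The inclusion $\mathbb{F}\subseteq \mathbb{F}_\ell$ is automatic because $\Pi_\ell\subseteq \Pi(\mathbb{F}^+)$, and every hyperplane in $\Pi(\mathbb{F}^+)$ contains $\mathbb{F}$. For the reverse, any $x\in \mathbb{F}_\ell$ satisfies $\langle \mathfrak{q}_i^\ell,x\rangle=r_i^\ell$ for $i=1,\ldots,d-k$, so by linearity $\langle \mathfrak{q},x\rangle=r$ holds for every $\mathfrak{q}=\sum_i c_i\mathfrak{q}_i^\ell\in \mathbb{K}_\ell$ with $r:=\sum_i c_i r_i^\ell$. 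Choose $\mathfrak{q}^*\in (\mathbb{K}_\ell)^{\circ}$. The key observation is that $(\mathbb{K}_\ell)^{\circ}\subseteq (\mathbb{F}^\vee)^{\circ}$: since $\mathbb{K}_\ell\subseteq \mathbb{F}^\vee$ and both cones have the same dimension $d-k$, they share the same linear span, so any relative-interior point of $\mathbb{K}_\ell$ has a neighborhood in that span lying inside $\mathbb{F}^\vee$. By the defining property of $(\mathbb{F}^\vee)^{\circ}$ in Definition \ref{d412}, $\pi_{\mathfrak{q}^*,r^*}\cap\mathbb{P}=\mathbb{F}$, whence $x\in \mathbb{F}$.

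The main obstacle is precisely this interior-containment step. It depends essentially on Lemma \ref{lem5.6} producing simplicial subcones of the maximal dimension $d-k$, together with the $(d-k)$-dimensionality of $\mathbb{F}^\vee$ from Observation \ref{o6.1}; were this dimension match to fail, a simplicial $\mathbb{K}_\ell$ could be squeezed into a proper boundary face of $\mathbb{F}^\vee$, and the identity $\pi_{\mathfrak{q}^*,r^*}\cap\mathbb{P}=\mathbb{F}$ would then return a strictly larger face instead. Once $\mathbb{F}_\ell=\mathbb{F}$ is in hand, the remaining assertions of the proposition — essential disjointness of the $\mathbb{F}_\ell^\vee$, their simplicial structure, the covering $\bigcup_\ell \mathbb{F}_\ell^\vee=\mathbb{F}^\vee$, and the equality $\bigcup_\ell \Pi_\ell=\Pi(\mathbb{F}^+)$ — are formal consequences of the triangulation and the way $\Pi_\ell$ was assembled.
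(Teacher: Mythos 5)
Your proof follows essentially the same strategy as the paper: apply the triangulation Lemma \ref{lem5.6} to the $(d-k)$-dimensional cone $\mathbb{F}^{\vee}$, read off $\Pi_\ell$ and $\mathbb{F}_\ell$ from each simplicial piece, and reduce everything to the set-theoretic identity $\mathbb{F}_\ell = \mathbb{F}$. The only real difference is in how that identity is closed: the paper notes that $\text{rank}(\{\mathfrak{q}_{i(\ell)}\}_{i=1}^{d-k}) = d-k$ forces $\mathbb{F}_\ell$ to be a $k$-dimensional face of $\mathbb{P}$ containing the $k$-dimensional face $\mathbb{F}$, hence equal to it, whereas you pass to a generic interior normal $\mathfrak{q}^{*}\in(\mathbb{K}_\ell)^{\circ}\subseteq(\mathbb{F}^{\vee})^{\circ}$ and invoke the defining property $\pi_{\mathfrak{q}^{*},r^{*}}\cap\mathbb{P}=\mathbb{F}$ from Definition \ref{d412}; both arguments are valid and interchangeable, and your explicit stellar-subdivision remark to guarantee $\bigcup_{\ell}\Pi_\ell=\Pi(\mathbb{F}^{+})$ tidies up a point the paper's proof leaves implicit.
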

 \begin{proof}
Let $\Pi(\mathbb{F}^+)=\{\pi_{\mathfrak{q}_{i},r_i}\}_{i=1}^m$ in   (\ref{39c}).  Apply  the simplicial decomposition of the cone $\mathbb{K}$ of Lemma \ref{lem5.6} for  the  $(d-k)$-dimensional cone $ \mathbb{F}^{\vee}=\rm{cone}(\{\mathfrak{q}_i\}_{i=1}^m)$ as
\begin{align}\label{hh1}
\mathbb{F}^{\vee} =\bigcup_{\ell=1}^L \rm{cone}(\{\mathfrak{q}_{i(\ell)} \}_{i=1}^{d-k})\ \text{for $ \{\mathfrak{q}_{i(\ell)} \}_{i=1}^{d-k}\subset \{\mathfrak{q}_i\}_{i=1}^m$}
\end{align} 
 where $ \rm{cone}(\{\mathfrak{q}_{i(\ell)} \}_{i=1}^{d-k})$  are ess  disjoint simplicial cones. Then  for each $\ell$,   choose $$\Pi_\ell:=\{\pi_{\mathfrak{q}_{i(\ell)},r_{i(\ell)}} \}_{i=1}^{d-k}\subset \Pi(\mathbb{F}^+)\ \text{and}\ 
\begin{cases}
\mathbb{F}_\ell :=\bigcap_{i=1}^{d-k}\pi_{\mathfrak{q}_{i(\ell)},r_{i(\ell)}} \cap \mathbb{P} \\
 \mathbb{F}_\ell^+:=\bigcap_{i=1}^{d-k}\pi^+_{\mathfrak{q}_{i(\ell)},r_{i(\ell)}}. \end{cases}
 $$
   Then    (\ref{hh1}) with
 $   (\mathbb{F}_\ell)^{\vee}  =  \rm{cone}(\{\mathfrak{q}_{i(\ell)} \}_{i=1}^{d-k}) $ 
shows 
   (\ref{jq513}).
Next,    $\text{rank}(\{\mathfrak{q}_{i(\ell)}\}_{i=1}^{d-k})=d-k$ implies that    $ \mathbb{F}_\ell$ is a $k$-dimensional boundary object of $\mathbb{P}$ containing $  \bigcap_{i=1}^{m} \pi_{\mathfrak{q}_{i},r_{i }}$ which is $\mathbb{F} $.  Thus  $\mathbb{F}_\ell$ and $\mathbb{F}$ coincide, showing (\ref{6.14}).  See  Figure \ref{octa}.
 \end{proof}
\noindent
{\bf  Orientation}.    Call   $\mathbb{F}\in \mathcal{F}(\mathbb{P})$ and $\mathbb{F}^\vee$  forward or backward oriented     if $\mathbb{F}^{\vee}\subset\mathbb{P}^{\vee}_{\rm{for}}$ or $ \subset\mathbb{P}^{\vee}_{\rm{bac}}$.  
 In Figure \ref{forba89}, we demonstrate  $\mathbb{F}^\vee  =\mathbb{F}_{\rm{for}}^{\vee}\cup\mathbb{F}_{\rm{bac}}^{\vee}$ showing how to reset $ \mathcal{F}(\mathbb{P})$ as $\mathcal{F}(\mathbb{P}_{\rm{for}})\cup \mathcal{F}(\mathbb{P}_{\rm{bac}})$.  Indeed, one can switch the single vertex $\mathbb{F}=\pi_{\mathfrak{q}(1)}\cap \pi_{\mathfrak{q}(2)} $   with
the   two  newly formed vertices $\mathbb{F}_{\rm{for}}:=\pi_{\mathfrak{q}(1)}\cap\pi_{(-1,1)}$ of $\mathbb{P}_{\rm{for}}$ and $\mathbb{F}_{\rm{bac}}:=\pi_{(-1,1)}\cap\pi_{\mathfrak{q}(2)}$ of $\mathbb{P}_{\rm{bac}}$, which are same to the original   $\mathbb{F}$ as a set, but have  distinct duals $\mathbb{F}_{\rm{for}}^{\vee} \subset \mathbb{P}_{\rm{for}}^{\vee}$ and $\mathbb{F}_{\rm{bac}}^{\vee} \subset\mathbb{P}_{\rm{bac}}^{\vee}$. 
We state this intuition   as the proposition below.
 \begin{proposition}\label{prop110}[Dichotomy to  Oriented Faces]
Let $\mathbb{F}\in \mathcal{F}^{k}(\mathbb{P})$.   Suppose that 
\begin{align}
(\mathbb{F}^{\vee})^{\circ}\cap(\mathbb{P}_{\rm{for}}^{\vee})^{\circ}\ne \emptyset\ \text{and}\ (\mathbb{F}^{\vee})^{\circ}\cap(\mathbb{P}_{\rm{bac}}^{\vee})^{\circ}\ne \emptyset. \label{jq823}
\end{align}
Then we can split  the dual face $\mathbb{F}^\vee  =\mathbb{F}_{\rm{for}}^{\vee}\cup\mathbb{F}_{\rm{bac}}^{\vee}$ as
\begin{align}
 \begin{cases} \mathbb{F}_{\rm{for}}^{\vee}  : =\mathbb{F}^{\vee}\cap \mathbb{P}_{\rm{for}}^{\vee}=\rm{cone}(\{\mathfrak{p}_{i}\}_{i=1}^m) \\ 
   \mathbb{F}_{\rm{bac}}^{\vee}  : =\mathbb{F}^{\vee}\cap \mathbb{P}_{\rm{bac}}^{\vee}  =\rm{cone}(\{\mathfrak{q}_{j}\}_{j=1}^n) 
   \end{cases} 
 \label{jq821}
\end{align}
for $\text{dim}(\mathbb{F}_{\rm{for}}^{\vee}  )=\text{dim}(\mathbb{F}_{\rm{bac}}^{\vee}  )  =d-k$, which are dual faces of    
\begin{align}\label{jq822} 
    \begin{cases}
 \mathbb{F}_{\rm{for}}:=\bigcap    \pi_{\mathfrak{p}_i,r_i}\cap  \mathbb{P}\in\mathcal{F}^{k}(\mathbb{P}_{\rm{for}}) \ \text{with}\ \Pi(\mathbb{F}_{\rm{for}}^+)=\{ \pi_{\mathfrak{p}_i,r_i}  \}_{i=1}^m  \\
\mathbb{F}_{\rm{bac}}:= \bigcap  \pi_{\mathfrak{q}_j,s_j}\cap \mathbb{P}\in\mathcal{F}^{k}(\mathbb{P}_{\rm{bac}})\ \text{with}\  \Pi(\mathbb{F}_{\rm{bac}}^+)=\{\pi_{\mathfrak{q}_j,s_j} \}_{j=1}^n  
   \end{cases}
 \end{align} 
which are identical to $\mathbb{F}$ as  a set. Here $ \pi_{\mathfrak{p}_i,r_i}\in \overline{\Pi}(\mathbb{P}_{\rm{for}})$ and $\pi_{\mathfrak{q}_j,s_j}\in\overline{\Pi}(\mathbb{P}_{\rm{bac}})$. If $\mathbb{F}^{\vee}\subset \mathbb{P}_{\rm{for}}^{\vee}\ \text{or}\ \mathbb{P}_{\rm{bac}}^{\vee}$,  rewrite $\mathbb{F}$ as $\mathbb{F}_{\rm{for}}\in\mathcal{F}^{k}(\mathbb{P}_{\rm{for}}) $ or $\mathbb{F}_{\rm{bac}}\in\mathcal{F}^{k}(\mathbb{P}_{\rm{bac}}) $ respectively.

   \end{proposition}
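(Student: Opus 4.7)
\medskip

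\noindent\textbf{Proof plan for Proposition \ref{prop110}.} The plan is to separate $\mathbb{F}^{\vee}$ by the hyperplane $H=\{\mathfrak{q}\in V:\langle\mathfrak{q},{\bf 1}\rangle=0\}$, whose two half-spaces $H^{\pm}$ cut $\mathbb{P}^{\vee}$ into $\mathbb{P}^{\vee}_{\rm{for}}=\mathbb{P}^{\vee}\cap H^{+}$ and $\mathbb{P}^{\vee}_{\rm{bac}}=\mathbb{P}^{\vee}\cap H^{-}$. Since $\mathbb{F}^{\vee}\subset \mathbb{P}^{\vee}$, one immediately gets
\begin{align*}
\mathbb{F}^{\vee}_{\rm{for}}\,:=\,\mathbb{F}^{\vee}\cap\mathbb{P}^{\vee}_{\rm{for}}\,=\,\mathbb{F}^{\vee}\cap H^{+},\qquad
\mathbb{F}^{\vee}_{\rm{bac}}\,:=\,\mathbb{F}^{\vee}\cap\mathbb{P}^{\vee}_{\rm{bac}}\,=\,\mathbb{F}^{\vee}\cap H^{-},
\end{align*}
so both sets are polyhedral cones (intersections of the polyhedral cone $\mathbb{F}^{\vee}$ with a closed half-space), and their union equals $\mathbb{F}^{\vee}$. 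I will then write them as $\mathbb{F}^{\vee}_{\rm{for}}=\rm{cone}(\{\mathfrak{p}_i\}_{i=1}^{m})$ and $\mathbb{F}^{\vee}_{\rm{bac}}=\rm{cone}(\{\mathfrak{q}_j\}_{j=1}^{n})$, where the list of extremal rays may include (i) the original extremal rays of $\mathbb{F}^{\vee}$ that happen to lie in $H^{\pm}$, and (ii) new rays on $\mathbb{F}^{\vee}\cap H$ produced by the slicing.

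\medskip

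Next I would verify the dimension claim $\dim(\mathbb{F}^{\vee}_{\rm{for}})=\dim(\mathbb{F}^{\vee}_{\rm{bac}})=d-k$. The hypothesis (\ref{jq823}) supplies interior points $\mathfrak{r}_{\pm}\in(\mathbb{F}^{\vee})^{\circ}$ with $\langle\mathfrak{r}_{+},{\bf 1}\rangle>0$ and $\langle\mathfrak{r}_{-},{\bf 1}\rangle<0$. Since $(\mathbb{F}^{\vee})^{\circ}$ is open in the affine span of $\mathbb{F}^{\vee}$ (which has dimension $d-k$), a full $(d-k)$-dimensional neighborhood of $\mathfrak{r}_{+}$ lies in $H^{+}$ and inside $\mathbb{F}^{\vee}$, giving $\dim(\mathbb{F}^{\vee}_{\rm{for}})=d-k$; the argument for $\mathbb{F}^{\vee}_{\rm{bac}}$ is symmetric. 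This produces (\ref{jq821}).

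\medskip

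To obtain the supporting planes in (\ref{jq822}), for each generator $\mathfrak{p}_{i}$ of $\mathbb{F}^{\vee}_{\rm{for}}$, set $r_{i}:=\langle \mathfrak{m}_{0},\mathfrak{p}_{i}\rangle$ for any $\mathfrak{m}_{0}\in\mathbb{F}$ (a well-defined value by Lemma \ref{lem34joo} and the fact that $\mathfrak{p}_{i}\in\mathbb{F}^{\vee}$). Then $\pi_{\mathfrak{p}_{i},r_{i}}$ is a supporting plane of $\mathbb{P}$ containing $\mathbb{F}$; moreover, because $\mathfrak{p}_{i}\in\mathbb{P}^{\vee}_{\rm{for}}$, the half-space $\pi^{+}_{\mathfrak{p}_{i},r_{i}}$ is among those defining $\mathbb{P}_{\rm{for}}$ in (\ref{jmq6}), so $\pi_{\mathfrak{p}_{i},r_{i}}\in\overline{\Pi}(\mathbb{P}_{\rm{for}})$ as required. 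I then define $\mathbb{F}_{\rm{for}}:=\bigcap_{i=1}^{m}\pi_{\mathfrak{p}_{i},r_{i}}\cap\mathbb{P}$, which manifestly contains $\mathbb{F}$. A dimension count closes the argument: the dual face of $\mathbb{F}_{\rm{for}}$ in $\mathbb{P}$ contains $\rm{cone}(\{\mathfrak{p}_{i}\}_{i=1}^{m})=\mathbb{F}^{\vee}_{\rm{for}}$, which has rank $d-k$, so $\dim(\mathbb{F}_{\rm{for}})\le k=\dim(\mathbb{F})$, forcing the set equality $\mathbb{F}_{\rm{for}}=\mathbb{F}$. The same analysis in $\mathbb{P}_{\rm{for}}$ in place of $\mathbb{P}$ gives $\mathbb{F}_{\rm{for}}\in\mathcal{F}^{k}(\mathbb{P}_{\rm{for}})$ with dual face exactly $\mathbb{F}^{\vee}_{\rm{for}}$. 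The backward case is identical after replacing $H^{+}$ by $H^{-}$; and the degenerate case $\mathbb{F}^{\vee}\subset\mathbb{P}^{\vee}_{\rm{for}}$ (resp.\ $\mathbb{P}^{\vee}_{\rm{bac}}$) is trivial, since one simply sets $\mathbb{F}_{\rm{for}}:=\mathbb{F}$ using the already-present supporting planes whose normals lie in $\mathbb{P}^{\vee}_{\rm{for}}$.

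\medskip

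The main technical obstacle I anticipate is the bookkeeping in step two: verifying that when $H$ slices $\mathbb{F}^{\vee}$, the new extremal rays arising on $\mathbb{F}^{\vee}\cap H$ still yield genuine supporting planes of $\mathbb{P}_{\rm{for}}$ (and not merely of $\mathbb{P}$), and that the full $(d-k)$-dimensionality of the pieces is preserved. Both points follow cleanly from the openness argument in (\ref{jq823}) and from $\mathbb{F}^{\vee}\cap H\subset \mathbb{P}^{\vee}_{\rm{for}}\cap \mathbb{P}^{\vee}_{\rm{bac}}$, but this is where care is needed to keep the statement consistent with Proposition \ref{facerep} (so that in later applications the oriented dual faces can afterwards be further subdivided into oriented \emph{simplicial} cones, completing Theorem \ref{lemir1}).
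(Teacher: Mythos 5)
Your proof is correct and follows essentially the same route as the paper: the hypothesis (\ref{jq823}) supplies interior points of $\mathbb{F}^{\vee}$ on both sides of the hyperplane $\langle\cdot,{\bf 1}\rangle=0$, giving full $(d-k)$-dimensionality of both slices; since the generators $\mathfrak{p}_i,\mathfrak{q}_j$ lie in $\mathbb{F}^{\vee}$, the corresponding supporting planes all contain $\mathbb{F}$, and the rank/dimension count forces $\mathbb{F}_{\rm{for}}=\mathbb{F}_{\rm{bac}}=\mathbb{F}$ as sets. Your added detail on the explicit choice $r_i=\langle\mathfrak{m}_0,\mathfrak{p}_i\rangle$ and the check that $\pi_{\mathfrak{p}_i,r_i}\in\overline{\Pi}(\mathbb{P}_{\rm{for}})$ (noting that a supporting plane of $\mathbb{P}\subset\mathbb{P}_{\rm{for}}$ whose normal lies in $\mathbb{P}^{\vee}_{\rm{for}}$ is among the defining half-spaces of $\mathbb{P}_{\rm{for}}$) makes the argument more explicit than the paper's, though the well-definedness of $r_i$ actually follows from $\mathfrak{p}_i\in\mathbb{F}^{\vee}$ directly rather than from Lemma \ref{lem34joo}.
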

\begin{proof}[Proof of Proposition \ref{prop110}]
 Since $\mathbb{F}\in \mathcal{F}^{k}$, 
 the cone $\mathbb{F}^{\vee}$ of  dim $d-k$    is imbedded in the $d-k$ dimensional subspace $U\subset V$.
From (\ref{jq823}),   both $(\mathbb{F}^{\vee})^{\circ}\cap(\mathbb{P}_{\rm{for}}^{\vee})^{\circ}$  and $ (\mathbb{F}^{\vee})^{\circ}\cap(\mathbb{P}_{\rm{bac}}^{\vee})^{\circ}$ 
contain a non-empty open sets  in $U$. 
Thus,    both $ \mathbb{F}^{\vee} \cap \mathbb{P}_{\rm{for}}^{\vee} $  and $  \mathbb{F}^{\vee}  \cap \mathbb{P}_{\rm{bac}}^{\vee} $  in   $U$    are  $d-k$ dimensional polyhedral cones.  
 Therefore  we have  (\ref{jq821}) such that $
\text{rank}(\{\mathfrak{p}_i\}_{i=1}^m)=d-k\ \text{and}\ \text{rank}(\{\mathfrak{q}_i\}_{i=1}^n)=d-k. 
$ 
Since   $\{\mathfrak{p}_i \}_{i=1}^m\subset\mathbb{F}^{\vee} \cap \mathbb{P}^{\vee}_{\rm{for}}\subset \mathbb{F}^{\vee}$ in (\ref{jq821}),   there are    $\pi_{\mathfrak{p}_1,r_1},\cdots,\pi_{\mathfrak{p}_m,r_m}\in    \overline{\Pi}(\mathbb{P})$ containing $\mathbb{F}$. Hence $\mathbb{F}_{\rm{for}}$, defined in (\ref{jq822}), is  at most $k$-dimensional object, containing    $\mathbb{F}$. 
 This  implies that   $\mathbb{F}_{\rm{for}}$  in (\ref{jq822}) coincides with   $\mathbb{F}$ as a set. Similarly,   $\mathbb{F}_{\rm{bac}}=\mathbb{F}$.  
 \end{proof}
  \noindent
 {\bf Decomposition by Oriented Simplicial Dual Faces}
Let    $\text{dim}(\mathbb{P}^{\vee})=d-k_0$.  Insert  $ \mathbb{F} \in\mathcal{F}(\mathbb{P}_{\rm{for}})\cup\mathcal{F}(\mathbb{P}_{\rm{bac}})$  in Proposition \ref{prop110} into    (\ref{07hh}) as
  $$\mathbb{P}^{\vee}    =\bigcup_{\mathbb{F} \in \mathcal{F}(\mathbb{P}_{\rm{for}})\cup \mathcal{F}(\mathbb{P}_{\rm{bac}})} \mathbb{F}^{\vee}   =\bigcup_{\mathbb{F} \in \mathcal{F}^{k_0}(\mathbb{P}_{\rm{for}})\cup \mathcal{F}^{k_0}(\mathbb{P}_{\rm{bac}})} \mathbb{F}^{\vee}.
$$   
Define $\mathcal{F}^{k}_{\rm{os}}(\mathbb{F}) =\{\mathbb{F}_\ell\}$  as the set of $k$-dimensional  newly-formed faces $\mathbb{F}_\ell$ in  (\ref{6.14})   having ess   disjoint simplicial  duals $\mathbb{F}_\ell^{\vee}$  in (\ref{jq513})  forming $  \bigcup_{ \mathbb{F}_\ell\in \mathcal{F}^{k}_{\rm{os}}(\mathbb{F}) } \mathbb{F}_\ell^{\vee}= \mathbb{F}^{\vee}$. By inserting $\mathcal{F}^{k}_{\rm{os}} :=\bigcup_{\mathbb{F}\in \mathcal{F}^{k}} \mathcal{F}^{k}_{\rm{os}}(\mathbb{F})$ and  $\mathcal{F}_{\rm{os}}:=\bigcup_{k\ge k_0}\mathcal{F}^{k}_{\rm{os}}$ into the above decomposition,
 \begin{align} 
 \mathbb{P}^{\vee} &=  \bigcup_{\mathbb{F} \in \mathcal{F}_{\rm{os}}(\mathbb{P}_{\rm{for}})\cup \mathcal{F}_{\rm{os}}(\mathbb{P}_{\rm{bac}})} \mathbb{F}^{\vee}   =\bigcup_{\mathbb{F} \in \mathcal{F}^{k_0}_{\rm{os}}(\mathbb{P}_{\rm{for}})\cup \mathcal{F}^{k_0}_{\rm{os}}(\mathbb{P}_{\rm{bac}})} \mathbb{F}^{\vee}.\label{jq415}
 \end{align}  
    We  can apply (\ref{jq415}) to (\ref{255}). Fix
 $\mathbb{P}={\bf N}(P,D_B)$ to prove Theorem \ref{lemir1}.
   \begin{proof} [Proof of Theorem \ref{lemir1}]
One can apply (\ref{jq415}) for   (\ref{255}) to have (\ref{s40})  where   $\{\mathfrak{q}_i\}_{i=1}^{d_0}\subset\mathbb{Q}^d$ is the set of  linearly independent vectors.  We next claim (\ref{asan1}).  First take $M_0$ as the product of the denominators of all entries in  $ \{\mathfrak{q}_i\}_{i=1}^{d_0}$. This implies the first inclusion of (\ref{asan1}).
For the $d\times d_0$   matrix $ A:=(\mathfrak{q}_1|\cdots|\mathfrak{q}_{d_0})$,    it holds 
 $ j\in \mathbb{F}^{\vee}\cap \mathbb{Z}^d\ \text{if and only if}\ j=\sum_{i=1}^{d_0}\alpha_i \mathfrak{q}_i=A(\alpha_i)\in \mathbb{Z}^d\ \text{for $\alpha_i\ge 0$}$. Since the first $d_0\times d_0$ sub-matrix $A_0$ of $A$ is non-singular, 
\begin{align}  \label{6.22}
 \mathbb{F}^{\vee}\cap &\mathbb{Z}^d\subset  \bigg\{ \sum_{i=1}^{d_0}\alpha_i \mathfrak{q}_i: (\alpha_i )_{i=1}^{d_0}\in A_0^{-1}(\mathbb{Z}^{d_0})\cap\mathbb{R}_+^{d_0}\bigg\}.    
  \end{align}
Take $M_1$   as   the product the denominators of all entries in $A_0^{-1}$    in (\ref{6.22}). Then $  A_0^{-1}(\mathbb{Z}^{d_0})\subset  \left(\frac{1}{M_1}\mathbb{Z}_+\right)^{d_0}$ in (\ref{6.22}) implies the second inclusion of (\ref{asan1}). \end{proof}  
 
 \section{Equivalence with Face-Nondegeneracy}\label{Sec14}

We prove Theorem \ref{prop44}. We need a notion of a neighborhood  of a dual face.

\subsection{Neighborhoods of Dual Faces}
Suppose  that  $D\cong D_B$ and $D$ is  away from the middle region   $M_h=\{x: 1/h\le |x_\nu|\le h\ \text{for all $\nu$}\}$ for $h\gg 1$.   Thus $D\subset\{x: |x_\nu|<1/h\ \text{or}\ |x_\nu|>h\ \text{for some $\nu$}\}$.  Take $h=2^{  d r^{d+100}}$ with $r\ge 1$. Then $$D\subset {\bf 2}^{-N_r^c}\ \text{where}\ N_r=B({\bf 0},d r^{d+100})$$    
Hence,   $
 D\subset  2^{- \rm{cone}^{\vee}(B)\cap  N_r^c+O(1)}$. Thus we shall work $\rm{cone}^{\vee}(B)\cap  N_r^c$ rather than
$\rm{cone}^{\vee}(B)$. One can split
by the faces in (\ref{jq415}), 
\begin{align}\label{a004}
 \rm{cone}(B)^{\vee}\cap N_r^c = \bigcup_{\mathbb{F}\in \mathcal{F}(\mathbb{P}) }   \mathbb{F}^{\vee}\cap N_r^c\ \text{where  $\mathcal{F}(\mathbb{P})=\mathcal{F}_{\rm{os}}(\mathbb{P}_{\rm{for}})\cup  \mathcal{F}_{\rm{os}}(\mathbb{P}_{\rm{bac}}).$} 
 \end{align}
  Next, consider   a neighborhood of a dual face $\mathbb{F}^{\vee}\cap N_r^c$. 
 \begin{definition}[Neighborhood of $\mathbb{F}^{\vee}$ in $\mathbb{G}^{\vee}$]\label{de49}
  Let  $\mathbb{F} \in  \mathcal{F}^k  $ with $\mathbb{G}\in \mathcal{F}^{k_0}  $ such that $\mathbb{G}\preceq\mathbb{F}$  in    (\ref{a004}).  Since $\mathbb{F}^{\vee},\mathbb{G}^{\vee}$ simplicial, one can take  such that 
\begin{align}
\text{$\rm{cone}( \{\mathfrak{q}_i\}_{i=1}^{d-k})=\mathbb{F}^{\vee}$  and $\rm{cone}( \{\mathfrak{q}_i\}_{i=1}^{d})=\mathbb{G}^{\vee}$ where $\mathfrak{q}_i\in \Pi(\mathbb{P})$}.
\end{align}
Denote $\rm{basis}(\mathbb{F}^{\vee})=\{\mathfrak{q}_i \}_{i=1}^{d-k}$ and
$\rm{basis}(\mathbb{G}^{\vee})=\{\mathfrak{q}_i \}_{i=1}^{d}$ to
 define  a neighborhood  $ \mathcal{N}^k_r(\mathbb{F}^{\vee}|\mathbb{G}^{\vee})$  of $\mathbb{F}^{\vee}\cap N_r^c$ in $\mathbb{G}^{\vee}\cap N_r^c$:
 \begin{align*}
 \bigg\{  \sum_{  \mathfrak{q}_i\in  \rm{basis}(\mathbb{F}^{\vee}) } \alpha_{i}\mathfrak{q}_i +\sum_{  \mathfrak{q}_j\in  \rm{basis}(\mathbb{G}^{\vee})\setminus \rm{basis}(\mathbb{F}^{\vee})} \alpha_{j}\mathfrak{q}_j \in N_r^c
 :  \alpha_i\ge r^{k+1}\ \text{and}\  0\le \alpha_j<r^k \bigg\}.
\end{align*}
The vector   $ j\in \mathcal{N}^k_r(\mathbb{F}^{\vee}|\mathbb{G}^{\vee})$   is the sum of the first term (main term) in $ \mathbb{F}^{\vee}$ of size $ \ge  d r^{d+99 }$   and the second term (error term) in $  \mathbb{G}^{\vee}$  of size $\le r^kd$. Therefore,
 $\mathcal{N}^k_r(\mathbb{F}^{\vee}|\mathbb{G}^{\vee})$ is  a perturbation of   the cone   $\mathbb{F}^{\vee} $ in $\mathbb{G}^{\vee}$ located   away from both   $\partial(\mathbb{F}^{\vee})$ and the origin. For simplicity,   write it as  $\mathcal{N}_r(\mathbb{F}^{\vee}|\mathbb{G}^{\vee})$. Notice that if $k=k_0$,  owing to $   \mathbb{F} =\mathbb{G}$, we can write $\mathcal{N}_r^k(\mathbb{F}^{\vee}|\mathbb{G}^{\vee})$ as
\begin{align} \label{26}
 \mathcal{N}_r^{k_0}(\mathbb{G}^{\vee}|\mathbb{G}^{\vee})&:=\bigg\{   \sum_{  \mathfrak{q}_i\in  \rm{basis}(\mathbb{G}^{\vee})  } \alpha_{i}\mathfrak{q}_i  \in N_r^c
 :  \alpha_i\ge r^{k_0+1} \bigg\}.
\end{align}
\end{definition}

\begin{lemma}\label{lemkim1}  [Strict Dual Face Decomposition]
Let $\mathbb{P}$ be a polyhedron in $\mathbb{R}^d$ and   let $ \rm{dim}(\mathbb{P}^{\vee})=d-k_0$ and $\mathcal{F}=\mathcal{F}^{\rm{os}}$ in (\ref{jq415})  and (\ref{a004}). Then,  \begin{eqnarray}\label{j101}
 \rm{cone}(B)^{\vee}\cap N_r^c  = \bigcup_{\mathbb{G}\in \mathcal{F}^{k_0}(\mathbb{P})}\bigcup_{\{\mathbb{F}\in  \mathcal{F}:  \mathbb{G}\preceq \mathbb{F}\}} \mathcal{N}_r(\mathbb{F}^{\vee}|\mathbb{G}^{\vee}).
\end{eqnarray} 
\end{lemma}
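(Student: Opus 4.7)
The plan is to prove the two inclusions separately. The inclusion $\supseteq$ will be essentially immediate: each $\mathcal{N}_r(\mathbb{F}^{\vee}|\mathbb{G}^{\vee})$ is defined as a subset of $\mathbb{G}^{\vee}\cap N_r^c$, and since $\mathbb{G}^{\vee}\subseteq \mathbb{P}^{\vee}=\rm{cone}(B)^{\vee}$, the containment in the right-hand side of (\ref{j101}) lies inside $\rm{cone}(B)^{\vee}\cap N_r^c$. The substance of the lemma is the reverse inclusion.

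For $\subseteq$, I will fix $j\in \rm{cone}(B)^{\vee}\cap N_r^c$ and first locate a top-dimensional simplicial dual cone containing it. By the decomposition (\ref{jq415}), there exists $\mathbb{G}\in\mathcal{F}^{k_0}(\mathbb{P})$ with $j\in\mathbb{G}^{\vee}$. Since $\mathbb{G}^{\vee}$ is simplicial with basis $\rm{basis}(\mathbb{G}^{\vee})=\{\mathfrak{q}_i\}_{i=1}^{d_0}$ ($d_0=d-k_0$), I can write $j=\sum_{i=1}^{d_0}\alpha_i\mathfrak{q}_i$ uniquely with $\alpha_i\ge 0$. The task reduces to exhibiting a face $\mathbb{F}\succeq\mathbb{G}$ such that the $\alpha_i$'s split cleanly into ``main'' coefficients $\ge r^{k+1}$ on $\rm{basis}(\mathbb{F}^{\vee})$ and ``error'' coefficients $<r^k$ on the complement, where $k=\dim(\mathbb{F})$.

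The key mechanism will be a transition-point argument applied to the sorted sequence $\alpha_{(1)}\ge\cdots\ge\alpha_{(d_0)}$. Since $|j|\ge dr^{d+100}$ (definition of $N_r^c$) and $|\mathfrak{q}_i|\le 2$, I get $\alpha_{(1)}\ge r^{d+100}/(2d_0)\ge r^d$. Setting $b_s:=\mathbf 1[\alpha_{(s)}\ge r^{d-s+1}]$ for $s\in\{1,\dots,d_0\}$ with the convention $b_{d_0+1}=0$, one has $b_1=1$ and $b_{d_0+1}=0$, so there exists $s\in\{1,\dots,d_0\}$ with $b_s=1$ and $b_{s+1}=0$, i.e.\ $\alpha_{(s)}\ge r^{d-s+1}$ and $\alpha_{(s+1)}<r^{d-s}$. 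Take $k:=d-s$ and $S:=\{(1),\dots,(s)\}$; then every main coefficient is $\ge r^{k+1}$ and every error coefficient is $<r^k$, matching the bounds in Definition \ref{de49}.

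Finally, I need to exhibit $\mathbb{F}$ with $\mathbb{G}\preceq\mathbb{F}$ and $\rm{basis}(\mathbb{F}^{\vee})=\{\mathfrak{q}_i:i\in S\}$. Each $\mathfrak{q}_i$ indexes a supporting plane $\pi_{\mathfrak{q}_i,r_i}\in \Pi(\mathbb{G}^+)$; define $\mathbb{F}:=\bigcap_{i\in S}\pi_{\mathfrak{q}_i,r_i}\cap\mathbb{P}$. Then $\mathbb{F}\supseteq\mathbb{G}$ (so $\mathbb{G}\preceq\mathbb{F}$), and $\rm{cone}(\{\mathfrak{q}_i:i\in S\})$ is a $(d-k)$-dimensional simplicial subcone of $\mathbb{G}^{\vee}$ that—viewed in the os-decomposition of Proposition \ref{facerep} applied to $\mathbb{F}$—is precisely the dual face $\mathbb{F}^{\vee}$ (after possibly relabeling $\mathbb{F}$ by the corresponding newly-formed face in $\mathcal{F}_{\rm{os}}^k$). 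Combined with $j\in N_r^c$ and the coefficient bounds, this yields $j\in\mathcal{N}_r^k(\mathbb{F}^{\vee}|\mathbb{G}^{\vee})$, completing the inclusion. I expect the main obstacle to be the last identification: verifying that the simplicial subcone generated by $S$ genuinely arises as a dual face $\mathbb{F}^{\vee}$ of an element of $\mathcal{F}(\mathbb{P})=\mathcal{F}_{\rm{os}}(\mathbb{P}_{\rm{for}})\cup\mathcal{F}_{\rm{os}}(\mathbb{P}_{\rm{bac}})$, which will require carefully tracking the oriented-simplicial refinement from Proposition \ref{prop110} and Proposition \ref{facerep} rather than invoking only the raw face lattice of $\mathbb{P}$.
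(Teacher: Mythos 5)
Your proof is correct and follows essentially the same route as the paper's: after decomposing into a maximal simplicial dual cone $\mathbb{G}^{\vee}$, both sort the coordinates $\alpha_i$ and locate a transition index across the threshold scales $r^k$ (the paper does this by walking down the sorted list starting from the largest; you do it with the indicator pigeonhole), and both then declare $\rm{cone}(\{\mathfrak{q}_i:i\in S\})$ to be a dual face of $\mathcal{F}_{\rm{os}}$ without further verification. The one place you explicitly flag as an obstacle---that a subcone on a subset of $\rm{basis}(\mathbb{G}^{\vee})$ genuinely appears as $\mathbb{F}^{\vee}$ for a newly-formed face $\mathbb{F}\in\mathcal{F}^{\rm{os}}$---is exactly the point the paper leaves implicit by building the nested-basis assumption into Definition \ref{de49}, so your honesty here is appropriate rather than a defect.
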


 \begin{proof}[Proof of Lemma \ref{lemkim1}]
We  prove $k_0=0$.  Note $\supset$ is true from $\mathcal{N}_r(\mathbb{F}^{\vee}|\mathbb{G}^{\vee})  \subset
 \rm{cone}(B)^{\vee}$. 
To claim $\subset$, 
let $\mathfrak{p}\in   \rm{cone}(B)^{\vee}\cap N_r^c $. By (\ref{a004}), find $\mathbb{G}\in \mathcal{F}^0(\mathbb{P})$: 
\begin{align*} 
\mathfrak{p}=\alpha_1\mathfrak{q}_1+\cdots+\alpha_d\mathfrak{q}_d\in \bigcup_{\mathbb{F}\in \{\mathbb{F}\in  \mathcal{F}:  \mathbb{G}\preceq \mathbb{F}\}}   \mathbb{F}^{\vee}\cap N_r^c\ \text{with}\   \rm{basis}(\mathbb{G}^{\vee})=\{\mathfrak{q}_j\}_{j=1}^d.
\end{align*}
It suffices to show that $\mathfrak{p}\in  \mathcal{N}_r^k(\mathbb{F}^{\vee}|\mathbb{G}^{\vee})$ for some $\mathbb{F}^{\vee}\preceq\mathbb{G}^{\vee}$ with $0\le k\le d-1$. We can assume that $0\le \alpha_1\le \cdots\le \alpha_d$ above.
Set $d+1$ number of disjoint  intervals
\begin{align*}
\text{ $I_k:=[r^k,r^{k+1})$ where $k=1,\cdots,d-1$, and $I_d:=[r^d,\infty)$ and  $I_0:=[0,r)$. }
\end{align*}
\begin{itemize}
\item
Observe $\alpha_d\in I_d$ because $|\mathfrak{p}|\ge dr^{d+100}$ for $\mathfrak{p}\in N_r^c=\{j|j|\ge dr^{d+100}\}$.
\item
Next $\alpha_{d-1}\in I_{d-1}\cup I_{d}$. If not, $\alpha_{d-1}\in I_0\cup \cdots\cup I_{d-2}$, namely, $\alpha_{d-1}<r^{d-1}$ and $\alpha_d\ge r^d$, leading $\mathfrak{p}\in \mathcal{N}^{d-1}_r(\mathbb{F}^{\vee}|\mathbb{G}^{\vee})$ for $ \mathbb{F}^{\vee} =\rm{cone}(\{\mathfrak{q}_d\})$. 
\item
Next $\alpha_{d-2}\in I_{d-2}\cup I_{d-1}\cup I_d$. If not, $\alpha_{d-2}\in I_0\cup \cdots\cup I_{d-3}$,  namely, $\alpha_{d-2}<r^{d-2}$ with $ \alpha_{d-1}\ge r^{d-1}$, leading $\mathfrak{p}\in \mathcal{N}_r(\mathbb{F}^{\vee}|\mathbb{G}^{\vee})$ with $ \mathbb{F}^{\vee} =\rm{cone}(\{\mathfrak{q}_d,\mathfrak{q}_{d-1}\})$.
\item
   Repeat until  $\alpha_1\in I_1\cup\cdots\cup I_d$. 
 So $\alpha_d\ge \cdots\ge\alpha_1\ge r^1$ and $\mathfrak{p}\in  \mathcal{N}^0_r(\mathbb{G}^{\vee}|\mathbb{G}^{\vee})$.
\end{itemize}
Therefore, we are done with $\supset$ in (\ref{j101}). 
\end{proof}
  \begin{proposition}\label{lemir11}[Strict Dual Face Decomposition] 
Let $P$ be a polynomial and $B\subset \mathbb{Q}^d$ with $\text{dim}(\rm{cone}^{\vee}(B))= d-k_0$. Then, one can decompose   $ \int\psi(\lambda P(x))\psi_{D_B}(x)dx$:
 \begin{align*}  
 \sum_{\mathbb{G}\in \mathcal{F}^{k_0}(\mathbb{P})  }
 \sum_{\mathbb{F}\in \{\mathbb{F}\in  \mathcal{F}:  \mathbb{G}\preceq \mathbb{F}\}}
   \sum_{j\in\mathcal{N}_r( \mathbb{F}^{\vee}|\mathbb{G}^{\vee})\cap\mathbb{Z}^d} \int  \psi(\lambda P(x)) \psi_{D_{B}}(x)\chi\left(\frac{x}{{\bf 2}^{-j}}\right) dx
    \end{align*} 
where     $ \mathbb{F}^{\vee}$ are oriented  simplicial cones of the form $\rm{cone}(\mathfrak{q}_1,\cdots,\mathfrak{q}_{d-k})$ with $k\ge k_0$ and $\mathfrak{q}_i\in \mathbb{Q}^d\cap \{1/2\le |\mathfrak{q}|\le  1\}$  having the  rational coordinates as in (\ref{asan1}). Moreover,  if  $\mathbb{F}$ is of dimension $k$, then  there exists $r_P>0$ such that for $r>r_P$,
 \begin{align}\label{624}
   2^{Cr^{k+1}} \left(  \sum_{\mathfrak{n}\in\Lambda(P)\setminus \mathbb{F} } |c_{\mathfrak{n}} |2^{-j\cdot \mathfrak{n}}\right) \le  |c_{\mathfrak{m}} |2^{-j\cdot \mathfrak{m}}   \ \text{  $\forall \mathfrak{m} \in \mathbb{F} $ and $j \in
\mathcal{N}^k_r(\mathbb{F}^{\vee}|\mathbb{G}^{\vee})$ }
\end{align}
that is a general version of (\ref{jq524}).    If  $\sum_{\sigma\le |\alpha|\le \tau} |\partial_x^{\alpha}[P_{\mathbb{F}}]_{(\mathbb{R}\setminus\{0\})^d}|$ is non-vanishing,   then   there is $C>0$ and  $ r_P$ such that for   $x\sim 2^{-j}$ and $r>r_P$ as  in (\ref{624}),
  \begin{align}\label{a635}
\frac{2^{-j\cdot \mathfrak{m}} }{2^{6Cr^k}}&\le\sum_{\sigma\le |\alpha|\le \tau}|x^{\alpha} \partial_x^{\alpha}P(x)|\approx \sum_{\sigma\le |\alpha|\le \tau}|x^{\alpha} \partial_x^{\alpha}P_{\mathbb{F}}(x)|\le \frac{2^{-j\cdot \mathfrak{m}} }{ 2^{-6Cr^k}}.
\end{align}
 
 \end{proposition}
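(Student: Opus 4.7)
The plan has three parts matching the three assertions of Proposition \ref{lemir11}. For the decomposition formula, I would insert the dyadic partition $\sum_{j\in\mathbb{Z}^d}\chi(x/\mathbf{2}^{-j})\equiv 1$ into $\int\psi(\lambda P(x))\psi_{D_B}(x)dx$; the support of $\psi_{D_B}$ forces the relevant indices to lie in $\rm{cone}^{\vee}(B)+O(1)$. I split this lattice set into $N_r$ (whose finite bounded contribution is the middle-region correction absorbed into $\approx$, in the spirit of Theorem \ref{prop44}) and its complement, and apply Lemma \ref{lemkim1} to partition $\rm{cone}^{\vee}(B)\cap N_r^c$ into the neighborhoods $\mathcal{N}_r(\mathbb{F}^{\vee}|\mathbb{G}^{\vee})$, making them mutually disjoint in the manner of Remark \ref{rmkk61}. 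Orientation, simpliciality of $\mathbb{F}^{\vee}$, the normalization $1/2\le|\mathfrak{q}_i|\le 1$ (by harmless rescaling), and the rational-coordinate representation are all inherited from Theorem \ref{lemir1}.

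For (\ref{624}), fix $j\in\mathcal{N}_r^k(\mathbb{F}^{\vee}|\mathbb{G}^{\vee})$ and split $j=j_1+j_2$ with $j_1=\sum_{i=1}^{d-k}\alpha_i\mathfrak{q}_i\in\mathbb{F}^{\vee}$ (all $\alpha_i\ge r^{k+1}$) and $j_2=\sum_{i=d-k+1}^d\alpha_i\mathfrak{q}_i$ (all $0\le\alpha_i<r^k$), so $|j_2|\le d r^k$. For any $\mathfrak{m}\in\mathbb{F}\cap\Lambda(P)$ and $\mathfrak{n}\in\Lambda(P)\setminus\mathbb{F}$, the point $\mathfrak{m}$ lies on every $\pi_{\mathfrak{q}_i,r_i}$ with $\mathfrak{q}_i\in\rm{basis}(\mathbb{F}^{\vee})$ while $\mathfrak{n}\notin\bigcap_i\pi_{\mathfrak{q}_i,r_i}\cap\mathbb{P}=\mathbb{F}$; hence some $i_0\in[d-k]$ satisfies $\mathfrak{q}_{i_0}\cdot(\mathfrak{n}-\mathfrak{m})>0$, while $\mathfrak{q}_i\cdot(\mathfrak{n}-\mathfrak{m})\ge 0$ for every $i\in[d-k]$. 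Finiteness of $\Lambda(P)$ yields a uniform $c_0>0$ below these positive differences, so $j_1\cdot(\mathfrak{n}-\mathfrak{m})\ge c_0 r^{k+1}$. Since $|j_2\cdot(\mathfrak{n}-\mathfrak{m})|\le C r^k$, I obtain $j\cdot(\mathfrak{n}-\mathfrak{m})\ge (c_0/2)r^{k+1}$ for $r>r_P:=2C/c_0$; summing finitely many $\mathfrak{n}$ (absorbing the finite number and the coefficient ratios) gives (\ref{624}).

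For (\ref{a635}), change variables $y=\mathbf{2}^j x$ so that $x^{\alpha}\partial_x^{\alpha}F(x)=y^{\alpha}\partial_y^{\alpha}F(\mathbf{2}^{-j}y)$ for any smooth $F$. Split $P=P_{\mathbb{F}}+(P-P_{\mathbb{F}})$: the triangle inequality together with (\ref{624}) bounds the remainder part of $\sum_{\sigma\le|\alpha|\le\tau}|x^{\alpha}\partial_x^{\alpha}P(x)|$ by $2^{-Cr^{k+1}}2^{-j\cdot\mathfrak{m}}$, which both justifies the central $\approx$ in (\ref{a635}) and is negligible compared with the target. For the main term, the identity $P_{\mathbb{F}}(\mathbf{2}^{-j}y)=2^{-j_1\cdot\mathfrak{m}}P_{\mathbb{F}}(\mathbf{2}^{-j_2}y)$ (valid because $j_1\cdot\mathfrak{n}=j_1\cdot\mathfrak{m}$ for all $\mathfrak{n}\in\mathbb{F}\cap\Lambda(P)$) together with the scale invariance $y^{\alpha}\partial_y^{\alpha}=\tilde y^{\alpha}\partial_{\tilde y}^{\alpha}$ under $\tilde y=\mathbf{2}^{-j_2}y$ reduces the problem to two-sided bounds on $\sum_{\sigma\le|\alpha|\le\tau}|\tilde y^{\alpha}\partial_{\tilde y}^{\alpha}P_{\mathbb{F}}(\tilde y)|$ with $\tilde y\sim_h 1$ and $h=2^{Cr^k}$. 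The upper bound is the triangle inequality, giving $\lesssim h^{C_*}$ with $C_*=\max\{|\mathfrak{n}|_1:\mathfrak{n}\in\mathbb{F}\cap\Lambda(P)\}$. For the lower bound, the face-nondegeneracy hypothesis says $\sum|\partial^{\alpha}P_{\mathbb{F}}|$ is non-vanishing on $(\mathbb{R}\setminus\{0\})^d$; factoring out the coordinate-wise minimum monomial $\tilde y^{\mathfrak{m}^*}$ of $\mathbb{F}\cap\Lambda(P)$ reduces to a polynomial that is non-vanishing on a compact $2^d$-sheeted fundamental domain, and a Lojasiewicz-type estimate then yields $\gtrsim h^{-C_*'}$. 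Writing $2^{-j_1\cdot\mathfrak{m}}=2^{-j\cdot\mathfrak{m}}2^{j_2\cdot\mathfrak{m}}$ with $|j_2\cdot\mathfrak{m}|\le Cr^k$ and absorbing constants into the exponent $6Cr^k$ then yields (\ref{a635}).

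The main obstacle will be extracting the quantitative lower bound on $\sum|\tilde y^{\alpha}\partial_{\tilde y}^{\alpha}P_{\mathbb{F}}(\tilde y)|$ on the perturbed cube $\tilde y\sim_h 1$ with a polynomial-in-$h$ dependence only. Using $\mathbb{F}^{\vee}$-weighted homogeneity of $P_{\mathbb{F}}$ cancels $d-k$ of the $d$ dimensions but leaves a $k$-dimensional residual range in $[h^{-1},h]$ on which cancellations among the monomials of $P_{\mathbb{F}}$ must be ruled out; the Lojasiewicz/minimal-monomial factorization above is the cleanest way I see to make that step quantitative.
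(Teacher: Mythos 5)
Your treatment of the decomposition formula and of \eqref{624} follows the paper's route almost verbatim: dyadic decomposition $\sum_j\chi(x/\mathbf{2}^{-j})\equiv 1$, restriction to $\rm{cone}^\vee(B)+O(1)$, application of Lemma~\ref{lemkim1}, then for \eqref{624} the split $j=j_1+j_2$ with $j_1\in\mathbb{F}^\vee$ of size $\ge r^{k+1}$ in each basis coordinate and $j_2$ of size $<r^k$, the strict positivity $\mathfrak{q}_{i_0}\cdot(\mathfrak{n}-\mathfrak{m})>0$, and absorption of the perturbation. That part is sound and matches the paper's argument with only cosmetic renaming.

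The gap is in \eqref{a635}, precisely at the step you yourself flag as the main obstacle. After writing $P_{\mathbb{F}}(\mathbf{2}^{-j}x)=2^{-\mathfrak{q}\cdot\mathfrak{m}}P_{\mathbb{F}}(\mathbf{2}^{-\mathfrak{u}}x)$ you must bound $\sum_{\sigma\le|\alpha|\le\tau}\bigl|\tilde y^{\alpha}\partial_{\tilde y}^{\alpha}P_{\mathbb{F}}(\tilde y)\bigr|$ from below by $2^{-O(r^k)}$ on the set $\tilde y\sim_h\mathbf{2}^{\mathbf 0}$ with $h=2^{O(r^k)}$. Your proposed fix, ``factoring out the coordinate-wise minimum monomial\ldots reduces to a polynomial that is non-vanishing on a compact $2^d$-sheeted fundamental domain,'' does not achieve this: after factoring $\tilde y^{\mathfrak{m}^*}$, the residual polynomial $R$ is still evaluated on the annulus $[h^{-1},h]^d$, which is \emph{not} a fixed compact set --- it grows with $r$. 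A naive rescaling to $[-1,1]^d$ changes the coefficients of $R$ by $h$-dependent factors and the \L ojasiewicz constant degenerates; this is exactly the obstruction you name without resolving. The paper's resolution is Lemma~\ref{lo1} (together with its consequence \eqref{12.13}): one covers $(\mathbb{R}\setminus\{0\})^d$ by the fixed compact set $\{|x|\le 100d\}$ and finitely many inverted charts $u_\nu=x_\nu/x_i,\ u_i=1/x_i$ that pull the unbounded regions $W_i$ into fixed compact neighborhoods $K_i$ of the coordinate hyperplanes, applies \L ojasiewicz once on each $K_i$ with $G(u)=u_1\cdots u_d$, and recombines to get the uniform estimate $|Q(x)|\ge B\min\{|x|^{-\gamma},|x_1\cdots x_d|^{\gamma}\}$, valid for \emph{all} $x$. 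Substituting $x\sim_h\mathbf{2}^{\mathbf 0}$ then gives the $2^{\pm br^k}$ two-sided bound, with exponent linear in $r^k$. Until you have an estimate of this strength --- uniform over the growing annuli rather than one compact set --- the lower bound in \eqref{a635} is not established; the minimal-monomial factorization is a useful normalization but it does not by itself supply the polynomial-in-$h$ constant you need.
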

 The decomposition of the integral follows from the application of (\ref{j101}).

 \subsection{Proof of (\ref{624}) and (\ref{a635})}   To show (\ref{624}), we need the following lemmas and the definition of some constants involving the coefficients of $P$.

\begin{lemma}[\L ojasiewicz]\label{loja}
Let $U\subset\mathbb{R}^d$ be an open set containing a compact set $K$. Suppose $g$ and $G$ are real analytic functions (polynomials) in  $U$  such that
$$\{u\in U: g(u)=0\}\subset\{u\in U: G(u)=0\}$$
Then there is  constants $\mu,C>0$ such that 
$$|g(u)|\ge C|G(u)|^\mu\ \text{for all}\ u\in K.$$
\end{lemma}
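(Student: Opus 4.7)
The plan is to derive this Łojasiewicz-type inequality from the classical Łojasiewicz inequality for distances, which states that for a real analytic (or polynomial) function $f$ on a neighborhood of a compact set $K$, there exist constants $c, \nu > 0$ such that
\[
|f(u)| \ge c \cdot \mathrm{dist}(u, \{f=0\})^{\nu} \qquad \forall u \in K.
\]
I will assume this classical version as known and show how the stated bound follows in two short steps, after disposing of a trivial case.

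First I would separate two cases. If $\{g = 0\} \cap K = \emptyset$, then by continuity $|g|$ attains a positive minimum $m > 0$ on $K$, while $|G|$ attains a finite maximum $M$ on $K$, and taking $\mu = 1$, $C = m/(M+1)$ gives the claim trivially. So I may assume $\{g=0\} \cap K \ne \emptyset$. Next, by shrinking $U$ slightly (or enlarging $K$ to a compact $K' \subset U$ with $K$ in its interior), I may assume $G$ extends real-analytically and is Lipschitz on a neighborhood of $K$; call the Lipschitz constant $L$.

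The key step is to compare $|G(u)|$ to the distance from $u$ to the zero set of $g$. For any $u \in K$, let $v \in \overline{\{g=0\}}$ be a closest point (which lies in a compact neighborhood of $K$ by choosing $K$ appropriately). Since $\{g=0\} \subset \{G=0\}$ by hypothesis, $G(v) = 0$, and the Lipschitz estimate gives
\[
|G(u)| = |G(u) - G(v)| \le L \cdot |u - v| = L \cdot \mathrm{dist}(u, \{g=0\}).
\]
Applying the classical Łojasiewicz inequality to $g$, there exist $c_1, \nu > 0$ with $\mathrm{dist}(u, \{g=0\})^{\nu} \le c_1^{-1} |g(u)|$ on $K$. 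Combining the two, one obtains
\[
|G(u)|^{\nu} \le L^{\nu} \mathrm{dist}(u,\{g=0\})^{\nu} \le \frac{L^{\nu}}{c_1} |g(u)|,
\]
which is exactly $|g(u)| \ge C |G(u)|^{\mu}$ with $\mu = \nu$ and $C = c_1 / L^{\nu}$.

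The only nontrivial ingredient is the classical Łojasiewicz inequality for the distance to the zero set of a real analytic function; this is the main obstacle, but it is a standard theorem and is the sort of black box one is expected to invoke in this section. The Lipschitz bound on $G$ and the reduction to a closest-point argument are routine. One minor technicality to be careful about is ensuring the closest point $v$ to $u$ lies inside $U$ where $G$ is defined; this is handled by first shrinking to a compact $K' \subset U$ containing $K$ in its interior and performing the argument there, and then absorbing the resulting constants.
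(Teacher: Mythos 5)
The paper does not give a proof of this lemma at all; it simply refers the reader to \cite{G2} and \cite{L}. Your argument is therefore not a variant of the paper's proof but a self-contained derivation, and it is correct. The key reduction — that the inclusion $\{g=0\}\subset\{G=0\}$ together with a local Lipschitz bound on $G$ gives $|G(u)|\le L\,\mathrm{dist}(u,\{g=0\})$, after which raising to the power $\nu$ and invoking the distance-form \L ojasiewicz inequality $|g(u)|\ge c_1\,\mathrm{dist}(u,\{g=0\})^{\nu}$ yields the two-function comparison with $\mu=\nu$ and $C=c_1/L^{\nu}$ — is the standard route, and your computations carry it through correctly. The technicality you flag is genuine: the nearest point of $\overline{\{g=0\}}$ to $u\in K$ could lie on $\partial U$ where $G$ is not defined. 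Your fix is the right one, and it is worth spelling out: fix a compact $K'\subset U$ with $K\subset\mathrm{int}(K')$ and set $d_0=\mathrm{dist}(K,\partial K')$; if $\mathrm{dist}(u,\{g=0\}\cap K')\ge d_0/2$ then the inequality follows trivially from $\sup_K|G|<\infty$ and the distance form of \L ojasiewicz, while otherwise the minimizing point $v$ already lies in $\mathrm{int}(K')$, where $G(v)=0$ and the Lipschitz bound applies. The only heavy ingredient is the distance-form \L ojasiewicz inequality invoked as a black box, which is precisely the content of \cite{L} that the paper is appealing to, so it is appropriate to treat it as known.
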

\begin{proof}
See its proof in \cite{G2} and  \cite{L}.
\end{proof}

 \begin{lemma}\label{lo1}
Suppose that $Q:\mathbb{R}^d\rightarrow \mathbb{R}$ is a  polynomial, non-vanishing on $ (\mathbb{R}\setminus \{0\})^d$. Then there are constants  $\gamma,B>0$ independent of $x$ such that, 
\begin{align}\label{0kk}
  |Q(x)|\ge  B\min\{|x|^{-\gamma}, |x_1\cdots x_d|^{\gamma}\}. \end{align} 
  \end{lemma}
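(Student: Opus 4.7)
The plan is to apply the Łojasiewicz inequality (Lemma~\ref{loja}) twice on the unit box: once directly in $x$-coordinates to control $Q$ near the coordinate hyperplanes, and once after the spherical inversion $y=x/|x|^{2}$ to control $Q$ at infinity. The hypothesis that $Q$ is non-vanishing on $(\mathbb{R}\setminus\{0\})^{d}$ is precisely the containment $\{Q=0\}\subset\{x_{1}\cdots x_{d}=0\}$ required to invoke Lemma~\ref{loja} with $G(x)=x_{1}\cdots x_{d}$. For the bounded region $|x|_{\infty}\le 1$ I would apply Lemma~\ref{loja} directly to the pair $(Q,x_{1}\cdots x_{d})$, obtaining constants $\mu_{1},C_{1}>0$ with $|Q(x)|\ge C_{1}|x_{1}\cdots x_{d}|^{\mu_{1}}$ on this box. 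Here $|x|^{-\gamma}\ge 1\ge|x_{1}\cdots x_{d}|^{\gamma}$, so the minimum on the right-hand side of the target inequality equals $|x_{1}\cdots x_{d}|^{\gamma}$; taking $\gamma\ge\mu_{1}$ and using $|x_{1}\cdots x_{d}|\le 1$ immediately yields $|Q(x)|\ge C_{1}\min\{|x|^{-\gamma},|x_{1}\cdots x_{d}|^{\gamma}\}$ in this region.

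For the unbounded region $|x|\ge 1$ I would introduce, with $D=\deg Q$, the function
\[\tilde Q(y):=|y|^{2D}\,Q\!\left(y/|y|^{2}\right)=\sum_{|\alpha|\le D}c_{\alpha}\,y^{\alpha}\,(y_{1}^{2}+\cdots+y_{d}^{2})^{D-|\alpha|},\]
which is an honest polynomial of degree at most $2D$ because $D-|\alpha|\in\mathbb{Z}_{\ge 0}$, and which inherits nonvanishing on $(\mathbb{R}\setminus\{0\})^{d}$ since the componentwise-nonzero condition is preserved by $y\mapsto y/|y|^{2}$. A second application of Lemma~\ref{loja} to the pair $(\tilde Q,y_{1}\cdots y_{d})$ on $\{|y|_{\infty}\le 1\}$ then furnishes $|\tilde Q(y)|\ge C_{2}|y_{1}\cdots y_{d}|^{\mu_{2}}$. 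Translating back via $|Q(x)|=|\tilde Q(y)|\,|x|^{2D}$ and $|y_{1}\cdots y_{d}|=|x_{1}\cdots x_{d}|/|x|^{2d}$, together with the observation that $|x|\ge 1$ forces $|y_{i}|\le 1/|x|\le 1$ so that $y$ really lies in the unit box, I would obtain
\[|Q(x)|\ge C_{2}\,|x_{1}\cdots x_{d}|^{\mu_{2}}\,|x|^{2D-2d\mu_{2}}\qquad\text{for }|x|\ge 1.\]

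To finish I would set $\gamma=\max(\mu_{1},(2d+1)\mu_{2})$ and split $\{|x|\ge 1\}$ according to whether $|x|\cdot|x_{1}\cdots x_{d}|\ge 1$ (so that $\min=|x|^{-\gamma}$) or $|x|\cdot|x_{1}\cdots x_{d}|\le 1$ (so that $\min=|x_{1}\cdots x_{d}|^{\gamma}$). In the first case I use $|x_{1}\cdots x_{d}|^{\mu_{2}}\ge|x|^{-\mu_{2}}$ and in the second $|x_{1}\cdots x_{d}|^{\mu_{2}-\gamma}\ge|x|^{\gamma-\mu_{2}}$ (valid since $\mu_{2}\le\gamma$); both branches collapse to the estimate $C_{2}\,|x|^{\gamma-(2d+1)\mu_{2}+2D}\ge C_{2}$, which holds because the exponent is $\ge 2D\ge 0$ and $|x|\ge 1$. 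Combining with the bounded-region estimate yields the lemma with $B=\min(C_{1},C_{2})$. The main obstacle I anticipate is not the algebra but the verification that $\tilde Q$ is a genuine polynomial whose zero set on a neighborhood of the unit $y$-box is contained in $\{y_{1}\cdots y_{d}=0\}$: polynomiality relies on the identity $|y|^{2(D-|\alpha|)}=(y_{1}^{2}+\cdots+y_{d}^{2})^{D-|\alpha|}$ with $D-|\alpha|\in\mathbb{Z}_{\ge 0}$, and the zero-set containment is a short tracing through the inversion $y\mapsto y/|y|^{2}$. Once these two bookkeeping points are secured, the rest of the proof is routine manipulation of exponents.
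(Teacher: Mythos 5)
Your proof is correct and rests on the same two ingredients as the paper---the \L ojasiewicz inequality (Lemma~\ref{loja}) applied to the pair $(Q,x_1\cdots x_d)$ on the bounded piece, and a rational change of variables to convert the behaviour of $Q$ at infinity into the behaviour of an auxiliary polynomial near the origin---but the compactification you choose is genuinely different. The paper covers $\{|x|\ge 100d\}$ by $d$ projective-type charts $W_i=\{\,|x_i|\ge\max_j|x_j|,\ |x_i|\ge 100\,\}$, homogenises on each via $g_i(u)=u_i^{\deg Q}\,Q(u_1/u_i,\dots,1/u_i,\dots,u_d/u_i)$, and applies \L ojasiewicz $d$ separate times before taking a worst-case exponent. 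You instead use the single spherical inversion $y=x/|y|^2$; the two bookkeeping points you flag are exactly the ones to check, and both are sound: $\tilde Q(y)=|y|^{2D}Q(y/|y|^2)=\sum_{|\alpha|\le D}c_\alpha y^\alpha(y_1^2+\cdots+y_d^2)^{D-|\alpha|}$ is a genuine polynomial because $D-|\alpha|\in\mathbb{Z}_{\ge 0}$, and $\tilde Q$ is nonvanishing on $(\mathbb{R}\setminus\{0\})^d$ since the inversion preserves componentwise nonvanishing, so $\{\tilde Q=0\}\subset\{y_1\cdots y_d=0\}$ on all of $\mathbb{R}^d$. The translation identities $|Q(x)|=|x|^{2D}|\tilde Q(y)|$ and $|y_1\cdots y_d|=|x_1\cdots x_d|\,|x|^{-2d}$, the containment $|y_i|\le 1/|x|\le 1$ for $|x|\ge 1$, and the final case-split on $|x|\cdot|x_1\cdots x_d|\gtrless 1$ are all handled correctly, so $\gamma=\max(\mu_1,(2d+1)\mu_2)$ closes the argument. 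What your route buys is economy: a single auxiliary polynomial and a single application of \L ojasiewicz in the unbounded region rather than $d$ of each, at the modest cost of roughly doubling the degree ($\le 2D$ versus the paper's $\deg g_i=\deg Q$); since neither method optimises $\gamma$, this difference is immaterial to the statement.
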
 
  
\begin{proof}[Proof of Lemma \ref{lo1}]
Set
$K_i=\{u:  |u_1|,\cdots,|u_{d}|\le 1\ \text{and}\ |u_i|\le 1/100 \}$ and  
 \begin{align}\label{9jj}
g_i(u):=u_i^n Q\left(\frac{u_1}{u_i},\cdots,\frac{u_{i-1}}{u_i}, \frac{1}{u_i},\frac{u_{i+1}}{u_i},\cdots,\frac{u_d}{u_i}\right)\ \text{for $n=\text{deg}(Q)$}.
\end{align}
 Then $g$ is a polynomial, because $u_i^n$ cancels the all $u_i^{-s}$ with $0\le s\le n$ arising from $Q(\cdot)$.  Since $Q$ is non-vanishing in $(\mathbb{R}\setminus \{0\})^d$, we can observe that for $U:=\mathbb{R}^d$,
$$\{u\in U:g_i(u)=0\}\subset\{u\in U: u_1\cdots u_d=0\}.$$
 Then, one can apply  Lemma \ref{loja} for $G(u)=u_1\cdots u_d$ to  have $C,\mu_i\ge 1$:
$$|g_i(u_1,\cdots,u_d)|\ge C |u_1\cdots u_d|^{\mu_i}    \ \text{for all $u\in K_i$}.$$
Set $W_i =\{x:|x_i|\ge |x_1|,\cdots,|x_{d}|\ \text{and}\ |x_i|\ge 100\} $
and   $\Phi_i:W_i \rightarrow \Phi(W_i)=K_i$ by   
$$ \Phi(x):=\left(\frac{x_1}{x_i},\cdots,\frac{x_{i-1}}{x_i}, \frac{1}{x_i},\frac{x_{i+1}}{x_i},\cdots,\frac{x_d}{x_i}\right)=(u_1,\cdots,u_d). $$ 
  This coordinate change in (\ref{9jj}) implies that
\begin{align*} 
| x_i^{-n} Q(x_1,\cdots,x_d)|=|g_i(u)|\ge C |u_1\cdots u_d|^{\mu_i}  \ \text{for $x\in W_i$}.
\end{align*}
This with $100\le |x_i|\le |x| \le d|x_i|$ in $W_i$ yields that for   $ \nu=\max\{\mu_i (d+1)-n\},$ 
$$|Q(x)|\ge (x_i)^n (x_i)^{-\mu_i (d+1)}  |x_1\cdots x_d|^{\mu_i}\ge C |x |^{-\nu }  |x_1\cdots x_d|^{\nu} \ \text{if $x\in W_i $}$$ 
which  holds true for all $i\in [d]$. Hence
\begin{align}\label{01j}
|Q(x)|\ge C\min\{ |x|^{-2\nu}, |x_1\cdots x_d|^{2\nu}\}\ \text{for $x\in \bigcup_{i=1}^d W_i \supset\{x:|x|\ge 100d \}$}.
\end{align}
On the other hand, by applying Lemma \ref{loja} again for $\{x\in \mathbb{R}^d: Q(x)=0\}\subset \{x\in \mathbb{R}^d: x_1\cdots x_d=0\}$ due to non-vanishing condition of $Q$ on $(\mathbb{R}\setminus\{0\})^d$,
 $$|Q(x)|\ge C|x_1\cdots x_d|^{\mu}\ \text{on the compact set} \  K:=\{x: |x|\le 100d\}.$$
By this with (\ref{01j}),  we obtain   (\ref{0kk}) for $\gamma=\max\{2\nu, \mu\}$. \end{proof}

\begin{lemma}
Let $Q$ be a non-vanishing polynomial on $(\mathbb{R}\setminus\{0\})^d$  and let  $U_{2^{100dr^k}}:=\{y:2^{-100dr^k}\le |y_{\nu}|\le 2^{100dr^k}\ \text{for}\ \nu=1,\cdots,d\}$ where $r\ge 1$. Then  there is $b>0$ depending on $Q$ such that
\begin{align} \label{12.13}
2^{-b(Q)r^k}\le |Q(y)|\le 2^{b(Q)r^k}\ \text{for all $y\in U_{2^{100dr}}$}. 
\end{align}
\end{lemma}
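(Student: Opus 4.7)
The plan is to derive this bound as a quick corollary of the preceding Lemma \ref{lo1} together with an elementary triangle-inequality estimate, so no new ideas are required. Write $Q(y)=\sum_{\alpha\in\Lambda(Q)}c_{\alpha}y^{\alpha}$ with $n=\deg(Q)$ and note that for every $y\in U_{2^{100dr^k}}$ each coordinate satisfies $|y_\nu|\le 2^{100dr^k}$.

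For the upper bound, I would simply apply the triangle inequality and bound each monomial by the trivial estimate $|y^{\alpha}|\le 2^{100dr^k|\alpha|_1}\le 2^{100d\cdot n\cdot r^k}$, obtaining
\[
|Q(y)|\le \Big(\sum_{\alpha\in\Lambda(Q)}|c_{\alpha}|\Big)\,2^{100dn\,r^k}\le 2^{b_1(Q)\,r^k}
\]
for $b_1(Q)=100dn+\log_2(1+\sum|c_{\alpha}|)$.

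For the lower bound, I would invoke Lemma \ref{lo1}, which uses the hypothesis that $Q$ is non-vanishing on $(\mathbb{R}\setminus\{0\})^d$ to furnish constants $\gamma,B>0$ with
\[
|Q(y)|\ge B\min\{|y|^{-\gamma},\,|y_1\cdots y_d|^{\gamma}\}.
\]
For $y\in U_{2^{100dr^k}}$ the inequalities $|y|\le \sqrt{d}\,2^{100dr^k}$ and $|y_1\cdots y_d|\ge 2^{-100d^2 r^k}$ give
\[
|y|^{-\gamma}\ge d^{-\gamma/2}\,2^{-100d\gamma\,r^k}\quad\text{and}\quad |y_1\cdots y_d|^{\gamma}\ge 2^{-100d^2\gamma\,r^k},
\]
so $|Q(y)|\ge 2^{-b_2(Q)\,r^k}$ for a suitable $b_2(Q)$ (e.g.\ $b_2(Q)=100d^2\gamma+\log_2(B^{-1}d^{\gamma/2})$). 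Taking $b(Q)=\max\{b_1(Q),b_2(Q)\}$ yields (\ref{12.13}).

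There is no main obstacle here: the non-vanishing hypothesis is precisely what is needed to invoke Lemma \ref{lo1}, and the remaining work is only a matter of tracking the factors of $r^k$ that come from the scale $2^{100dr^k}$ defining $U_{2^{100dr^k}}$. The statement's ``$U_{2^{100dr}}$'' at the end is evidently a typographical shortening of $U_{2^{100dr^k}}$; the argument is insensitive to this.
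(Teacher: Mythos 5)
Your proof is correct and follows exactly the route the paper intends: its own proof consists of the single line ``It follows from Lemma \ref{lo1},'' and your write-up simply supplies the routine details that are left implicit there, namely the triangle-inequality upper bound using $\deg Q$ and the boundedness of $U_{2^{100dr^k}}$, together with the lower bound obtained by plugging the inequalities $|y|\le\sqrt{d}\,2^{100dr^k}$ and $|y_1\cdots y_d|\ge 2^{-100d^2r^k}$ into the estimate $|Q(y)|\ge B\min\{|y|^{-\gamma},|y_1\cdots y_d|^{\gamma}\}$ of Lemma \ref{lo1}, then absorbing the constant factors into the exponent using $r^k\ge 1$. You are also right that ``$U_{2^{100dr}}$'' in the displayed inequality is a typographical truncation of $U_{2^{100dr^k}}$.
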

\begin{proof}
It follws from Lemma \ref{lo1}.
\end{proof}

\begin{definition}[Constants Associated with $P$]\label{def217} Let $\mathbb{P}={\bf N}(P,D_B)$. Given  $P(x)=\sum_{\mathfrak{m}\in \Lambda(P)}c_{\mathfrak{m}}x^{\mathfrak{m}}$,  we define  the maximal ratio  of coefficients of $P$ as
  \begin{align*}
  C_P& :=\frac{\sum_{\mathfrak{n}\in \Lambda(P)} |c_{\mathfrak{n}}|}{\min\{1,|c_{\mathfrak{n}}|:\mathfrak{n}\in \Lambda(P) \}}.
  \end{align*}
From (\ref{12.13}), we take $b=\max\{b(P_{\mathbb{F}}):\ (P_{\mathbb{F}})_{(\mathbb{R}\setminus\{0\})^d}\ \text{non-vanishing}\ \mathbb{F}\in \mathcal{F}(\mathbb{P})\}$.
Set the two constants as
  \begin{align*}
 H&:=  (b+10d+\text{deg}(P) )^{10}   \sum_{ \mathfrak{m},\mathfrak{n}\in\Lambda(P) }| \mathfrak{n}-\mathfrak{m}|, \\   
L&  :=\min\left\{1,(\mathfrak{n}-\mathfrak{m})\cdot \mathfrak{q}: \mathfrak{n}\in \Lambda(P)\setminus \pi_{\mathfrak{q}}\ \text{and}\ \mathfrak{m}\in \Lambda(P)\cap \pi_{\mathfrak{q}} \right\}_{ \mathfrak{q} \in\mathbb{P}^{\vee} }.\nonumber
 \end{align*}
where $ 1/2\le|\mathfrak{q}|\le 1$. Then  $L>0$ because $(\mathfrak{n}-\mathfrak{m})\cdot \mathfrak{q}>0$  for $\mathfrak{n}\in \pi_{\mathfrak{q}}^+\setminus \pi_{\mathfrak{q}} \ \text{and}\ \mathfrak{m}\in  \pi_{\mathfrak{q}}$.
\end{definition}
 
 \begin{proof}[Proof of (\ref{624})] \label{lem13} 
Let $ r_P:=\max\{ C_{P}/L,  H/L  \}$ and    $j \in
\mathcal{N}^k_r(\mathbb{F}^{\vee}|\mathbb{G})$  for  $0\le k\le d-1$. Then,  we claim (\ref{624}) for $C=\frac{8L }{10}$. 
 By Definition \ref{de49}, we can write $j$ as
\begin{align}\label{ko33}
 \sum_{\mathfrak{q}_i\in \rm{basis}(\mathbb{F}^{\vee})}\alpha_{i}\mathfrak{q_i} +\sum_{\mathfrak{q}_{\ell}\in \rm{basis}(\mathbb{G}^{\vee})\setminus \rm{basis}(\mathbb{F}^{\vee})}\alpha_{\ell}\mathfrak{q}_{\ell}:  0\le \alpha_{\ell} <r^{k}\ \text{and}\  r^{k+1}\le \alpha_{i}
\end{align}
where the second sum is zero if $k=0$. Let $\mathfrak{m},\tilde{\mathfrak{m}}\in \mathbb{F} $.
   As
  $ (\mathfrak{m}-\tilde{\mathfrak{m}})\cdot \mathfrak{q}_i =0$  for  $\mathfrak{q}_i\in \rm{basis}(\mathbb{F}^{\vee})$, we have   \begin{align}\label{65t}
|j\cdot (\mathfrak{m}-\tilde{\mathfrak{m}})|\le \left|\sum_{\mathfrak{q}_\ell\in   \rm{basis}(\mathbb{G}^{\vee})\setminus \rm{basis}(\mathbb{F}^{\vee})}\alpha_{\ell}\mathfrak{q}_\ell\cdot  (\mathfrak{m}-\tilde{\mathfrak{m}})\right|\le r^{k}H. \end{align}  
Let $\mathfrak{m}\in  \mathbb{F}=\bigcap_{\mathfrak{q}_i\in \rm{basis}(\mathbb{F}^{\vee})} \pi_{\mathfrak{q}_i,r_i}\cap \mathbb{P}$ and $\mathfrak{n}\in
\Lambda(P)\setminus \mathbb{F}$. Then  $\mathfrak{m}\in \pi_{\mathfrak{q}_i}$ and $\mathfrak{n}\in \pi_{\mathfrak{q}_i}^+$ for all $\mathfrak{q}_i\in \rm{basis}(\mathbb{F}^{\vee})$, whereas   $\mathfrak{n}\in (\pi_{\mathfrak{q}_{s}}^+)^{\circ}$ for some $\mathfrak{q}_s\in \rm{basis}(\mathbb{F}^{\vee})$. Thus
\begin{align*} 
\text{ $\mathfrak{q}_{s}  \cdot(\mathfrak{n}-\mathfrak{m})>0$ \ \text{and}\ \  $ \mathfrak{q}_i \cdot(\mathfrak{n}-\mathfrak{m})\ge 0$ for all $\mathfrak{q}_i\in \rm{basis}(\mathbb{F}^{\vee})$. }
\end{align*}
From  this with the constants in Definition \ref{def217}, $\alpha_{i}\ge r^{k+1}$ and $\alpha_{\ell}<r^k$ in (\ref{ko33}),
$$\sum_{\mathfrak{q}_i\in \rm{basis}(\mathbb{F}^{\vee})} \alpha_{i}\mathfrak{q}_i \cdot(\mathfrak{n}-\mathfrak{m}) \ge  r^{k+1} L\ \text{and}\   \big|\sum_{\mathfrak{q}_\ell\in  \rm{basis}(\mathbb{G}^{\vee})\setminus \rm{basis}(\mathbb{F}^{\vee})}\alpha_\ell\mathfrak{q}_\ell\cdot  (\mathfrak{n}-\mathfrak{m})\big| < r^k H. $$
This with (\ref{65t}) implies that $
j\cdot (\mathfrak{n}-\mathfrak{m})   \ge \frac{9r^{k+1}L }{10} 
$
and  $2^{r^{k+1}L /10}> r^{k+1}L  \ge C_{P}$ due to $r>r_P$. Therefore,
$$
2^{-j\cdot \mathfrak{m}}\ge  2^{r^{k+1}\frac{9L}{10}}  2^{-j\cdot \mathfrak{n}}\ge 2^{r^{k+1}\frac{8L}{10}}  2^{-j\cdot \mathfrak{n}} C_{P}.$$  
This together with $C_{P}$ defined in Definition \ref{def217} yields that
\begin{align*} 
2^{-j\cdot \mathfrak{m}}\ge    \frac{  2^{(r^{k+1})\frac{8L}{10}}  \sum_{\mathfrak{n}\in
\Lambda(P)\setminus \mathbb{F}} |c_{\mathfrak{n}}|2^{-j\cdot \mathfrak{n}}  }{\min\{|c_{\mathfrak{m}}|:\mathfrak{m}\in \Lambda(P)\cap \mathbb{F}\}} 
\end{align*}
which yields (\ref{624}). 
\end{proof}  
 
 \begin{proof}[Proof of (\ref{a635})]
 We show (\ref{a635}) for $\sigma=\tau=0$.
Let
$j\in \mathcal{N}^k_r(\mathbb{F}^{\vee}|\mathbb{G})$  in Definition \ref{de49}.
Then
  $j=\mathfrak{q}+\mathfrak{u}$ for $\mathfrak{q}\in \mathbb{F}^{\vee}$ with $|\mathfrak{q}|\ge dr^{d+99}$ and $\mathfrak{u}\in \mathbb{G}^{\vee}$ with $|\mathfrak{u}|\le r^kd$.      Let $x\sim 2^{-{\bf 0}}$. Then
\begin{align}\label{488n}
P({\bf 2}^{-j}x)= P_{\mathbb{F}}({\bf 2}^{-j}x)+\sum_{\mathfrak{n}\in \Lambda(P)\setminus\mathbb{F}} 
2^{-j\cdot \mathfrak{n}} c_{\mathfrak{n}} x^{\mathfrak{n}}\end{align}
with $ P_{\mathbb{F}}({\bf 2}^{-j}x)  =   2^{- \mathfrak{q}\cdot \mathfrak{m}} P_{\mathbb{F}}\left(2^{-\mathfrak{u}}x \right).$  Note that $2^{-(d+1)r^k}\le |2^{-\mathfrak{u}_\nu}x_\nu| \le     2^{|\mathfrak{u}|+1} \le   2^{(d+1)r^k} $    where $ |\mathfrak{u}|\le r^kd $  in the above. Thus, we use (\ref{12.13}) to    obtain that 
 $$    2^{-br^k} \le   |P_{\mathbb{F}}\left(2^{-\mathfrak{u}}x \right)|\le 2^{br^k}. $$ 
This multiplied by $2^{-\mathfrak{q}\cdot \mathfrak{m}}= 2^{-j\cdot \mathfrak{m}} 2^{\mathfrak{u}\cdot \mathfrak{m}}\in    2^{-j\cdot \mathfrak{m}} [ 2^{-\deg(P) dr^k}, 2^{\deg(P) dr^k}  ]$  
leads that there exists  $C= \text{deg}(P) d(b+1) >0$ such that  for $r>r_P$,
\begin{align*} 
 2^{-j\cdot \mathfrak{m}}  2^{-3 Cr^k}\le  2^{-\mathfrak{q}\cdot \mathfrak{m}}  |P_{\mathbb{F}}\left({\bf 2}^{-\mathfrak{u}}x \right)|    \le     2^{3Cr^k}  2^{-j\cdot \mathfrak{m}}.
  \end{align*}
  Thus,   $|P_{\mathbb{F}}({\bf 2}^{-j}x)  |= 2^{-\mathfrak{q}\cdot \mathfrak{m}}  |P_{\mathbb{F}}\left({\bf 2}^{-\mathfrak{u}}x \right)|   $   in (\ref{488n}) with  (\ref{624})  implies (\ref{a635}) for $\sigma=\tau=0$.  To consider the general case,  replace $|P_{\mathbb{F}}(2^{-j}x)|$ and $P(2^{-j}x)|$  by $$\sum_{\sigma\le |\alpha|\le \tau}|(2^{-j\cdot \alpha}x^{\alpha} )\partial_x^{\alpha}P_{\mathbb{F}}(2^{-j}x)|\ \text{and}\ \sum_{\sigma\le |\alpha|\le \tau}|2^{-j\cdot \alpha}x^{\alpha} \partial_x^{\alpha}P(2^{-j}x)|$$
  where
  $$\sum_{\sigma\le |\alpha|\le \tau}|(2^{-j\cdot \alpha}x^{\alpha} )\partial_x^{\alpha}P_{\mathbb{F}}(2^{-j}x)|= 2^{- \mathfrak{q}\cdot \mathfrak{m}} \sum_{\sigma\le |\alpha|\le \tau}|(2^{-\mathfrak{u}\cdot \alpha}x^{\alpha} )\partial_x^{\alpha}P_{\mathbb{F}}(2^{-\mathfrak{u}}x)|.$$
Here $2^{-\mathfrak{q}\cdot \mathfrak{m}}=     2^{-j\cdot \mathfrak{m}} [ 2^{-\deg(P) dr^k}, 2^{\deg(P) dr^k}  ]$  with the non-vanishing condition 
  $$ 2^{-b' r^k} \le  \sum_{\sigma\le |\alpha|\le \tau}|(2^{-\mathfrak{u}\cdot \alpha}x^{\alpha} )\partial_x^{\alpha}P_{\mathbb{F}}(2^{-\mathfrak{u}}x)|\le 2^{b' r^k} $$
  leads (\ref{a635}) for $P_{\mathbb{F}}$. Finally, this   with  the difference  $ \sum_{\sigma\le |\alpha|\le \tau}|(2^{-j\cdot \alpha}x^{\alpha} )\partial_x^{\alpha}(P-P_{\mathbb{F}})(2^{-j}x)| \lesssim \sum_{\mathfrak{n}\in \Lambda(P)\setminus\mathbb{F}} 
2^{-j\cdot \mathfrak{n}} \lesssim 2^{-Cr{k+1}}$ gives  the desired estimate for $P$.
\end{proof}

 \subsection{Proof of Theorem \ref{prop44} (Normal-Crossing $\Leftrightarrow$ Face-nondegeneracy)}  \label{Sec10}

 \begin{proof}[Normal Crossing $\Rightarrow$ Face-Nondegeneracy]  
Suppose that there exists $h\ge 1$ satisfying  (\ref{sv3}) on $D=D\cap M_h^c$.  Let $\mathbb{F}$ be a face with  $\mathbb{F}^{\vee}=\rm{cone}(\{\mathfrak{q}_i\}_{i=1}^{s})$ and let    $y\in (\mathbb{R}\setminus\{0\})^d$.
We claim that $ \sum_{\sigma\le |\alpha|\le \tau}| (\partial_x^{\alpha} P_{\mathbb{F}}) (y)|  \ne 0$.
Choose $ \mathfrak{q}:=(\mathfrak{q}_1+\cdots+\mathfrak{q}_s) \in
 (\mathbb{F}^{\vee})^{\circ}$. Then there is $\rho>0$ such that  for all $r>0$,
 \begin{align}\label{004}
 \text{
  $  2^{-r\mathfrak{q}\cdot\mathfrak{n}}\le 2^{-r\rho} 2^{-r\mathfrak{q}\cdot \mathfrak{m}}   $ for all  $\mathfrak{m}\in \Lambda(P)\cap \mathbb{F}\ \text{and}\ \mathfrak{n}\in \Lambda(P)\setminus \mathbb{F}$.} 
 \end{align}
 Take $r\gg 1$ and  $\mathfrak{q}=(q_1,\cdots,q_d)$ above so that
  $$ x:={\bf 2}^{-r\mathfrak{q}} y=(2^{-rq_1}y_1,\cdots,2^{-rq_d}y_d) \in  D_B\cap M_h^c\subset D, $$because $y_i\ne 0$ for all $i$ with $ \mathfrak{q}   \in
 (\mathbb{F}^{\vee})^{\circ}$.   Thus, by (\ref{004}) and (\ref{sv3}),  for $\mathfrak{m}\in \Lambda(P)\cap\mathbb{F}$,
\begin{align*}
2^{-r\mathfrak{q}\cdot \mathfrak{m}}|y^{\mathfrak{m}}|= |x^{\mathfrak{m}}|&\lesssim \sum_{\sigma \le |\alpha|\le \tau} |x^{\alpha}\partial^{\alpha}_xP(x)|   \lesssim \sum_{\sigma\le |\alpha|\le \tau}  |[x^{\alpha}\partial^{\alpha}_x P]_{\mathbb{F}}(x)|\big|  +\sum_{\mathfrak{n}\in \Lambda(P)\setminus\mathbb{F}}  | x^{\mathfrak{n}}|\\
&\le  2^{-r\mathfrak{q}\cdot \mathfrak{m}}\sum_{\sigma\le |\alpha|\le \tau} |y^{\alpha} \partial^{\alpha}_y P_{\mathbb{F}}(y_1,\cdots,y_{d})|+2^{-r\mathfrak{q}\cdot \mathfrak{m}}2^{-r \rho }  \sum_{\mathfrak{n}\in \Lambda(P)\setminus\mathbb{F}}  |y^{\mathfrak{n}}|.
\end{align*} 
Divide by $2^{-r\mathfrak{q}\cdot \mathfrak{m}}$ and take
  $r\rightarrow\infty$. Then
$  |y^{\mathfrak{m}}|\lesssim\sum_{\sigma\le |\alpha|\le \tau} |y^{\alpha} \partial^{\alpha}_y P_{\mathbb{F}}(y_1,\cdots,y_{d})|. $
From this with $y^{\mathfrak{m}}\ne 0$ for $y\in (\mathbb{R}\setminus\{0\})^d$, we obtain  $\sum_{\sigma\le |\alpha|\le \tau}| \partial_y^{\alpha} P_{\mathbb{F}} (y)|  \ne 0$.   
 \end{proof}

 \begin{proof}[Face-Nondegenercay $\Rightarrow$ Normal Crossing]
 
   Choose   $  h :=  2^{-d r^{d+100}} $ where $r=r_P$   in Lemma \ref{lem13}.
    Let $x\in D \subset 2^{-\rm{cone}^{\vee}(B)+O(1)}\cap M_h^c$. Then from (\ref{j101}), we can express $x={\bf 2}^{-j}y$ for some $y\sim 2^{-{\bf 0}}$ and $j\in  \mathcal{N}_r(\mathbb{F}^{\vee}|\mathbb{G})$ in Definition \ref{de49}. Thus, the  non-vanishing condition of $P_{\mathbb{F}}|_{(\mathbb{R}\setminus\{0\})^d}(x)$ implies (\ref{a635}) with $2^{j\cdot \mathfrak{m}}=|x^{\mathfrak{m}}|$.  \end{proof}

\end{document}